\newtheorem{theorem}{Theorem}[section]
\newtheorem{proposition}[theorem]{Proposition}
\newtheorem{lemma}[theorem]{Lemma}
\newtheorem{corollary}[theorem]{Corollary}
\newtheorem{claim}[theorem]{Claim}
\newtheorem*{blow up}{Blow-up axiom - \BU}
\newtheorem*{Rigid}{Rigidity - \Rig}
\newtheorem*{SG}{Strict grading - \SGrad}
\newtheorem*{Facto}{Factorizability - \Fac}
\newtheorem*{QIB}{Invertibility of quasibijections - \QBI}
\newtheorem*{unique}{Unique fiber condition - \UFB}
\newtheorem*{weak_blow up}{Weak blow-up axiom - \WBU}
\theoremstyle{definition}
\newtheorem{definition}[theorem]{Definition}
\newtheorem{example}[theorem]{Example}
\newtheorem{remark}[theorem]{Remark}
\newtheorem{assu}[theorem]{Assumptions}
\def\AA{{\it Assumptions~\ref{A}\,}}
\def\fdl{{\mathfrak D}_{\mathfrak l}}
\def\fl{{\mathfrak l}}
\def\fSet{{\tt fSet}}
\def\Markl{{\EuScript M}}
\def\virt#1{\stackrel {#1}{\hbox{\large $\hskip -.05em \rightsquigarrow
      \hskip -.1em$}}\, }
\def\Mercin{\rotatebox{270}{$\blacktriangleright$}}
\def\kompozice{\relax{}}
\def\minggGrc{{\mathfrak M}_{\ggGrc}}
\DeclareMathOperator\can{{\it can}}
\def\D{{\mathbb D}}
\def\desusp{{\downarrow}}
\def\C{{\mathbb \Omega}}
\def\dusd#1{{\overline \delta}^\downarrow_{\!#1}}
\def\dus{{\desusp \uCO}}
\def\bftau{{\boldsymbol \tau}}
\def\redukce#1{\vbox to .3em{\vss\hbox{#1}}}
\def\bfpsi{{\boldsymbol \psi}}
\def\bfbeta{{\boldsymbol \beta}}
\def\bfalpha{{\boldsymbol \alpha}}
\def\bfphi{{\boldsymbol \phi}}
\def\pa{\partial}
\def\CO{\raisebox{.7em}{\rotatebox{180}{$\Markl$}}}
\def\rGddo{\oddGr^*}
\def\uCO{\overline{\raisebox{.7em}{\rotatebox{180}{$\Markl$}}}}
\def\colorop #1(#2;#3){{#1}
   \left(\rule{0pt}{15pt}\right.
         \hskip -3mm \begin{array}{c}
	              #3\\#2
                     \end{array}
         \hskip -3mm \left. 
   \rule{0pt}{15pt} \right)
}
\def\hGr{{\frac12\Gr}}
\def\Dio{{\tt Dio}}
\def\Whe{{\tt Whe}}
\def\frC{{\mathfrak C}}
\def\SFac{{\sf SFac}}
\def\UFB{{\sf UFib}}
\def\Rig{{\sf Rig}} 
\def\QBI{{\sf QBI}}
\def\Fac{{\sf Fac}} 
\def\WBU{{\sf WBU}} 
\def\BU{{\sf BU}}
\def\SGrad{{\sf SGrad}}
\def\Ord{{\mathbb {\tt \Delta}_{\rm semi}}}
\def\oP{{\EuScript P}}
\def\Lie{{\EuScript Lie}}
\def\Ass{{\EuScript Ass}}
\def\Com{{\EuScript Com}}
\def\out{{\rm out}}
\def\Wheterm{{\sf 1}_\Whe}
\def\End{{\EuScript E}nd}
\def\oEnd{{\mathring \End}}
\def\calE{{\mathcal E}}
\def\ttV{{\tt V}}
\def\vrt{|}
\def\bbA{{\mathbb N}}
\def\termGr{{\sf 1}_\ggGrc}
\def\oddGr{{\mathfrak {K}}_\ggGrc} 
\def\calI{{\EuScript I}}
\def\iso{{\hbox{\,$\stackrel\cong\longrightarrow$\,}}}
\def\oC{{\EuScript C}}
\def\ggGrc{{\tt ggGrc}}
\def\RTr{{\tt RTr}}\def\PRTr{{\tt PRTr}}\def\PTr{{\tt PTr}}
\def\susp{{\uparrow}}
\def\tlTw{{\widetilde {\lTw}}}
\def\im{isomorphism}
\def\Grp{{\tt Grp}}
\def\Free{{\mathbb F}}
\def\Coll{{\tt Coll}_1^\ttV}
\def\fib{\triangleright}
\def\Fib{\blacktriangleright}
\def\Term{{\tt trm}}
\def\term{{\sf 1}}
\def\Iso{{\ttO}_{\tt iso}}
\def\bfk{{\mathbb k}}
\def\ltw{{\it lTw}}
\def\lTw{{\tt lTw}}
\def\Surj{{\Fin_{\rm semi}}}
\def\Tw{{\it Tw}}
\def\Vect{{\tt Vect}}
\def\Per{{\tt Per}}
\def\ttP{{\tt P}}
\def\pperm{{\mathfrak{pp}}}
\def\LT{{\ttO}_{\tt ltrm}}
\def\VR{{\ttO}_{\tt vrt}}
\def\VI{\hbox{${\VR}\!\int\!{\Iso}$}}
\def\VIe{\hbox{${\VR(e)}\!\int\!{\Iso}$}}
\def\QV{{\tt Q}\VR}
\def\colim{\mathop{\rm colim}\displaylimits}
\def\rada#1#2{{#1,\ldots,#2}}
\def\stt#1{\{#1\}}
\def\Rada#1#2#3{#1_{#2},\dots,#1_{#3}}
\def\epi{\twoheadrightarrow}
\def\bfT{{\boldsymbol {\EuScript T}}}
\def\bfF{{\boldsymbol  {\EuScript F}}}
\def\bfS{{\boldsymbol  {\EuScript S}}}
\def\bfsigma{{\boldsymbol \sigma}}
\def\op{{\rm op}}
\def\ot{\otimes}
\def\id{1\!\!1}
\def\term{{\boldsymbol 1}}
\def\QO{{\ttO}_{\tt qb}}
\def\ttO{{\tt O}}
\def\ttP{{\tt P}}
\def\Tr{\tt Tr}
\def\bbN{{\mathbb N}}
\def\Vert{{\rm Ver}}
\def\Fin{{\tt Fin}}
\def\Bq{{\tt Bq}} 
\def\Gr{{\tt Gr}}
\def\OGr{{\tt Gr}_{\tt ord}}
\def\oM{{\mathring \Markl}}
\def\and{{\mbox { and }}}
\def\DO{{{\ttO_{\tt ord}}}}
\def\inv#1{{#1}^{-1}}
\title{Koszul duality for operadic categories}
\author{Michael Batanin}
\email{bataninmichael@gmail.com}
\author{Martin~Markl}
\email{markl@math.cas.cz}
\affiliation{Institute of Mathematics of the Czech Academy of Sciences, {\v Z}itn{\'a} 25,
         115 67 Prague 1, The Czech Republic\\ 
		 and
		 MFF UK, Sokolovsk\'a 83, 186 75 Prague 8, The Czech Republic}
\keywords{Operadic category, operad, PROP, Koszul duality, Koszulity.}
\def\qb{quasi\-bijection}
\begin{document}
\bibliographystyle{plain}

\begin{abstract}
The aim of this paper (which is a sequel to {\em Operadic categories as a 
natural environment for Koszul duality}) 
is to set up the cornerstones of  
Koszul duality and Koszulity in the context of operads over a large class of
operadic categories. In particular, for these operadic categories
we will study concrete examples of binary quadratic operads, 
describe their Koszul duals and prove
that they are~Koszul.
This includes operads (for operadic categories)
whose algebras are the most important operad- and PROP-like structures such as 
the classical operads,  their variants such as cyclic or modular
operads, and also diverse versions of PROPs such as
wheeled properads, dioperads,
\hbox{$\frac12$PROPs}, and still more exotic objects such as
permutads and pre-permutads.
\end{abstract}

\maketitle

\setcounter{secnumdepth}{3}
\setcounter{tocdepth}{1}

\tableofcontents

\section*{Introduction}

Koszul duality is
an important ingredient in the theory of algebraic operads. The 
classical Koszul duality theory for associative algebras goes a long
way back to Priddy~\cite{Priddy}, but a~milestone was 
the 1994 paper {\em Koszul
duality for operads} by Ginzburg and
Kapranov~\cite{ginzburg-kapranov:DMJ94}, where they
generalize it to operads. The 
key examples are the operads $\Lie$
and $\Com$ for Lie, resp.~commutative associative algebras that are 
Koszul dual to each other, whereas the operad $\Ass$ for associative algebras
is self-dual. While many aspects of operad theory can be formulated in
general symmetric monoidal categories, such as the category of
sets or the category of spaces, Koszul duality theory is really
a feature specific to algebraic operads, meaning operads in the
category $\Vect$ of graded vector spaces.

By~\cite[Definition~4.1.3]{ginzburg-kapranov:DMJ94}, a symmetric 
quadratic operad
$\oP$ is  Koszul if the dual dg (abbreviating differential graded) operad~$\D(\oP^!)$ of 
its Koszul dual resolves $\oP$. The operad $\D(\oP^!)$
then provides, using a construction of~\cite{markl:JHRS10},
a canonical explicit $L_\infty$-algebra capturing deformations of 
$\oP$-algebras via its moduli space of Maurer{\textendash}Cartan elements.
Moreover, algebras for $\D(P^!)$ are strongly homotopy $\oP$-algebras,  
whose salient feature is the transfer property over
weak homotopy equivalences. The operad $\D(\oP^!)$ also leads to the canonical
(co)homology theory of $\oP$-algebras, 
cf.~\cite[Section~II.3.8]{markl-shnider-stasheff:book}. 
This explains the prominent position of 
quadratic Koszul operads in the traditional operad theory.

The aim of the present article is to implant the theory of
quadratic operads and their Koszulity into the context of
operadic categories. Operadic categories, introduced by the
authors in~\cite{duodel} is a general abstract framework to accommodate
operad-like structures. Just as algebraic structures of
different kinds can fruitfully be interpreted as algebras over
operads, there are also various kinds of operads, which in this
theory are interpreted as operads over operadic categories: each
operadic category has its attending notion of operad. By
carrying out the theory at this level of generality, we get
unified proofs of the main theorems of Koszul duality theory for
various flavors of operads. Not all operadic categories admit
this theory. In order for the theory to work it is necessary to
impose several further axioms on top of the general axioms of
operadic categories. This requires a considerable amount of
groundwork which we have carried out in the companion article
{\it Operadic categories as a natural environment for Koszul
duality}~\cite{part1}, whose results are really a prerequisite for the
present article. Although the additional axioms are briefly
recollected below, we have to refer the reader to~\cite{part1} for more
detailed background, also on the general theory of operadic
categories, which it is not practical to reproduce here.

As in the classical case, free operads over operadic categories will
play the central r\^ole in our generalized Koszul duality theory. 
Following the approach of~\cite{markl:zebrulka}, 
we use a version of operads whose
composition laws are binary. Operad-like
structures based on these ``partial compositions''
were later called Markl operads.   The theory of these operads in the
context of operadic categories was developed
in~\cite{part1}; we recall their definition including the necessary
auxiliary material in 
Subsection~\ref{Jsem nachlazen?}.  
In the rest of this introduction, by an operad we will mean a
Markl operad, typically denoted by $\Markl$ in contrast to the generic
notation $\oP$ for the standard operads~\cite[Definition~1.1]{duodel} 
with the compositions along all fibers performed~simultaneously.

After describing free operads, we introduce quadratic
operads as those isomorphic to free operads quotiented by
quadratic relations. As in the classical setup, each quadratic operad
$\Markl$ possesses a Koszul dual $\Markl^!$. Given an operad $\Markl$, we define
its dual dg operad $\D(\Markl)$ and, for
$\Markl$ quadratic, construct the canonical morphism \hbox{$\can_\Markl : \D(\Markl^!) \to
\Markl$}. A~quadratic operad $\Markl$ will be called
Koszul if $\can_\Markl$ induces a component-wise isomorphism of homology.
We finally prove that the operads whose algebras are the most relevant
operad- and PROP-like structures are Koszul. Our method is to show
that the dual dg operads of their Koszul duals 
are isomorphic to their minimal models described
in~\cite{BMO}. Here, as in the classical \hbox{case~\cite[Proposition~2.6]{markl:zebrulka}}, this
characterizes their Koszulity.

Throughout this article, we will be working  with operads in the
category of dg vector spaces, although the operadic
categories themselves are completely combinatorial and do not
depend on any linear structure. The reason is that even to write down the
algebraic presentations it is necessary to have a linear
structure, and that linear duality is a key ingredient, as are
the notions of homology and resolutions.

Koszulity of the semi-classical colored
operad whose algebras are modular operads was established, in the
setup of groupoid-colored operads,
in~\cite{Ward}. We believe that the advantage of our approach is that,
after some heavy preparatory work has been done, 
everything is stripped to bare bones, conveniently hiding details that only
complicate the picture,
such as the groupoid actions and explicit indices,
in the way explained in Remark~\ref{Grete vynechava zapalovani.}. 

\subsection*{Plan of the paper}

In Section~\ref{4,2} we recall from~\cite{part1} some additional
axioms of operadic categories required in the present paper, and  
Markl operads in the context of 
operadic categories. While the
underlying structure of a classical operad is a collection of spaces
equipped with actions of symmetric groups, for general operadic
categories  the situation is subtler. 
The r\^ole of underlying collections is
played by presheaves on the category $\QV(e)$ of virtual isomorphisms 
constructed  in the first part of Section~\ref{letam_205}.
The second part of that section describes  
the precise relation of Markl operads to the category $\QV(e)$.  
Free Markl operads are explicitly described in
Section~\ref{zitra_letim_do_Pragy}. 
With the notion of free operads available, we introduce
quadratic operads and their Koszul duals  in Section~\ref{pairing}.

The remaining sections are
devoted to explicit calculations.
In Section~\ref{zitra_budu_pit_na_zal} we study the constant operad
$\term_\ggGrc$  whose algebras are modular
operads. We show that this operad is binary quadratic and that
algebras over its Koszul dual are odd (aka twisted) modular operads.
In Section~\ref{Asi_pojedu_vecer.} we make a similar analysis for
operads whose algebras are ordinary and cyclic operads, and pre-permutads.  
In Section~\ref{Ta_moje_lenost_je_strasna.} we continue the analysis
for wheeled properads, dioperads, $\frac12$PROPs and permutads.
With the exception of the operads $\oddGr$, resp.~$\pperm$, 
whose algebras are odd modular
operads, resp.~pre-permutads,  all these operads are terminal operads over an
appropriate operadic category.

Section~\ref{Ceka mne nehezky vikend.} describes the cobar
construction and
Section~\ref{Prevezu Tereje?} the dual dg operad of a Markl operad,
needed for the
definition of Koszulity. Theorem~\ref{Privezu Tereje?}
then establishes Koszulity of the operads whose algebras are modular,
cyclic and ordinary operads, and wheeled PROPs. 

In Section~\ref{zitra_budu_pit_na_zal} we need to refer to concrete
axioms of modular and odd modular operads. Since the only source we
are aware of where these axioms are listed in a concise and itemized
form is the recent monograph~\cite{DJMS}, we decided to recall 
them in the Appendix. For the same reasons we also
included itemized axioms of the classical Markl operads so that 
the reader need not
consult the ancient paper~\cite{markl:zebrulka}.
To help the reader navigate through the paper, we included an index
of terminology and notation.

\bigskip

\noindent
{\bf Conventions.} 
Operadic categories and related notions were introduced
in~\cite{duodel}; some basic concepts of that paper are recalled in
\cite[Section~1]{part1}. We will freely use the terminology and notation
from there and, when necessary, refer to concrete definitions,
diagrams, results or formulas in those sources. If not stated otherwise,
$\phi^*$ will denote the image of a morphism~$\phi$ under some presheaf.

%

\subsection*{Acknowledgments}

The first author acknowledges 
support of  Praemium Academiae and RVO: 67985840 and GA\v CR EXPRO 19-28628X.
This material is based upon work supported by the 
National Science Foundation
    under Grant No.~DMS-1928930 while the second author participated in a program
  supported by the Mathematical Sciences Research Institute. The program was
   held in the Summer of 2022 in partnership with the Universidad Nacional
                              Aut\'onoma de M\'exico.
The second author was also supported by 
grant GA \v CR 18-07776S, Praemium Academiae and RVO: 67985840.
Both authors acknowledge the hospitality of the Max Planck
  Institute for Mathematics in Bonn where this work was initiated.
We express our gratitude to Joachim Kock and the referees for 
useful suggestions and comments that 
led to substantial improvement of our paper. 
The second author is also indebted
to Dominik Trnka for many useful remarks and corrections.

\section{Recollections}
\label{4,2}

In the first part of this section we recall  from~\cite{part1}
some additional requirements on the base operadic category~$\ttO$, 
formulated in the 
form of axioms, guaranteeing that free operads admit a nice explicit
description. In the second part we recall 
Markl operads.  The material is taken from~\cite{part1} almost verbatim.

\subsection{The Axioms}
\label{Potkali se u Kolina.}
Intuitively, the axioms below express 
that an operadic category $\ttO$ is of combinatorial nature,
close to various categories of graphs. 
The first axiom involves 
the subcategory $\QO \subset \ttO$ of {\em \qb{s}\/}, i.e.\  morphisms in 
$\ttO$ all of whose fibers are
the chosen local terminal objects; in~\cite{duodel} we called them
  ``trivial.''\label{Internet porad nejde.}

\begin{QIB} 
\label{Za chvili na kozni.}
All \qb{s} in $\ttO$ are invertible.
\end{QIB}

In the next axiom, $\DO$ \label{Zitra v 7 na koznim.} 
will denote  the subcategory of $\ttO$ with the objects of $\ttO$, and morphisms  $f: S
\to T$ of $\ttO$ such that  $|f| : |S| \to |T|$ is
order preserving.

\begin{weak_blow up}
\label{wbu}
For any $f' :S' \to T$ in $\DO$ and
morphisms $\pi_i : \inv{f'}(i) \to F''_i$ in $\ttO$, $i \in |T|$, 
there exists a unique factorization of $f'$
\[
\xymatrix@C=2em@R=1.2em{S' \ar[rr]^\omega \ar[dr]_{f'} && S'' \ar[ld]^{f''}
\\
&T&
}
\] 
such that $f'' \in \DO$ and 
the maps $\omega_i$ between the fibers induced by $\omega$ satisfy
$\omega_i = \pi_i$ for all $i \in |T|$.
\end{weak_blow up}

Before we recall the strong version of the above axiom, we need to
remind the reader of the notation introduced in
a lemma of~\cite{part1}:

\begin{lemma}[Lemma~2.4 of~\cite{part1}]
\label{l3}
Consider the commutative diagram in an operadic category
\begin{equation}
\label{s1}
\xymatrix@C=4em{S' \ar[d]_{f'}\ar[dr]^{f} \ar[r]^\pi  & S'' \ar[d]^{f''}
\\
T' \ar[r]^\sigma &\ T''\,.
}
\end{equation}
Let  $j\in |T''|$ and $|\sigma|^{-1}(j) = \{i\}$ for some 
$i\in |T'|$. Diagram~(\ref{s1}) determines:
\begin{itemize}
\item[(i)]
the map $f'_j:  \inv{f}(j) \to
\inv{\sigma}(j)$ whose unique fiber equals  $\inv{f'}(i)$, and
\item[(ii)]
{the induced map  $\pi_j :  
\inv{f}(j) \to
\inv{f''}(j)$.}
\end{itemize}
If $\inv{\sigma}(j)$ is
trivial, in particular if $\sigma$ is a \qb,~then
$\pi$ induces a map
\begin{equation}
\label{zitra_na_prohlidku_k_doktoru_Reichovi}
\pi_{(i,j)} : \inv{f'}(i) \to  \inv{f''}(j)
\end{equation}
which is a \qb\ if $\pi$ is.
\end{lemma}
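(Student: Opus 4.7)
The plan is to reduce everything to the operadic-category axiom on fibers of composites: for a composable pair $S\xrightarrow{p}T\xrightarrow{q}U$ and $j\in|U|$, the fiber $(qp)^{-1}(j)$ carries a canonical morphism to $q^{-1}(j)$ whose own fibers, indexed by $|q|^{-1}(j)\subseteq|T|$, are the fibers $p^{-1}(i)$. Since the diagonal $f\colon S'\to T''$ of~\eqref{s1} admits the two factorizations $f=\sigma\circ f'$ and $f=f''\circ\pi$, parts~(i) and~(ii) should drop out by applying this axiom once to each. Applied to $f=\sigma\circ f'$, it produces $f'_j\colon f^{-1}(j)\to\sigma^{-1}(j)$ whose fibers over $i\in|\sigma|^{-1}(j)$ are the $(f')^{-1}(i)$; the hypothesis $|\sigma|^{-1}(j)=\{i\}$ collapses this into a single fiber, giving~(i). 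Applied to $f=f''\circ\pi$, it gives $\pi_j$ directly, yielding~(ii).

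For the last assertion, triviality of $\sigma^{-1}(j)$ forces $|\sigma^{-1}(j)|$ to be a singleton and makes $f'_j$ a morphism into a chosen local terminal object. My plan is to exploit this triviality to identify the unique fiber $(f')^{-1}(i)$ of $f'_j$ canonically with $f^{-1}(j)$ and then take $\pi_{(i,j)}$ to be $\pi_j$ transported through this identification; commutativity of~\eqref{s1} is what ensures the transported map lands inside $(f'')^{-1}(j)$ rather than somewhere else. For the quasibijection clause, I would invoke the fact, built into the definition of the fiber functor on an operadic category, that fibers preserve quasibijections: $\pi$ a quasibijection makes $\pi_j$ a quasibijection, and the identification above then passes the property on to $\pi_{(i,j)}$.

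The main obstacle I anticipate is making the identification $(f')^{-1}(i)\simeq f^{-1}(j)$ in the trivial-target case fully canonical. This uses the general principle that local terminal objects behave as units for the fiber construction, but extracting it cleanly requires a careful appeal to the unique-fiber axiom together with the \QBI-type axioms recalled in Section~\ref{4,2}. Once this step is pinned down, everything else is a transparent unwinding of the fiber axiom along the two factorizations of~$f$, with the necessary naturality supplied automatically by the commutativity of~\eqref{s1}.
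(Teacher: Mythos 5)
Your proof is correct and is essentially the intended argument: parts (i) and (ii) follow from applying the fiber-of-a-composite axiom once to each of the two factorizations $f=\sigma\kompozice f'$ and $f=f''\kompozice\pi$ of the diagonal, and the identification $(f')^{-1}(i)=f^{-1}(j)$ needed for the last clause is an on-the-nose equality rather than a delicate canonical isomorphism. The one thing to fix is the attribution of that identification: it comes from Axiom~(iii) of an operadic category (the fiber of the unique morphism to a chosen local terminal object is the object itself), not from the unique-fiber axiom or \QBI\ --- since $\sigma^{-1}(j)$ is trivial, $f'_j$ is the unique morphism from $f^{-1}(j)$ to the chosen local terminal object of its component, so its unique fiber is simultaneously $f^{-1}(j)$ and, by part~(i), $(f')^{-1}(i)$; the quasibijection clause then follows exactly as you say, because the fibers of $\pi_j$ are among the fibers of~$\pi$.
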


In the situation of Lemma~\ref{l3} with $\sigma$ a \qb, the {\em
  derived sequence\/} is the sequence of morphisms 
\begin{equation}
\label{e1}
\left\{\pi_{(i,j)} : 
\inv{f'}(i) \to \inv{f''}(j), j = |\sigma|(i)\right\}_{i \in |T'|}
\end{equation}
consisting of \qb{s} if $\pi$ is a \qb. The derived sequence is featured
in the following axiom.

\begin{blow up}\label{bu}
Consider a corner in the operadic category $\ttO$
\begin{equation}
\label{c1}
\xymatrix@C=3.5em{S' \ar[d]_{f'}  &
\\
T' \ar[r]^\sigma_\sim &T''
}
\end{equation}
in which $\sigma$ is a \qb\ and $f' \in \DO$. Assume we are given
objects $F''_j$, $j \in |T''|$ together with a collection of maps
\begin{equation}
\label{e2}
\big\{\pi_{(i,j)} : \inv{f'}(i) \to F''_j,\ j = |\sigma|(i)\big\}_{i \in |T'|}.
\end{equation}
Then the corner~(\ref{c1}) can be completed uniquely  into the
commutative square
\begin{equation}
\label{eq:5}
\xymatrix@C=3.5em{S' \ar[d]_{f'} \ar[r]^\pi  & S'' \ar[d]^{f''}
\\
T' \ar[r]^\sigma_\sim &T''
}
\end{equation}
in which $f'' \in \DO$,  $\inv{f''}(j) = F''_j$ for $j \in |T''|$, and
such that
derived sequence~(\ref{e1}) induced by $f''$ coincides with~(\ref{e2}).
\end{blow up}

One of the axioms of operadic categories says that the fiber of the
unique morphism $T \to U$ to a chosen local terminal object $U$ is $T$. 
The following axiom requires that this property characterizes 
the chosen local terminal objects.

\begin{unique}
\label{Kveta_asi_spi.}
If the fiber of the unique morphism 
$!:T\to t$ to a local terminal object is $T$, then $t$ is a
chosen local terminal object.
\end{unique}

The following axiom refers to the categories $\DO$ and $\QO$ recalled
above.

\begin{Facto}
\label{dnes_prednaska_na_Macquarie}
An operadic category $\ttO$ is {\em factorizable\/} 
if each morphism $f \in \ttO$ decomposes, not necessarily uniquely, 
as $\phi \kompozice \sigma$ for some $\phi \in \DO$ and
$\sigma \in \QO$ or, symbolically, $\ttO = \DO \kompozice\, \QO$. 
\end{Facto}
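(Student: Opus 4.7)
Since $\Fac$ is stated as an axiom, there is no abstract derivation from the earlier material. The natural task is instead to verify $\Fac$ for the concrete operadic categories studied later in the paper --- $\ggGrc$ and its graph-based variants. For each morphism $f : S \to T$ the plan is to reorder the underlying set of the source so that $|f|$ becomes order-preserving, and then to realize this reordering by a quasibijection in $\ttO$.

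First I would construct the reordering. Examining $|f| : |S| \to |T|$ and exploiting the total order of the finite set $|T|$, pick a new total order on $|S|$ in which the preimages $|f|^{-1}(j)$ appear in the order of their images $j \in |T|$. The order inside each fiber is free, which accounts for the non-uniqueness flagged in the statement. This yields a set-level bijection $\tau : |S| \to |S|$ for which $|f| \circ \inv{\tau}$, read as a map from $|S|$ with the new order to $|T|$, is order-preserving.

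Next I would realize $\tau$ by a morphism of $\ttO$. For graph-type categories, relabeling vertices or flags produces an object $S'$ whose fiber structure agrees with that of $S$ but whose underlying ordered set is $|S|$ equipped with the new order, together with a morphism $\sigma : S \to S'$ all of whose fibers are local terminal --- i.e.\ a quasibijection. By $\QBI$ this $\sigma$ is invertible, and then $\phi := f \kompozice \inv{\sigma} : S' \to T$ lies in $\DO$ by construction and satisfies $\phi \kompozice \sigma = f$, giving the desired factorization.

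The principal obstacle is exactly the existence of $\sigma$: one needs $\ttO$ to admit enough quasibijections to realize every set-level permutation of $|S|$. For $\ggGrc$ and its variants this is transparent because graphs can be freely relabeled along a bijection of vertex or flag sets, but in a general operadic category it is a real structural hypothesis, which is precisely why $\Fac$ is imposed as a separate axiom rather than derived from the others.
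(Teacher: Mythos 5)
You are right that this is an axiom, not a theorem: the paper offers no proof of the statement, merely recalling the definition of factorizability from the prequel, and later asserts that $\Gr$ satisfies $\Fac$ by citation to that paper and that $\ggGrc$ inherits it through the Grothendieck construction. Your sketch of how one would verify it for graph-based categories --- reorder $|S|$ so the fibers of $|f|$ appear in the order of their images, then realize that permutation by a relabeling quasibijection --- is consistent with how the verification actually goes in the cited source, and your observation that the freedom of ordering within each fiber accounts for the non-uniqueness of the factorization is correct.
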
 

\begin{Rigid} 
\label{Koleno se pomalu lepsi.}
An operadic category $\tt O$  is
{\em rigid\/} if for each $\phi \in \DO$ the only \im\ $\sigma$ that makes
\begin{equation}
\label{Skrabe_mne_v_krku.}
\xymatrix@C=3.5em{S \ar[d]_{\phi} \ar@{=}[r]  & S \ar[d]^{\phi}
\\
T \ar[r]^\sigma_\cong &T
}
\end{equation}
commutative is the identity $\id_T : T \to T$.
\end{Rigid}

The last axiom recalled here involves a {\em grading\/}, 
defined as a map $e :
{\rm Objects}(\ttO) \to \bbN$ with the property that\label{Za
  14 dni prvni davka.}
\[
e(T) + e(F_1) + \cdots + e(F_k) = e(S)
\]
for each $f : S \to T$ with fibers $\Rada F1k$. In this situation,
the {\em grade\/} $e(f)$ of $f$ is the difference $e(f) := e(S)- e(T)$.

\begin{SG}
\label{Zadne viditelne kozni projevy nemam.}
A graded operadic category $\ttO$ is {\em strictly graded\/}
if a morphism $f \in \ttO$ is an isomorphism if and only if $e(f) =
0$.
\end{SG}

\label{Jak bude o vikendu jeste nevim.}
Finally, all operadic categories are assumed to be 
{\em constant-free\/}, which by~\cite[Definition~2.18]{part1} 
requires that  $|f| : |T| \to |S|$ is
surjective for each morphism $f : T \to S$.
With $\Fin$ the operadic category of finite
ordinals  $\bar{n} =\{1,\ldots,n\}$,
$n\in \bbN$, and their set-theoretic maps,
this means that
the cardinality functor $|\hbox{-}|:\ttO\to \Fin$  factorizes through
the operadic category $\Surj$  of nonempty finite sets and
their surjections.

The above axioms will be imposed at many places of the
paper including the rest of this section. 
The operadic category $\ttO$ will therefore
fulfill the following assumptions:

\begin{assu}
\label{A}
The operadic category $\ttO$ is a strictly graded and
factorizable, all
\qb{s} are invertible, the blow-up axiom and unique fiber condition are
fulfilled, and a
morphism $f$ is an isomorphism if and only if $e(f) =
0$.
In brief, we require
\[
\hbox{\rm \Fac\ \& \BU\ \& \QBI\ \& \UFB\ \& \SGrad.}
\]
\end{assu}

\subsection{Markl operads}
\label{Jsem nachlazen?}

Composition laws of Markl operads are labeled by morphisms which are
elementary in the sense of the following definition.

\begin{definition}
\label{plysacci_postacci}
A morphism $\phi 
: T \to S \in \DO$ in a graded operadic category $\ttO$ 
is {\em elementary\/} if all its fibers
are trivial (= chosen local terminal) 
except precisely~one whose grade is $\geq 1$.
\end{definition}

When $i \in |S|$ is the unique element with nontrivial fiber $\inv{\phi}(i)$, we will
sometimes write $\phi$ as the pair $(\phi,i)$ and say that $\phi$ is
{\em $i$-elementary\/}. If we want to name
the unique nontrivial fiber $F := \inv\phi( i)$\label{Jsme treti na ``Potkali se u Kolina.''} 
explicitly, we will write $F \fib_i T
\stackrel\phi\to S$, or  $F \fib T
\stackrel\phi\to S$ when the concrete $i \in |S|$ is not important.

\begin{definition}
\label{d3}
Let $T\stackrel{(\phi,j)}{\longrightarrow} S
\stackrel{(\psi,i)}{\longrightarrow} P$ be 
elementary morphisms. If  $|\psi|(j) = i$
we say that 
the fibers of $\phi$ and $\psi$ are {\em joint\/}.
If
 $|\psi|(j)\ne i$ we say that 
$\phi$ and $\psi$ have {\em disjoint fibers\/} or,  more
specifically, that the fibers of $\phi$ and $\psi$ are {\em
  $(i,j)$-disjoint}. Denoting $k:= |\psi|(i)$, we call
$(\psi,\phi)$ a {\em $(k,i)$-pair}.
\end{definition}

An example of a configuration with disjoint fibers is portrayed in the picture
following Definition~5.4 of~\cite{part1}. We need also to recall
from~\cite{part1} the following lemma and its corollary. 

\begin{lemma}[Lemma~5.5 of~\cite{part1}]
\label{l7} 
If the fibers of elementary morphisms $\phi$ and $\psi$ in
Definition~\ref{d3} are joint, then the
composite $\xi = \psi \circ \phi$ is elementary  as well,  with
nontrivial fiber over $i$, and the induced morphism
$\phi_i:\xi^{-1}(i)\to \psi^{-1}(i) $ is elementary with the
nontrivial fiber over $j$ equal to $\phi^{-1}(j)$.  For $l\ne i$ the
morphism $\phi_l$ equals the identity $U_c\to U_c$ of trivial objects.

If the fibers of $\phi$ and $\psi$ are { $(i,j)$-disjoint} then the morphism
$\xi = \psi \circ \phi$ has exactly two nontrivial fibers and these are
fibers over $i$ and $k: =|\psi|(j)$. Moreover, there is a canonical
induced quasibijection
\begin{subequations}
\begin{equation}
\label{harmonika}
\phi_i:\xi^{-1}(i)\to \psi^{-1}(i) \in \DO
\end{equation}
and the equality
\begin{equation}
\label{pisu_opet_v_Sydney}
\xi^{-1}(k) = \phi^{-1}(j).
\end{equation}
\end{subequations}
\end{lemma}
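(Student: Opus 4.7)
The entire statement is a systematic bookkeeping of fibers of $\xi = \psi \kompozice \phi$, using the standard fact about composites in an operadic category that, for each $l \in |P|$, there is an induced morphism $\phi_l : \xi^{-1}(l) \to \psi^{-1}(l)$ whose fibers over the elements $m \in |\psi^{-1}(l)|$ equal $\phi^{-1}(m)$; this is the general form of the construction used in Lemma~\ref{l3}. I would analyse $l \in |P|$ case by case.

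Consider first those $l$ at which $\psi^{-1}(l)$ is the chosen local terminal $U_c$ --- that is, $l \ne i$ in the joint case, and $l \notin \{i,k\}$ in the disjoint case. The map $\phi_l$ then has a single fiber, namely $\phi^{-1}(m)$ for the unique $m \in |\psi|^{-1}(l)$; since $m \ne j$ in either setting, this fiber is trivial, so $\phi_l$ is a quasibijection with target $U_c$. Axiom~(iii) of operadic categories identifies the source of $\phi_l$ with its unique fiber, and \QBI\ together with the uniqueness of chosen local terminals of fixed colour then force $\xi^{-1}(l) = U_c$ and $\phi_l = \id_{U_c}$.

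In the joint case only $l = i$ remains: the fibers of $\phi_i$ are $\{\phi^{-1}(m) : m \in |\psi|^{-1}(i)\}$, and since $|\psi|(j) = i$ the index $j$ lies in this set, with $\phi^{-1}(j)$ the only non-trivial entry. Hence $\phi_i$ is elementary with unique non-trivial fiber $\phi^{-1}(j)$ over $j$, and combined with the previous step $\xi$ is elementary with sole non-trivial fiber $\xi^{-1}(i)$. In the disjoint case two indices remain: for $l = i$ one has $j \notin |\psi|^{-1}(i)$, so every $\phi^{-1}(m)$ with $m \in |\psi|^{-1}(i)$ is trivial and $\phi_i$ is a quasibijection, lying in $\DO$ since both $\phi$ and $\psi$ do; for $l = k := |\psi|(j)$ the target $\psi^{-1}(k)$ is trivial and the single fiber of $\phi_k$ is the non-trivial $\phi^{-1}(j)$, so Axiom~(iii) identifies $\xi^{-1}(k) = \phi^{-1}(j)$.

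The subtle point is the first case: \QBI\ alone only yields that $\phi_l$ is an \emph{isomorphism}, whereas the lemma claims the equality $\phi_l = \id_{U_c}$. Upgrading the isomorphism to a literal identity requires Axiom~(iii), which forces the source to coincide on the nose with the unique fiber, together with the uniqueness of chosen local terminals of a given colour; this is what precisely pins down the set of non-trivial fibers of $\xi$ and yields the statement as written.
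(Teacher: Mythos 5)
Your proof is correct and follows essentially the same route as the paper's: Axiom~(iv) to identify the fibers of the induced maps $\phi_l$ with fibers of $\phi$, Axiom~(iii) to identify $\xi^{-1}(l)$ with the unique fiber of $\phi_l$ when $\psi^{-1}(l)$ is trivial, and then a case analysis over $l\in|P|$. One small remark: the appeal to \QBI\ is superfluous (and the paper avoids it, proving the lemma from the operadic-category axioms and the grading alone) — the equality $\xi^{-1}(l)=U_c$ is a literal consequence of Axioms~(iii) and~(iv), and $\phi_l=\id_{U_c}$ then follows just from the terminality of $U_c$, since there is only one morphism into a local terminal object.
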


\begin{definition}
\label{har}
We will call the pair 
$T\stackrel{(\phi,j)}{\longrightarrow} S
\stackrel{(\psi,i)}{\longrightarrow} P$ of morphisms
in Definition~\ref{d3}  with disjoint fibers 
{\em harmonic\/}  if $\inv{\xi}(i) = \inv \psi
(i)$ and the map $\phi_i$ in~(\ref{harmonika}) is the identity.
\end{definition}

\begin{corollary}[Corollary~5.8 of~\cite{part1}]
\label{move}
Assume that
\begin{equation}
\label{eq:3}
\xymatrix@R = 1em@C=4em{& {P'} \ar[dr]^{(\psi',\, i)} &
\\
T\ar[dr]^{(\phi'',\, l)} \ar[ur]^{(\phi',\, j)} && S
\\ 
&{P''}\ar[ur]^{(\psi'',\, k)}&
}
\end{equation}
is a commutative diagram of elementary morphisms. 
Assume that $|\psi''|(l) = i$, $|\psi'|(j) = k$ and $i \not= k$.
Let $F', F'',G',G''$ be the
only nontrivial fibers of $\phi',\phi'',\psi',\psi''$, respectively.
Then one has canonical quasibijections
\begin{equation}
\label{za_tyden_poletim_do_Prahy}
\sigma': F' \longrightarrow G'' \ 
\mbox { and }\ \sigma'': F'' \longrightarrow G'.  
\end{equation}
If both pairs in~(\ref{eq:3}) are harmonic, then $F' = G''$, $F'' =
G'$ and $\sigma',\sigma''$ are the identities.
\end{corollary}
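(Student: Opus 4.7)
The plan is to run Lemma~\ref{l7} twice, once along each of the two paths in~(\ref{eq:3}), and read the quasibijections off the resulting derived maps. Write $\xi := \psi' \kompozice \phi' = \psi'' \kompozice \phi''$ for the common composite $T \to S$. The hypotheses $|\psi'|(j) = k$ and $|\psi''|(l) = i$, combined with the fact that the elementary morphisms $\psi'$ and $\psi''$ carry their unique nontrivial fibers over $i$ and $k$ respectively, place us in the disjoint-fibers case of Lemma~\ref{l7} for both pairs; note that harmonicity in the second half of the statement is only defined for disjoint pairs, so this is the intended situation.

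Applying Lemma~\ref{l7} to the top composite $T \stackrel{(\phi',j)}{\longrightarrow} P' \stackrel{(\psi',i)}{\longrightarrow} S$ shows that $\xi$ has precisely two nontrivial fibers, over $i$ and $k$, together with the identification $\inv{\xi}(k) = \inv{\phi'}(j) = F'$ coming from~(\ref{pisu_opet_v_Sydney}) and a canonical quasibijection $\phi'_i : \inv{\xi}(i) \to \inv{\psi'}(i) = G'$ coming from~(\ref{harmonika}). Applying the same lemma to the bottom composite $T \stackrel{(\phi'',l)}{\longrightarrow} P'' \stackrel{(\psi'',k)}{\longrightarrow} S$, with the roles of $i$ and $k$ interchanged, yields $\inv{\xi}(i) = \inv{\phi''}(l) = F''$ and a canonical quasibijection $\phi''_k : \inv{\xi}(k) \to \inv{\psi''}(k) = G''$. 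Setting $\sigma' := \phi''_k$ and $\sigma'' := \phi'_i$ and chaining the identifications $\inv{\xi}(k) = F'$ and $\inv{\xi}(i) = F''$ produces canonical quasibijections $\sigma' : F' \to G''$ and $\sigma'' : F'' \to G'$, as required.

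For the addendum, harmonicity of the pair $(\phi',\psi')$ in the sense of Definition~\ref{har} states that $\inv{\xi}(i) = \inv{\psi'}(i)$ and $\phi'_i = \id$; translated through the identifications $\inv{\xi}(i) = F''$ and $\inv{\psi'}(i) = G'$ this becomes $F'' = G'$ together with $\sigma'' = \id$. Symmetrically, harmonicity of $(\phi'',\psi'')$ gives $F' = G''$ and $\sigma' = \id$, finishing the proof.

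The argument is essentially bookkeeping once Lemma~\ref{l7} is in hand; the main obstacle is keeping straight which of the indices $i, k$ plays the role of the distinguished fiber index for each elementary $\psi$, so that the identifications $\inv{\xi}(k) = F'$ (extracted from the $\psi'$-application) and $\inv{\xi}(i) = F''$ (extracted from the $\psi''$-application) end up paired with the correct induced quasibijection and hence produce $\sigma'$ and $\sigma''$ with the sources and targets prescribed by the statement.
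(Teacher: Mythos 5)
Your proof is correct and follows essentially the same route as the paper's: both introduce the common composite $\xi$, invoke Lemma~\ref{l7} on each of the two factorizations to obtain the identifications $F' = \inv{\xi}(k)$, $F'' = \inv{\xi}(i)$ and the induced quasibijections $\phi''_k$, $\phi'_i$, and then set $\sigma' := \phi''_k$, $\sigma'' := \phi'_i$, with the harmonic case read off directly from Definition~\ref{har}. The index bookkeeping you highlight is exactly where the paper's care lies as well, so no further comment is needed.
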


In the following definition of a Markl operad,  $\ttV$ is a 
strict
symmetric monoidal category  with 
a strict monoidal unit $\bfk$ and symmetry $\tau$, and $\Iso \subset
\ttO$ the subcategory of \label{Co  zitra zjisti?} isomorphisms. 
Axiom~(ii) of that definition is given in the simplified form assuming the
$\BU$ axiom that guarantees, by \cite[Corollary~5.7]{part1}, that all pairs of
elementary morphisms with disjoint fibers are harmonic. A general form can be
found in~\cite{part1}.

\begin{definition}
\label{markl} 
A {\em Markl $\ttO$-operad\/} in $\ttV$
is a presheaf $\Markl: \Iso^{\rm op} \to \ttV$ with values in $\ttV$ equipped, for each
elementary morphism $F\fib T \stackrel\phi\to S$,
with a ``circle product''
\begin{equation}
\label{ten_prelet_jsem_podelal}
\circ_{\phi}: \Markl(S)\otimes \Markl(F)\to \Markl(T).
\end{equation}
These operations must satisfy the following set of axioms.

\begin{itemize}
\item[(i)]
 Let
$T\stackrel{(\phi,j)}{\longrightarrow} S
  \stackrel{(\psi,i)}{\longrightarrow} P$
  be elementary morphisms such that $|\psi|(j) = i$ and let
  $\xi: T \to P$ be the composite $\psi \kompozice \phi$.  Then
the  diagram
\begin{equation}
\label{vymena}
\xymatrix@R = 1em{& {         \Markl(P)\otimes \Markl(\xi^{-1}(i))} 
\ar[dr]^(.65){\circ_\xi} &
\\
 \Markl(P)\otimes \Markl(\psi^{-1}(i))\otimes \Markl(\phi^{-1}(j)) 
\ar[dr]_{\circ_\psi\ot \id} \ar[ur]^{\id \ot \circ_{\phi_i}} &&  \Markl(T)
\\ 
&{ \Markl(S)\otimes \Markl(\phi^{-1}(j))}\ar[ur]_(.65){\circ_\phi}&
}
\end{equation}
commutes.

\item[(ii)]  
Consider the diagram 
\begin{equation}
\label{den_pred_Silvestrem_jsem_nachlazeny}
\xymatrix@R = 1em@C=4em{& {P'} \ar[dr]^{(\psi',\, i)} &
\\
T\ar[dr]^{(\phi'',\, l)} \ar[ur]^{(\phi',\, j)} && S
\\ 
&{P''}\ar[ur]^{(\psi'',\, k)}&
}
\end{equation}
of elementary morphisms with disjoint fibers as in
Corollary~\ref{move}. By the harmonicity implied by $\BU$,
the morphisms $\sigma'$, $\sigma''$
in~\eqref{za_tyden_poletim_do_Prahy} 
are the 
identities; denote $F:=F' = G''$ and $G := G' = F''$. With this
notation, the diagram
\begin{equation}
\label{Napadne jeste tento rok snih?}
\xymatrix@R = 1.5em@C=4em{
\Markl(S) \ot \Markl(G) \ot \Markl(F)\ar[r]^(.55){\circ_{\psi'} \ot \id} &
 \Markl(P') \ot \Markl(F)\ar[d]^(.5){\circ_{\phi'}}
\\
& \Markl(T)
\\
\ar[uu]^{\id \ot \tau}
\Markl(S) \ot \Markl(F) \ot \Markl(G)\ar[r]^(.55){\circ_{\psi''}  \ot \id} & 
\ \Markl(P'') \ot \Markl(G) \ar[u]_(.5){\circ_{\phi''}}
}
\end{equation}
commutes.

\item[(iii)]
For every commutative diagram
\[
\xymatrix@C=4em@R=1.4em{T'\ar[r]_\cong^\omega \ar[d]^{\phi'}
& T'' \ar[d]^{\phi''}
\\
S'\ar[r]^\sigma_\sim & S''
}
\] 
where $\omega$ is an isomorphism, $\sigma$ a \qb, 
and
$F' \fib_i T' \stackrel{\phi'}\to S'$\!, $F'' \fib_j 
T'' \stackrel{\phi''}\to S''$\!,
the~diagram
\begin{equation}
\label{posledni_nedele_v_Sydney}
\xymatrix@C=4em{\ar[d]_{\omega_{(i,j)}^* \ot \sigma^*}^\cong
\Markl(F'') \ot \Markl(S'') 
\ar[r]^(.65){\circ_{\phi''}}& \Markl(T'') \ar[d]^{\omega^*}_\cong
\\
\Markl(F') \ot
\Markl(S')\ar[r]_(.65){\circ_{\phi'}}&  \Markl(T')
}
\end{equation}
in which $\omega_{(i,j)} : F' \to F''$ is the
induced map~(\ref{zitra_na_prohlidku_k_doktoru_Reichovi}) of fibers, commutes.
\end{itemize}
Markl operad $\Markl$ is {\em  unital\/} if one is given, for
each\label{Habaneros} 
trivial (i.e.\ chosen local terminal) object~$U$ of $\ttO$, 
a~map $\eta_U
:\bfk \to \Markl(U)$  such that the diagram
\begin{equation}
\label{Holter_se_blizi.}
\xymatrix{\Markl(U) \ot \Markl(T) \ar[r]^(.6){\circ_!} & \Markl(T)
\\
\ar[u]^{\eta_U \ot \id}
\bfk \ot \Markl(T)  \ar@{=}[r]^(.58)\cong & \Markl(T) \ar@{=}[u]
}
\end{equation}
commutes whenever $T$ is such that $e(T) \geq 1$ and 
$T \fib T \stackrel!\to U$ is the unique map.
\end{definition}

\label{Jsem znepokojen.}
Let $\LT$ be the operadic subcategory of $\ttO$ consisting of its local
terminal objects. Denote by $\term_\Term : \LT \to \ttV$ the constant
functor. The collection $\{\eta_U :\bfk \to \Markl(U)\}$ of unit maps
extends  uniquely into a transformation
\begin{equation}
\label{2x2}
\eta :   \term_\Term \to \iota^* \Markl
\end{equation}
from  $\term_\Term$ to the
restriction of $\Markl$ along the inclusion
$\iota : \LT \hookrightarrow \ttO$.
Transformation~(\ref{2x2}) amounts to a family
of maps $\eta_u : \bfk \to \Markl(u)$
given for each local terminal $u \in
\ttO$, such that the diagram
\begin{equation}
\label{Je_vedro.}
\xymatrix@R=1.2em{\Markl(u)  \ar[r]^(.5){!^*} & \Markl(v)
\\
\bfk  \ar@{=}[r] \ar[u]^{\eta_u }   &
  \bfk
\ar[u]_{\eta_v}
}
\end{equation}
commutes for each (unique) map $! : v \to u$ of local terminal
objects. We will call the components   $\eta_u : \bfk \to
\Markl(u)$ of the transformation~\eqref{2x2} the {\em extended
  units\/}.\label{Mam zdravotni zpusobilost na dalsi dva roky.}

For each $T$ with $e(T) \geq 1$ and $F \fib T \stackrel!\to u$ with
$u$ a local terminal object, 
one has a map $\vartheta(T,u) :  \Markl(F) \to  \Markl(T)$ defined by the diagram
\begin{equation}
\label{proc_ty_lidi_musej_porad_hlucet}
\xymatrix@R=1em{\Markl(u) \ot \Markl(F) \ar[r]^(.6){\circ_!} & \Markl(T)
\\
  &
\\
\ar[uu]^{\eta_u \ot \id}
\bfk \ot \Markl(F) \ar@{=}[r]^\cong &\ \Markl(F)\,. \ar[uu]_{\vartheta(T,u)}
}
\end{equation}   
The unitality  offers
a generalization of Axiom~(iii) of Markl operads
which postulates for each commutative diagram
\begin{equation}
\label{moc_se_mi_na_prochazku_nechce}
\xymatrix@C=4em{T'\ar[r]^\omega_\cong \ar[d]^{\phi'}\ar[rd]^{\phi} 
& T'' \ar[d]^{\phi''}
\\
S'\ar[r]^\sigma_\cong & S''
}
\end{equation} 
where the horizontal maps are isomorphisms and 
the vertical
maps are elementary, with
$F' \fib_i T' \stackrel{\phi'}\to S'$, $F'' \fib_j 
T'' \stackrel{\phi''}\to S''$, the commutativity of 
the diagram
\begin{equation}
\label{X}
\xymatrix@C=4em
{\Markl(F) \ot \Markl(S'')&\Markl(F'') \ot \Markl(S'') \ar[l]_{\omega^*_j \ot \id}^\cong
\ar[r]^(.65){\circ_{\phi''}}& \Markl(T'') \ar[d]^{\omega^*}_\cong
\\
\ar[u]^{\vartheta(F,\inv{\sigma}(j)) \ot \id}
\Markl(F') \ot \Markl(S'')\ar[r]^{\id \ot \sigma^*}_\cong &\Markl(F') \ot
\Markl(S')\ar[r]_(.65){\circ_{\phi'}}&  \Markl(T') \,,
}
\end{equation}
in which $F := \inv{\phi}(j)$ and $\omega_j : F \to F''$ is the
induced map of fibers. Notice that if $\sigma$ is a \qb,
(\ref{X}) implies~(\ref{posledni_nedele_v_Sydney}).

\begin{definition}
\label{svedeni}
A Markl operad $\Markl$ is {\em strictly unital\/} if all the maps
$\vartheta(T,u)$ in~(\ref{proc_ty_lidi_musej_porad_hlucet})
are identities. It is {\em $1$-connected\/} 
if~the unit maps $\eta_U : \bfk \to \Markl(U)$ are isomorphisms for each
trivial~$U$.
\end{definition}

If $\Markl$ is strictly unital, one has $\Markl(F) = \Markl(F')$
in~(\ref{X}), so this diagram assumes a
particularly simple form, namely
\begin{equation}
\label{Ve_ctvrtek_letim_do_Prahy.}
\xymatrix@C=4em{
\Markl(F'') \ot \Markl(S'') \ar[d]_{\omega^*_j \ot \sigma^*}^\cong
\ar[r]^(.65){\circ_{\phi''}}& \Markl(T'') \ar[d]^{\omega^*}_\cong
\\
\Markl(F') \ot
\Markl(S')\ar[r]^(.65){\circ_{\phi'}}&\,  \Markl(T')\,.
}
\end{equation}

\section{Markl operads and virtual isomorphisms}
\label{letam_205}

We introduce the category of virtual isomorphisms $\VR$ 
related to an operadic category $\ttO$, its extension $\VI$, and the
quotient $\QV$ of  $\VI$  modulo virtual isomorphisms. In the presence of a
grading $e$ on $\ttO$  we will further consider the subgroupoid $\VR(e)$
of objects of grade $\geq 1$, the extension $\VIe$ 
and the related quotient $\QV(e)$.
Presheaves on $\QV(e)$ will
then serve as the underlying collections for Markl operads.

The operadic category $\ttO$ will be required to fulfill \AA\ although
the grading (which need not even be strict) will be used only in the
second half of this section.  All definitions and results of this
section hold also for operadic categories which are not constant-free. 
As before we denote by $\LT$ the
groupoid of local terminal objects in $\ttO$ and by
$\Iso \subset \ttO$ the subcategory with the same objects as $\ttO$,
and morphisms the isomorphisms of $\ttO$.

Let $T\in \ttO$ and let $t\in \ttO$ be a local terminal object in 
the connected component of $T$. We  therefore have a unique morphism $T\to t$
with a unique fiber $F$, which will be expressed by the shorthand
\hbox{$F \Fib T\to t$}.\label{Jsem treti na ``Potkali se u Kolina.''} 
In this situation we write
$F\virt{} T$ or $F \virt t T$ if $t$ needs to be specified, and speak about
a {\it virtual   morphism} from $F$ to $T$. Notice that $F\virt{} T$ does
not represent an ``actual'' morphism $F \to T$; in fact there may not
be any morphisms from $F$ to $T$ in $\ttO$, cf.~Example~\ref{Uz mam
  vozik v Koline.} below.\label{Za chvili sraz s Denisem.} Since we
assume $\BU$ and $\UFB$,
there exists at most one virtual morphism between two given objects of
$\ttO$ by~\cite[Lemma~2.15]{part1}. 
In fact, ``$\virt{}$'' interpreted as a relation on $\ttO$ is a preorder.

The notation $F \Fib T\to t$ shall not be confused with $F \fib T\to
t$ used before. The map $T \to t$  need not be an elementary morphism even when
$\ttO$ is graded, since we do not demand $e(T) \geq 1$ or any analog of this.
The following statement shows that one may define a~composition rule for virtual
morphisms so that they form a groupoid.

\begin{lemma} 
\label{Za_chvili_pojedu_na_schuzi_klubu.}
Virtual morphisms in the operadic category $\ttO$ form a groupoid
$\VR$. 
\end{lemma}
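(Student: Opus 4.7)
My plan is to equip virtual morphisms with identities, associative composition, and an inverse for each morphism, using the axioms \Fac, \SBU, \QBI, \UFB\ in force throughout this section together with axioms (iii) (fiber of $T \to U_c$ is $T$) and (iv) (iterated fibers commute) of operadic categories.

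Identities come directly from axiom (iii): for each $T \in \ttO$ in its connected component $c$, the virtual morphism $\id_T := (T\virt{U_c} T)$ is given by the fact that the unique fiber of $T\to U_c$ is $T$ itself. For the composition of $F\virt{s} G$ and $G\virt{t} T$, I apply the weak blow-up axiom \WBU\ to the morphism $f' := (T\to t)$, which lies in $\DO$ since $|t|=1$, together with the prescribed fiber-level map $\pi_1 := (G \to s)$. This yields a unique factorization $T\stackrel{\omega}{\to} S'' \stackrel{f''}{\to} t$ with $f''\in\DO$ and with the map induced by $\omega$ on the sole fiber equal to $G\to s$. By axiom (iv), the unique fiber of $\omega$ is then $F$ and that of $f''$ is $s$; in particular $|S''|=|s|=1$, and a short argument combining \UFB, \QBI\ and \Fac\ (factoring $S''\to U_c$ via \Fac\ as $\phi\kompozice\sigma$ with $\sigma\in\QO$ invertible by \QBI\ and $\phi\in\DO$) upgrades this to the assertion that $S''$ is itself a local terminal in the component of $T$. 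The composite is then $F\virt{S''} T$.

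Associativity follows from the uniqueness clause in \WBU\ combined with axiom (iv): the two bracketings of a triple composition produce the same factorization data and hence the same target object. For inverses, given $F\virt{t} G$ (so $G\to t$ has fiber $F$), I construct an inverse $G\virt{t'} F$ by producing a local terminal $t'$ in the component of $F$ with $F\to t'$ having fiber $G$. The strategy is to factor $G\to t$ using \Fac\ as $G\stackrel{\sigma}{\to} H\stackrel{\phi}{\to} t$ with $\sigma\in\QO$ (invertible by \QBI) and $\phi\in\DO$, and then to apply the strong blow-up axiom \SBU\ to a transposed configuration in order to extract from $F$ a morphism exhibiting $G$ as a unique fiber; the resulting target is the desired $t'$.

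The main obstacle is twofold. First, in the composition step, upgrading the equality $|S''|=1$ to the assertion that $S''$ is genuinely local terminal is the key subtlety: \UFB\ characterizes chosen local terminals by a specific fiber condition rather than asserting directly that $|X|=1$ implies local-terminality, so the argument must route through a \Fac-decomposition of $S''\to U_c$ and invoke \QBI\ to recognize $S''$ inside its component. Second, the construction of inverses is the deepest part: producing a new local terminal in the component of $F$ and reversing the fiber relation requires a genuine transposed application of \SBU\ and \Fac, going beyond formal manipulation of axioms (iii) and (iv). Once both of these hurdles are cleared, the remaining verifications (unitality and associativity) follow from uniqueness in the blow-up axioms.
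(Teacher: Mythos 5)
Your identities and your composition coincide with the paper's: the identity on $T$ is the virtual morphism attached to $T\Fib T\to U$, and the composite of $F\virt{s}G$ and $G\virt{t}T$ is obtained, exactly as you say, by applying the blow-up axiom to $T\to t$ with prescribed fiber map $G\to s$ and then invoking Axiom~(iv) to identify the unique fiber of the resulting $T\to S''$ with $F$. Your worry that one must still check that $S''$ is local terminal is legitimate (the paper asserts this silently), though your sketch via \Fac\ and \UFB\ is too vague to evaluate. For associativity the paper takes a shorter route than the one you propose: both bracketings produce virtual morphisms $S\Fib Q\to q'$ and $S\Fib Q\to q''$ with the \emph{same} fiber, and a uniqueness lemma from the prequel (a morphism from $Q$ to a local terminal object is determined by its fiber) gives $q'=q''$ at once; matching the two iterated blow-up factorizations directly, as you suggest, would need a diagram chase you have not supplied.

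The genuine gap is the construction of inverses. Factoring $G\to t$ by \Fac\ into a quasibijection followed by a morphism of $\DO$ gains nothing, since the quasibijection is invertible by \QBI\ and carries no information; and \SBU\ cannot be ``transposed'': it only ever produces a new morphism out of the \emph{total} object of a corner with prescribed fibers, never a morphism out of a prescribed \emph{fiber} such as $F$, which is what an inverse $G\virt{}F$ requires. The idea you are missing is elementary and uses no blow-up axiom at all. Given $S\Fib T\stackrel{\phi}{\to}t$, factor $\phi$ through the \emph{chosen} local terminal object $U$ of the component of $T$ as $T\to U\stackrel{\delta}{\to}t$, where $\delta$ inverts the canonical isomorphism $t\to U$, and let $s$ be the unique fiber of $\delta$. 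The commuting triangle induces a map of fibers $S\to s$, and by Axiom~(iv) its fiber equals the fiber of $T\to U$, which by Axiom~(iii) is $T$ itself; hence $T\Fib S\to s$, and $T\virt{s}S$ is the candidate inverse. That both composites are identities then follows from the unique fiber axiom. It is precisely Axiom~(iii) --- the canonical map to the chosen local terminal object has fiber the whole object --- that reverses the fiber relation, and this is the step your proposal does not reach.
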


\begin{proof}
The lemma will follow from the following facts: each object of $\ttO$
possesses a~virtual endomorphism; virtual endomorphisms can be composed;
a virtual morphism $S \virt{} T$ exists if and only if there exist
a virtual morphism  $T \virt{} S$. 
Since there is at most one virtual morphism between two given objects of
$\ttO$ by~\cite[Lemma~2.15]{part1}, all properties of a groupoid 
follow automatically. 
 
The virtual  endomorphism of $T\in \ttO$ is 
$T \virt U T$, associated to the unique morphism
$T\to U$ to the chosen local terminal object $U$ in the connected
component of $T$.  
The composition of virtual morphisms is defined as follows.

Let $S\virt t T \virt r R$ be a chain of virtual morphisms. 
This means that $S$ is the fiber of the unique morphism 
$\phi:T\to t$, and $T$ is the fiber of $\psi:R\to r$,
i.e.\  $S \Fib T \stackrel\phi\to t$ and  $T \Fib R \stackrel\psi\to
r$, $t \in \LT$.      
By the weak blow-up axiom there exists a unique factorization of $\psi$ as in the
diagram
\begin{equation}
\label{Musim_Jarce_odvezt_kramy.}
\xymatrix@R=1em@C=1em{
R \ar[rr]^\xi \ar[rd]_\psi&&s \ar[ld]^\delta
\\
&r&
}
\end{equation}
such that $\xi_1$, the induced map between the unique fibers, equals $\phi$.
From Axiom~(iv) of an operadic category, 
one has $\xi^{-1}(1) = \xi_1^{-1}(1) = \phi^{-1}(1) =S,$ that is 
$S\Fib R \stackrel{\xi}{\to} s$.  We take the related 
virtual morphism $S\virt s R$ as the composite of $S \virt t T$ and $T
\virt r R$. 

Consider a virtual morphism  $S\virt t
T$ given by some  $S \Fib T\stackrel{\phi}{\to} t \in \ttO$. 
The morphism~$\phi$ has a unique
factorization $T\to U\stackrel{\delta}{\to} t$ through a chosen
local terminal object $U$. Let $s$ be the unique
fiber of $\delta$, i.e.\   $s\Fib
U\stackrel{\delta}{\to} t$. 
The diagram
\[
\xymatrix@R=1em@C=1em{
T \ar[rr] \ar[rd]_\phi&&U \ar[ld]^\delta
\\
&t&
}
\]
induces a morphism of fibers $S\to s$ whose 
fiber is $T$, giving rise to the required virtual morphism $T\virt {s}
S$ in the opposite direction.
\end{proof}

As we already noticed, 
if there exists a morphism $S\virt{} T$ in $\VR$ then it is
unique. Together with 
Lemma~\ref{Za_chvili_pojedu_na_schuzi_klubu.} this implies 
that $\VR$ is equivalent as a category to a discrete set.
The notation $T \virt{} S$ will  denote the unique isomorphism
from $T$ to $S$,
tacitly assuming its existence,  and $S \virt{} T$ its inverse.

As the next step towards our construction of $\QV$,  we extend $\VR$ to
a category $\VI$ which has the same objects as $\ttO$ but whose 
morphisms $T\to R$  are sequences
\[
S\stackrel{\phi}{\to} T\virt{} R
\]
where $\phi : S \to T \in \Iso$ is an isomorphism in $\ttO$. 
To define the composition, consider the~sequence
\[
S\stackrel{\phi}{\to} T\virt{} R\stackrel{\psi}{\to} Q\virt{} P.
\]
The virtual morphism  $T\virt{} R$ is related
to a morphism $T\Fib R\to r$ with a unique $r\in \LT$ and
the virtual morphism  $Q\virt{} P$ 
to $Q\Fib P\to p$ with $p\in \LT$,
The objects $R$ and $Q$ live in the same connected component, 
so  there is a unique $D\Fib Q\to r$.
We may therefore construct the diagram
\begin{equation}
\label{Motorove pily uz jedou.}
\xymatrix@R=0.1em@C=0em{
S \ar[rrrrrr]^\phi  &&&&&&T \ar[rrrrrr]^\xi&&&&&&D \ar@{=}[rr] &&D
\\
&&&&&&\raisebox{.7em}{\Mercin}&&&&&&\raisebox{.7em}{\Mercin}
&&\raisebox{.7em}{\Mercin}
\\
&&&&&&R  \ar@/_1.4em/[rrrrrrdddd]^{!} 
\ar[rrrrrr]^\psi   &&&&&&Q \ar[dddd]^{!}  &\Fib&P \ar[dddd]^\omega 
\ar@/^.9em/[rrrrrrdd]^{!} 
&&&&&&
\\
\\
&&&&&&&&&&&&&&&&\hbox{\scriptsize \WBU}&&&&p
\\
\\
&&&&&&&&&&&&r&\Fib&x \ar@/_.9em/[ruurrrrr]^{!} 
}
\end{equation}
in which $\omega$ is the unique map such that the
induced map between the fibers is $Q \stackrel!\to r$, and $\xi :=
\psi_1$ is the induced map between the fibers.
We then define the composite
\[
 (R\stackrel{\psi}{\to} Q\virt{}
 P) \kompozice (S\stackrel{\phi}{\to} T\virt{} R)
\] 
as the sequence
\begin{equation}
\label{v_patek_pak_Psenicka}
S\stackrel{\xi \kompozice\phi}{\longrightarrow}
D \virt{} P,
\end{equation}
with $D \virt{} P$ given by $D \Fib P \to x$.
The identity automorphism of $S$ is
$S\stackrel{\id}{\to} S\virt{} S$.

One can easily check that the above structure makes $\VI$ a category
which is, in fact, a  groupoid: 
for a morphism $\Phi: S\stackrel{\phi}{\to} T\virt{} R$
in $\VI$
take  the inverse
$R \virt{} T$ to $T \virt{} R$ 
and the inverse
$\psi:T \to S$ of~$\phi : S \to T \in \ttO$. Then the composite
\[
(T \stackrel \psi\to S \virt{} S)(R \stackrel {\id}\to R \virt{} T)
\]
in $\VI$ is the inverse to $\Phi$.
We leave the details to the reader.

\begin{remark}
The composite
$(T \stackrel\id \to T \virt{} R)(S \stackrel \phi\to T \virt{} T)$  
equals $S \stackrel \phi\to T \virt {}R$ as expected.
The diagram~(\ref{Motorove pily uz jedou.}) in this particular case becomes
\[
\xymatrix@R=0.1em@C=0em{
S \ar[rrrrrr]^\phi  &&&&&&T \ar[rrrrrr]^\xi&&&&&&T \ar@{=}[rr] &&T
\\
&&&&&&\raisebox{.7em}{\Mercin}&&&&&&\raisebox{.7em}{\Mercin}
&&\raisebox{.7em}{\Mercin}
\\
&&&&&&T  \ar@/_1.4em/[rrrrrrdddd]^{!}     \ar[rrrrrr]^\psi   &&&&&&T
\ar[dddd]^{!}  &\Fib&R  \ar[dddd]^{!} 
\ar@/^.9em/[rrrrrrdd]^{!}
&&&&&&
\\
\\
&&&&&&&&&&&&&&&&&&&&r
\\
\\
&&&&&&&&&&&&u&\Fib&x \ar@/_.9em/[ruurrrrr]^{!}
}
\]
where $u$ is the {\em chosen} local terminal object. Both $\xi$ and
$\phi$ are the identities by the axioms of operadic categories, while
$x=r$ because $u$ is chosen local terminal. The claim~follows.
One can similarly verify the identity
\[
(T \stackrel \psi\to R \virt{} R)(S \stackrel \id\to S \virt{} T) 
= (S \stackrel \xi\to D \virt{} T)
\]
with $S \stackrel\xi\to D$ given by the diagram
\[
\xymatrix@R=0.1em@C=0em{
S \ar[rrrrrr]^\xi&&&&&&D
\\
\raisebox{.7em}{\Mercin}&&&&&&\raisebox{.7em}{\Mercin}
\\
T  \ar@/_1em/[rrrdddd]     \ar[rrrrrr]^\psi   &&&&&&R
\ar@/^1em/[ddddlll]
&&&&&&
\\
\\
\\
\\
&&&r&&&
}
\]
We may express the result of the above calculation as the distributive law
\[
(T \stackrel \psi\to R)( S \virt{} T) \longmapsto (D \virt{} R)(S
\stackrel\xi\to D)
\]
between $\virt{}$ and $\stackrel \cong \to$~, cf.~\cite{RW}.
\end{remark}

We now consider the quotient $\QV$ of $\VI$ \label{Je streda.}
whose objects are classes of objects of $\VI$ with
respect to the  relation generated by virtual isomorphisms.  
That is, two objects are equivalent if there is a virtual isomorphism
between them. 
More precisely, $\QV$ is defined by the pushout
\begin{equation}
\label{Musim_napsat_doporuceni_pro_Jovanu.}
\xymatrix@C=3.5em@R=1.2em{\VR\ar[d]_{} \ar[r]  & \VI \ar[d]^{}
\\
\pi_0(\VR) \ar[r] &\QV
}
\end{equation}
in the category of groupoids. Since
the left vertical functor is an equivalence and the top horizontal
functor a cofibration of groupoids in the canonical model
structure on the category of groupoids~\cite{anderson}, 
the right vertical functor is an equivalence of~groupoids too. 

It is easy to see that 
morphisms between objects in $\QV$ are equivalence classes of 
non-virtual isomorphisms in the following sense. 
Let $\phi':T'\to S'$ and $\phi'':T''\to S''$ be two isomorphisms in $\ttO$.
They are 
equivalent if there exists a local terminal object $t$ such that
$\phi''$ is the induced fiber map 
in the diagram
\begin{equation}
\label{Dnes_jsem_se_koupal_v_Hradistku.}
\xymatrix@R=1em@C=1em{
T' \ar[rr]^{\phi'} \ar[rd]&&\ S' \ar[ld]
\\
&\phantom{.}t\,.&
}
\end{equation}

\begin{example}
\label{Budu_mit_nova_sluchatka.}
Assume that each connected component of $\ttO$ contains precisely one
terminal object, i.e.~all local terminal objects are the trivial (chosen)
ones. Then $\QV \cong \Iso$. This is the case of e.g.~the category $\Fin$ of
finite ordinals or of the operadic category $\Per$ in
Subsection~\ref{Michael_do_Prahy.}.
\end{example}

\begin{example} 
\label{Uz mam vozik v Koline.}
In the operadic category $\Bq(\frC)$ recalled in~\cite[Example~1.5]{part1}
two bouquets are
virtually equivalent if they differ only in the last color. Notice
that an ``actual'' morphism \hbox{$b' \to b''$} in $\Bq(\frC)$ between virtually
equivalent bouquets exists if and only if $b' = b''$.
The groupoid ${\tt Q}\Bq(\frC)_{\tt vrt}$ is
the groupoid of strings  $(i_1,\ldots,i_k)$, $k \geq 1$, 
with morphisms arbitrary~bijections.
\end{example}

\begin{example} 
\label{Snad_to_projde_i_formalne.}
Two graphs in the operadic category $\Gr$ of \cite[Definition~3.13]{part1}
are virtually equivalent if they  differ only in the global orders of their
leaves. Morphisms in ${\tt Q}\Gr_{\tt vrt}$ are 
isomorphisms of graphs which need not preserve the
global orders.  
\end{example}

Assume that $\ttO$ possesses a grading
$e : {\rm Objects}(\ttO) \to \bbN$. In
this case we denote by $\VR(e)\subset \VR$ the full subgroupoid  with
objects $T \in \ttO$ such that $e(T) \geq 1$. \label{Musim to dorvat.}
We construct $\VIe$ out
of $\VR(e)$ and $\Iso$  as before, and define 
its quotient $\QV(e)$ by replacing $\VR$
by $\VR(e)$ in~(\ref{Musim_napsat_doporuceni_pro_Jovanu.}).
In the following lemma, $\ttV$  denotes a 
strict symmetric 
monoidal category as in Definition~\ref{markl}.

\begin{lemma}
\label{zitra_Holter}
Each  unital Markl $\ttO$-operad $\Markl$ with values in $\ttV$ 
induces a covariant functor
$\VR(e) \to \ttV$, denoted~$\Markl$ again, which acts as $\Markl$ on objects, and
on virtual morphisms is defined by
\[
\Markl(F \virt{} T) := \vartheta(T,u),
\]
where $\vartheta(T,u)$ is as
in~(\ref{proc_ty_lidi_musej_porad_hlucet}). Since $\VR(e)$ is a groupoid, 
all maps
$\vartheta(T,u)$ are invertible. 
\end{lemma}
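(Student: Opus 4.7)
The plan is to verify in turn the three functoriality conditions for the formula $\Markl(F\virt{} T):=\vartheta(T,u)$, and then to harvest invertibility from the groupoid structure of $\VR(e)$. Well-definedness on morphisms is essentially free: by Lemma~\ref{zitra_volam_Machovi_do_Jaromere} there is at most one morphism between any two objects in $\VR(e)$, and such a virtual morphism $F\virt{} T$ is canonically presented as $F\virt u T$ for a uniquely determined local terminal $u\in\LT$, so the assignment is unambiguous. For the identity of $T$, which is the virtual morphism $T\virt{U} T$ with $U$ the chosen local terminal in $\pi_0(T)$, I would observe that here $F=T$ by Axiom~(iii) of operadic categories and that the map $!^*$ along $\id_U$ on top of diagram~(\ref{proc_ty_lidi_musej_porad_hlucet}) is the identity, so $\vartheta(T,U)$ collapses to the composite of the unitality diagram~(\ref{Holter_se_blizi.}) and therefore equals $\id_{\Markl(T)}$.

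The substantive step is the preservation of composition. Given $S\virt{t_1}T\virt{t_2}R$ presented by $S\Fib T\stackrel\phi\to t_1$ and $T\Fib R\stackrel\psi\to t_2$, the composite supplied by Lemma~\ref{Za_chvili_pojedu_na_schuzi_klubu.} is $S\virt s R$ built from the blow-up factorization $R\stackrel\xi\to s\stackrel\delta\to t_2$, in which $\xi$ is elementary with fiber $S$ and $\delta$ is a morphism between local terminal objects. Under \SGrad\ one has $e(\delta)=0$, so $\delta$ is an isomorphism, but in general not a \qb; this is precisely why the \qb-restricted axiom~(\ref{posledni_nedele_v_Sydney}) does not suffice and one must appeal to its generalization~(\ref{X}) postulated for unital Markl operads. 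Applying~(\ref{X}) to the commutative square with $\omega=\id_R$, $\sigma=\delta$, $\phi'=\xi$ and $\phi''=\psi$ produces the identity
\[
\circ_\psi\bigl(\vartheta(T,t_1)(x)\ot y\bigr)=\circ_\xi\bigl(x\ot\delta^*(y)\bigr),\qquad x\in\Markl(S),\ y\in\Markl(t_2).
\]
Specializing $y:=\eta_{t_2}$ and invoking the $\LT$-naturality of the extended unit~(\ref{Je_vedro.}) in the form $\delta^*\eta_{t_2}=\eta_s$ converts this equality into $\vartheta(R,t_2)\circ\vartheta(T,t_1)=\vartheta(R,s)$, exactly the required functoriality.

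Once functoriality is in hand, invertibility of each $\vartheta(T,u)$ is automatic: $\VR(e)$ is a groupoid by Lemma~\ref{Za_chvili_pojedu_na_schuzi_klubu.}, so every virtual morphism has an inverse there, and a functor must send inverses to inverses. The main obstacle I anticipate is pinning down the correct instance of the generalized axiom~(\ref{X}); one has to have recognized that in the composition step $\delta$ must be allowed to be an arbitrary isomorphism between local terminal objects (and not merely a \qb), in order to cover an arbitrary choice of local terminal $u$ along the chain of virtual morphisms.
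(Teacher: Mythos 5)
Your proposal is correct and follows essentially the same route as the paper: the identity case reduces to the unitality diagram~(\ref{Holter_se_blizi.}), and the composition case applies the generalized axiom~(\ref{X}) to the square $\omega=\id_R$, $\sigma=\delta$, $\phi'=\xi$, $\phi''=\psi$ coming from the blow-up factorization, then specializes along the extended unit using~(\ref{Je_vedro.}) — which is exactly the content of the paper's diagram~(\ref{dopocitat_si_zapisnik}). Your explicit remark that $\delta$ need not be a \qb\ (so that~(\ref{posledni_nedele_v_Sydney}) would not suffice) is a correct and worthwhile clarification of a point the paper leaves implicit.
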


\begin{proof}
It follows from the unitality~(\ref{Holter_se_blizi.}) of $\Markl$ 
that $\Markl(T \virt{} T) = \id_{\Markl(T)}$. Let us verify the functoriality
\begin{equation}
\label{pozitri_Holter}
\Markl(S \virt{} R) = \Markl(T \virt{} R) \kompozice \Markl(S \virt{} T).
\end{equation}
To this end we consider the commutative diagram
\begin{equation}
\label{dopocitat_si_zapisnik}
\xymatrix{
&\Markl(T) \ot \Markl(r) \ar[r]^(.6){\circ_!} & \Markl(R)
\\
&\Markl(S) \ot \Markl(r) \ar[u]^{\vartheta(T,r) \ot \id} 
\ar[r]^{\id \ot \delta^*}   & \ \Markl(S) \ot \Markl(r')\,.\ar[u]^{\circ_!}
\\
\Markl(S)\ar[r]^(.4)\cong   & 
\ar[u]^{\id \ot \eta_r} \ar[ur]_{\id \ot \eta_{r'}}
\Markl(S) \ot \bfk
}
\end{equation}
Its upper square is~(\ref{X}) applied to the
diagram
\[
\xymatrix@C=3.5em@R=1.5em{
R  \ar@{=}[r] \ar[d]_\xi  \ar[dr]^\psi & R  \ar[d]^\psi
\\
r' \ar[r]^\delta & r
}
\]
in place of~(\ref{moc_se_mi_na_prochazku_nechce}), in which the
symbols have the same meaning as in~(\ref{Musim_Jarce_odvezt_kramy.}).
The commutativity of the bottom triangle follows from the
commutativity of~(\ref{Je_vedro.}).
It follows from the definition of the maps $\vartheta(T,u)$ that the
composite
\[
\xymatrix@1{\Markl(S) \cong \Markl(S) \ot \bfk \ \ar[r]^(.55){\id \ot \eta_{r'}}
&\ \Markl(S) \ot \Markl(r') \
\ar[r]^(.63){\circ_!} & \ \Markl(R)
}
\]
in~(\ref{dopocitat_si_zapisnik}) equals the left-hand side
of~(\ref{pozitri_Holter}), while the composite
\[
\xymatrix@1{
\Markl(S) \cong \Markl(S) \ot \bfk \
\ar[r]^(.55){\id \ot \eta_{r}} & \ \Markl(S) \ot \Markl(r) \
\ar[rr]^{\vartheta(T,r) \ot \id} &&
\ \Markl(T) \ot \Markl(r) \ \ar[r]^(.63){\circ_!} & \ \Markl(R)
}
\]
equals the right-hand side of~(\ref{pozitri_Holter}).
\end{proof}

\begin{proposition}
\label{dnes_schuze_klubu}
The $\Iso$-presheaf structure of a Markl $\ttO$-operad $\Markl$ in $\ttV$ combined with
the functor $\Markl: \VR(e) \to \ttV$ of Lemma~\ref{zitra_Holter} makes $\Markl$ an
$\VIe$-presheaf via the formula
\begin{equation}
\label{Pujdu_s_Jarkou_na_demonstraci?}
\Markl\big(S\stackrel{\phi}{\to} T\virt{} R\big) 
:=  \phi^* \kompozice \Markl(R \virt{} T).
\end{equation}
\end{proposition}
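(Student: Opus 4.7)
The task is to verify that~(\ref{Pujdu_s_Jarkou_na_demonstraci?}) defines a contravariant functor $\Markl:\VIe\to\ttV$, i.e.\ that it sends identities to identities and respects composition (reversing order). The identity check is immediate: the identity of $S$ in $\VIe$ is $S\stackrel{\id_S}{\to} S\virt{} S$, and~(\ref{Pujdu_s_Jarkou_na_demonstraci?}) returns $\id_S^*\kompozice \Markl(S\virt{} S)$; the first factor is the identity because $\Markl$ is an $\Iso$-presheaf, and the second is the identity because, by the unitality diagram~(\ref{Holter_se_blizi.}), the map $\vartheta(S,U)$ attached to the chosen local terminal $U$ is the identity.

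For composition, take composable $\VIe$-morphisms $\Phi_1 = (S\stackrel{\phi}{\to} T\virt{} R)$ and $\Phi_2=(R\stackrel{\psi}{\to} Q\virt{} P)$ with $T\Fib R\to r$ and $D\Fib Q\to r$. By~(\ref{v_patek_pak_Psenicka}), the composite $\Phi_2\Phi_1$ is $S\stackrel{\xi\kompozice\phi}{\longrightarrow} D\virt{} P$, where $\xi=\psi_1:T\to D$ is the iso induced on fibers by $\psi$ (it is an iso because the fiber map induced by an isomorphism is again an isomorphism). Apply~(\ref{Pujdu_s_Jarkou_na_demonstraci?}) to get
\[
\Markl(\Phi_2\Phi_1) \;=\; (\xi\kompozice\phi)^* \kompozice \Markl(P\virt{} D)
\;=\; \phi^* \kompozice \xi^* \kompozice \Markl(P\virt{} D),
\]
using $\Iso$-functoriality. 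By Lemma~\ref{zitra_Holter}, $\Markl(P\virt{} D) = \Markl(Q\virt{} D)\kompozice \Markl(P\virt{} Q)$. So the required equality $\Markl(\Phi_2\Phi_1)=\Markl(\Phi_1)\kompozice\Markl(\Phi_2)$ reduces, after cancelling the common factors $\phi^*$ on the left and $\Markl(P\virt{} Q)$ on the right, to the single identity
\[
\xi^*\kompozice \Markl(Q\virt{} D) \;=\; \Markl(R\virt{} T)\kompozice \psi^*.
\]

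This last identity is the compatibility between the $\Iso$-presheaf action and the $\vartheta$-maps, and it is an instance of the extended Axiom~(iii), i.e.\ diagram~(\ref{X}), applied to the commutative square
\[
\xymatrix@R=1.5em@C=3em{
R \ar[r]^\psi_\cong \ar[d]\ar[rd] & Q \ar[d]\\
r \ar@{=}[r] & r
}
\]
in place of~(\ref{moc_se_mi_na_prochazku_nechce}) (both vertical maps being the unique morphisms to the same local terminal $r$, with fibers $T$ and $D$ respectively). Taking inverses in $\VR(e)$ turns the target identity into the commutativity of the square of $\vartheta(R,r)$ and $\vartheta(Q,r)$ with $\xi^*$ and $\psi^*$, which is exactly what~(\ref{X}) asserts once one unfolds $\vartheta$ through its defining diagram~(\ref{proc_ty_lidi_musej_porad_hlucet}).

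The only non-routine point is matching the indexing in~(\ref{X}) to the present square and tracing the unit maps $\eta_r:\bfk\to\Markl(r)$ through the diagram, exactly as in the proof of Lemma~\ref{zitra_Holter}; once this bookkeeping is done the identity follows immediately, and no additional axioms of operadic categories or of unital Markl operads are required. This is the main, but purely notational, obstacle.
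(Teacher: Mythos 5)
Your proposal is correct and follows essentially the same route as the paper: check the identity via the unitality diagram~\eqref{Holter_se_blizi.}, reduce the composition law by cancelling the invertible factors $\phi^*$ and $\Markl(P\virt{}Q)$ to the single identity $\xi^*\kompozice\Markl(Q\virt{}D)=\Markl(R\virt{}T)\kompozice\psi^*$ (the inverse form of the paper's~\eqref{V_pondeli_plicni.}), and establish it by applying the extended axiom~\eqref{X} to the square with $\psi$ on top and the unique maps to $r$ as verticals. The final "bookkeeping" you defer is exactly the paper's diagram~\eqref{V_patek_pak_Psenicka.}, so nothing substantive is missing.
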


\begin{proof}
Since clearly  
$\Markl(T\stackrel\id\to T \virt{} T) = \id_{\Markl(T)}$, 
we need only to prove that
\begin{equation}
\label{uz_zase_stavebni_stroje_duni}
\Markl\big(S\stackrel{\phi}{\to} T\virt{} R\big) 
\kompozice \Markl\big(R\stackrel{\psi}{\to} Q\virt{}
 P\big) =
\Markl\big(S\stackrel{\xi \kompozice\phi}{\longrightarrow}
(Q \virt{} P)(D \virt{} Q)
\big)
\end{equation}
for the composite
\[
 (R\stackrel{\psi}{\to} Q\virt{}
 P) \kompozice (S\stackrel{\phi}{\to} T\virt{} R)
= S\stackrel{\xi \kompozice\phi}{\longrightarrow}
(Q \virt{} P)(D \virt{} Q)
\] 
defined in~(\ref{v_patek_pak_Psenicka}).   
Evaluating both sides of~(\ref{uz_zase_stavebni_stroje_duni})  
using~\eqref{Pujdu_s_Jarkou_na_demonstraci?} gives
\[
\phi^* \kompozice \Markl(R\virt{} T) \kompozice  \psi^* \kompozice \Markl(P
\virt{} Q) =
(\xi \kompozice\phi)^* \kompozice
\Markl(Q \virt{} D)\Markl(P \virt{} Q).
\]
Since all the maps involved are isomorphisms, we easily see
that~(\ref{uz_zase_stavebni_stroje_duni}) is equivalent~to
\begin{equation}
\label{V_pondeli_plicni.}
\Markl(D \virt{} Q) \kompozice (\inv\xi)^* = (\inv\psi)^* \kompozice \Markl(T
\virt{} R).
\end{equation}
To prove this equality, consider the diagram
\begin{equation}
\label{V_patek_pak_Psenicka.}
\xymatrix{
\Markl(T) \cong \Markl(T) \ot \bfk \ar[r]^(.54){\id \ot \eta_r} 
& \Markl(T) \ot \Markl(r)\ar[d]_{\circ_!} &\ar[l]_{\xi^* \ot \id} \Markl(D) \ot
 \Markl(r) \ar[d]^{\circ_!}
\\
& \Markl(R)&\Markl(Q)\ar[l]_{\psi^*}
}
\end{equation}
in which the square is~(\ref{X}) associated to
\[
\xymatrix@C=3.5em@R=1.5em{\ar[dr]^{!}
R\ar[d]_{!} \ar[r]^\psi &Q\ar[d]^{!}
\\
r \ar@{=}[r] &r
}
\]
in place of~(\ref{moc_se_mi_na_prochazku_nechce}). It follows from
the definitions that the composite of the maps
\[
\xymatrix@1{
\Markl(T) \cong \Markl(T) \ot \bfk \ \ar[r]^(.55){\id \ot \eta_r} &
\ \Markl(T) \ot \Markl(r) \    \ar[rr]^{\inv{(\xi^* \ot \id)}}&&
\ \Markl(D) \ot
 \Markl(r) \ \ar[r]^(.65){\circ_!}&\    \Markl(Q)
}
\]
in~(\ref{V_patek_pak_Psenicka.}) equals the left-hand side
of~(\ref{V_pondeli_plicni.}), while the composite
\[
\xymatrix{
\Markl(T) \cong \Markl(T) \ot \bfk \ \ar[r]^(.55){\id \ot \eta_r} &
\ \Markl(T) \ot \Markl(r) \     \ar[r]^(.6){\circ_!} & 
\ \Markl(R) \ \ar[r]^{\inv{\psi^*}} & \   \Markl(Q)
}
\]
equals its right-hand side. 
\end{proof}

\begin{proposition}
\label{Za_14_dni_LKDL}
If $\Markl$ is a unital Markl $\ttO$-operad in a cocomplete symmetric
monoidal category $\ttV$, then the $\VIe$-presheaf
of Proposition~\ref{dnes_schuze_klubu} 
associated to $\Markl$ functorially descends to a\/
$\QV(e)$-presheaf $\oM$ by means of the left Kan extension along the
equivalence $\big(\VIe\big)^\op \to \QV(e)^\op$.
\end{proposition}

\begin{proof}
We will construct the presheaf $\oM$ explicitly.
Objects of $\QV(e)$ are, by definition, the equivalence classes $[T]$
of the
objects of $\ttO$ modulo the relation $[T'] = [T'']$\/ if\/ $T'' \virt{}
T'$. We define $\oM([T])$ as the colimit
\begin{equation}
\label{V_sobotu_budu_letat_vycvik_vlekare.}
\oM([T]) : = \colim \Markl(S)
\end{equation} 
over the groupoid of all $S \in \ttO$ virtually isomorphic to $T$. It
is clear that the
canonical injection $\iota_T : \Markl(T) \hookrightarrow \oM([T])$ is an
isomorphism.

Consider a morphism $[\phi] : [T] \to [S]$ in $\QV(e)$ 
given by an isomorphism
$\phi : T \to S$. We define $ \oM([\phi]) :
\oM([S])  \to \oM([T])$ by the diagram
\[
\xymatrix@C=3em{
\oM([S])  \ar[r]^{\oM([\phi])}    & \oM([T])
\\
\Markl(S)\ar[r]^{\phi^*}
\ar@{_{(}->}[u]^{\iota_S}_\cong
& \Markl(T) \ar@{_{(}->}[u]_{\iota_T}^\cong
}
\]
in which $\phi^*$ refers to the $\Iso$-presheaf structure of $\Markl$.

We need to show that $\oM([\phi])$ does not depend on the choice of a
representative of the map $[\phi]$ under  the equivalence that identifies $\phi'$
as in~(\ref{Dnes_jsem_se_koupal_v_Hradistku.}) with the induced map $\phi''$
between the fibers over~$t$. To this end,
consider the commutative diagram
\begin{equation}
\label{Dnes_jde_Jarka_s_bouli_pod_uchem_k_doktorovi.}
\xymatrix{
\Markl(T') \ar@{=}[r] \ar[d]_{\vartheta(T',t)}
& \bfk \ot \Markl(T')  \ar[d]_{\eta_u \ot \id}  &  \ar[d]^{\eta_u \ot \id}
\bfk \ot \Markl(S') \ar@{=}[r]\ar[l]_{\id  \ot \phi'^*} &
\Markl(S')
\ar@/_2em/[lll]_{\phi'^*}  \ar[d]^{\vartheta(S',t)}
\\
\Markl(T'') &\ar[l]_(.6){\circ_!}  \Markl(u) \ot \Markl(T') &  \Markl(u) \ot \Markl(S') 
\ar[r]^(.63){\circ_!}\ar[l]_{\id  \ot \phi''^*} 
&  
\Markl(S'')\ar@/^2em/[lll]_{\phi''^*}
}
\end{equation}
in which the leftmost and rightmost squares are instances
of~(\ref{proc_ty_lidi_musej_porad_hlucet}). The commutativity of the
central square and  of the upper part
is clear.  Finally, the commutativity of the lower part follows from
axiom~(\ref{posledni_nedele_v_Sydney}) of Markl operads. An easy
diagram chase shows that the commutativity
of~(\ref{Dnes_jde_Jarka_s_bouli_pod_uchem_k_doktorovi.}) implies the
commutativity of the middle square in
\[
\xymatrix@C=3em{
\oM([T']) \ar@{^{(}->}[r]^{\iota_{T'}}_\cong \ar@{=}[d]
&\Markl(T')  \ar[d]_{\vartheta(T',t)}  &
\Markl(S')
\ar[l]_{\phi'^*}  \ar[d]^{\vartheta(S',t)}& \oM([S']) 
\ar@{_{(}->}[l]_{\iota_{S'}}^\cong \ar@{=}[d]
\\
\oM([T''])\ar@{^{(}->}[r]^{\iota_{T''}}_\cong
 &\Markl(T'')  
&  
\Markl(S'')\ar[l]_{\phi''^*}&\ \oM([S'']) \,.
\ar@{_{(}->}[l]\ar@{_{(}->}[l]_{\iota_{S''}}^\cong
}
\]
The independence of $\oM([\phi])$ on the choice of a
representative of $[\phi]$ is now clear. 
\end{proof}

\begin{remark}
\label{Zitra_mam_prednasku_na_Macquarie.}
If $\Markl$ is strictly 
unital, the definition in~(\ref{V_sobotu_budu_letat_vycvik_vlekare.}) via
a  colimit can be replaced by
$\oM([T]) := \Markl(T)$.
\end{remark}

\section{Free Markl operads}
\label{zitra_letim_do_Pragy}

This section is devoted to our construction of free strictly
unital $1$-connected Markl operads over $\ttO$ generated by
$\QV(e)$-presheaves. In the light of~\cite[Theorem~6.5]{part1} this will also
provide free (standard) $\ttO$-operads.  We require $\ttO$ to 
satisfy \AA. The base symmetric monoidal category is assumed to be
monoidally cocomplete.

\subsection{Chains of elementary morphisms} 
In this subsection
we introduce the cornerstones of free Markl operads. We will need
the following result.

\begin{proposition}
\label{bude_pekna_zima}
Consider a diagram 
\begin{equation}
\label{grand_mall}
\xymatrix@R=1.6em@C=3em{T'\ar[r]^{\sigma_T}_\sim\ar[d]_{(\phi',j')}  & 
T''\ar[d]^{(\phi'',j'')}
\\
P'\ar[r]^{\sigma_P}_\sim \ar[d]_{(\psi',i')} & P''\ar[d]^{(\psi'',i'')}
\\
S'\ar[r]^{\sigma_S}_\sim & S''
}
\end{equation}
whose vertical maps are elementary with disjoint fibers as indicated,
and where the horizontal maps are \qb{s}. Denoting $k' := |\psi'|(j')$,  
$k'' := |\psi''|(j'')$, one has   
\begin{equation}
\label{zase_podleham}
|\sigma_S|(i') = i'' \ \and \  |\sigma_S|(k') = k'' .
\end{equation}
If we are given a subdiagram
of~(\ref{grand_mall}) consisting only of the morphisms
$\phi',\phi'',\psi',\psi'',\sigma_T$ and $\sigma_S$, i.e.\ 
\begin{equation}
\label{Pojedu_za_Jarkou_na_chalupu?}
\xymatrix@R=1.6em@C=3em{T'\ar[r]^{\sigma_T}_\sim\ar[d]_{(\phi',j')}  & 
T''\ar[d]^{(\phi'',j'')}
\\
P' \ar[d]_{(\psi',i')} & P''\ar[d]^{(\psi'',i'')}
\\
S'\ar[r]^{\sigma_S}_\sim & S''\,,
}
\end{equation} 
then the
conditions~(\ref{zase_podleham}) are also sufficient for the existence
of a unique \qb\ $\sigma_P$ as in~(\ref{grand_mall}).
\end{proposition}

\begin{proof}
The only nontrivial fiber of $\psi'$ is $\inv{\psi'}(i')$ and 
the only nontrivial fiber of $\psi''$ is $\inv{\psi''}(i'')$ so, 
by~\cite[Remark~5.2]{part1},
we have $|\sigma_S|(i') = i''$. By the
same argument,  $|\sigma_P|(j') = j''$. Since $|-|$ is a functor, we
have 
\[
k'' = |\psi''||\sigma_P|(j') = |\sigma_S| |\psi'|(j') =  |\sigma_S| (k')
\]
proving the first part of the proposition.

To prove the second part, denote by $\xi'$ (resp.~by $\xi''$) the
composite of the maps in the left (resp.\ right) column
of~(\ref{Pojedu_za_Jarkou_na_chalupu?}). Since the left column
of~(\ref{Pojedu_za_Jarkou_na_chalupu?}) is 
harmonic by~\cite[Corollary~5.7]{part1},
we may define a map $(\sigma_P)_{(i',i'')}$ by the commutativity of the diagram
\begin{equation}
\label{streda_v_Srni}
\xymatrix@C=4em{\inv{\xi'}(i')  \ar[r]^{(\sigma_T)_{(i',i'')}} 
\ar@{=}[d]_{\phi'_{i'} = \id}
 &    \inv{\xi''}(i'')\ar[d]_{\phi''_{i''}}
\\
\inv{\psi'}(i')  \ar[r]^{(\sigma_P)_{(i',i'')}}  &    \inv{\psi''}(i'')\,.
}
\end{equation}
The blow-up axiom produces a commutative diagram 
\[
\xymatrix@R=1.6em@C=3em{
P'\ar[r]^{\sigma_P}_\sim \ar[d]_{\psi'} & \tilde P''\ar[d]^{\tilde \psi''}
\\
S'\ar[r]^{\sigma_S}_\sim & S''
}
\]
in which, by construction, 
$\tilde \psi''$ is elementary with the only nontrivial fiber
${\psi''}^{-1}(i'')$ over~$i''$, and the map between nontrivial fibers
induced by $\sigma_P$ is $(\sigma_P)_{(i',i'')}$. Consider now two
commutative diagrams
\begin{equation}
\label{Necham_auto_na_Rokyte.}
\xymatrix@R=1.6em@C=3em{
T'\ar[r]^{\sigma_P\kompozice \phi'}_\sim \ar[d]_{\xi'} & \tilde P''\ar[d]^{\tilde \psi''}
\\
S'\ar[r]^{\sigma_S}_\sim & S''
} 
\ \and \
\xymatrix@R=1.6em@C=3em{
T'\ar[r]^{\phi''\kompozice \sigma_T}_\sim \ar[d]_{\xi'} &  P''\ar[d]^{\psi''}
\\
S'\ar[r]^{\sigma_S}_\sim & S''\,.
}
\end{equation}
In both diagrams, the right vertical map is elementary, with the only
nontrivial fiber ${\psi''}^{-1}(i'')$. We will show that both
$\sigma_P\kompozice \phi'$ and $\phi'' \kompozice \sigma_T$ induce the same map
between nontrivial fibers. One has
\[
(\sigma_P\kompozice \phi')_{(i',i'')} = (\sigma_P)_{(i',i'')} \kompozice\phi'_{i'}
\]
while
\[
(\phi''\kompozice \sigma_T)_{(i',i'')} = \phi''_{i''}\kompozice (\sigma_T)_{(i',i'')}.
\]
By the defining diagram~(\ref{streda_v_Srni}), the right-hand sides of
both equations coincide. By~\BU,
the diagrams in~(\ref{Necham_auto_na_Rokyte.}) are the same, therefore
both squares in~(\ref{grand_mall}) with $\sigma_P$ constructed above
commute. This finishes the proof.  
\end{proof}

For the validity of the following lemma and
Lemma~\ref{spousta-prace-1} below, only the weak blow-up axiom $\WBU$ is 
required.

\begin{subequations}
\begin{lemma}
\label{spousta-prace}
Let $\rho: S \to T \in \DO$ be elementary with the unique fiber $F$
over $a \in |T|$. 
Suppose that we are given a chain of
elementary morphisms 
\begin{equation}
\label{eq:4}
F \stackrel {\varphi_1} \longrightarrow F_1  \stackrel  {\varphi_2} \longrightarrow
F_2  \stackrel  {\varphi_3} \longrightarrow
F_3  \stackrel  {\varphi_4} \longrightarrow \cdots  \stackrel
{\varphi_{l-1}} \longrightarrow F_{l-1}.
\end{equation}
Then there exists a unique factorization
\begin{equation}
\label{eq:2}
S \stackrel {\rho_1} \longrightarrow S_1  \stackrel  {\rho_2} \longrightarrow
S_2  \stackrel  {\rho_3} \longrightarrow
S_3  \stackrel  {\rho_4} \longrightarrow \cdots  \stackrel
{\rho_{l-1}} \longrightarrow S_{l-1}  \stackrel  {\rho_l} \longrightarrow  T
\end{equation}
of $\rho$ into elementary morphisms such that $(\rho_l \kompozice \cdots \kompozice
\rho_s)^{-1}(a) = F_{s-1}$ for each $2 \leq s \leq l$, 
and $(\rho_s)_a = \varphi_s$ for
each $1 \leq s < l$.
\end{lemma}
\end{subequations}

\begin{proof}
We will inductively construct maps in the commutative diagram
\begin{equation}
\label{eq:10}
\xymatrix{S \ar[r]^{\rho_1}\ar[d]_{\rho} & 
S_1\ar[r]^{\rho_2} \ar@/^.1pc/[dl]_{\eta_1}  & S_2 \ar@/^.3pc/[dll]_{\eta_2}
  \ar[r]^{\rho_3} & S_3  \ar[r]^{\rho_4}\ar@/^.6pc/[dlll]_{\eta_3} &
\cdots \ar[r]^{\rho_{l-1}} &  S_{l-1}\,. \ar@/^.9pc/[dlllll]_{\eta_l} 
\\
T
}
\end{equation}
The weak blow-up axiom implies that the maps
\[
\varphi_1 : F = \inv{\rho}(a) \to F_1, \ \id : \inv{\rho}(i) = U_i \to U_i
\mbox { for } i \ne a,
\]
uniquely determine a decomposition $\rho = \eta_1 \kompozice
\rho_1$. Clearly, $\eta_1$ is elementary with the unique fiber $F_1$
and we may apply the same reasoning to $\eta_1$ in place of $\rho$. The
result will be a unique decomposition $\eta_1 = \eta_2 \kompozice
\rho_2$. Repeating this process $(l-1)$ times and defining $\rho_l : =
\eta_l$ finishes the proof. 
\end{proof}

\begin{remark}
In the situation of~(\ref{eq:4}),
assume that  the pair $(\varphi_t,\varphi_{t+1})$ has
$(i,j)$-disjoint fibers for some $1 \leq t \leq l-2$. Then the corresponding pair
$(\rho_t,\rho_{t+1})$ in~(\ref{eq:2}) has  $(i+a-1,j+a-1)$-disjoint
fibers. This is an immediate consequence of Axiom~(iv) of an
operadic category.
\end{remark}

\begin{lemma}
\label{spousta-prace-1}
 With the notation of Lemma~\ref{spousta-prace}, suppose that we
are given two chains of elementary morphisms as in~(\ref{eq:4}) of the
form 
\begin{subequations}
\begin{equation}
\label{eq:8}
F \stackrel {\varphi_1} \longrightarrow F_1 \stackrel{\varphi_2} \longrightarrow
\cdots \stackrel{\varphi_{u-1}} \longrightarrow {F_{u-1}}
\stackrel{\varphi'_{u}}\longrightarrow F'_{u} \stackrel
         {\varphi'_{u+1}}\longrightarrow F_{u+1} \stackrel{\varphi_{u+2}}
         \longrightarrow \cdots \stackrel {\varphi_{l-1}} \longrightarrow
         F_{l-1}
\end{equation}
and
\begin{equation}
\label{eq:9}
F \stackrel {\varphi_1} \longrightarrow F_1  \stackrel{\varphi_2} \longrightarrow
\cdots  
\stackrel{\varphi_{u-1}} \longrightarrow {F_{u-1}} 
\stackrel{\varphi''_{u}}\longrightarrow F''_{u}  
\stackrel  {\varphi''_{u+1}}\longrightarrow F_{u+1} 
\stackrel{\varphi_{u+2}} \longrightarrow \cdots
\stackrel {\varphi_{l-1}} \longrightarrow F_{l-1}
\end{equation}
\end{subequations}
such that the diagram
\[
\xymatrix@R = 1em{& {F'_u} \ar[dr]^{\varphi'_{u+1}} &
\\
F_{u-1}\ar[dr]^{\varphi_u''} \ar[ur]^{\varphi_u'} && F_{u+1}
\\ 
&{F''_u}\ar[ur]^{\varphi''_{u+1}}&
}
\]
commutes.
Then the corresponding decompositions~(\ref{eq:2}) are of the form
\begin{subequations}
\begin{equation}
\label{eq:6}
S \stackrel {\rho_1} \longrightarrow S_1 \stackrel{\rho_2} \longrightarrow
\cdots \stackrel{\rho_{u-1}} \longrightarrow {S_{u-1}}
\stackrel{\rho'_{u}}\longrightarrow S'_{u} \stackrel
         {\rho'_{u+1}}\longrightarrow S_{u+1} \stackrel{\rho_{u+2}}
         \longrightarrow \cdots \stackrel {\rho_{l-1}} \longrightarrow
         S_{l-1}  \stackrel {\rho_{l}} \longrightarrow T ,
\end{equation}
respectively
\begin{equation}
\label{eq:7}
S \stackrel {\rho_1} \longrightarrow S_1  \stackrel{\rho_2} \longrightarrow
\cdots  
\stackrel{\rho_{u-1}} \longrightarrow {S_{u-1}} 
\stackrel{\rho''_{u}}\longrightarrow S''_{u}  
\stackrel  {\rho''_{u+1}}\longrightarrow S_{u+1} 
\stackrel{\rho_{u+2}} \longrightarrow \cdots
\stackrel {\rho_{l-1}} \longrightarrow S_{l-1}
\stackrel {\rho_{l}} \longrightarrow T,
\end{equation}
\end{subequations}
and the diagram
\begin{equation}
\label{eq:11}
\xymatrix@R = 1em{& {S'_u} \ar[dr]^{\rho'_{u+1}} &
\\
S_{u-1}\ar[dr]^{\rho_u''} \ar[ur]^{\rho_u'} && S_{u+1}
\\ 
&{S''_u}\ar[ur]^{\rho''_{u+1}}&
}
\end{equation}
commutes.
\end{lemma}

\begin{proof}
We will rely on the notation used in the proof of 
Lemma~\ref{spousta-prace}. It is clear from the inductive construction
described there that the initial parts of the canonical decompositions
corresponding to~(\ref{eq:8}) resp.~(\ref{eq:9}) coincide and are equal to
\[
S \stackrel {\rho_1} \longrightarrow S_1  \stackrel{\rho_2} \longrightarrow
\cdots  
\stackrel{\rho_{u-1}} \longrightarrow {S_{u-1}}. 
\]
Consider the following two stages of the inductive construction in the
proof of Lemma~\ref{spousta-prace}:
\[
\xymatrix{S \ar[r]^{\rho_1}\ar[d]_{\rho}   & \cdots
  \ar[r]^{\rho_{u-1}} & S_{u-1}  \ar[r]^{\rho'_u}\ar@/^.6pc/[dll]_{\eta_{u-1}} &
S'_u  \ar@/^1.1pc/[dlll]_(.3){\eta'_u}
\ar[r]^{\rho'_{u+1}} &  S'_{u+1} \ar@/^1.3pc/[dllll]_(.25){\eta'_{u+1}} 
\\
T
}
\]
and
\[
\xymatrix{S \ar[r]^{\rho_1}\ar[d]_{\rho}  & \cdots
  \ar[r]^{\rho_{u-1}} & S_{u-1}  \ar[r]^{\rho''_u}\ar@/^.6pc/[dll]_{\eta_{u-1}} &
S''_u  \ar@/^1.1pc/[dlll]_(.3){\eta''_u}
\ar[r]^{\rho''_{u+1}} &  S''_{u+1}\,. \ar@/^1.3pc/[dllll]_(.25){\eta''_{u+1}} 
\\
T
}
\]
The maps $\eta_{u-1}$,  $\eta'_{u+1}$ and $\eta''_{u+1}$ are 
elementary, with the 
nontrivial fibers $F_{u-1}$ resp.~$F_{u+1}$.  By construction, the 
horizontal maps in the factorizations
\[
\xymatrix@C=4em{S_{u-1} \ar[r]^{\rho'_{u+1} \kompozice \rho'_{u}} \ar[d]^{\eta_{u-1}} & S'_{u+1}
  \ar[ld]^{\eta_{u+1}} 
\\
T
}
\ \and \
\xymatrix@C=4em{S_{u-1} \ar[r]^{\rho''_{u+1} \kompozice \rho''_{u}} \ar[d]^{\eta_{u-1}} & S''_{u+1}
  \ar[ld]^{\eta_{u+1}} 
\\
T
}
\] 
induce the same map between these nontrivial fibers, namely 
$\varphi'_{u+1} \kompozice \varphi'_{u} = \varphi''_{u+1} \kompozice
\varphi''_{u}$. By the uniqueness of the weak blow-up, the diagrams in the
above display coincide, so diagram~(\ref{eq:11}) 
with $S_{u+1} = S'_{u+1} = S''_{u+1}$
commutes. It is obvious that the remaining parts of~(\ref{eq:6}) 
and~(\ref{eq:7}) are the same.
\end{proof}

\subsection{Free operads} Let us proceed  to our
description of free Markl operads. 
In this subsection, $\ttV$ will be a cocomplete strict symmetric monoidal
category and $0$ its initial object.

\begin{definition}
\label{Jarka_dnes_u_lekare}
$\QV(e)$-presheaves in $\ttV$ will be called  
{\em $1$-connected  $\ttO$-collections\/} in $\ttV$. We will denote by
$\Coll(\ttO)$, or simply $\Coll$ when $\ttO$ is understood,
the corresponding category.
\end{definition}

For $E \in \Coll$, we will often
write simply $E[T]$ instead of~$E([T])$.
Notice that a \hbox{$1$-connected}  $\ttO$-collection can equivalently be
defined as a $\QV$-presheaf $E$  such that $E(T) = 0$ if $e(T) = 0$. 
It follows that a  $1$-connected  $\ttO$-collection in $\ttV$
is the same as an $\Iso$-presheaf $\calE$ with values in $\ttV$ such that
\begin{itemize}
\item[(i)]
$\calE(T) = 0$ if $e(T) = 0$ ($1$-connectivity),
\item[(ii)]
 $\calE(T) = \calE(F)$ whenever  $F \Fib T 
\stackrel!\to u$, and
\item[(iii)]
$\phi'^* = \phi''^*$, where $\phi'$ is as 
in~(\ref{Dnes_jsem_se_koupal_v_Hradistku.}) and $\phi''$ is the
induced map between the fibers.
\end{itemize}

\begin{example}
It follows from Example~\ref{Budu_mit_nova_sluchatka.} that the
category $\Coll(\Fin)$ 
is isomorphic to the category of $1$-connected $\Sigma$-modules,
i.e.~sequences $\{E(n) \in \ttV\}_{n \geq 2}$, with actions of the
symmetric groups $\Sigma_n$. 
\end{example}

\begin{proposition}
\label{Pomuze_vitamin_C?}
One has a forgetful functor $\Box:   {\tt SUMOp}^\ttV_1(\ttO) \to \Coll(\ttO)$
from the category of\/ $1$-connected strictly
unital Markl $\ttO$-operads to the category  of $1$-connected
$\ttO$-collections in a cocomplete symmetric
monoidal category~$\ttV$ defined, on objects~by
\begin{equation}
\Box \Markl\big([T]\big) := 
\begin{cases}
\Markl(T) &\hbox {if $e(T) \geq 1$}
\\
0 & \hbox {otherwise.} 
\end{cases}
\end{equation}
\end{proposition}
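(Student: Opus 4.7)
The plan is to leverage the descent machinery already built in Section~\ref{letam_205}. By Proposition~\ref{Za_14_dni_LKDL}, any unital Markl $\ttO$-operad $\Markl$ yields a $\QV(e)$-presheaf $\oM$, and for strictly unital $\Markl$ Remark~\ref{Zitra_mam_prednasku_na_Macquarie.} gives the simple identification $\oM([T]) = \Markl(T)$ with no colimit to form. The essential work is then only to truncate $\oM$ so that it vanishes on grade-$0$ equivalence classes---as required by the definition of a $1$-connected collection---and to verify that this truncation respects the $\QV(e)$-presheaf structure.

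First I would check well-definedness on objects. By the axiom \SGrad, isomorphisms in $\ttO$ are exactly the morphisms of grade $0$ and therefore preserve the grade of objects; combined with our standing assumptions, an object has grade $0$ if and only if it is local terminal. Since $\VR(e)$ is by definition restricted to objects of grade $\geq 1$, every virtual equivalence class $[T] \in \QV(e)$ consists either entirely of grade-$0$ objects or entirely of grade-$\geq 1$ objects. Hence the formula of the proposition is unambiguous: on grade-$\geq 1$ classes it coincides with $\oM$, and on grade-$0$ classes it is set to $0$. Turning to morphisms, grade preservation under both $\Iso$ and $\VR(e)$ means that no morphism in $\QV(e)$ connects a grade-$0$ class with a grade-$\geq 1$ class, so the truncation is compatible with the presheaf action: on morphisms between grade-$\geq 1$ classes I take the action inherited from $\oM$, and on morphisms between grade-$0$ classes the zero map. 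Functoriality follows at once from Propositions~\ref{dnes_schuze_klubu} and~\ref{Za_14_dni_LKDL}.

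For a morphism $\alpha\colon \Markl_1 \to \Markl_2$ of $1$-connected strictly unital Markl operads I would define $(\Box\alpha)_{[T]}$ as $\alpha_T$ on grade-$\geq 1$ classes and as the zero map on grade-$0$ ones. Naturality with respect to the $\QV(e)$-structure is automatic, since the descended action in Propositions~\ref{dnes_schuze_klubu}--\ref{Za_14_dni_LKDL} is built out of $\alpha$-equivariant ingredients, namely the $\Iso$-presheaf structure and the unit maps. The identity and composition axioms for $\Box$ are inherited from those for morphisms of Markl operads. The only point requiring genuine thought, and what I would flag as the main bookkeeping obstacle, is the grade-preservation observation above; once it is in hand, all remaining verifications are formal.
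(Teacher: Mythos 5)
Your proposal is correct and follows essentially the same route as the paper: the paper's proof is precisely the composite of the descent functor $\Markl \mapsto \oM$ from Proposition~\ref{Za_14_dni_LKDL} (simplified via Remark~\ref{Zitra_mam_prednasku_na_Macquarie.} in the strictly unital case) with the truncation to zero on grade-zero objects. The grade-preservation check you flag is a correct piece of bookkeeping that the paper leaves implicit.
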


\begin{proof}
The $\Iso$-presheaf structure induces on $\Markl$ an
$\VIe$-presheaf structure by Proposition~\ref{dnes_schuze_klubu}.
The functor $\Box$ is then the composite of the functor $\Markl \mapsto \oM$
of Proposition~\ref{Za_14_dni_LKDL}, cf.\ Remark~\ref{Zitra_mam_prednasku_na_Macquarie.}, with the functor that replaces
the values of the presheaf $\oM$ by $0$ on objects of grade zero. 
\end{proof}

In the rest of this section we construct a left adjoint $E \mapsto
\Free(E)$ to the forgetful functor of Proposition~\ref{Pomuze_vitamin_C?}.
Our strategy will be to construct a Markl $\ttO$-operad $\lTw$
with values  in
the category of groupoids $\Grp$, extend $E$ to a functor $E : \lTw
\to \ttV$ and define $\Free(E)$ as the colimit of this functor.
The building blocks of the operad $\lTw$ will be the towers
\begin{equation}
\label{t1-1}
\bfT: =
T \stackrel {\tau_1} \longrightarrow T_1  \stackrel  {\tau_2} \longrightarrow
T_2\stackrel  {\tau_3} \longrightarrow 
\cdots  \stackrel  {\tau_{k-1}}   \longrightarrow  T_{k-1}  
\stackrel  {\tau_{k} }  \longrightarrow  U_c 
\end{equation}
of elementary morphisms as in Definition~\ref{plysacci_postacci},
with $\tau_k$ the unique morphism to a
chosen local terminal object~$U_c$.
Since  $\tau_k$ bears no information, we will sometimes write the
tower~as
\begin{equation}
\label{t1}
\bfT: =
T \stackrel {\tau_1} \longrightarrow T_1  \stackrel  {\tau_2} \longrightarrow
T_2\stackrel  {\tau_3} \longrightarrow 
\cdots  \stackrel  {\tau_{k-1}}   \longrightarrow  T_{k-1}.
\end{equation}
Let $t_1,\ldots,t_{k}$ be the unique nontrivial fibers of
$\tau_1,\ldots,\tau_{k}$; notice that  $t_k = T_{k-1}$.
We will call $t_1,\ldots,t_{k}$ the {\em fiber sequence\/}
of the tower $\bfT$. The number $k$ is the {\em height\/} of the tower  $\bfT$.

We will denote by $\Tw(T)$ the set of all towers with the
initial term $T$. 
A~{\em~morphism\/} $\bfsigma : \bfT' \to \bfT''$   
of towers~(\ref{t1-1}) is an array $\bfsigma =
(\sigma_1,\sigma_2,\ldots,\sigma_{k})$ of \im{s} as in
\begin{equation}
\label{f1}
\xymatrix{T' \ar[d]_{\tau'_1} \ar[r]^{\sigma_1}_\cong  & T'' \ar[d]^{\tau''_1}
\\
T_1'  \ar[d]_{\tau'_2} \ar[r]^{\sigma_2}_\cong  & T_1'' \ar[d]^{\tau''_2}
\\
\vdots \ar[d]_{\tau'_{k-1}}  & \vdots \ar[d]^{\tau''_{k-1}}
\\
T_{k-1}' \ar[r]^{\sigma_{k}}_\cong & \,T_{k-1}''\,.
}
\end{equation}

\begin{definition}
\label{zitra_vedeni}
A {\em labeled tower\/} is a pair $(\ell,\bfT)$ consisting of a
tower $\bfT$ as in~(\ref{t1-1}) together with an \im\  (the {\em labeling})   
$\ell : X \to
T$. We denote by ${\it lTw}(X)$ the set of all labeled towers of this~form.
\end{definition}

We will equip  ${\ltw}(X)$ with the structure of a groupoid
generated by morphisms of two types, modulo the commutativity relations
specified below. 
Each morphism $\bfsigma : \bfT' \to \bfT''$   
of towers~(\ref{t1-1}) determines a 
morphism $(\ell,\bfsigma) 
:(\ell,\bfT') \to (\sigma_1\ell,\bfT'')$ of the {\em first
type\/}. These morphisms compose as follows. Suppose that
$(\ell,\bfsigma'): (\ell,\bfT')  \to (\sigma_1'\ell,\bfT'')$ and
$(\sigma_1'\ell,\bfsigma''): (\sigma_1'\ell,\bfT'')  \to
(\sigma_1''\sigma_1'\ell,\bfT''')$ are two such morphisms,  
then
\[
(\sigma_1'\ell,\bfsigma'') \kompozice  (\ell,\bfsigma') :=
(\sigma_1''\sigma_1'\ell, \bfsigma''\kompozice \bfsigma').
\]

To define morphisms of the second type, consider two 
towers of elementary morphisms,
\[
\bfT':= T \stackrel {\tau_1} \longrightarrow T_1 \stackrel{\tau_2} \longrightarrow
\cdots \stackrel{\tau_{u-1}} \longrightarrow {T_{u-1}}
\stackrel{\tau'_{u}}\longrightarrow T'_{u} \stackrel
         {\tau'_{u+1}}\longrightarrow T_{u+1} \stackrel{\tau_{u+2}}
         \longrightarrow \cdots \stackrel {\tau_{k-1}} \longrightarrow
         T_{k-1}
\]
and
\[
\bfT'':= 
T \stackrel {\tau_1} \longrightarrow T_1  \stackrel{\tau_2} \longrightarrow
\cdots  
\stackrel{\tau_{u-1}} \longrightarrow {T_{u-1}} 
\stackrel{\tau''_{u}}\longrightarrow T''_{u}  
\stackrel  {\tau''_{u+1}}\longrightarrow T_{u+1} 
\stackrel{\tau_{u+2}} \longrightarrow \cdots
\stackrel {\tau_{k-1}} \longrightarrow T_{k-1},  
\]
as in~(\ref{t1}). Their associated fiber sequences
are clearly of the form
\[
t_1,\ldots,t_{u-1},t'_u,t'_{u+1},t_{u+2},\ldots,t_k
\ \hbox { resp. } \ 
t_1,\ldots,t_{u-1},t''_u,t''_{u+1},t_{u+2},\ldots,t_k.
\]
Assume that the diagram 
\[
\xymatrix@R = 1em{& {T'_u} \ar[dr]^{\tau'_{u+1}} &
\\
T_{u-1}\ar[dr]^{\tau_u''} \ar[ur]^{\tau_u'} && T_{u+1}
\\ 
&{T''_u}\ar[ur]^{\tau''_{u+1}}&
}
\]
is as in~(\ref{eq:3}) from Corollary~\ref{move}, with  
$\phi' = \tau'_u$, $\phi'' = \tau''_u$,  
$\psi' = \tau'_{u+1}$ and $\psi'' = \tau''_{u+1}$, and thus $t'_u =
t''_{u+1}$ and $t'_{u+1} = t''_u$.   
The above situation, by definition, determines an invertible morphism
$\vartheta_u:(\ell,\bfT') \to (\ell,\bfT'')$ of the {\em second
  type\/}.  The resulting groupoid will be denoted $\lTw(X)$.

Morphisms of both types are
subject to relations. They are easy to figure out, so we address only
the most complicated case 
of morphisms $\vartheta_u, \vartheta_v$ of the
second type with $|v\!-\!u| = 1$. To this end, consider the diagram
\begin{equation}
\label{Stale jeste jedu podle letniho casu.}
\xymatrix@R = 1.3em@C = 3.7em{
&& {T'_u} \ar[r]^{\tau'_{u+1}}   \ar[rdd]_{\omega'_{u+1}} &  T'_{u+1} \ar[rdd]^{\tau'_{u+2}} 
&
\\
&&&\vartheta'_{u+1}
\\
\cdots\ar[r]&T_{u-1}  \ar[ddr]^{\tau_u''} \ar[uur]^{\tau_u'} &
\vartheta_u
& T_{u+1} \ar[r]^{\tau_{u+2}}  & T_{u+2}  \ar[r]&\cdots
\\
&&&\vartheta''_{u+1}
\\ 
&&{T''_u} \ar[ruu]^{\omega''_{u+1}}
\ar[r]^{\tau''_{u+1}} 
&T''_{u+1}  \ar[uur]_{\tau''_{u+2}}&
}
\end{equation}
representing the composition $\vartheta_{u+1}''\vartheta_u \vartheta'_{u+1}$, 
such that $\tau'_{u}$ is $r$-elementary, $\tau'_{u+1}$ is
$j$-elementary, $\tau'_{u+2}$ is $i$-elementary, and $i,
|\tau'_{u+2}|(j)$ $|\tau'_{u+2}\tau'_{u+1}|(r)$ are mutually
distinct elements of $|T_{u+2}|$ -- the case of three disjoint fibers.
Notice that the maps $\tau'_{u}, \tau'_{u+1}$ and $\tau'_{u+2}$
uniquely determine the remaining ones, by the following

\begin{lemma}
\label{Dnes naposledy jede Lednacek.}
There exists a natural reflection  $(\phi',\psi') \leftrightarrow
(\phi'',\psi'')$  between $(k,i)$-pairs, cf.~Definition~\ref{d3}, 
and $(i,k)$-pairs such that  $\phi'\psi' = \phi''\psi''$.
\end{lemma}

\begin{proof}
Assume that  $T \stackrel{\psi'}\to P'  \stackrel{\phi'}\to S$  is a
$(k,i)$-pair. Then $\alpha := \phi'\psi'$ has precisely two
nontrivial fibers, say $F'$ over $k$ and $F''$ over $i$. The weak
blow-up axiom produces the commutative diagram
\[
\xymatrix@R=1em@C=1em{
T \ar[ddr]_{\alpha}   \ar[rr]^{\psi''}  && P''\ar[ddl]^{\phi''}
\\
&\raisebox{.5em}{\WBU}&
\\
&\phantom{.}S\,.&
}
\]
in which $\phi''$ is elementary, with its only nontrivial fiber $F'$
over $k$, under the condition that the prescribed fiber maps
$\psi''_s$, $s \in |T|$, are the unique maps to the chosen local
terminal objects except $\psi''_k$ which is the identity
$\id : F' \to F'$. Then $(\phi'',\psi'')$ is an
$(i,k)$-pair. Applying the above construction to 
$(\phi'',\psi'')$ clearly produces the original $(\phi',\psi')$.
\end{proof}

Returning to~\eqref{Stale jeste jedu podle letniho casu.},
Lemma~\ref{Dnes naposledy jede Lednacek.} with $\phi' = \tau'_{u+2}$
and $\psi' = \tau'_{u+1}$ gives the maps $\tau_{u+2}$ and
$\omega'_{u+1}$, the
maps $\omega'_{u+1}$ and $\tau'_u$ determine $\omega''_{u+1}$ and
$\tau''_u$ and, finally, $\tau_{u+2}$ and  $\omega''_{u+1}$ determine
$\tau''_{u+2}$ and $\tau''_{u+1}$. In the same manner, we obtain the diagram
\[
\xymatrix@R = 1.3em@C = 3.7em{
&& {T'_u} \ar[r]^{\tau'_{u+1}}    &  T'_{u+1} \ar[rdd]^{\tau'_{u+2}} 
&
\\
&&\vartheta'_u
\\
\cdots\ar[r]&T_{u-1} \ar[r]^{\tau_u} \ar[ddr]^{\tau_u''} \ar[uur]^{\tau_u'} &T_u
\ar[rdd]^{\delta''_{u+1}}
\ar[ruu]_{\delta'_{u+1}}
&  \vartheta_{u+1} & T_{u+2}  \ar[r]&\cdots
\\
&&\vartheta''_u
\\ 
&&{T''_u}
\ar[r]^{\tau''_{u+1}} 
&T''_{u+1}  \ar[uur]^{\tau''_{u+2}}&
}
\]
One can prove, by generalizing Lemma~\ref{Dnes naposledy jede
  Lednacek.} to the case of three morphisms, that the 
maps $\tau''_{u}, \tau''_{u+1}$ and $\tau''_{u+2}$ are
the same in both diagrams, giving rise to the relation
\begin{equation}
\label{Vcera jsem si byl prodlouzit prukaz.}
\vartheta_{u+1}''\vartheta_u \vartheta'_{u+1} =
\vartheta_{u}''\vartheta_{u+1} 
\vartheta'_{u}.
\end{equation}
Since all morphisms of the second type are involutions by
Lemma~\ref{Dnes naposledy jede Lednacek.} again, we notice the
resemblance of~(\ref{Vcera jsem si byl prodlouzit prukaz.}) and 
the relation between the
generating transpositions of the symmetric group $\Sigma_3$.
This is because morphisms of the second type  generalize 
the interchange of adjacent levels in a leveled tree.

\begin{example}
\label{vcera_jsem_bezel_36_minut}
Since morphisms of both types preserve the height of towers, the
groupoid $\lTw(X)$ is  graded,
\[
\lTw(X) = \textstyle\coprod_{h \geq 1}\lTw^h(X),
\]
where $\lTw^h(X)$ is the subgroupoid of labeled towers of height $h$.
It is clear that $\lTw^1(X)$ is the category $X/\Iso$ of \im{s} in
$\ttO$ under $X$.

In $\lTw^2(X)$, only morphisms of the first type exist. Therefore,
labeled towers $(\ell',\bfT')$ and $(\ell'',\bfT'')$  are connected
by a morphism if and only if there is a commuting diagram
\begin{equation}
\label{vecer_ustrice}
\xymatrix{&X  \ar[ld]_{\ell'}^\cong  \ar[rd]^{\ell''}_\cong     &
\\
T'  \ar[d]_{\tau'} \ar[rr]^{\sigma_1}_\cong  &&T'' \ar[d]^{\tau''}
\\
T'_1 \ar[rr]^{\sigma_2}_\cong   &&T''_1
}
\end{equation}
with \im{s} $\sigma_1$ and $\sigma_2$.
\end{example}

For an \im\ $\omega : X' \iso X''$  one has the induced map
$\omega^*: \ltw(X'') \to \ltw(X')$ that sends the labeled tower
$(\ell'',\bfT'') \in \ltw(X'')$ to $(\ell''\omega,\bfT'')  \in
\ltw(X')$, which clearly extends to a functor (denoted by
the same symbol)  $\omega^*: \lTw(X'') \to \lTw(X')$.
This makes the collection of categories $\lTw(X)$ a $\Grp$-presheaf on
$\Iso$. 
Our next move will be to construct, for each $G \fib W \stackrel\phi\to X$, 
a  functor
\begin{equation}
\label{mohl_bych_si_dat_i_12_ustric}
\circ_\phi : \lTw(X) \times \lTw(G)   \to     \lTw(W).
\end{equation}

As the first step in this construction we will prove that each
labeled tower $(\ell, \bfT)$ can be functorially replaced by one in
which $\ell$ is a quasibijection. To this end we prove a couple of
auxiliary lemmas.

\begin{lemma}
The factorization  $\xi =\phi \kompozice \sigma$, 
$\phi \in \DO$, $\sigma \in \QO$, of an isomorphism $\xi :A \to B$ 
guaranteed by the factorization axiom
is unique, and
both $\phi$ and $\sigma$ are isomorphisms too. 
\end{lemma}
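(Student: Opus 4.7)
The plan is to handle the two assertions separately: first that both $\phi$ and $\sigma$ must be isomorphisms, then uniqueness of the factorization.

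For the first, I would exploit the additivity of the grade under composition. Writing $\xi = \phi \kompozice \sigma$ with $\sigma: A\to C$ and $\phi: C\to B$, a telescoping argument gives $e(\xi) = e(A) - e(B) = e(\sigma) + e(\phi)$. Since $\xi$ is an isomorphism, $e(\xi)=0$, and because both $e(\sigma)$ and $e(\phi)$ are non-negative integers, they must both vanish. The strict grading axiom \SGrad\ then immediately forces both $\phi$ and $\sigma$ to be isomorphisms.

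For uniqueness, suppose we have two factorizations $\xi = \phi_1 \kompozice \sigma_1 = \phi_2 \kompozice \sigma_2$, with $\sigma_i \in \QO$ and $\phi_i \in \DO$, all four necessarily isomorphisms by the first part. I would introduce $\tau := \inv{\phi_2} \kompozice \phi_1 : C_1\to C_2$, which also equals $\sigma_2 \kompozice \inv{\sigma_1}$. Since any order-preserving bijection of finite ordinals is the identity, $\phi_2 \in \DO$ being an isomorphism forces $|\phi_2|$ to be the identity, hence $\inv{\phi_2}\in\DO$, and so $\tau \in \DO$. The uniqueness will drop out once I verify that $\tau$ is also a \qb: then \cite[Corollary~2.4]{part1} (which says that a \qb\ lying in $\DO$ is the identity) applies to give $\tau = \id_{C_1}$, from which $C_1 = C_2$, $\phi_1 = \phi_2$, and finally $\sigma_1 = \sigma_2$ all follow.

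The main obstacle is thus the verification that every fiber of $\tau$ is a chosen local terminal. I would attack this via axiom~(iv) of operadic categories applied to the composite $\sigma_2 = \tau \kompozice \sigma_1$: for each $c \in |C_2|$, axiom~(iv) produces the induced map $\sigma_2^{-1}(c) \to \tau^{-1}(c)$ whose unique fiber (noting $|\tau|^{-1}(c)$ is a singleton, say $\{c'\}$, because $|\tau|$ is a bijection) equals $\sigma_1^{-1}(c')$. Since both $\sigma_1$ and $\sigma_2$ are quasibijections, the objects $\sigma_1^{-1}(c')$ and $\sigma_2^{-1}(c)$ are trivial chosen local terminal objects, so we are faced with a map from a chosen local terminal to $\tau^{-1}(c)$ whose unique fiber is again chosen local terminal and, in fact, equal to its source. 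The unique fiber axiom \UFB\ then identifies $\tau^{-1}(c)$ as a chosen local terminal, confirming that $\tau$ is a~\qb\ and completing the proof.
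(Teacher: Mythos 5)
Your first assertion and your overall strategy for uniqueness both match the paper's: the paper gets the invertibility of $\phi$ and $\sigma$ by noting that $\sigma$ is invertible by \QBI\ so that $\phi=\xi\kompozice\inv{\sigma}$ is a composite of isomorphisms, while your telescoping of grades plus \SGrad\ is an equally valid (if slightly longer) route. For uniqueness you, like the paper, form the comparison isomorphism $\tau$ between the two middle objects, aim to show it is a \qb\ lying in $\DO$, and conclude it is the identity; your argument that $\tau\in\DO$ (via $|\phi_2|=\id$) is correct.

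The gap is in your verification that $\tau$ is a \qb. Axiom~(iv) does produce the induced map $\sigma_2^{-1}(c)\to\tau^{-1}(c)$ with unique fiber $\sigma_1^{-1}(c')$, and both $\sigma_2^{-1}(c)$ and $\sigma_1^{-1}(c')$ are chosen local terminal objects. But your parenthetical ``and, in fact, equal to its source'' is precisely the point that needs proof: the fiber is $\sigma_1^{-1}(c')$, the source is $\sigma_2^{-1}(c)$, and two chosen local terminal objects coincide only when they lie in the same connected component of $\ttO$ --- something you never establish (distinct chosen local terminal objects can perfectly well be virtually isomorphic, as the bouquet category shows). Moreover, this identification is essentially equivalent to what you are trying to prove: by Axiom~(iii) of operadic categories the unique fiber of the map to a \emph{chosen} local terminal object is the source, and conversely \UFB\ upgrades ``fiber equals source'' to ``target is chosen''; so asserting the equality at that point amounts to assuming that $\tau^{-1}(c)$ is trivial. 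The paper avoids this entirely by reading $\tau$ off the other triangle, $\tau=\sigma_2\kompozice\inv{\sigma_1}$, and invoking the closure of quasibijections under composition and inversion established in the first paper. To repair your proof, either cite that closure property directly, or supply the missing argument that $\sigma_1^{-1}(c')$ and $\sigma_2^{-1}(c)$ lie in the same connected component (e.g.\ by comparing both with the relevant sources of $C_1$ and $C_2$), after which \UFB\ applies as you intend.
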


\begin{proof}
Consider two factorizations,  $\phi' \kompozice \sigma'$  and  $\phi''
\kompozice \sigma''$, of  $\xi$. Since $\sigma'$ and $\sigma''$ are \qb{s},
they are invertible, so one may define $u$ by the
commutativity of the diagram
\[
\xymatrix@R=1em{&X'  \ar[rd]^{\phi'}_\cong &
\\
A\ar[ur]^{\sigma'}_\sim \ar[rd]^{\sigma''}_\sim&&\ B\,.
\\
&X'' \ar[ru]^{\phi''}_\cong \ar[uu]_u  &
}
\]
By the left triangle, $u$ is a \qb\ while it belongs
to $\DO$ by the right triangle. The uniqueness follows
from~\cite[Corollary~2.6]{part1}. 
The invertibility of $\phi'$ and
$\phi''$ is clear.
\end{proof}

\begin{subequations}
\begin{lemma}
\label{zitra_na_ustrice}
Each corner
\begin{equation}
\label{Bude mi ta mycka fungovat?}
\xymatrix@R=1.2em@C=2.5em{\tilde T \ar[r]^\omega_\cong & T \ar[d]^\phi
\\
&S
}
\end{equation}
in which $\omega$ is an \im\ from $\DO$ and $\phi$ is elementary, can
be canonically and functorially completed to the square
\begin{equation}
\label{uz_se_to_krati}
\xymatrix@R=1.2em@C=2.5em{\tilde T \ar[d]_{\tilde \phi} \ar[r]^\omega_\cong & T \ar[d]^\phi
\\
\tilde S\ar[r]^{\tilde \omega}_\cong&S
}
\end{equation}
with $\tilde \omega$  an \im{s} from $\DO$ and $\tilde \phi$ elementary.
\end{lemma}
\end{subequations}
 
\begin{proof}
Since $\phi$ is elementary, it has  one fiber $F = \inv \phi
(i)$ for some $i \in |S|$ such that $e(F) \geq 1$, and the remaining
fibers are the 
chosen local terminal objects $u_1,\ldots,u_{i-1},u_{i+1},\ldots,u_{|S|}$.
Consider the diagram
\[
\xymatrix@C=1em@R=1.4em{\tilde T \ar[rr]^\omega \ar[rd]^\pi   && T \ar[ld]_\phi
\\
&S&
}
\]
in which $\pi := \phi\omega$. Since $\omega$ is an isomorphism, by the
functoriality of fiber functors, each induced map $\omega_s : \inv \pi
(s) \to \inv \phi (s)$, $s \in |S|$, is an isomorphism too. Using this and
the obvious fact that $\pi \in \DO$, we see that the ordered list of
fibers of $\pi$ equals
\[
v_1,\ldots,v_{i-1},G,v_{i+1},\ldots,v_{|S|}
\]
where $e(G) \geq 1$, while all the remaining fibers are of grade $0$
and cardinality $1$. The $\WBU$ produces the diagram
\[
\xymatrix@R=.6em@C=.7em{
\tilde T \ar[ddr]_(.4){\pi}   
\ar[rr]^{\tilde \phi}  && \tilde S\ar[ddl]^(.4){\tilde\omega}
\\
&\raisebox{.5em}{\hbox{\scriptsize \WBU}}&
\\
&S&
}
\]
such that the  ordered list of
fibers of $\tilde \omega$ equals
\[
v_1,\ldots,v_{i-1},U,v_{i+1},\ldots,v_{|S|}
\]
and all the induced maps between the fibers are the identities except
$\tilde \phi_i:  G \to U$, which is the unique map to the chosen local
terminal object $U$ in the connected component of $G$.

Axiom (iv) of an operadic category, cf.~\cite[Section~1]{part1} 
or~\cite[page~1634]{duodel}, identifies the fibers of $\tilde \phi$
with the corresponding fibers of the maps induced by $\tilde \phi$ 
between the fibers. 
Since, again
by the axioms of an operadic category, the fibers of the identity map
are the chosen local terminal objects, and the fiber of the unique map $G
\to U$ is $G$, we conclude that $\tilde \phi$ is elementary. Finally,
$\tilde \omega$ is an isomorphism
by~\cite[Lemma~2.12]{part1}.
This concludes the constructions.  
 
The functoriality means that any isomorphism of corners
in~(\ref{Bude mi ta mycka fungovat?}), in the usual sense of
isomorphisms of diagrams, uniquely extends to an isomorphism of the
corresponding squares~(\ref{uz_se_to_krati}). This can be established
by a standard diagram chase.
\end{proof}

\begin{proposition}
\label{22_hodin_cesty}
Each $(\ell, \bfT)\in \lTw(X)$ can be functorially replaced within 
its isomorphism
class by some  $(\tilde\ell, \tilde\bfT)$ in which $\tilde\ell$ is a \qb.
\end{proposition}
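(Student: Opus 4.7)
My plan is to (i) factorize $\ell$ via \Fac\ into its quasibijective and $\DO$ parts, and (ii) propagate the $\DO$ part down the tower using Lemma~\ref{zitra_na_ustrice}. By the lemma preceding Lemma~\ref{zitra_na_ustrice}, the isomorphism $\ell : X \to T$ admits a unique factorization $\ell = \phi_0 \kompozice \tilde \ell$ with $\tilde\ell \in \QO$, $\phi_0 \in \DO$, and both of them isomorphisms. This $\tilde\ell : X \to \tilde T$ will be the quasibijective labeling of the new tower.

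Next I would build $\tilde \bfT$ inductively. Given an iso $\phi_{i-1} : \tilde T_{i-1} \to T_{i-1}$ in $\DO$ (with $\tilde T_0 := \tilde T$), apply Lemma~\ref{zitra_na_ustrice} to the corner formed by $\phi_{i-1}$ and $\tau_i$ to obtain a commutative square with an elementary $\tilde\tau_i : \tilde T_{i-1} \to \tilde T_i$ and an iso $\phi_i : \tilde T_i \to T_i$ in $\DO$. At the final step $i = k$, the lemma's prescription of fiber maps combined with Axiom~(iii) of an operadic category forces $\tilde T_k = U_c$; moreover $\phi_k$ is then a \qb\ in $\DO$, hence the identity by \cite[Corollary~2.4]{part1}. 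The chain $\tilde\bfT := \tilde T \stackrel{\tilde\tau_1}{\to} \cdots \stackrel{\tilde\tau_k}{\to} U_c$ is therefore a tower, and the ladder $\bfphi = (\phi_0, \phi_1, \ldots, \phi_{k-1}, \id_{U_c})$ is a tower morphism $\tilde\bfT \to \bfT$ yielding an isomorphism $(\tilde\ell, \tilde\bfT) \to (\ell, \bfT)$ of the first type in $\lTw(X)$.

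For functoriality I would rely on two uniqueness statements: the uniqueness of the $\DO$--$\QO$ factorization of an isomorphism, and the uniqueness of the blow-up provided by \SBU. A morphism $(\ell, \bfsigma) : (\ell, \bfT) \to (\sigma_1 \ell, \bfT')$ of the first type, combined with the canonical factorizations of $\ell$ and $\sigma_1 \ell$, forces a unique morphism $(\tilde\ell, \tilde\bfT) \to (\widetilde{\sigma_1 \ell}, \tilde \bfT')$ by level-wise uniqueness of blow-up, and the assignment visibly preserves identities and composites. For morphisms of the second type, which swap two consecutive levels $u, u+1$, I would invoke Lemma~\ref{spousta-prace-1}: pushing the common iso $\phi_{u-1}$ through the two alternative pairs $\tau'_u, \tau'_{u+1}$ and $\tau''_u, \tau''_{u+1}$ yields, by the uniqueness of the factorization of the common composite into elementary morphisms with prescribed fiber chains, towers $\tilde\bfT$ and $\tilde\bfT'$ that coincide outside the levels $u, u+1$ and differ there by exactly the corresponding swap.

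The main obstacle is this last compatibility with second-type morphisms: one must ensure that the iterative blow-up does not ``unshuffle'' a level swap, and this is exactly where Lemma~\ref{spousta-prace-1} is needed. Everything else reduces to the uniqueness assertions in \Fac\ and \SBU, so the verification that the assignment $(\ell, \bfT) \mapsto (\tilde\ell, \tilde\bfT)$ is a (retracting) endofunctor on $\lTw(X)$ becomes a routine diagram chase.
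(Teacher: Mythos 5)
Your proposal takes essentially the same route as the paper's proof: factor $\ell$ as a quasibijection followed by a $\DO$-isomorphism via the uniqueness lemma, then iterate Lemma~\ref{zitra_na_ustrice} down the tower to produce $(\tilde\ell,\tilde\bfT)$. The paper leaves the functoriality check implicit ("repeating this process produces a tower"), so your additional remarks on first- and second-type morphisms are a reasonable elaboration rather than a deviation.
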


\begin{proof}
Let $\bfT$ be as in~(\ref{t1-1}) and let $\ell : X \to T$ be an \im. We
decompose $\ell$ as $\sigma_1 \kompozice \tilde\ell$, with  $\tilde\ell$ a \qb\
and $\sigma_1$ an \im\ in $\DO$. Lemma~\ref{zitra_na_ustrice} gives a
canonical square
\[
\xymatrix@R=1.2em@C=2.5em{\tilde T\ar[d]_{\tilde \tau_1} 
\ar[r]^{\sigma_1}_\cong & T \ar[d]^{\tau_1}
\\
\tilde T_1\ar[r]^{\sigma_2}_\cong&T_1
}
\]
in which $\tilde \tau_1$ is elementary and $\sigma_2$ an \im\ in
$\DO$. Repeated  application of Lemma~\ref{zitra_na_ustrice} produces a tower $\tilde \bfT$ labeled
by the \qb\ $\tilde\ell : X \to \tilde T$, as well as a morphism of
the first type $(\tilde\ell,\tilde \bfT) \to (\ell,\bfT)$.
\end{proof}

Let $\tlTw(X)$ be the graded category whose objects are
towers $(\ell,\bfT)$ labeled by a \qb. 
 Morphisms of the first type in  $\tlTw(X)$  are
those  $(\ell,\bfsigma) 
:(\ell,\bfT') \to (\sigma_1\ell,\bfT'')$ in which $\sigma_1$ is a \qb.
Morphisms of the second
type are the same as those in  $\lTw(X)$. Notice that  $\tlTw(X)$ is a
full subcategory of $\lTw(X)$. Indeed, if both $\ell$ and
$\sigma_1\ell$ are \qb{s}, then $\inv \ell$ is a \qb\ by $\QBI$, so
$\sigma_1 = (\sigma_1\ell)\inv\ell$ is a \qb\ too.
This, along with Proposition~\ref{22_hodin_cesty}, 
implies that  $\tlTw(X)$ is a full reflective graded subcategory of~$\lTw(X)$.

\begin{example}
\label{pisu_druhy_den_v_Srni}
Labeled towers $(\ell',\bfT'), (\ell'',\bfT'') \in\tlTw^2(X)$ are 
isomorphic if and only if there is the commuting
diagram~(\ref{vecer_ustrice}) in which the maps in the upper triangle
are \qb{s}.
\end{example}

Let $G \fib W \stackrel\phi\to X$ be elementary. 
Assume we are given a labeled tower $(\ell',\bfF) \in
\ltw(G)$, where
\begin{equation}
\label{17b}
\bfF: =
F \stackrel {\varphi_1} \longrightarrow F_1  \stackrel  {\varphi_2} \longrightarrow
F_2\stackrel  {\varphi_2} \longrightarrow 
\cdots  \stackrel  {\varphi_{l-1}}   \longrightarrow  F_{l-1}
\end{equation}
is a tower with the associated fibers $f_1,\ldots,f_l$,
with the labeling $\ell' : G \to F$. Assume we are also given a labeled
tower $(\tilde \ell,\bfT) \in \tlTw(X)$, with $\tilde\ell$ a \qb. 
The blow-up axiom gives a unique diagram
\[
\xymatrix@C=3.5em@R=1.2em{W \ar[d]_{\phi} \ar[r]^{\ell''}_\cong  & S \ar[d]^{\rho}
\\
X \ar[r]^{\tilde \ell}_\sim &T
}
\]
in which $F \fib S \stackrel\rho\to T$ is elementary  and
$\ell''$ an \im\ inducing the map $\ell' : G \to F$ between the unique
nontrivial fibers of $\phi$ and $\rho$, respectively.  
Lemma~\ref{spousta-prace} gives the composite tower
\begin{equation}
\label{Taronaga_Zoo}
\bfT \circ_\rho \bfF:=
S \stackrel {\rho_1} \longrightarrow S_1  \stackrel  {\rho_2} \longrightarrow
S_2  \stackrel  {\rho_3} \longrightarrow \cdots  \stackrel
{\rho_{l-1}} \longrightarrow S_{l-1}  \stackrel  {\rho_l} \longrightarrow  T
  \stackrel  {\tau_1}\longrightarrow T_1  \stackrel  {\tau_2} \longrightarrow
T_2\stackrel  {\tau_2} \longrightarrow 
\cdots  \stackrel  {\tau_{k-1}}   \longrightarrow  T_{k-1}  
\end{equation}
whose initial part is~(\ref{eq:2}), so we have the composite labeled tower
\begin{equation}
\label{Safari_ani_ne_za_tyden.}
(\tilde\ell,\bfT) \circ_\phi (\ell',\bfF) :=  (\ell'',\bfT \circ_\rho \bfF) 
\in \ltw(W).
\end{equation}
The above construction clearly extends to a functor
\[
\circ_\phi : \tlTw(X) \times \lTw(G)   \to     \lTw(W)
\]
which, precomposed with the equivalence $\lTw(X) \to \tlTw(X)$ in the
first variable, gives~(\ref{mohl_bych_si_dat_i_12_ustric}).

Let $E \in \Coll$ be a $1$-connected collection.
For a tower~(\ref{t1}) we define
\[
E(\bfT) := E[t_1] \otimes \cdots  \otimes E[t_k]
\in \ttV.
\]
We will show how the rule $E(\ell,\bfT) := E(\bfT)$ extends to a functor $E
: \lTw(X) \to \ttV$. Consider a morphism $(\ell,\bfsigma) 
:(\ell,\bfT') \to (\sigma_1\ell,\bfT'')$ of the first
type, with  $\bfsigma : \bfT' \to \bfT''$ a map   
of towers as in~\eqref{f1}. For each $0 \leq s \leq k$ one has
the commutative diagram
\[
\xymatrix@C=4em{T_s' \ar[d]_{(\tau_s',i)}\ar[dr]^{\tau_s}
  \ar[r]^{\sigma_{s+1}}_\cong  
& T_s'' \ar[d]^{(\tau_s'',j)}
\\
T_{s-1}' \ar[r]^{\sigma_s}_\cong &T_{s-1}''
}
\]
in which $\tau_s := \sigma_s \kompozice \tau'_s =  \tau''_s \kompozice
\sigma_{s+1}$ and where $T'_s := T'$,   $T''_s := T''$ if $s=0$.
Lemma~\ref{l3} provides us~with
\[
t'_s \fib  \tau^{-1}_s(j)  
\stackrel{(\tau'_s)_j}\longrightarrow \sigma_s^{-1}(j)
\ \hbox { and } \
(\sigma_{s+1})_j: \tau_s^{-1}(j) \to t''_s
\]
where $ \sigma_s^{-1}(j)$ is local terminal
by~\cite[Lemma~2.12]{part1}, so we can define $\sigma_s^\star : E[t''_s] \to   E[t'_s]$
as the composite
\[
\sigma_s^\star : E[t''_s] \stackrel{(\sigma_{s+1})_j^*}\longrightarrow E[
\tau_s^{-1}(j)] = E[t'_s]
\]
where the equality uses the
fact that $E$ is constant along virtual isomorphisms.
This in turn induces a map
\begin{equation}
\label{Dnes snad uvidim Tereje.}
\bfsigma^\star : E(\bfT'') 
=  E[t'_1] \otimes \cdots  \otimes E[t'_k] \longrightarrow
E(\bfT')
=  E[t''_1] \otimes \cdots  \otimes E[t''_k]
\end{equation}
by $\bfsigma^\star := {\sigma}_1^\star \ot \cdots \ot 
{\sigma}_k^\star$.
Define finally $E(\ell,\bfsigma) :  E(\ell,\bfT'') \to E(\sigma_1\ell,\bfT')$
as $E(\ell,\bfsigma):= \bfsigma^\star$.

Let us discuss morphisms of the second type.
Corollary~\ref{move} gives identities
\[
 t'_u = t''_{u+1} \ \mbox { and } \
t''_u = t'_{u+1}.
\]
We define the $E$-image of this map as the identification of
\[
e_1 \ot \cdots\ot e''_{u+1} \ot e'_{u+1} \ot \cdots \ot e_k
\in E[t_1] \ot \cdots \ot E[t'_u] \ot E[t'_{u+1}] \ot 
\cdots \ot E[t_k]
\]
in $E(\bfT')$ with
\[
e_1 \ot \cdots\ot e_{u+1}'\ot e''_{u+1} \ot \cdots \ot e_k
\in E[t_1] \ot \cdots \ot E[t''_u] \ot E[t''_{u+1}] \ot 
\cdots \ot E[t_k]
\]
in $E(\bfT'')$ given by the symmetry constraint in $\ttV$. 
It is simple to show that the above definition of the functor $E$ 
is compatible with the
relations between the generating morphisms of $\lTw(X)$. 
For instance, the compatibility with~\eqref{Vcera jsem si
  byl prodlouzit prukaz.} is guaranteed by the hexagon axiom for the
symmetric monoidal category $\ttV$.

Here and at
several places below we use the notation that assumes that the objects
of $\ttV$ have elements.  The interested reader can easily rewrite
formulas of this type to more general but less intuitive language of diagrams.

\begin{lemma}
\label{popozitri_Janacek}
The diagram of functors
\[
\xymatrix{& \ttV &
\\
\tlTw(X) \times \lTw(G) \ar[ur]^{E \ot E}  \ar[rr]^(.6){\circ_\phi} && 
\lTw(W) \ar[ul]_E
}
\]
commutes for an arbitrary elementary morphism $G \fib W \stackrel\phi\to X$.
\end{lemma}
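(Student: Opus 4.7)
My plan is to verify the commutativity of the diagram on objects and then on each of the four kinds of generating morphism of $\lTw(X) \times \lTw(G)$: first- and second-type morphisms in each factor.

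\emph{On objects.} Fix $((\ell,\bfT),(\ell',\bfF))$ with $\bfT$ as in~\eqref{t1} and $\bfF$ as in~\eqref{17b}. By construction, $\bfT\circ_\rho\bfF$ in~\eqref{Taronaga_Zoo} is obtained from the elementary $\rho: S\to T$ (produced from $\phi$ by the strong blow-up axiom) and the chain $\bfF$ via Lemma~\ref{spousta-prace}. That lemma guarantees $(\rho_s)_a=\varphi_s$ for $s<l$ and $(\rho_l\circ\cdots\circ\rho_s)^{-1}(a)=F_{s-1}$, so the unique nontrivial fiber of each $\rho_s$ is virtually isomorphic to $f_s$, while the fibers of the tail $\tau_1,\ldots,\tau_k$ are unchanged. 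Since a $1$-connected collection~$E$ is constant on virtual-isomorphism classes, one finds $E(\bfT\circ_\rho\bfF) = E[f_1]\ot\cdots\ot E[f_l]\ot E[t_1]\ot\cdots\ot E[t_k]$, which agrees with $E(\bfT)\ot E(\bfF)$ under the symmetry constraint of $\ttV$.

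\emph{On morphisms of the first type.} A morphism $(\ell,\bfsigma):(\ell,\bfT')\to(\sigma_1\ell,\bfT'')$ in $\lTw(X)$ yields, via the strong blow-up axiom applied iteratively to each square, a corresponding morphism of composite towers $\bfT'\circ_{\rho'}\bfF\to\bfT''\circ_{\rho''}\bfF$. The uniqueness clause of Lemma~\ref{spousta-prace} forces this lifted morphism to act by the $\sigma_s$ on the $\bfT$-levels and as the identity on the $\bfF$-levels (since both factorizations are built from the same data $\pi_s=\varphi_s$). Applying $E$ gives $\id_{E(\bfF)}\ot\bfsigma^\star$, matching $E\ot E$ of the original morphism. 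First-type morphisms in $\lTw(G)$ are treated symmetrically: a morphism $\bfrho$ of $\bfF$-towers with fixed label $\ell'$ transfers to the $\bfF$-portion of the composite only, producing $\bfrho^\star\ot\id_{E(\bfT)}$.

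\emph{On morphisms of the second type.} A second-type morphism of $\bfF$ interchanging elementary morphisms with disjoint fibers at levels $u,u+1$ yields, via Lemma~\ref{spousta-prace-1} applied to the chains~\eqref{eq:6} and~\eqref{eq:7} (with the fiber-index shift noted in the remark following Lemma~\ref{spousta-prace}), a second-type morphism of the composite tower interchanging $\rho_u$ and $\rho_{u+1}$. Under~$E$ this becomes precisely the symmetry isomorphism of $\ttV$ swapping the factors $E[f_u]$ and $E[f_{u+1}]$, as in the definition of $E$ on second-type morphisms, so it agrees with $E\ot E$ of the original morphism. Second-type morphisms in $\lTw(X)$ are handled identically, swapping two $E[t_\cdot]$-factors.

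\emph{Main obstacle.} The delicate point is that the induced isomorphisms $\bfsigma^\star$ computed inside $\lTw(W)$ via~\eqref{Dnes snad uvidim Tereje.} must agree, factor by factor, with those originally computed in $\lTw(X)$ or $\lTw(G)$. This boils down to checking that the maps between fibers produced by Lemma~\ref{l3} commute with the factorization of Lemma~\ref{spousta-prace}; Axiom~(iv) of an operadic category together with the uniqueness in \SBU\ takes care of this. Once this bookkeeping is in place, bifunctoriality and the compatibility of the two morphism types reduces to routine verification.
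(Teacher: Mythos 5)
Your proof is correct and rests on the same key observation as the paper's: by Lemma~\ref{spousta-prace} the fiber sequence of the composite tower $\bfT\circ_\rho\bfF$ is the concatenation $f_1,\ldots,f_l,t_1,\ldots,t_k$ of the two fiber sequences, so $E$ of the composite is canonically $E(\bfF)\ot E(\bfT)$. The paper stops essentially there, declaring the rest immediate from the definition of $E$; your explicit check on the four kinds of generating morphisms (via the uniqueness clauses of Lemmas~\ref{spousta-prace} and~\ref{spousta-prace-1}) merely spells out the naturality the paper leaves implicit.
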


\begin{proof}
Assume that $(\tilde \ell,\bfT) \in \tlTw(X)$ and  
 $(\ell',\bfF) \in \lTw(G)$, with $\bfT$ as in~(\ref{t1}) and $\bfF$
as in~(\ref{17b}). Recall that then $(\tilde\ell,\bfT) \circ_\phi
(\ell',\bfF) \in  \lTw(W)$ is
given by formula~(\ref{Safari_ani_ne_za_tyden.}). The crucial fact is
that the fiber sequence of $\bfT \circ_\rho \bfF$ is 
\[
f_1,\ldots,f_l,t_1,\ldots,t_{k},
\] 
where  $f_1,\ldots,f_l$ 
resp.~$t_1,\ldots,t_k$ is the fiber sequence of $\bfF$ resp.~of $\bfT$. 
The canonical isomorphism 
\[
E(\ell,\bfT) \ot E(\ell',\bfF) 
\cong E\big((\ell,\bfT) \circ_\phi (\ell',\bfF)\big)
\]
then follows immediately from the definition of the functor $E$ as
given above.
\end{proof}

\begin{theorem}
\label{dnes_bylo_40}
Let $\ttV$ be a cocomplete symmetric monoidal category and  let
$E \in \Coll$ be a $1$-connected collection in $\ttV$. Then the formula
\begin{equation}
\label{Konci_Neuron?}
\Free(E)(X) := 
\begin{cases}\displaystyle
\colim_{(\ell,\bfT) \in\lTw(X)} E(\ell,\bfT)&\hbox{if $e(X) \geq 1$}
\\
\bfk &\hbox{if $e(X) = 0$}
\end{cases}
\end{equation}
defines a left adjoint $E \mapsto \Free(E)$ to the forgetful functor\/
$\Box$ of  Proposition~\ref{Pomuze_vitamin_C?}. Therefore $\Free(E)$
is  the free $1$-connected strictly unital 
Markl operad generated by $E$.
\end{theorem}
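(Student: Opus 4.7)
The plan is to verify in order that $\Free(E)$ is an $\Iso$-presheaf, that it carries composition laws for each elementary morphism, that it is $1$-connected and strictly unital, that it satisfies Axioms~(i)--(iii) of Definition~\ref{markl}, and finally to exhibit the adjunction with~$\Box$. By Proposition~\ref{22_hodin_cesty}, the colimit in~\eqref{Konci_Neuron?} may be taken equivalently over the reflexive subcategory $\tlTw(X)$, which will simplify several checks.

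First I would build the structure. For an isomorphism $\omega : X' \to X''$ the assignment $(\ell,\bfT) \mapsto (\ell\omega,\bfT)$ is a functor $\lTw(X'') \to \lTw(X')$, and since $E(\ell,\bfT) = E(\ell\omega,\bfT)$ as objects of $\ttV$, this induces $\omega^* : \Free(E)(X'') \to \Free(E)(X')$ by the universal property. For an elementary $G \fib W \stackrel{\phi}{\to} X$, the composition law $\circ_\phi : \Free(E)(X) \otimes \Free(E)(G) \to \Free(E)(W)$ is obtained from the functor $\circ_\phi : \tlTw(X) \times \lTw(G) \to \lTw(W)$ of~\eqref{mohl_bych_si_dat_i_12_ustric} together with the natural isomorphism $E(\ell,\bfT) \otimes E(\ell',\bfF) \cong E\bigl((\ell,\bfT)\circ_\phi(\ell',\bfF)\bigr)$ of Lemma~\ref{popozitri_Janacek}; because the tensor product in $\ttV$ commutes with colimits in each variable, this yields a well-defined morphism between the colimits. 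The unit maps $\eta_U : \bfk \to \Free(E)(U) = \bfk$ for chosen local terminals are the identities.

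Next I would verify the Markl axioms. Strict unitality and $1$-connectivity are immediate from~\eqref{Konci_Neuron?}, since composing with the unique tower of height one over $U$ attaches no new fiber and multiplies $E(\ell,\bfT)$ by $\bfk$. Axiom~(i): for a joint-fibre pair $T \stackrel{\phi}{\to} S \stackrel{\psi}{\to} P$ with composite $\xi$, one verifies that attaching the tower for a labelled $\phi_i$-tower above $\psi$ produces the same labelled tower in $\lTw(W)$ as first composing at $\psi$ to form a tower for $\xi$; here Lemma~\ref{spousta-prace} guarantees that both constructions yield the same factorization~\eqref{eq:2}, and the tensor factorizations of $E$ match. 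Axiom~(ii): for the disjoint-fibre diagram~\eqref{den_pred_Silvestrem_jsem_nachlazeny}, the two resulting factorizations of $\xi$ differ by the commutative diamond~\eqref{eq:11} supplied by Lemma~\ref{spousta-prace-1}, which is precisely a morphism of the second type in $\lTw(W)$; the symmetry $\tau$ on the $E$-side then matches the identification imposed in forming the colimit. Axiom~(iii) is the compatibility of $\circ_\phi$ with isomorphisms and reduces, via the functorial lift of Lemma~\ref{zitra_na_ustrice}, to the functoriality of~$(-)^*$ on~$\lTw$.

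The main obstacle is the well-definedness of $\circ_\phi$ on colimits, i.e.\ the verification that the pairing descends across \emph{both} types of morphisms in $\lTw(W)$, especially the second-type moves between towers which differ only at two consecutive elementary factors with disjoint fibres. Once this coherence and the three axioms above are in place, the adjunction is constructed as follows. The unit $\eta_E : E \to \Box\Free(E)$ at $[X]$ sends a generator to the colimit class of the height-one labelled tower $(\id_X, X \to U_{\pi_0(X)})$ tensored with that generator. Conversely, a morphism of collections $f : E \to \Box\Markl$ induces, for each labelled tower $(\ell,\bfT) \in \tlTw(X)$ with fibre sequence $t_1,\ldots,t_k$, the map
\[
E[t_1]\otimes\cdots\otimes E[t_k]
\xrightarrow{f^{\otimes k}}
\Markl(t_1)\otimes\cdots\otimes\Markl(t_k)
\longrightarrow \Markl(T) \xrightarrow{\ell^*} \Markl(X),
\]
where the middle arrow is the iterated circle product along the tower $\bfT$. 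Axioms~(i)--(iii) and strict unitality of~$\Markl$ make this cocone compatible with morphisms of both types in $\lTw(X)$, so it factors uniquely through $\Free(E)(X)$, and the triangle identities together with uniqueness of the factorization are then a routine check.
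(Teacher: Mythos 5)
Your proposal is correct and follows essentially the same route as the paper: the $\Iso$-presheaf structure via $(\ell,\bfT)\mapsto(\ell\omega,\bfT)$, the composition laws as colimits of the natural isomorphism of Lemma~\ref{popozitri_Janacek}, the unit of the adjunction via the height-one towers $E\cong\Free^1(E)$, and the extension of a collection map $y:E\to\Box\Markl$ by iterated circle products along the fiber sequence, exactly as in the paper's formula~(\ref{dneska_musim_zvladnout_jeste_toho_Slovaka}); indeed you supply more detail than the paper on which auxiliary lemmas verify Axioms~(i)--(iii), which the paper leaves to the reader. The only point the paper treats explicitly that you pass over quickly is the composition $\circ_\phi$ when the target $X$ has grade zero (so $\Free(E)(X)=\bfk$ is not itself a colimit over $\lTw(X)$); there the paper uses the blow-up axiom to transport a labelled tower over the fiber to one over $W$, and your appeal to strict unitality should be expanded along those lines.
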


Adjoining the monoidal unit
in~(\ref{Konci_Neuron?}) should be
compared to adjoining the unit to the free nonunital operad in
formula~(II.1.58) of~\cite{markl-shnider-stasheff:book}.

\begin{proof}[Proof of Theorem~\ref{dnes_bylo_40}]
Assume that $X\in \ttO$ is such that   $e(X) \geq 1$.  
It is clear that $\Free(E)(X)$ is 
graded by the height $k$  of the
underlying towers so that it decomposes as
\begin{equation}
\label{co_udela_ta_noha_na_Safari}
\Free(E)(X) \cong \coprod_{k \geq 1}  \Free^k(E)(X).
\end{equation} 
Elements of $\Free^k(E)(X)$ are equivalence classes
$[\ell,e]$ consisting of a labeling $\ell :X \iso T$ and of an
element 
$e \in E(\bfT)$ associated with a labeled tower $(\ell,\bfT)$  of height 
$k$ as 
in Definition~\ref{zitra_vedeni}.
For an isomorphism $\omega : Y \iso X$ one puts $\omega^*[\ell,e] :=
[\ell\omega,e] \in  \Free^k(E)(Y)$. This turns $\Free^k(E)$ into 
an $\Iso$-presheaf in $\ttV$.
Defining formally $\Free^0(E)$ to be the trivial presheaf $\bfk$,
one thus has a decomposition
\[
\Free(E) \cong \coprod_{k \geq 0}  \Free^k(E)
\]
of $\Iso$-presheaves in $\ttV$.

In particular, 
this shows that $\Free(E)$ is an $\Iso$-presheaf as required in
the definition of Markl operad.

To define the composition, recall that $\tlTw(X)$ is a full
reflective subcategory of $\lTw(X)$, therefore one has the canonical isomorphism 
\[
\Free^k(E)(X) \cong
\colim_{(\tilde \ell,\bfT) \in\tlTw^k(X)}  E(\tilde\ell,\bfT)
\]
with the colimit taken over towers of height $k$ labeled by \qb{s}.
The composition~law
\begin{equation}
\label{Zavolam_ty_Sarce?}
\circ_\phi: \Free^k(E)(X) \otimes \Free^l(E)(G) \longrightarrow 
\Free^{k+l}(E)(W),\ 
G \fib W \stackrel\phi\to X,\ k,l \geq 1,
\end{equation} 
is then defined as the colimit of the natural isomorphism between the
functors
\[
E \ot E: \tlTw^k(X) \times \lTw^l(G) \to \ttV \hbox { and } 
E : \lTw^{k+l}(W)  \to \ttV
\] 
established in Lemma~\ref{popozitri_Janacek}. One must also define the
composition law in~(\ref{Zavolam_ty_Sarce?}) for $k=0$, i.e.,
specify a map
\begin{equation}
\label{zavolal_jsem_ji}
\circ_\phi: \Free^l(E)(G) \cong \Free^0(E)(X) \otimes \Free^l(E)(G) 
\longrightarrow \Free^{l}(E)(W).
\end{equation}

Notice first that the grade of $X$ must be zero, so by our assumptions on $\ttO$, $X$ is 
a~local terminal object. Consider an element
$[\ell',e] \in \Free^l(E)(G)$ with  $\ell' : G \iso F$ and $e \in
E(\bfF)$ with $\bfF$ as in~(\ref{17b}). 
Using the blow-up axiom we include $\phi : W \to X$ to the diagram
\[
\xymatrix@C=1em{
W \ar[rr]^{\ell''}_\cong\ar[rd]_\phi  &&S\ar[ld]^\rho 
\\
&X&
}
\]
in which $\ell''$ induces the map  $\ell' : G \iso F$ between the
fibers. Let ${\bfS}$ be the tower as in~(\ref{eq:2})
with $X$ in place of $T$. Then $(\ell'',\bfS) \in \lTw(W)$. Since by
construction the associated fiber sequence of $\bfS$ is the same as
the associated fiber sequence of $\bfF$, one has $E(\bfF) = E(\bfS)$,
thus it makes sense to define $\circ_\phi$ in~(\ref{zavolal_jsem_ji}) by
$\circ_\phi ([\ell',e]) := [\ell'',e]$.

Notice that one cannot have $l = 0$ in~(\ref{Zavolam_ty_Sarce?}),
since the fiber of an elementary map has always positive grade. We
leave to the reader to verify that the above constructions make  
$\Free(E)$ a~Markl operad.

Let us describe  $\Free^1(E)$ explicitly. As noticed in
Example~\ref{vcera_jsem_bezel_36_minut},  
$\lTw^1(X)$ is the category $X/\Iso$ of \im{s} in
$\ttO$ under $X$. Elements of  $\Free^1(E)(X)$ are equivalence classes
$[\omega,e]$ of pairs $\omega : X \iso T$, $e \in E[T]$, modulo the
identification
$[\sigma\omega' ,e''] = [\omega' ,\sigma^*e'']$ 
for each diagram
\[
\xymatrix@C=1em{
&X \ar[ld]_{\omega'}^\cong\ar[rd]^{\omega''}_\cong   &
\\
T' \ar[rr]^\sigma_\cong && T''
}
\]
of isomorphisms in $\ttO$.
Since $\lTw^1(X)$ is connected, with a
distinguished object $\id : X \to X$, the map 
$i : E[X] \to \Free^1(E)(X)$ given by $i(e) :=  [\id,e]$ for
$e \in E[X]$, is an isomorphism. These isomorphisms
assemble 
into an isomorphism $E \cong \Free^1(E)$ of collections. 
Let us finally denote by $\iota : E \hookrightarrow \Box \Free(E)$ the
composite
\begin{equation}
\label{Kdy bude hotovy Terej?}
\iota:
E \iso \Free^1(E)  \hookrightarrow \Box \Free(E).
\end{equation}

To establish the freeness of Theorem~\ref{dnes_bylo_40} means
to prove that, for each  $1$-connected strictly  unital 
Markl operad $\Markl$ and a map of collections  $y : E \to \Box \Markl$, there
exists precisely one map $\hat y : \Free(E) \to \Markl$ of  strictly unital 
Markl operads making the diagram
\[
\xymatrix@C=3em{E\ \ar@{^{(}->}[r]^(.4){\iota}\ar[rd]^y & 
\Box \Free(E) \ar[d]^{\Box
    \hat y}
\\
&\Box \Markl
 }
\]
commutative.

Let us assume that such a map  $\hat y : \Free(E) \to \Markl$ exists and prove that
it is unique. To this end consider an arbitrary element $[\ell, e] \in
\Free(E)(X)$ given by a pair $\ell : X \iso T$, $e \in E(\bfT)$ for 
a~labeled tower  $(\ell, \bfT) \in \lTw(X)$ as in
Definition~\ref{zitra_vedeni}. For
\[
e = e_1 \ot \cdots \ot e_k  \in  E[t_1] \ot \cdots \ot E[t_k]
\]
it immediately follows from the definition of the operad
structure of $\Free(E)(X)$ that 
\[
[\ell, e] = \ell^*\big(e_1 \circ_{\tau_1}(e_2  \circ_{\tau_2}
\cdots(e_{k-1} 
\circ_{\tau_{k-1}} e_{k}) \cdots )\big) ,
\]
where we used the notation
\[
x \circ_{\tau_i} y := (-1)^{|x||y|} \circ_{\tau_i}(y \otimes x)
\]
for $x \in E[T_i],\  y \in E[t_i]$ and $1 \leq i \leq k-1$. We moreover
considered $\Rada e1k$ as elements of $\Free^1(E)$ via the
isomorphism $i : E \iso  \Free^1(E)$. Since $\hat y$ is a
morphism of operads, we have
\begin{equation}
\label{dneska_musim_zvladnout_jeste_toho_Slovaka}
\hat y([\ell, e]) = \ell^*\big(y(e_1) \circ_{\tau_1}(y(e_2)  \circ_{\tau_2}
\cdots(y(e_{k-1}) 
\circ_{\tau_{k-1}} y(e_{k})) \cdots )\big).
\end{equation}

On the other hand, one may verify
that~(\ref{dneska_musim_zvladnout_jeste_toho_Slovaka}) indeed defines a
morphism of operads with the required property, to finish the proof.
\end{proof}

\section{Quadratic Markl operads and duality}
\label{pairing}

The goal of this section is to introduce quadratic Markl operads over operadic
categories and define their Koszul duals. As customary in this
context, the base monoidal category~$\ttV$ here and in the rest of the paper
will be the category $\Vect$ of graded vector spaces
over a~field $\bfk$ of characteristic $0$. All operads will be tacitly
assumed to be strictly unital. The operadic category
$\ttO$ shall fulfill  \AA\ plus the rigidity axiom~\Rig. 

\begin{definition}
\label{Prestane Jarka pit?}
An {\em ideal\/} $\calI$ in a Markl operad $\Markl$  is a
sub-$\Iso$-presheaf of $\Markl$ which is simultaneously an ideal with respect to the
circle products~(\ref{ten_prelet_jsem_podelal}), i.e.\
\[
\circ_\phi(a \ot b) \in \calI(T) 
\hbox { if $a\in \calI(S)$ or   $b \in \calI(F)$ for $F \fib T
  \stackrel \phi\to S$.}
\]
For a  sub-$\Iso$-presheaf  $R$ 
we denote by $(R)$ the component-wise intersection of all
ideals containing $R$. We call $(R)$ the ideal {\em generated\/} by $R$.
\end{definition}

\begin{definition}
\label{zitra_seminar}
A {\em quadratic data\/} consists of a
$1$-connected collection  $E \in \Coll$ and 
an~sub-$\Iso$-presheaf  $R$ of $\Free^2(E)$. A $1$-connected
Markl operad $\Markl$ is {\em quadratic\/} if it is of the form
\[
\Markl = \Free(E)/(R).
\]
It is {\em binary\/} if the
generating collection $E$ is such that $E[T] \not= 0$ implies that $e(T)= 1$.
\end{definition}

Many examples of binary operads will be given in the
following sections. Let us proceed to our generalization of the
operadic Koszul duality of \cite{ginzburg-kapranov:DMJ94} to operads over general
operadic~categories.

We start by noticing that the 
piece  $\lTw^k(X)$ of height $k$ of the 
groupoid $\lTw(X)$ constructed in Section~\ref{zitra_letim_do_Pragy} 
decomposes into  the coproduct 
\[
\lTw^k(X) = \coprod_{c \in \pi_0(\lTw^k(X))} \lTw^k_c(X) 
\]
over the
set $ \pi_0(\lTw^k(X))$ of connected components of $\lTw^k(X)$,
which is thus also true for the $k$th piece of the $X$-component of
the free operad
\begin{equation}
\label{Do_konce_Safari_zbyvaji_tri_dny.}
\Free^k(E)(X) = \bigoplus_{c \in \pi_0(\lTw^k(X))} \Free^k_c(E)(X). 
\end{equation}
Choose a labeled tower $(\ell^c,\bfT^c)$ in each connected component
$c$ of $\lTw^k(X)$ and assume the notation
\[
\bfT^c: =
T^c \stackrel {\tau^c_1} \longrightarrow T^c_1  \stackrel  
{\tau^c_2} \longrightarrow
T^c_2\stackrel  {\tau^c_2} \longrightarrow 
\cdots  \stackrel  {\tau^c_{k-1}}   \longrightarrow  T^c_{k-1},
\]
with the associated fiber sequence $t^c_1,\ldots,t^c_{k}$. 
Since there are no automorphisms of the first type of
$(\ell^c,\bfT^c)$ in $\lTw^k(X)$ by the
rigidity of $\ttO$, we have
\[
\Free^k_c(E)(X) \cong  E[t^c_1] \otimes \cdots  \otimes E[t^c_k],
\]
so we have an isomorphism of graded vector spaces
\begin{subequations}
\begin{equation}
\label{dnes_bude_vedro}
\Free^k(E)(X) \cong
\bigoplus_{c \in \pi_0(\lTw^k(X))}    E[t^c_1] \otimes \cdots  \otimes E[t^c_k],
\end{equation}
cf.~the similar 
presentation~\cite[formula (II.1.51)]{markl-shnider-stasheff:book} 
for ``ordinary'' free operads. In the light of
Proposition~\ref{22_hodin_cesty}, one may assume that the tower 
$(\ell^c,\bfT^c)$ in~(\ref{Do_konce_Safari_zbyvaji_tri_dny.}) belongs
to $\tlTw^k(X)$, therefore~(\ref{dnes_bude_vedro}) can be written as the
direct sum
\begin{equation}
\label{dnes_bude_vedro_b}
\Free^k(E)(X) \cong
\bigoplus_{c \in \pi_0(\tlTw^k(X))}    E[t^c_1] \otimes \cdots  \otimes E[t^c_k]
\end{equation}
\end{subequations}
over isomorphism classes of objects of  $\tlTw^k(X)$.

Let $\susp E^*$ be the suspension of the component-wise linear dual of
the collection $E$.
With the above preliminaries, it is easy to define a pairing 
\begin{equation}
\label{jdu_si_lepit_142}
\langle - | - \rangle :
\Free^2(\susp E^*)(X) \ot \Free^2(E)(X) \longrightarrow 
\bfk,\ (\alpha,x) \longmapsto \alpha(x)
\end{equation}
as follows.
If $c' \not= c''$ we declare the subspaces $\Free_{c'}^2(\susp E^*)(X)$ and
$\Free_{c''}^2(E)(X)$ of $\Free^2(\susp E^*)(X)$ resp.~$\Free^2(E)(X)$  
to be orthogonal. If $c := c' = c''$, then
\[
\Free_c^2(\susp E^*)(X) \cong \susp E^*[t^c_1] \ot \susp E^*[t^c_2]\
\hbox { and }\
\Free_c^2(E)(X) \cong E[t^c_1] \ot E[t^c_2]
\]
and the pairing between $\Free_c^2(\susp E^*)(X)$ and
$\Free_c^2(E)(X)$ is defined as the canonical evaluation
\[
\susp E^*[t^c_1] \ot \susp E^*[t^c_2] \ot   E[t^c_1] \ot E[t^c_2]
\longrightarrow \bfk.
\]
We leave as an exercise to show that this definition does not depend
on the choices of the representatives $(\ell^c,\bfT^c)$.

\begin{definition}
\label{vice nez 10 000 pripadu denne}
Let $\Markl$ be a quadratic Markl operad as in
Definition~\ref{zitra_seminar}.
Its {\em Koszul dual\/} $\Markl^!$ is the quadratic Markl operad defined as
\[
\Markl^! = \Free(\susp E^*)/(R^\perp),
\]
where $R^\perp$ denotes the component-wise annihilator of $R$ in
$\Free^2(\susp E^*)$ under the pairing~(\ref{jdu_si_lepit_142}). 
\end{definition}

\begin{definition}
\label{Uz_je_o_te_medaili_zapis.}
A quadratic Markl operad $\Markl$ is {\em self-dual\/} if the associated
categories of $\Markl$- and $\Markl^!$-algebras in $\Vect$ are isomorphic. 
\end{definition}

\begin{example}
All assumptions of this section are met by the operadic category
$\Surj$ of finite non-empty sets and their surjections. The operads
for this category are the classical constant-free operads for which
Koszul duality is classical 
heritage~\cite{ginzburg-kapranov:DMJ94}. A similar example is the
operadic category $\Ord$ of non-empty ordered finite sets and
their order-preserving surjections. Our theory in this case recovers Koszul
duality for nonsymmetric operads. The terminal category ${\tt 1}$ also
satisfies the assumptions of this section. The only $1$-connected
${\tt 1}$-operad, i.e.~an associative algebra, is however the ground
field $\bfk$.
\end{example}

\section{Modular and odd modular operads}
\label{zitra_budu_pit_na_zal}

In this section we analyze binary quadratic operads over the operadic category
$\ggGrc$ of connected genus-graded ordered graphs 
introduced in \cite[Example~4.19]{part1}.
Recall from \hbox{\cite[Example~4.26]{part1}} that  $\ggGrc$ 
satisfies all of the properties
required for Koszul duality, namely, $\ggGrc$
is rigid and fulfills \AA.

We will prove that the terminal operad $\term_\ggGrc$ in the category
\ggGrc\ is binary
quadratic and describe its Koszul dual  $\oddGr :=\term_\ggGrc^!$. We then
show that algebras for $\term_\ggGrc$ are modular operads of
\cite{getzler-kapranov:CompM98} while algebras for
$\term_\ggGrc^!$ are their suitably
twisted versions. We start by analyzing
graphs in $\ggGrc$ with a small number of internal edges.

\begin{example}
\label{Snad_se_ta_Achylovka_trochu_lepsi.}
The local terminal objects of  $\ggGrc$ are genus-graded corollas
$c(\sigma)^g$ for a permutation $\sigma =(\Rada\sigma1n) \in \Sigma_n$
and a genus $g \in \bbN$:
\begin{equation}
\label{Dasa_Vokata}
\raisebox{-10em}{
\psscalebox{1.0 1.0} 
{
\begin{pspicture}(0,-2.6672062)(11,1.672062)
\psdots[linecolor=black, dotsize=0.225](5.275,-0.8477939)
\rput[bl](5.475,-1.2477939){$g$}
\rput(5.275,1.5){$1$}
\psline[linecolor=black, linewidth=0.04](5.275,1.1522061)(5.275,-0.8477939)
\psline[linecolor=black, linewidth=0.04](6.275,0.95220613)(5.275,-0.8477939)
\psline[linecolor=black, linewidth=0.04](4.275,0.95220613)(5.275,-0.8477939)
\rput{-227.95706}(8.177993,-5.332993){\psarc[linecolor=black, linewidth=0.04, linestyle=dotted, dotsep=0.10583334cm, dimen=outer](5.275,-0.8477939){1.2}{0.0}{220}}
\psline[linecolor=black, linewidth=0.04](6.875,0.3522061)(5.275,-0.8477939)
\psline[linecolor=black, linewidth=0.04](7.275,-0.44779387)(5.275,-0.8477939)
\rput[bl](6.3,1.1){$2$}
\rput[bl](7,0.22061){$3$}
\rput[bl](7.4,-0.54779387){$4$}
\rput[bl](4.075,1.1){$n$}
\rput[bl](5.375,0.7522061){$\sigma_1$}
\rput[bl](6.275,0.422061){$\sigma_2$}
\rput[bl](6.875,-1){$\sigma_4$}
\rput[bl](6.75,-0.247793884){$\sigma_3$}
\rput[bl](4.475,0.7522061){$\sigma_n$}
\end{pspicture}
}
}
\end{equation}
The chosen local terminal objects are the genus-graded corollas   
$c_n^g :=c(\id_n)^g$ with $\id_n \in \Sigma_n$ 
the identity permutation.
\end{example}

\begin{example}
Any ordered connected genus-graded graph with one internal 
edge and one vertex looks like
$\xi(\Rada \lambda1k | \lambda_{k+1},\lambda_{k+2})^g$ in
\begin{subequations}
\begin{equation}
\label{dnes_na_Rusalku left}
\raisebox{-7.5em}{
\psscalebox{1.0 1.0} 
{
\begin{pspicture}(0,-1.7275)(5.055,1.9275)
\rput{-227.95706}(5.3,-4.332993){\psarc[linecolor=black, linewidth=0.04, linestyle=dotted, dotsep=0.10583334cm, dimen=outer](5.275,-0.8477939){1.2}{40}{95}}
\rput[bl](2.5,-.55){$g$}
\psarc[linecolor=black, linewidth=0.04, dimen=outer](4.275,0.0125){1.0}{230}{123}
\psline[linecolor=black, linewidth=0.04](2.675,0.0125)(1.075,1.6125)(1.075,1.6125)
\psline[linecolor=black, linewidth=0.04](2.675,0.0125)(1.075,-1.3875)(1.075,-1.3875)
\psline[linecolor=black, linewidth=0.04](5.075,0.0125)(5.475,0.0125)
\psdots[linecolor=black, dotsize=0.225](2.675,0.0125)
\psline[linecolor=black, linewidth=0.04](2.675,0.0125)(0.475,1.0125)
\rput[bl](3.075,-0.1875){$\relax$}
\psline[linecolor=black, linewidth=0.04](3.7,0.85)(2.675,0.0125)
\psline[linecolor=black, linewidth=0.04](2.675,0.0125)(3.675,-0.7875)
\rput(0.875,1.8125){$1$}
\rput(0.25,1.0125){$2$}
\rput(0.875,-1.5875){$k$}
\rput(1.675,1.6125){$\lambda_1$}
\rput(1,1.1){$\lambda_2$}
\rput(1.275,-0.7875){$\lambda_k$}
\rput(4.175,1.39){$\lambda_{k+1}$}
\rput(4.175,-0.5875){$\lambda_{k+2}$}
\end{pspicture}
}
}
\end{equation}
with  half-edges  labeled by a
permutation $\{\Rada \lambda1{k+2}\}$ of $\{\rada1{k+2}\}$. Its automorphism group
equals $\Sigma_2$ that interchanges the half-edges
forming the loop. 
Any two graphs of this kind are isomorphic. In
\begin{equation}
\label{dnes_na_Rusalku right}
\raisebox{-4.7em}{\rule{0em}{0em}}
\raisebox{-4em}{
\psscalebox{1.0 1.0} 
{
\begin{pspicture}(2.5,-1.7275)(40.055,1.9275)
\rput[bl](9,-.55){$g_u$}
\rput[bl](11.6,-.55){$g_v$}

\rput{-227.95706}(11.5779,-4.332993){\psarc[linecolor=black, linewidth=0.04, linestyle=dotted, dotsep=0.10583334cm, dimen=outer](5.275,-0.8477939){1}{40}{95}}
\rput{-227}(15.1,-4.332993){\psarc[linecolor=black, linewidth=0.04,
  linestyle=dotted, dotsep=0.10583334cm,
  dimen=outer](5.275,-0.8477939){1}{205}{-95}}
\psline[linecolor=black, linewidth=0.04](9.075,0.0125)(7.475,1.6125)(7.475,1.6125)
\psline[linecolor=black, linewidth=0.04](9.075,0.0125)(7.475,-1.3875)(7.475,-1.3875)
\psline[linecolor=black, linewidth=0.04](9.075,0.0125)(6.875,1.0125)
\rput(7.275,1.8125){$l_1$}
\rput(6.6,1.0125){$l_2$}
\rput(7.275,-1.5875){$l_k$}
\rput(8,1.6125){$\lambda^u_1$}
\rput(7.375,1.125){$\lambda^u_2$}
\rput(7.675,-0.7875){$\lambda^u_k$}
\psdots[linecolor=black, dotsize=0.225](9.075,0.0125)
\psline[linecolor=black, linewidth=0.04](13.475,-1.5875)(11.875,0.0125)(11.875,0.0125)
\psline[linecolor=black, linewidth=0.04](13.475,1.4125)(11.875,0.0125)(11.875,0.0125)
\psline[linecolor=black, linewidth=0.04](14.075,-0.9875)(11.875,0.0125)
\rput(13.075,1.6125){$\lambda^v_l$}
\rput(13.675,-0.3875){$\lambda^v_2$}
\rput(13.475,-1.1875){$\lambda^v_1$}
\psdots[linecolor=black, dotsize=0.225](11.875,0.0125)
\rput(13.975,-1.7875){$l_{k+1}$}
\rput(14.575,-0.9875){$l_{k+2}$}
\rput(13.875,1.6125){$l_{k+l}$}
\psline[linecolor=black, linewidth=0.04](10.475,0.2125)(10.475,-0.1875)
\psline[linecolor=black, linewidth=0.04](9.075,0.0125)(11.875,0.0125)(11.875,0.0125)
\rput(9.875,0.4125){$\lambda^u_{k+1}$}
\rput(11.075,0.4125){$\lambda^v_{l+1}$}
\rput(9.075,0.4125){$u$}
\rput(11.875,0.4125){$v$}
\end{pspicture}
}
}
\end{equation}
\end{subequations}
we depict a general graph
$\nu(\Rada {\lambda^u}1k | \lambda^u_{k+1},\lambda^v_{l+1}|\Rada{\lambda^v}1l)^{g_u|g_v}$ with one
internal edge and two vertices labeled by $u,v \in \{1,2\}$ with 
genera $g_u,g_v \in \bbN$. 
Its global order is determined by a $(k,l)$-shuffle 
\[
\{l_1 < \cdots < l_k, \ l_{k + 1} < \cdots < l_{k + l}\} = \{\rada 1{k+l}\}.
\]
The half-edges adjacent to $u$ are labeled by a permutation $\lambda^u$ of
$\{\rada 1{k+1}\}$; the half-edges adjacent to 
$v$  by a permutation $\lambda^v$ of $\{\rada {1}{l+1}\}$.
Two such graphs with the same global orders and the same genera
are always isomorphic. There are no nontrivial
automorphisms except for the case $k=l=0$ and $g_u = g_v$ when
the graph is an interval with no legs. Then one has the automorphism
flipping it around its middle.
\end{example}

\def\semidirect{\makebox{\hskip .3mm$\times \hskip
    -.8mm\raisebox{.2mm}
 {\rule{.17mm}{1.9mm}}\hskip-.75mm$\hskip 2mm}}

\def\+{\!+\!}
\begin{example}
\label{chteji_po_mne_abych_velel_grantu}
A general graph   $\xi(\Rada \lambda1k |
\lambda_{k+1},\lambda_{k+2}|\lambda_{k+3},\lambda_{k+4})^g$ with two
internal edges and one vertex is depicted in
\begin{equation}
\label{pozitri_do_Srni}
\raisebox{-8em}{
\psscalebox{1.0 1.0} 
{
\begin{pspicture}(2.5,-2.1219277)(8.795,2.1219277)
\psline[linecolor=black,
linewidth=0.04](6.275,0.0030721934)(4.675,1.6030722)(4.475,1.8030722)
\rput(3.5,0){
\rput{-227.95706}(5.3,-4.332993){\psarc[linecolor=black, linewidth=0.04, linestyle=dotted, dotsep=0.10583334cm, dimen=outer](5.275,-0.8477939){1.2}{40}{95}}
\rput[bl](2.6,-.65){$g$}
}
\psdots[linecolor=black, dotsize=0.225](6.275,0.0030721934)
\psline[linecolor=black, linewidth=0.04](6.275,0.0030721934)(4.075,1.2030722)
\rput[bl](6.075,-0.5969278){$\relax$}
\rput(4.275,-1.9969279){$k$}
\rput(4.275,1.9969279){$1$}
\rput(4.875,1.8030722){$\lambda_1$}
\rput(4.475,1.30722){$\lambda_2$}
\rput(4.675,-1.05){$\lambda_k$}
\rput(6.275,1.6030722){$\lambda_{k+1}$}
\rput(7.475,0.60307217){$\lambda_{k+2}$}
\rput{-1.6502923}(-0.025787057,0.21568896){\psarc[linecolor=black, linewidth=0.04, dimen=outer](7.475,1.0030721){0.8}{280}{170}}
\psline[linecolor=black, linewidth=0.04](6.275,0.0030721934)(7.675,0.20307219)
\rput(8.6,-0.3969278){$\lambda_{k+3}$}
\rput(7.475,-1.3969278){$\lambda_{k+4}$}
\rput(3.875,1.4030722){$2$}
\psline[linecolor=black, linewidth=0.04](6.675,1.2030722)(6.275,0.0030721934)
\rput{-87.20956}(8.106839,6.5177426){\psarc[linecolor=black, linewidth=0.04, dimen=outer](7.475,-0.9969278){0.8}{280}{170}}
\psline[linecolor=black, linewidth=0.04](7.675,-0.1969278)(6.275,0.0030721934)
\psline[linecolor=black, linewidth=0.04](6.675,-1.1969278)(6.275,0.0030721934)
\psline[linecolor=black, linewidth=0.04](8, 1.3930722)(8.275,1.5969278)
\psline[linecolor=black, linewidth=0.04](8,-1.3969278)(8.275,-1.5969278)
\psline[linecolor=black, linewidth=0.04](6.275,0.0030721934)(4.475,-1.7969278)
\end{pspicture}
}
}
\end{equation}
Its local order at its single vertex is determined by a permutation
$\lambda$ of $\{\rada 1 {k\+4}\}$.
Its automorphism group equals the semidirect
product $\Sigma_2 \semidirect ( \Sigma_2  \times
\Sigma_2) $. We leave the similar detailed analysis of the remaining
graphs with two internal edges as an exercise. 

Our next task will be to describe free operads over $\ggGrc$ using
formula~(\ref{dnes_bude_vedro_b}). 
As the first step towards this goal we 
describe isomorphism classes of labeled towers 
$(\ell,\bfT) \in  \tlTw^2(X)$ for the ordered graph
\[
X := \xi(\rada 1k
 |{k\+1},{k\+2}|{k\+3},{k\+4})^g
\]
i.e.\ for the graph in~\eqref{pozitri_do_Srni} with $\lambda$ the identity.
As observed in Example~\ref{pisu_druhy_den_v_Srni}, it suffices to consider 
diagrams~(\ref{vecer_ustrice}) in which all maps in the upper triangle
are \qb{s}. Since the graphs $X, T'$ and $T''$ in that triangle have one
vertex only, all $\ell', \ell''$ and $\sigma_1$ belong to~$\OGr$,
therefore they are the identities by
\cite[Corollary~2.6]{part1}. Isomorphism classes in
$\tlTw^2(X)$ are thus represented by maps 
\begin{equation}
\label{dnes_na_Breznik}
\tau:  \xi(\rada 1k
 |{k\+1},{k\+2}|{k\+3},{k\+4} )^g
\longrightarrow  \xi(\Rada \nu1k | \nu_{k+1},\nu_{k+2})^{g+1}
\end{equation}
modulo the equivalence that identifies  
$\tau'$ with $\tau''$  if and only if there
exists an isomorphism $\sigma$ such that $\tau'' = \sigma \kompozice
\tau'$. Notice that a map as in~(\ref{dnes_na_Breznik}) is automatically
elementary, and that all elementary maps from $X$ decreasing the grade
by $1$ are of this form.
Now define the ``canonical''~maps
\[
p_i: 
 \xi(\rada 1k |{k\+1},{k\+2}|{k\+3},{k\+4})^{g} \longrightarrow
\xi(\rada 1k |{k\+1},{k\+2})^{g+1}, \ i = 1,2,
\]
by postulating that $p_1$ (resp.~$p_2$) contracts the loop
$\{{k\+1},{k\+2}\}$ (resp.~$\{{k\+3},{k\+4}\})$ leaving the other loop
unaffected. In other words, the injection $\psi_1$ (resp.~$\psi_2$) of half-edges
defining $p_1$ (resp.~$p_2$) is the order-preserving injection
\[
(\rada 1{k\+2}) \hookrightarrow (\rada 1{k\+4}) 
\] 
that misses the subset $\{k\+1,k\+2\}$ (resp.~$\{k\+3,k\+4\})$.

We claim that for each $\tau$
in~(\ref{dnes_na_Breznik}) there exist a unique $i \in \{1,2\}$ and a
unique isomorphism $\sigma$ making the diagram  
\begin{equation}
\label{lyze_mi_nejedou}
\xymatrix@R=3em@C=-6em{
&\xi(\rada 1k |{k\+1},{k\+2}|{k\+3},{k\+4})^g\ar[ld]_{p_i}\ar[dr]^\tau& 
\\
\xi(\rada 1k |{k\+1},{k\+2})^{g+1}\ar[rr]^\sigma_\cong
&& \xi(\Rada \nu1k | \nu_{k\+1},\nu_{k\+2})^{g+1}
}
\end{equation}
commutative. Since, by definition, morphisms in $\ggGrc$ preserve global
orders, one has for the injections $\psi_\tau$ resp.~$\psi_\sigma$
of half-edges defining $\tau$ resp.~$\sigma$,
\[
\psi_\tau(\nu_j) = \psi_\sigma(\nu_j) = j \ \hbox { for } \  1\leq j \leq k.
\]
Since  $\psi_\tau$ must further preserve the involutions on the sets
of half-edges, there are only two
possibilities:

\noindent 
{\it Case 1:  $\psi_\tau\{\nu_{k+1},\nu_{k+2}\} = \{k\+3,k\+4\}$.\/}  
In this case we take $i=1$ in~(\ref{lyze_mi_nejedou}) and define 
\[
 \psi_\sigma(\nu_{k+1}) := \psi_\tau(\nu_{k+1}) -2,\
 \psi_\sigma(\nu_{k+2}) := \psi_\tau(\nu_{k+2}) -2.
\]
It is clear that with this choice the diagram
in~(\ref{lyze_mi_nejedou}) is commutative and that it is the only such
choice.

\noindent 
{\it Case 2:  $\psi_\tau\{\nu_{k+1},\nu_{k+2}\} = \{k\+1,k\+2\}$.\/}  
In this case we take $i=2$ and define 
\[
\psi_\sigma(\nu_{k+1}) := \psi_\tau(\nu_{k+1}),\
\psi_\sigma(\nu_{k+2}) := \psi_\tau(\nu_{k+2}).
\]
Intuitively, in Case 1 the map $\tau$ contracts the loop
$\{k\+1,k\+2\}$, in  Case 2 the loop $\{k\+3,k\+4\}$. In both cases
the isomorphism
$\sigma$ is uniquely determined by the behavior of
$\tau$ on the non-contracted edge.

The above calculation shows that there are precisely two isomorphism
classes of objects of $\tlTw^2(X)$, namely those of $p_1$ and
$p_2$.
Notice that
\[
p^{-1}_1(1) = \xi(\rada 1k,k\+3,k\+4 |{k\+1},{k\+2})^g
\ \hbox { and } \
p^{-1}_2(1) = \xi(\rada 1k,{k\+1},{k\+2} |{k\+3},{k\+4})^g.
\]
Let $E\in \Coll$ be a  $1$-connected  $\ggGrc$-collection as in 
Definition~\ref{Jarka_dnes_u_lekare}. Formula~(\ref{dnes_bude_vedro})
gives
\begin{align*}
\Free^2(E)(X) \cong\  & 
E[\xi(\rada 1k,{k\+3},{k\+4} |{k\+1},{k\+2})^g] 
\ot E[\xi(\rada 1k |{k\+1},{k\+2})^{g+1}]
\\
\nonumber 
\oplus\ & E[\xi(\rada 1k,{k\+1},{k\+2} |{k\+3},{k\+4})^g]
\ot E[\xi(\rada 1k |{k\+1},{k\+2})^{g+1}]\,.
\end{align*}
Analogous expressions for $X =  \xi(\Rada \lambda1k |
\lambda_{k+1},\lambda_{k+2}|\lambda_{k+3},\lambda_{k+4} )^g$ can be obtained from
the above ones by substituting $j \mapsto \lambda_j$ for $1 \leq j
\leq k+4$. The result is
\begin{align}
\label{po_navratu_ze_Srni_b}
\Free^2(E)(X) \cong\  & 
E[\xi(\Rada \lambda1k,\lambda_{k+3},\lambda_{k+4} |\lambda_{k+1},\lambda_{k+2})^g] 
\ot E[\xi(\Rada \lambda1k |\lambda_{k+1},\lambda_{k+2})^{g+1}]
\\
\nonumber 
\oplus\ & E[\xi(\Rada \lambda1k,\lambda_{k+1},\lambda_{k+2} |\lambda_{k+3},\lambda_{k+4})^g]
\ot E[\xi(\Rada \lambda1k |\lambda_{k+1},\lambda_{k+2})^{g+1}]\,.
\end{align}
\end{example}

\begin{example}
\label{toho_snehu_je_moc}
The right-hand side of~(\ref{po_navratu_ze_Srni_b}) depends only on the
virtual isomorphism classes in $\QV(e)$ of the graphs involved. By the
observations made in
Example~\ref{Snad_to_projde_i_formalne.}, these classes do
not depend on the global orders. 
In this particular case, this means that they do not depend on 
the indices $\Rada \lambda1k$; we can therefore simplify 
the exposition by removing 
them from notation and drawings. We also replace  $\Rada
\lambda{k+1}{k+4}$  by less clumsy
symbols $a,b,c$ and $d$.  
With this convention, we write the two representatives of isomorphism classes in
$\tlTw^2(X)$~as
\begin{align*}
\xi(*,c,d |a,b)^g& \fib \xi(*|a,b|c,d)^g
\stackrel {p_1} \longrightarrow
\xi(*|c,d)^{g+1} \hbox { and}
\\
\xi(*,a,b |c,d)^{g}& \fib \xi(*|a,b|c,d)^{g}
\stackrel {p_2} \longrightarrow
\xi(*|a,b)^{g+1},
\end{align*}
where $*$ stands for unspecified labels.
The right-hand side of~(\ref{po_navratu_ze_Srni_b}) now takes the form
\begin{align}
\label{Bude_Ovar_dalsich_pet_let prezidentem?}
\big\{
E[\xi(*,c,d |a,b)^g] \ot E[\xi(* |c,d)^{g+1}]\big\}
\oplus\big\{ E[\xi(*,a,b |c,d)^g]\ot E[\xi(* |a,b)^{g+1}] \big\}
\end{align}
with the first summand corresponding to the class of $p_1$ and the second
to the class of $p_2$.

We also noticed that the maps $p_1$ and $p_2$ are determined by specifying
which of the two loops of $\xi(*|a,b|c,d)^g$ they contract. The map
$p_1$ and its unique nontrivial fiber is thus depicted as
\[
\psscalebox{1.0 1.0} 
{
\begin{pspicture}(0,-1.4)(14.850119,1.4)
\rput[b](2.4,.3){$g$}
\rput[b](8,.3){$g$}
\rput[b](12.4,.3){$g\+1$}
\rput{-55.289433}(0.4305688,0.822039){\psarc[linecolor=black, linewidth=0.04, dimen=outer](1.0,0.0){0.8}{108.04867}{355.90524}}
\psline[linecolor=black, linewidth=0.04](0.4,0.0)(0.0,0.0)
\psdots[linecolor=black, dotsize=0.225](2.4,0.0)
\psellipse[linecolor=black, linewidth=0.04, linestyle=dashed, dash=0.17638889cm 0.10583334cm, dimen=outer](6.7916665,0.0)(1.9,1.4)
\rput[bl](0.9583333,1.0){$a$}
\rput[bl](0.89166665,-0.6){$b$}
\rput[bl](7.1583333,0.8){$a$}
\rput[bl](6.2916665,-0.6){$b$}
\rput[bl](2.825,.95){$c$}
\rput[bl](2.825,-1.0666667){$d$}
\rput[bl](13.625,1.0){$c$}
\rput[bl](9.425,1.0){$c$}
\rput[bl](9.425,-0.6666667){$d$}
\rput[bl](13.625,-0.6666667){$d$}
\psline[linecolor=black, linewidth=0.04](3.425,0.8)(2.425,0.0)
\psline[linecolor=black, linewidth=0.04](2.425,0.0)(3.425,-0.8)
\rput[bl](11.225,0.2){$p_1$}
\rput[bl](4,-0.2){\LARGE $\fib$}
\psline[linecolor=black, linewidth=0.04, arrowsize=0.05291666666666668cm 2.0,arrowlength=1.4,arrowinset=0.0]{<-}(11.891666,0.06666667)(10.825,0.06666667)
\psline[linecolor=black, linewidth=0.04](2.425,0.0)(1.225,0.8)
\psline[linecolor=black, linewidth=0.04](2.425,0.0)(1.225,-0.8)
\rput{-180.46338}(24.849737,-0.082693085){\psdots[linecolor=black, dotsize=0.225](12.425036,0.008896015)}
\rput{-235.7528}(21.6421,-11.452748){\psarc[linecolor=black, linewidth=0.04, dimen=outer](13.849989,-0.002628368){0.8}{108.04867}{355.90524}}
\psline[linecolor=black, linewidth=0.04](14.449969,-0.00748074)(14.8499565,-0.010715654)
\psline[linecolor=black, linewidth=0.04](12.425036,0.008896015)(13.618526,-0.80078256)
\psline[linecolor=black, linewidth=0.04](12.425036,0.008896015)(13.631467,0.7991651)
\psdots[linecolor=black, dotsize=0.225](8.025,0.0)
\rput{-55.289433}(2.8417542,5.4254575){\psarc[linecolor=black, linewidth=0.04, dimen=outer](6.6,0.0){0.8}{108.04867}{355.90524}}
\psline[linecolor=black, linewidth=0.04](6.0,0.0)(5.6,0.0)
\psline[linecolor=black, linewidth=0.04](8.025,0.0)(6.825,0.8)
\psline[linecolor=black, linewidth=0.04](8.025,0.0)(6.825,-0.8)
\rput{-180.46338}(16.049881,-0.047109026){\psdots[linecolor=black, dotsize=0.225](8.025036,0.008896015)}
\rput{-235.7528}(14.765935,-7.815632){\psarc[linecolor=black, linewidth=0.04, dimen=outer](9.449989,-0.002628368){0.8}{108.04867}{355.90524}}
\psline[linecolor=black, linewidth=0.04](10.04997,-0.00748074)(10.449957,-0.010715654)
\psline[linecolor=black, linewidth=0.04](8.025036,0.008896015)(9.218527,-0.80078256)
\psline[linecolor=black, linewidth=0.04](8.025036,0.008896015)(9.231466,0.7991651)
\end{pspicture}
}
\]
where he dashed oval indicates which part of the graph is contracted by $p_1$.
The pictorial expression of $p_2$ is similar.

We will use similar pictures as a language for free operads over
$\ggGrc$. As an illustration, 
here is a pictorial version
of~(\ref{Bude_Ovar_dalsich_pet_let prezidentem?}):
\begin{equation}
\label{dnes_jsem_objel_kanal}
\raisebox{-6em}{
\psscalebox{1.0 1.0} 
{
\begin{pspicture}(0,-1.4019446)(12.320392,1.4019446)
\rput[b](3.2,.3){$g$}
\rput[b](9.2,.3){$g$}
\psline[linecolor=black, linewidth=0.04](5.133333,-0.0019446583)(5.5333333,-0.0019446583)
\psdots[linecolor=black, dotsize=0.225](3.1333332,-0.0019446583)
\rput{-2.439889}(0.0016315024,0.072724074){\psarc[linecolor=black, linewidth=0.04, dimen=outer](1.7083331,-0.0019446583){0.8}{55.279263}{317.44205}}
\psline[linecolor=black, linewidth=0.04](1.1083331,-0.0019446583)(0.70833313,-0.0019446583)
\psellipse[linecolor=black, linewidth=0.04, linestyle=dashed, dash=0.17638889cm 0.10583334cm, dimen=outer](1.8999997,-0.0019446583)(1.9,1.4)
\rput[bl](6.0,-0.13527799){$\oplus$}
\psline[linecolor=black, linewidth=0.04](3.1333332,-0.0019446583)(1.9333332,0.79805535)
\psline[linecolor=black, linewidth=0.04](3.1333332,-0.0019446583)(1.9333332,-0.8019447)
\rput{-184.56499}(9.025159,-0.39291084){\psarc[linecolor=black, linewidth=0.04, dimen=outer](4.5204096,-0.01659256){0.8}{55.279263}{317.44205}}
\psline[linecolor=black, linewidth=0.04](3.0963898,0.03624849)(4.265899,-0.807699)
\psline[linecolor=black, linewidth=0.04](3.1333332,-0.0019446583)(4.333333,0.79805535)
\rput{-179.75348}(18.374098,0.030811006){\psdots[linecolor=black, dotsize=0.225](9.187082,-0.0043584616)}
\rput{-182.19337}(21.216465,-0.40032578){\psarc[linecolor=black, linewidth=0.04, dimen=outer](10.612064,0.002911593){0.8}{55.279263}{317.44205}}
\psline[linecolor=black, linewidth=0.04](11.212056,0.0059726685)(11.612051,0.008013386)
\rput{-179.75348}(20.840694,0.048701778){\psellipse[linecolor=black, linewidth=0.04, linestyle=dashed, dash=0.17638889cm 0.10583334cm, dimen=outer](10.4204,0.0019337495)(1.9,1.4)}
\psline[linecolor=black, linewidth=0.04](9.187082,-0.0043584616)(10.391149,-0.7982259)
\psline[linecolor=black, linewidth=0.04](9.187082,-0.0043584616)(10.382985,0.8017533)
\rput{-4.318467}(0.021902805,0.5873463){\psarc[linecolor=black, linewidth=0.04, dimen=outer](7.799949,0.003212673){0.8}{55.279263}{317.44205}}
\psline[linecolor=black, linewidth=0.04](9.22422,-0.042362634)(8.050421,0.7956073)
\psline[linecolor=black, linewidth=0.04](9.187082,-0.0043584616)(7.9911795,-0.8104702)
\psline[linecolor=black, linewidth=0.04](6.833333,-0.0019446583)(7.133333,-0.0019446583)
\rput[bl](0.33333313,0.19805534){$E$}
\rput[bl](0,0.99805534){$E$}
\rput[bl](11.6,0.19805534){$E$}
\rput[bl](12,0.99805534){$E$}
\rput[bl](12.5,-0.03){$.$}
\rput[bl](2.1333332,0.79805535){$a$}
\rput[bl](7.5333333,0.99805534){$a$}
\rput[bl](1.5333332,-0.6019447){$b$}
\rput[bl](7.733333,-0.6019447){$b$}
\rput[bl](9.533333,0.59805536){$c$}
\rput[bl](4.733333,0.99805534){$c$}
\rput[bl](4.333333,-0.6019447){$d$}
\rput[bl](10.533333,-0.6019447){$d$}
\end{pspicture}
}
}
\end{equation}
It features the souls of the relevant graphs in the sense of the following 

\begin{definition}
The {\em soul\/} of a graph $\Gamma$ is the graph obtained from $\Gamma$ by
amputating its legs.
\end{definition}

The $E$'s inside the dashed circles indicate the
decoration of the fiber represented by the subgraph inside
the circle, while the $E$'s outside the circles indicate the decoration of the
images. Thus the object on the left of~\eqref{dnes_jsem_objel_kanal}
represents the first summand of~(\ref{Bude_Ovar_dalsich_pet_let
  prezidentem?}) and the object on the right the second one. 
This description should be compared to the description of free
``classical'' operads in terms of trees with decorated vertices,
cf.~\cite[Section~II.1.9]{markl-shnider-stasheff:book}. 
Here we have graphs instead of trees and ``nests'' of subgraphs ordered by
inclusion in place of vertices.  
\end{example}

\begin{example}
\label{Spregner}
Using the same reasoning as in Examples
\ref{chteji_po_mne_abych_velel_grantu} and \ref{toho_snehu_je_moc}, we
can draw similar pictures describing $\Free^2(E)(X)$ for $X$ a graph
with two internal edges and two vertices with genera $g_1$ and $g_2$. 
Their souls are shown
in~\eqref{Zeman_nebo_Drahos? nahore}--\eqref{Zeman_nebo_Drahos? dole} below
\begin{subequations}
\begin{equation}
\label{Zeman_nebo_Drahos? nahore}
\raisebox{-7em}{
\psscalebox{1.0 1.0} 
{
\begin{pspicture}(0,-1.9412143)(13.26,1.9412143)
\rput{-90.73138}(4.696944,1.3197962){\psarc[linecolor=black, linewidth=0.04, dimen=outer](3.0,-1.6587857){2.6}{130.21674}{230.1}}
\psdots[linecolor=black, dotsize=0.225](1.0,-0.058785707)
\psdots[linecolor=black, dotsize=0.225](5.0,-0.058785707)
\psarc[linecolor=black, linewidth=0.04, linestyle=dashed, dash=0.17638889cm 0.10583334cm, dimen=outer](0.2,-2.1254523){0.26666668}{131.51372}{131.67642}
\pscustom[linecolor=black, linewidth=0.04, linestyle=dashed, dash=0.17638889cm 0.10583334cm]
{
\newpath
\moveto(16.133333,-1.1254523)
}
\pscustom[linecolor=black, linewidth=0.04, linestyle=dashed, dash=0.17638889cm 0.10583334cm]
{
\newpath
\moveto(12.533334,-2.4587858)
}
\pscustom[linecolor=black, linewidth=0.04, linestyle=dashed, dash=0.17638889cm 0.10583334cm]
{
\newpath
\moveto(16.133333,0.607881)
}
\rput{18.595245}(-0.299674,-1.0204285){\psarc[linecolor=black, linewidth=0.04, linestyle=dashed, dash=0.17638889cm 0.10583334cm, dimen=outer](2.9666667,-1.4254524){1.7666667}{15.0}{121.2717}}
\rput{-229.69476}(8.168101,-4.099964){\psarc[linecolor=black, linewidth=0.04, linestyle=dashed, dash=0.17638889cm 0.10583334cm, dimen=outer](5.0333333,-0.1587857){0.6333333}{89.0}{260.0}}
\rput{-229.69476}(1.5806634,-1.0495273){\psarc[linecolor=black, linewidth=0.04, linestyle=dashed, dash=0.17638889cm 0.10583334cm, dimen=outer](1.0333333,-0.1587857){0.6333333}{21.63039}{204.26073}}

\psarc[linecolor=black, linewidth=0.04, linestyle=dashed, dash=0.17638889cm 0.10583334cm, dimen=outer](3.0333333,-1.692119){3.1}{39.0}{143.0}
\rput{-90.0}(5.7254524,8.741215){\psarc[linecolor=black, linewidth=0.04, linestyle=dashed, dash=0.17638889cm 0.10583334cm, dimen=outer](7.233333,1.5078809){0.26666668}{131.51372}{131.67642}}
\rput{-270.98245}(4.489549,-1.4847707){\psarc[linecolor=black, linewidth=0.04, dimen=outer](3.0,1.5412143){2.6}{132.3}{230.1}}
\rput{-179.83253}(25.716059,4.203225){\psarc[linecolor=black, linewidth=0.04, linestyle=dashed, dash=0.17638889cm 0.10583334cm, dimen=outer](12.861101,2.0828211){0.26666668}{131.51372}{131.67642}}
\rput{-271.28363}(11.644145,-8.692223){\psarc[linecolor=black, linewidth=0.04, dimen=outer](10.266661,1.6078811){2.6}{130.21674}{230.1}}
\rput{-180.55226}(24.532843,-0.102453604){\psdots[linecolor=black, dotsize=0.225](12.266668,0.007890621)}
\rput{-180.55226}(16.533028,-0.06393689){\psdots[linecolor=black, dotsize=0.225](8.266668,0.007871618)}
\rput{-161.95702}(19.667742,5.8717227){\psarc[linecolor=black, linewidth=0.04, linestyle=dashed, dash=0.17638889cm 0.10583334cm, dimen=outer](10.299994,1.374548){1.7666667}{15.0}{121.2717}}
\rput{-50.247025}(2.8853579,6.368748){\psarc[linecolor=black, linewidth=0.04, linestyle=dashed, dash=0.17638889cm 0.10583334cm, dimen=outer](8.233335,0.10787146){0.6333333}{89.0}{260.0}}
\rput{-50.247025}(4.327428,9.44399){\psarc[linecolor=black, linewidth=0.04, linestyle=dashed, dash=0.17638889cm 0.10583334cm, dimen=outer](12.233335,0.107890464){0.6333333}{21.63039}{204.26073}}
\rput{-180.55226}(20.481997,3.1837165){\psarc[linecolor=black, linewidth=0.04, linestyle=dashed, dash=0.17638889cm 0.10583334cm, dimen=outer](10.233327,1.6412143){3.1}{39.0}{143.0}}
\rput{-91.53471}(12.133192,8.628233){\psarc[linecolor=black, linewidth=0.04, dimen=outer](10.266676,-1.5921189){2.6}{132.3}{230.1}}
\rput[bl](4.4,-1.2587857){$E$}
\rput[bl](5.2,-0.4587857){$E$}
\rput[bl](12.4,-0){$E$}
\rput[bl](13.0,0.5412143){$E$}
\psline[linecolor=black, linewidth=0.04](3.0,1.1412143)(3.0,0.7412143)
\psline[linecolor=black, linewidth=0.04](10.2,-0.8587857)(10.2,-1.2587857)
\psline[linecolor=black, linewidth=0.04](10.2,1.1412143)(10.2,0.7412143)
\psline[linecolor=black, linewidth=0.04](3.0,-0.8587857)(3.0,-1.2587857)
\rput[bl](2.0,0.35){$a$}
\rput[bl](2.0,-0.6587857){$c$}
\rput[bl](4.2,0){$b$}
\rput[bl](3.8,-0.6587857){$d$}
\rput[bl](9.0,0.9412143){$a$}
\rput[bl](11.2,0.9412143){$b$}
\rput[bl](9.2,-0.6587857){$c$}
\rput[bl](11.0,-0.6587857){$d$}
\rput[bl](6.4,-0.2587857){$\oplus$}
\end{pspicture}
}
}
\end{equation}
\begin{equation}
\label{Zeman_nebo_Drahos? dole}
\raisebox{-6em}{
\psscalebox{1.0 1.0} 
{
\begin{pspicture}(0,-1.4000019)(12.4,1.4000019)
\rput(-.2,0){
\psline[linecolor=black, linewidth=0.04](4.8,-0.0038784079)(5.2,-0.0038784079)
\psdots[linecolor=black, dotsize=0.225](2.8,-0.0038784079)
\rput[bl](6.266667,-0.13721174){$\oplus$}
\rput{-184.56499}(8.359396,-0.3702423){\psarc[linecolor=black, linewidth=0.04, dimen=outer](4.1870766,-0.01852631){0.8}{55.279263}{317.44205}}
\psline[linecolor=black, linewidth=0.04](2.7630565,0.03431474)(3.9325662,-0.8096328)
\psline[linecolor=black, linewidth=0.04](2.8,-0.0038784079)(4.0,0.7961216)
\rput{-179.75348}(18.17405,0.039097566){\psellipse[linecolor=black, linewidth=0.04, linestyle=dashed, dash=0.17638889cm 0.10583334cm, dimen=outer](9.087067,0.0)(1.7,1.0)}
\rput[bl](0.0,0.19612159){$E$}
\rput[bl](3.0,-1.0038784){$E$}
\psdots[linecolor=black, dotsize=0.225](0.6,-0.0038784079)
\psline[linecolor=black, linewidth=0.04](0.6,-0.0038784079)(2.8,-0.0038784079)
\psline[linecolor=black, linewidth=0.04](1.8,0.19612159)(1.8,-0.2038784)
\psline[linecolor=black, linewidth=0.04](12.0,-0.0038784079)(12.4,-0.0038784079)
\psdots[linecolor=black, dotsize=0.225](10.0,-0.0038784079)
\rput{-184.56499}(22.736555,-0.9432886){\psarc[linecolor=black, linewidth=0.04, dimen=outer](11.387076,-0.01852631){0.8}{55.279263}{317.44205}}
\psline[linecolor=black, linewidth=0.04](9.963057,0.03431474)(11.132566,-0.8096328)
\psline[linecolor=black, linewidth=0.04](10.0,-0.0038784079)(11.2,0.7961216)
\rput[bl](8.2,-0.6038784){$E$}
\rput[bl](10.4,0.9961216){$E$}
\psdots[linecolor=black, dotsize=0.225](7.8,-0.0038784079)
\psline[linecolor=black, linewidth=0.04](7.8,-0.0038784079)(10.0,-0.0038784079)
\psline[linecolor=black, linewidth=0.04](9.0,0.19612159)(9.0,-0.2038784)
\rput{-179.75348}(7.7740974,0.016724302){\psellipse[linecolor=black, linewidth=0.04, linestyle=dashed, dash=0.17638889cm 0.10583334cm, dimen=outer](3.8870666,0.0)(1.5,1.4)}
\rput[bl](1.2,0.19612159){$a$}
\rput[bl](8.4,0.19612159){$a$}
\rput[bl](2.2,0.19612159){$b$}
\rput[bl](9.4,0.19612159){$b$}
\rput[bl](4.0,0.9961216){$u$}
\rput[bl](4.2,-0.6038784){$v$}
\rput[bl](11.4,0.9961216){$u$}
\rput[bl](11.2,-0.6038784){$v$}
}
\end{pspicture}
}
}
\end{equation}
\end{subequations}
where, in order to ease the interpretation, we did not show the genera of the vertices.
The picture in~\eqref{Zeman_nebo_Drahos? nahore} represents 
an analog of~(\ref{Bude_Ovar_dalsich_pet_let prezidentem?}):
\begin{align*}
\Free^2(E)(X) \cong& \
\big\{
E[\nu(*,c|a,b|d,*)^{g_1|g_2}] \ot E[\xi(*|c,d)^{g_1+g_2}]\big\}
\\
\oplus & \ \big\{ E[\nu(*,a|c,d|b,*)^{g_1|g_2}] \ot 
E[\xi(* |a,b)^{g_1+g_2}]\big\}
\end{align*}
in which the notation $\nu(*,c|a,b|d,*)$ resp.\ $\nu(*,a|c,d|b,*)$ refers to the
graph in~\eqref{dnes_na_Rusalku right}. 
The picture in~\eqref{Zeman_nebo_Drahos? dole} symbolizes
\begin{align*}
\Free^2(E)(X) \cong& \ 
\big\{
E[\xi(*,b|u,v)^{g_1}] \ot E[\nu(*|a,b|*)^{g_1|g_2+1}]\big\}
\\
\oplus& \ \big\{ E[\nu(*|a,b|u,v,*)^{g_1|g_2}] 
\ot E[\xi(* |u,v)^{g_1+g_2}]\big\}\,.
\end{align*}

The last relevant case is when $X$ is an ordered  graph with two
internal edges and three vertices with genera $g_1,g_2$ and $g_3$. 
The situation is portrayed in
\begin{equation}
\label{Mozna_bude_Ovara_volit_i_Jana.}
\raisebox{-4.5em}{\rule{0em}{1em}}
\raisebox{-4em}{
\psscalebox{1.0 1.0} 
{
\begin{pspicture}(0,-.9412143)(13.052372,1)
\psdots[linecolor=black, dotsize=0.225](0.8,-0.058785707)
\rput[bl](1.4,-0.6587857){$E$}
\rput[bl](0.3,0.74){$E$}
\psdots[linecolor=black, dotsize=0.225](3.2,-0.058785707)
\psarc[linecolor=black, linewidth=0.04, linestyle=dashed, dash=0.17638889cm 0.10583334cm, dimen=outer](0.2,-2.1254523){0.26666668}{131.51372}{131.67642}
\pscustom[linecolor=black, linewidth=0.04, linestyle=dashed, dash=0.17638889cm 0.10583334cm]
{
\newpath
\moveto(16.133333,-1.1254523)
}
\pscustom[linecolor=black, linewidth=0.04, linestyle=dashed, dash=0.17638889cm 0.10583334cm]
{
\newpath
\moveto(12.533334,-2.4587858)
}
\pscustom[linecolor=black, linewidth=0.04, linestyle=dashed, dash=0.17638889cm 0.10583334cm]
{
\newpath
\moveto(16.133333,0.607881)
}

\rput{-90.0}(5.7254524,8.741215){\psarc[linecolor=black, linewidth=0.04, linestyle=dashed, dash=0.17638889cm 0.10583334cm, dimen=outer](7.233333,1.5078809){0.26666668}{131.51372}{131.67642}}
\rput{-179.83253}(25.716059,4.203225){\psarc[linecolor=black, linewidth=0.04, linestyle=dashed, dash=0.17638889cm 0.10583334cm, dimen=outer](12.861101,2.0828211){0.26666668}{131.51372}{131.67642}}
\psdots[linecolor=black, dotsize=0.225](5.6,-0.058785707)
\psline[linecolor=black, linewidth=0.04](0.8,-0.058785707)(5.6,-0.058785707)
\psdots[linecolor=black, dotsize=0.225](7.2,-0.058785707)
\rput[bl](12.4,0.7412143){$E$}
\rput[bl](11.0,-0.6587857){$E$}
\psdots[linecolor=black, dotsize=0.225](9.6,-0.058785707)
\psdots[linecolor=black, dotsize=0.225](12.0,-0.058785707)
\psline[linecolor=black, linewidth=0.04](7.2,-0.058785707)(12.0,-0.058785707)
\rput[bl](6.266667,-0.2587857){$\oplus$}
\psellipse[linecolor=black, linewidth=0.04, linestyle=dashed, dash=0.17638889cm 0.10583334cm, dimen=outer](2.0,-0.058785707)(1.48,1)
\psellipse[linecolor=black, linewidth=0.04, linestyle=dashed, dash=0.17638889cm 0.10583334cm, dimen=outer](10.8,-0.058785707)(1.48,1)
\psline[linecolor=black, linewidth=0.04](4.4,0.1412143)(4.4,-0.2587857)
\psline[linecolor=black, linewidth=0.04](8.4,0.1412143)(8.4,-0.2587857)
\psline[linecolor=black, linewidth=0.04](10.8,0.1412143)(10.8,-0.2587857)
\psline[linecolor=black, linewidth=0.04](2.0,0.1412143)(2.0,-0.2587857)
\rput[bl](7.8,0.1412143){$a$}
\rput[bl](1.4,0.1412143){$a$}
\rput[bl](2.4,0.1412143){$b$}
\rput[bl](8.8,0.1412143){$b$}
\rput[bl](3.8,0.1412143){$c$}
\rput[bl](10.2,0.1412143){$c$}
\rput[bl](5.0,0.1412143){$d$}
\rput[bl](11.4,0.1412143){$d$}
\end{pspicture}
}
}
\end{equation}
where again the genera of the vertices are not shown.
The resulting formula~is
\begin{align*}
\Free^2(E)(X) \cong& \
\big\{
E[\nu(*|a,b|c,*)^{g_1|g_2}] \ot E[\nu(* |c,d|*)^{g_1+g_2|g_3}]\big\}
\\
\oplus &\   
\big\{ E[\nu(*,b|c,d|*)^{g_2|g_3}] \ot E[\nu(* 
|a,b|*)^{g_1|g_2+g_3}]\big\}\,.
\end{align*}
\end{example}

\begin{remark}
\label{Grete vynechava zapalovani.}
In order to appreciate the
advantages of our approach,  we suggest to compare  
the simple and self-explaining pictures
in~\eqref{dnes_jsem_objel_kanal}--\eqref{Mozna_bude_Ovara_volit_i_Jana.}
with Figures~2 and~3 of~\cite{Ward} expressing the axioms of
modular operads as algebras over a colored operad. 
\end{remark}

\def\edg{{\rm edg}}
The observations in Examples \ref{toho_snehu_je_moc}
and~\ref{Spregner} easily generalize to descriptions of
isomorphism classes of labeled towers in $\lTw(\Gamma)$ for an
arbitrary graph $\Gamma \in \ggGrc$. Since we will be primarily
interested in free operads generated by {\em binary\/}
collections,~i.e.\   $1$-connected collections
that are trivial on graphs with more than one internal edge,
we will consider only towers whose
associated fiber sequence consists of graphs with one internal
edge. Let 
\[
\Gamma \stackrel {\tau_1} \longrightarrow \Gamma_1  \stackrel  {\tau_2} \longrightarrow
\Gamma_2\stackrel  {\tau_2} \longrightarrow 
\cdots  \stackrel  {\tau_{k-1}}   \longrightarrow  \Gamma_{k-1}  
\]
be such a tower. By the definition of graph morphisms, one has the
associated sequence 
\begin{equation}
\label{ta_zruda_bude_prezident}
\edg(\Gamma) \supset \edg(\Gamma_1) \supset \cdots \supset \edg(\Gamma_{k-1}) 
\end{equation}
of inclusions of the sets of internal edges. Since the cardinalities
of the sets in~(\ref{ta_zruda_bude_prezident}) decrease by one,  there 
is an obvious
one-to-one correspondence between
sequences~(\ref{ta_zruda_bude_prezident}) and linear orders on
$\edg(\Gamma)$ such that
$x > y$  if $y\in \edg(\Gamma_i)$, $x \not\in  \edg(\Gamma_i)$ for some
$i$, $1 \leq i \leq k-1$. We~formulate:

\begin{proposition}
\label{uz_melo_prvni_cislo_davno_vyjit}
The isomorphism classes of labeled towers in $\ltw(\Gamma)$ whose
associated fiber sequence consists of graphs with one internal edge
are in one-to-one correspondence with linear orders on
$\edg(\Gamma)$ modulo the relation $\bowtie$ that
interchanges two adjacent edges without a common vertex in $\Gamma$.  
\end{proposition}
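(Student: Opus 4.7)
\medskip

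\noindent\textbf{Proof plan.} The plan is to define a map from isomorphism classes of such labeled towers to linear orders on $\edg(\Gamma)$ modulo $\bowtie$, then verify it is well-defined and bijective. Given a labeled tower $(\ell,\bfT)$ whose associated fiber sequence consists of one-edge graphs, each elementary $\tau_i : \Gamma_{i-1} \to \Gamma_i$ contracts exactly one internal edge, so the chain of inclusions~(\ref{ta_zruda_bude_prezident}) determines a linear order on $\edg(\Gamma)$ in which the $i$-th smallest element is the edge removed by~$\tau_i$. The labeling $\ell : \Gamma \iso \Gamma_0$ induces a bijection on edges and transports this order back to $\edg(\Gamma)$.

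First I would show that morphisms of the first type in $\lTw(\Gamma)$ preserve the induced linear order up to the canonical identification on $\edg(\Gamma)$. This is immediate from the commutativity of~(\ref{f1}): each $\sigma_i$ is an isomorphism of ordered graphs and the squares tell us precisely that the sequences of inclusions on edge sets are intertwined by the bijections induced by $\bfsigma$. Next, morphisms of the second type concern a swap $\tau_u', \tau_u''$ of elementary morphisms with \emph{disjoint} fibers; by the second part of Lemma~\ref{l7} and~(\ref{pisu_opet_v_Sydney}), disjointness in the graph setting is exactly the condition that the two edges contracted at steps $u$ and $u+1$ share no vertex in $\Gamma_{u-1}$. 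Since the pieces of the tower outside positions $u, u+1$ are fixed, the two linear orders arising from $\bfT'$ and $\bfT''$ differ by precisely transposing two adjacent edges having no common endpoint, i.e.\ by one application of $\bowtie$. Composing such moves with morphisms of the first type, the map descends to classes modulo $\bowtie$.

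For surjectivity I would start with a linear order on $\edg(\Gamma)$, list the edges as $e_1 < e_2 < \cdots < e_{k-1}$, and inductively build $\Gamma_i$ by contracting $e_i$ in $\Gamma_{i-1}$. The required elementary morphisms exist because contracting a single edge is an elementary morphism in $\ggGrc$ of grade one; the strong blow-up axiom and Lemma~\ref{spousta-prace} guarantee uniqueness and compatibility when we package the contractions into a tower with labeling $\ell = \id_\Gamma$. If one then permutes two consecutive edges in the order that share no vertex, Lemma~\ref{spousta-prace-1} together with Corollary~\ref{move} produces precisely a morphism of the second type, confirming that $\bowtie$-equivalent orders give isomorphic classes in $\lTw(\Gamma)$.

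The main obstacle is \textbf{injectivity}: two towers whose induced orders coincide after a sequence of $\bowtie$-moves must be isomorphic in $\lTw(\Gamma)$. By induction on the number of $\bowtie$-swaps it suffices to treat the case where the two orders are literally equal. Here I would proceed inductively on height, using the rigidity axiom \Rig\ (which by Corollary~\ref{zitra_vylet_do_Sydney} forces all pairs with disjoint fibers to be harmonic) together with the uniqueness clause of the strong blow-up axiom to construct the unique isomorphisms $\sigma_i : T_i' \iso T_i''$ fitting into~(\ref{f1}): at each level the choice of elementary map contracting the prescribed edge is determined up to unique isomorphism by \SBU, and Lemma~\ref{spousta-prace} ensures these assemble into a morphism $\bfsigma$ of the first type. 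The combination \Rig\ \&\ \SBU\ \&\ \UFB\ \&\ \SGrad\ is exactly what rules out spurious automorphisms that could obstruct this reconstruction, so the induction closes and establishes the claimed bijection.
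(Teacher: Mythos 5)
Your overall route is the same as the paper's: read off a linear order on ${\rm edg}(\Gamma)$ from the chain of edge-set inclusions~\eqref{ta_zruda_bude_prezident}, check that morphisms of the first type preserve it and that morphisms of the second type transpose two consecutive entries, and then invert the construction. The paper compresses all of this into three sentences by appealing to the normal-form computations of Examples~\ref{toho_snehu_je_moc} and~\ref{Spregner} (every tower is isomorphic to one built from pure contractions, and such a tower \emph{is} a linear order on the edges), so your well-definedness/surjectivity/injectivity scaffolding is an explicit rendering of the same idea, and your use of \SBU, \Rig\ and Lemmas~\ref{spousta-prace}--\ref{spousta-prace-1} for the reconstruction step is in the intended spirit.

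There is, however, one point where a step as you wrote it would fail. You correctly observe that a morphism of the second type at position $u$ exists precisely when the two elementary morphisms have disjoint fibers, i.e.\ when the edges contracted at steps $u$ and $u+1$ share no vertex \emph{in $\Gamma_{u-1}$}, the graph obtained from $\Gamma$ by contracting the earlier edges. You then silently identify this with the relation $\bowtie$ of the statement, whose condition reads ``no common vertex in $\Gamma$.'' These conditions are not equivalent: contractions merge vertices, so two edges disjoint in $\Gamma$ may well share a vertex in $\Gamma_{u-1}$. Concretely, let $\Gamma$ be a path with edges $e=\{v_1,v_2\}$, $g=\{v_2,v_3\}$, $f=\{v_3,v_4\}$ and take the order $g<e<f$. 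After contracting $g$ the edges $e$ and $f$ share the merged vertex, the corresponding pair of elementary morphisms has \emph{joint} fibers, Corollary~\ref{move} does not apply, and no morphism of the second type relates the towers for $g<e<f$ and $g<f<e$ --- yet these two orders are identified by $\bowtie$ under the literal reading of the statement. So your sentence ``confirming that $\bowtie$-equivalent orders give isomorphic classes'' is not justified as written, and the reduction of injectivity to literally equal orders inherits the problem. The resolution is to read the condition in $\bowtie$ as ``no common vertex in the graph in which the two edges are about to be contracted'' --- which is exactly what the paper's own proof does when it declares that $\bowtie$ ``reflects morphisms of towers of the second type,'' and which is all that the application in Theorem~\ref{v_nedeli_si_snad_zaletam_s_NOE} requires --- but you should state this explicitly and not pass from $\Gamma_{u-1}$ to $\Gamma$ without comment, since with the condition evaluated in $\Gamma$ the claimed bijection is false.
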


\begin{example}
One has two isomorphism classes of towers for the graph 
in~\eqref{pozitri_do_Srni}. In the notation
of~\eqref{dnes_jsem_objel_kanal}, let $x$ be the edge $\{a,b\}$ and
$y$ the edge  $\{c,d\}$.  Then the picture on the left in that display
corresponds to the order $x > y$ ($x$ is contracted first), the one on
the right to~$y > x$.
\end{example}

\begin{proof}[Proof of Proposition~\ref{uz_melo_prvni_cislo_davno_vyjit}]
Using the same arguments as in Examples \ref{toho_snehu_je_moc}
and~\ref{Spregner} we show that each tower can be replaced
by an isomorphic one all of whose morphisms are pure
contractions of internal edges, in the sense of~\cite[Definition~3.3]{part1}. 
Such towers are determined by the order in which the edges are
contracted. The relation \ $\bowtie$ \ reflects morphisms
of towers of the second type introduced in Section~\ref{zitra_letim_do_Pragy}.
\end{proof}

\begin{theorem}
\label{v_nedeli_si_snad_zaletam_s_NOE} 
The terminal $\ggGrc$-operad $\termGr$ having $\termGr(\Gamma) := \bfk$
for each $\Gamma \in \ggGrc$
and constant composition laws is  binary quadratic.
\end{theorem}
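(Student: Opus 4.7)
The strategy is to realize $\termGr$ explicitly as $\Free(E)/(R)$ for explicit binary quadratic data $(E,R)$, then verify the presentation using the description of free operads in \eqref{dnes_bude_vedro_b} together with Proposition~\ref{uz_melo_prvni_cislo_davno_vyjit}.

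\emph{Generators.} Let $E \in \Coll(\ggGrc)$ be the collection with $E[\Gamma] := \bfk$ (trivial $\Iso$-action) whenever $e(\Gamma)=1$, and $E[\Gamma] := 0$ otherwise. By construction this is a binary $1$-connected collection. By Theorem~\ref{dnes_bylo_40} the canonical inclusion $E \hookrightarrow \Box\termGr$ (both sides are $\bfk$ in grade~$1$ and zero elsewhere) extends uniquely to a morphism $\pi \colon \Free(E) \to \termGr$ of strictly unital Markl operads. Formula~\eqref{dneska_musim_zvladnout_jeste_toho_Slovaka}, together with the fact that every composition law in $\termGr$ is $\id_{\bfk}$, shows that $\pi$ sends every basis element $[\ell,e_1\otimes\cdots\otimes e_k]$ to $1\in\bfk$; in particular $\pi$ is componentwise surjective.

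\emph{Relations.} Put $R(\Gamma) := \ker\bigl(\pi|_{\Free^2(E)(\Gamma)}\bigr)$; this is an $\Iso$-sub-presheaf of $\Free^2(E)$. For any $\Gamma$ with $e(\Gamma)=2$, a direct Euler-characteristic count shows that the two internal edges of a connected graph must share a vertex, so the equivalence $\bowtie$ of Proposition~\ref{uz_melo_prvni_cislo_davno_vyjit} is vacuous. Hence $\Free^2(E)(\Gamma)$ is $2$-dimensional with basis the two contraction orders, and $R(\Gamma)$ is the $1$-dimensional subspace spanned by their difference. Pictorially these are precisely the relations displayed in \eqref{dnes_jsem_objel_kanal}--\eqref{Mozna_bude_Ovara_volit_i_Jana.}. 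This exhibits $(E,R)$ as binary quadratic data in the sense of Definition~\ref{zitra_seminar}.

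\emph{Identification.} It remains to show that $\pi$ factors through an isomorphism $\bar\pi \colon \Free(E)/(R) \iso \termGr$, i.e.\ that $(R) = \ker\pi$. The inclusion $(R) \subseteq \ker\pi$ holds by construction, so it suffices to prove that $\bigl(\Free^k(E)/(R)\bigr)(\Gamma)$ is at most $1$-dimensional for each $\Gamma$ with $e(\Gamma)=k\geq 1$. By Proposition~\ref{uz_melo_prvni_cislo_davno_vyjit}, $\Free^k(E)(\Gamma)$ has a basis indexed by linear orders on $\edg(\Gamma)$ modulo $\bowtie$; any two such orders are connected by a sequence of adjacent transpositions. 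A transposition of vertex-disjoint adjacent edges is already identified by $\bowtie$ in the free operad, while a transposition of two adjacent edges $e_s, e_{s+1}$ sharing a vertex in the intermediate graph $T_{s-1}$ is realized by nesting a generator of $R$ (the quadratic relation on the two-edge subgraph of $T_{s-1}$ spanned by $e_s, e_{s+1}$) inside the Markl compositions corresponding to $\tau_1,\dots,\tau_{s-1},\tau_{s+2},\dots,\tau_k$. All basis elements thus collapse to a single class in $\Free^k(E)/(R)$, yielding the required dimension bound and, together with surjectivity of $\bar\pi$, the desired isomorphism.

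\emph{Main obstacle.} The delicate step is the last one: formally identifying the swap of two vertex-adjacent edges inside a height-$k$ tower with the application of a single quadratic relation. One needs Lemma~\ref{spousta-prace-1} to produce the two parallel factorizations differing only at positions $s, s+1$, and then repeated use of associativity axiom~(i) of Definition~\ref{markl} to factor their difference through the $2$-step sub-factorization on which the relevant element of $R$ lives. Once this localisation is set up, the rest is combinatorial.
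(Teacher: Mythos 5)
Your proposal is correct and follows essentially the same route as the paper: the same binary generating collection $E$ supported on one-edge graphs, the same relations (your kernel description of $R$ on $\Free^2(E)$ coincides with the paper's explicit spanning set $r_1,\dots,r_4$ of differences of the two contraction orders), and the same identification of $\Free(E)/(R)(\Gamma)$ with $\bfk$ via Proposition~\ref{uz_melo_prvni_cislo_davno_vyjit} by collapsing all linear orders on $\edg(\Gamma)$. The only difference is that you make explicit the localisation step (nesting a grade-$2$ relation inside a height-$k$ tower) that the paper leaves implicit; this is a welcome clarification but not a different argument.
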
 

\begin{proof}
Let us define a collection  $E \in \Coll$ by
\begin{equation}
\label{vcera}
E[\Gamma] :=
\begin{cases}
\bfk & \hbox {if $\Gamma$ has exactly one internal edge}
\\
0 & \hbox {otherwise}  
\end{cases}
\end{equation}
with the constant $\QV(e)$-presheaf structure.
As we already noticed,
the quadratic part $\Free^2(E)$ of the free operad may be nontrivial only
for graphs with precisely two internal edges, i.e.~those analyzed
in Examples \ref{toho_snehu_je_moc} and~\ref{Spregner}. 
For $X$ as in~\eqref{pozitri_do_Srni} and $E$ in~(\ref{vcera}), 
formula~(\ref{Bude_Ovar_dalsich_pet_let prezidentem?})~gives
\[
\Free^2(E)(X) \cong \{\bfk\ot \bfk\} \oplus
\{\bfk\ot \bfk\}
\]
which is a two-dimensional vector space with the basis 
\begin{equation}
\label{zni_mi_hlavou_Joe_Karafiat}
b_1^1 : =\{ 1\ot 1\} \oplus \{0 \ot 0\} \ \hbox { and } \ 
b_2^1 : = \{0\ot 0\} \oplus \{1 \ot 1\}.
\end{equation}
For the situations portrayed
in~\eqref{Zeman_nebo_Drahos? nahore}--\eqref{Zeman_nebo_Drahos? dole}
and~\eqref{Mozna_bude_Ovara_volit_i_Jana.} we get  similar 
spaces, with bases $(b^t_1,b^t_2)$, $2 \leq t \leq 4$. 
Let us denote
\begin{equation}
\label{Krammer}
r_1:= b_1^1 - b_2^1, \ r_2:= b_1^2 - b_2^2, \  r_3:= b_1^3 - b_2^3
\hbox { and } \ 
r_4:= b_1^4 - b_2^4
\end{equation}
We define
$R(X) $ to be the subspace of  $\Free^2(E)(X)$ spanned by
\hfill\break 
-- $r_1$ for $X$ with soul as in~\eqref{dnes_jsem_objel_kanal}, \hfill\break
-- $r_2$ for $X$ with soul as  in~\eqref{Zeman_nebo_Drahos? nahore}, \hfill\break
--   $r_3$ for $X$ with soul as  in~\eqref{Zeman_nebo_Drahos? dole}, and \hfill\break
-- $r_4$ for $X$ with soul as
in~\eqref{Mozna_bude_Ovara_volit_i_Jana.}, \hfill\break 
while $R(X) :=0$ if $X$ does not have exactly two internal edges. 
We are going to prove that
\begin{equation}
\label{dnes_jsem_predsedal_vedecke_rade}
\termGr \cong \Free(E)/(R)
\end{equation}
for the sub-presheaf $R = \{R(X)\}_{X \in \ggGrc}$ of $\Free(E)$ defined above.

By Proposition~\ref{uz_melo_prvni_cislo_davno_vyjit}
combined with formula~(\ref{dnes_bude_vedro}), the vector space
$\Free(E)(\Gamma)$ is spanned 
by the set of total orders on $\edg(\Gamma)$ modulo the relation $\bowtie$ that
interchanges any two edges $x,y \in \edg(\Gamma)$ that {\em do
  not\/} share a common vertex in $\Gamma$.

All possible relative configurations  of edges $x,y$ that {\em do share\/} a
common vertex are in~\eqref{dnes_jsem_objel_kanal}--\eqref{Mozna_bude_Ovara_volit_i_Jana.}.
Relations in~(\ref{Krammer}) guarantee that two orders that differ by
the interchange $x \leftrightarrow y$ agree in the
quotient~(\ref{dnes_jsem_predsedal_vedecke_rade}). We conclude that
all orders on $\edg(\Gamma)$ are mutually equivalent modulo $(R)$, so
$\Free(E)/(R)(\Gamma) \cong \bfk$ as required.  
\end{proof}

\begin{proposition}
\label{pozitri_turnaj_v_Patku}
Algebras over the terminal $\ggGrc$-operad $\termGr$ are modular operads.
\end{proposition}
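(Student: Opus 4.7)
The strategy is to unwind the defining data of a $\termGr$-algebra using the binary quadratic presentation $\termGr \cong \Free(E)/(R)$ established in Theorem~\ref{v_nedeli_si_snad_zaletam_s_NOE}, and then identify the resulting structure with the axioms of a modular operad as listed in the Appendix. By the universal property of $\Free(E)/(R)$ and Proposition~\ref{Udelal_jsem_si_ciruvky_zelanky.}, a $\termGr$-algebra structure on a collection $A = \{A_c\}_{c \in \pi_0(\ggGrc)}$ is the same thing as a morphism $\termGr \to \End_A$, hence by freeness the same thing as a map $E \to \Box\End_A$ of $1$-connected collections whose associated map $\Free(E) \to \End_A$ kills the relations $r_1,\dots,r_4$ of~\eqref{Krammer}. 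First I would identify $\pi_0(\ggGrc)$ with the set of stable pairs $(g,n)$, via the corolla representative $c_n^g$ of~\eqref{Dasa_Vokata}, so that the underlying collection is a family $A = \{A((g,n))\}$; the $\Iso$-presheaf structure on $\End_A$ together with the equivariance axiom of Definition~\ref{Zapomel_jsem_si_pocitac_v_Koline_ja_hlupak}~(ii) produces the symmetric group actions required of a modular operad.

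Next I would translate the two families of generators. By~\eqref{vcera}, $E[\Gamma] = \bfk$ precisely when $\Gamma$ has one internal edge, and such graphs fall into the two types of~\eqref{dnes_na_Rusalku left} and~\eqref{dnes_na_Rusalku right}. A map $E \to \Box\End_A$ on a representative $\xi(1,\dots,n\,|\,n\!+\!1,n\!+\!2)^g$ of the first type delivers a $\Sigma_n$-equivariant operation
\[
\xi_{n+1,n+2}: A((g,n\!+\!2)) \longrightarrow A((g\!+\!1,n)),
\]
i.e.\ the self-contraction of modular operads, while a map on $\nu(1,\dots,k\,|\,k\!+\!1,l\!+\!1\,|\,1,\dots,l)^{g_u|g_v}$ gives the grafting
\[
{}_{k+1}\!\circ_{l+1}: A((g_u,k\!+\!1)) \otimes A((g_v,l\!+\!1)) \longrightarrow A((g_u\!+\!g_v, k\!+\!l)).
\]
Equivariance of these operations under relabeling of half-edges is exactly item~(ii) of Definition~\ref{Zapomel_jsem_si_pocitac_v_Koline_ja_hlupak}, applied to the $\Iso$-automorphisms of the relevant graphs (in particular the $\Sigma_2$-symmetry of a loop yields the symmetry $\xi_{ij} = \xi_{ji}$). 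The unit axiom~(i) of Definition~\ref{Zapomel_jsem_si_pocitac_v_Koline_ja_hlupak} applied to the chosen corollas $c_n^g$ supplies the modular operadic units.

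The heart of the argument is the identification of the four quadratic relations of~\eqref{Krammer} with the four families of associativity/commutativity axioms of a modular operad. I would proceed case by case, reading the Axiom~(iii) of Definition~\ref{Zapomel_jsem_si_pocitac_v_Koline_ja_hlupak} on each tower in~\eqref{dnes_jsem_objel_kanal}--\eqref{Mozna_bude_Ovara_volit_i_Jana.}. Concretely, $r_1 = b^1_1 - b^1_2$ says that the two orders of contracting two disjoint loops on the same vertex give the same operation, which is the commutativity of two self-contractions $\xi_{ab}\xi_{cd} = \xi_{cd}\xi_{ab}$. The relation $r_2$, read off~\eqref{Zeman_nebo_Drahos? nahore}, equates two ways of obtaining the same two-loop two-vertex graph and gives the exchange law between a grafting and a self-contraction happening on disjoint sets of half-edges. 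The relation $r_3$, read off~\eqref{Zeman_nebo_Drahos? dole}, is the compatibility between a self-contraction and a grafting that shares a vertex: it says that contracting a loop on a vertex commutes with grafting on a different half-edge of the same vertex. Finally $r_4$, extracted from~\eqref{Mozna_bude_Ovara_volit_i_Jana.}, is precisely the associativity of grafting across three vertices, $(x \circ y) \circ z = x \circ (y \circ z)$.

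I expect the main obstacle to be purely bookkeeping: carefully matching indices, global orders, and the $\Sigma_2$-symmetries of loops so that the four quadratic relations reproduce exactly the itemized axioms (M1)--(M?) of modular operads recalled in the Appendix, with no axiom missing and no extra relation introduced. Once this translation is verified on generators and relations, the converse direction — showing that every modular operad structure on $A$ defines a morphism $E \to \Box\End_A$ whose extension to $\Free(E)$ vanishes on $(R)$ — is automatic, because the modular operad axioms are exactly the relations we imposed. This yields a bijection between $\termGr$-algebra structures and modular operad structures on $A$, natural in $A$, completing the proof.
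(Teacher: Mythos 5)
Your proposal follows essentially the same route as the paper: identify $\pi_0(\ggGrc)$ with pairs $(n;g)$ so the underlying collection is a modular module, read the two one-edge graph types as the contraction $\xi_{uv}$ and the grafting $\circ_{ab}$, and match $r_1,\dots,r_4$ with the self-contraction commutativity, the $\xi$--$\circ$ exchange, the $\circ(\xi\ot\id)=\xi\circ$ compatibility, and the associativity axioms exactly as the paper does. Two cosmetic caveats: $\End_A$ is not \emph{strictly} unital, so one cannot literally invoke the freeness adjunction with $\Box\End_A$ (the paper notes the morphism is nonetheless determined by a map of $\QV(e)$-presheaves $E\to\oEnd_\oM$), and the remark that the algebra unit axiom ``supplies the modular operadic units'' should be dropped, since the modular operads of the Appendix are non-unital.
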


\def\oM{{\mathcal M}}
\begin{proof}
The key ingredients of the proof are
presentation~(\ref{dnes_jsem_predsedal_vedecke_rade}) together 
with Proposition~6.13 of~\cite{part1}
which describes
$\termGr$-algebras as morphisms  to the endomorphism operad.
We start by determining what the underlying
collection
\[
V = \{V_c \ | \ c \in \pi_0(\ttO)\}
\]
of the endomorphism operad is in this case.

We noticed in
Example~\ref{Snad_se_ta_Achylovka_trochu_lepsi.} that the 
local terminal objects of
$\ggGrc$ are the $n$-corollas  $c(\sigma)^g$ with the vertex of
genus $g$ and the local order given by a permutation~$\sigma \in \Sigma_n$. The chosen local terminal
objects are the $n$-corollas  $c_n^g:=c(\id_n)^g$, $n,g \in \bbN$.
Therefore the set $\pi_0(\ggGrc)$ is indexed by pairs $(n;g)$ of natural
numbers consisting of an ``arity'' $n$ and a ``genus''~$g$, i.e.
\[
\pi_0(\ggGrc) = \big\{(n;g) \  \vrt \  (n;g)   \in  \bbA \times \bbA \big\}.
\]
The underlying collection of the endomorphism operad is thus a family
\[
\oM = \big\{\oM(n;g) \in \Vect\  \vrt \  (n;g)   \in  \bbA \times \bbA \big\}.
\]
The actions
$
u: V_{\pi_0(s_1(u))} \to  V_{\pi_0(u)}
$
of the groupoid of local terminal objects in this particular case
give rise to actions of the symmetric group $\Sigma_n$ on each
$\oM(n;g)$. We recognize $\oM$ as the skeletal version of a~modular
module recalled in Appendix~\ref{a1}.
Proposition~6.13 of~\cite{part1} now  identifies 
$\termGr$-algebras with~morphisms
\begin{equation}
\label{nevydrzim_dele_nez_tri_dny}
a: \Free(E)/(R)  \to \End_\oM,
\end{equation}
where $E$ is as in~(\ref{vcera}) and $R$ is spanned by
relations~(\ref{Krammer}). 

\def\xxi{\circ}
\newcommand{\ooo}[2]{\sideset{_{#1}}{_{#2}}{\mathop{\circ}}}

By Proposition~\ref{Za_14_dni_LKDL}, the unital
operad $\End_\oM$
determines a $\QV(e)$-presheaf $\oEnd_\oM$.
Although $\End_\oM$ is not strictly  unital, 
morphism~(\ref{nevydrzim_dele_nez_tri_dny}) is still uniquely determined by a
map $\tilde a : E \to  \oEnd_\oM$ of  $\QV(e)$-presheaves given by
a family
\begin{equation}
\label{slepit_vrtulku_nebude_snadne}
\tilde a_{[\Gamma]} : E[\Gamma] \to \oEnd_\oM([\Gamma]),\ [\Gamma] \in \QV(e).
\end{equation}

By definition, the generating collection $E$ is
supported on graphs with one internal
edge portrayed in~\eqref{dnes_na_Rusalku left} and~\eqref{dnes_na_Rusalku right},
whose souls are:
\begin{equation} 
\label{hluboka_deprese_z_voleb}
\raisebox{-4em}{
\psscalebox{1.0 1.0} 
{
\begin{pspicture}(0,-1.0434394)(7.1108775,1.0434394)
\rput[b](.15,0.2){$g_1$}
\rput[b](2.3,0.2){$g_2$}
\rput[b](4.7,0.2){$g$}
\rput[b](7.3,-.18){.}
\psdots[linecolor=black, dotsize=0.225](2.3108778,-0.11656067)
\psdots[linecolor=black, dotsize=0.225](0.110877685,-0.11656067)
\psline[linecolor=black, linewidth=0.04](0.110877685,-0.11656067)(2.3108778,-0.11656067)
\psline[linecolor=black, linewidth=0.04](1.3108777,0.08343933)(1.3108777,-0.31656066)
\rput[bl](0.71087766,0.08343933){$a$}
\rput[bl](1.7108777,0.08343933){$b$}
\psline[linecolor=black, linewidth=0.04](6.710878,-0.11656067)(7.1108775,-0.11656067)
\psdots[linecolor=black, dotsize=0.225](4.710878,-0.11656067)
\rput{-184.56499}(12.166121,-0.7473357){\psarc[linecolor=black, linewidth=0.04, dimen=outer](6.0979543,-0.13120857){0.8}{55.279263}{317.44205}}
\psline[linecolor=black, linewidth=0.04](4.6739345,-0.07836752)(5.843444,-0.92231506)
\psline[linecolor=black, linewidth=0.04](4.710878,-0.11656067)(5.9108777,0.6834393)
\rput[bl](5.9108777,0.8834393){$u$}
\rput[bl](5.9108777,-0.71656066){$v$}
\end{pspicture}
}
}
\end{equation}
The operations $\tilde a_{[\Gamma]}$ may therefore
be nontrivial only for graphs of this form.

Let us analyze the
operation~\eqref{slepit_vrtulku_nebude_snadne} 
induced by the virtual isomorphism class of the graph 
$\Gamma := \xi(\Rada \lambda1k | \lambda_{k+1},\lambda_{k+2})$
in~\eqref{dnes_na_Rusalku left}. One clearly has
$\pi_0(s_1(\Gamma)) = (k\+2;g)$  and $\pi_0(\Gamma) = (k;g\+1)$, therefore
$\tilde a_{[\Gamma]}$ is by~(\ref{V_sobotu_budu_letat_vycvik_vlekare.}) a~map
\[
\tilde a_{[\Gamma]} : E[\Gamma]  
= \bfk \longrightarrow    \colim_{\tilde\Gamma}    \End_\oM(\tilde\Gamma) \cong
 \colim_{\sigma \in\Sigma_k}
 \Vect\big(\oM(k\+2;g),\oM(k;g\+1)_\sigma\big),
\]
where the first colimit is taken over all $\tilde\Gamma$'s virtually isomorphic to
$\Gamma$, where 
 $\sigma = (\Rada \sigma1k)$ and  where $\oM(k;g\+1)_\sigma$ is the copy of $\oM(k;g\+1)$ corresponding
to the graph 
\[
\xi(\Rada \sigma 1k | \lambda_{k+1},\lambda_{k+2}),
\]
which is virtually isomorphic to $\xi(\Rada \lambda1k | \lambda_{k+1},\lambda_{k+2})$.
The map $\tilde a_{[\Gamma]}$ is clearly determined by 
\[
\tilde a_{[\Gamma]}(1) : \oM(k\+2;g)\to \colim_{\sigma\in \Sigma_k} \oM(k;g\+1)_\sigma
\]
which is the same as a collection of morphisms
\[
\xxi^\sigma_{uv} :\oM(k\+2;g) 
\longrightarrow \oM(k;g\+1),\ u : = \lambda_{k+1},\ v :=
\lambda_{k+2},\
\sigma \in \Sigma_k,
\]
satisfying
\[
\xxi^{\sigma \delta}_{uv}(x) = \sigma \xxi^{\delta}_{uv}(x),\
x \in   \oM(k\+2;g), \ \sigma, \delta \in \Sigma_k.
\]

The operation $\xxi_{uv} := \xxi^{\id_k}_{uv}$ is the
skeletal version 
of the contraction~(\ref{Galway}).
The identity $\xxi_{uv} = \xxi_{vu}$ follows from the
$\Sigma_2$-symmetry of the graph~$\Gamma$. 
In exactly the same manner, the graph in~\eqref{dnes_na_Rusalku right} gives rise to the operations 
in~(\ref{v_Galway1}). 

The map  $\tilde a$ determines a
morphism~(\ref{nevydrzim_dele_nez_tri_dny}) if and only if it sends
the generators~(\ref{Krammer}) of $R$ to $0$. The vanishing  $\tilde
a(r_i) = 0$ for $1 \leq i \leq 4$  corresponds to the 
remaining axiom of modular~operads:

\begin{tabular}{ll}
\hbox{Axiom~(\ref{eq:38}) corresponds to relation $r_2$}, &
\hbox{Axiom~(\ref{eq:39}) corresponds to relation $r_3$}, 
\\
\hbox{Axiom~(\ref{Dnes_s_Jaruskou_k_Pakouskum}) corresponds to relation
$r_4$,}&
\hbox{Axiom~(\ref{eq:33}) corresponds to relation $r_1$.}\rule{0em}{1em}
\end{tabular}

\noindent 
This finishes the proof.
\end{proof}

\begin{theorem}
\label{vcera_jsem_podlehl}
The Koszul dual of the operad $\termGr$, denoted 
$\oddGr$, is the operad whose algebras are odd modular operads.
\end{theorem}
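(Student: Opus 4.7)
The plan is to parallel the proofs of Theorem~\ref{v_nedeli_si_snad_zaletam_s_NOE} and Proposition~\ref{pozitri_turnaj_v_Patku}, transported across the Koszul duality functor of Definition~\ref{vice nez 10 000 pripadu denne}. Start by observing that, since $\termGr \cong \Free(E)/(R)$ with $E$ the one-dimensional collection concentrated on graphs with exactly one internal edge and $R$ spanned by $r_1,r_2,r_3,r_4$ in~\eqref{Krammer}, the Koszul dual by definition equals $\Free(\susp E^*)/(R^\perp)$. The generators $\susp E^*[\Gamma]$ are one-dimensional lines placed in cohomological degree $-1$ on each of the same one-edge graphs~\eqref{hluboka_deprese_z_voleb}, so an algebra morphism $\Free(\susp E^*) \to \End_\oM$ already encodes two families of contraction maps $\xxi_{uv}$ and $\ooo{u}{v}$ that live in odd degree — this is the first half of the odd modular data.

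Next, compute $R^\perp$ by dualising the four quadratic pieces one at a time. For each of the four pictures~\eqref{dnes_jsem_objel_kanal}, \eqref{Zeman_nebo_Drahos? nahore}, \eqref{Zeman_nebo_Drahos? dole}, \eqref{Mozna_bude_Ovara_volit_i_Jana.}, the space $\Free^2(\susp E^*)(X)$ is two-dimensional with the dual basis $(b^t_1)^*,(b^t_2)^*$ to $(b^t_1,b^t_2)$. Under the pairing~\eqref{jdu_si_lepit_142} the suspension forces a Koszul sign when the two $\susp E^*$-factors are evaluated on the two $E$-factors of $b^t_j$, and a straightforward sign bookkeeping shows that the annihilator of the commutator $r_t = b^t_1 - b^t_2$ is spanned by the anticommutator $(b^t_1)^* + (b^t_2)^*$. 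Thus $R^\perp$ is generated by four relations that, in contrast to~\eqref{Krammer}, glue the two compositions with opposite rather than equal signs.

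Now identify $\oddGr := \Free(\susp E^*)/(R^\perp)$ with the operad of odd modular operads. Copy the argument of Proposition~\ref{pozitri_turnaj_v_Patku}: an $\oddGr$-algebra is, by Proposition~\ref{Udelal_jsem_si_ciruvky_zelanky.}, a morphism $\Free(\susp E^*)/(R^\perp) \to \End_\oM$ where $\oM = \{\oM(n;g)\}$ is a modular module. Such a morphism is determined by its restriction to the generating collection, and that restriction is a pair of operations $\xxi_{uv}$, $\ooo{u}{v}$ of degree $-1$ on $\oM$. The vanishing of the images of the four generators of $R^\perp$ translates the four quadratic pictures into the four associativity/commutativity relations of odd modular operads: exactly as in the matching table at the end of the proof of Proposition~\ref{pozitri_turnaj_v_Patku}, but with every commutator replaced by the corresponding graded anticommutator, which is the defining feature of the odd/twisted version recalled in the Appendix.

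The main obstacle will be keeping the Koszul signs under control through the pairing~\eqref{jdu_si_lepit_142}. Concretely, one must check that the isomorphism~\eqref{dnes_bude_vedro_b} is compatible with the sign twist coming from suspending $E^*$, and that this twist, together with the intrinsic $\Sigma_2$-symmetry of the self-gluing vertex in~\eqref{dnes_na_Rusalku left}, produces exactly the sign flips in the odd modular axioms (e.g.~$\xxi_{uv} = -\xxi_{vu}$ in the odd case versus $\xxi_{uv} = \xxi_{vu}$ in Proposition~\ref{pozitri_turnaj_v_Patku}). Once these signs are pinned down in each of the four quadratic pieces, the identification $\termGr^! \cong \oddGr$ and the description of its algebras follow immediately.
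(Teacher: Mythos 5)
Your proposal is correct and follows essentially the same route as the paper's proof: identify $\termGr^! = \Free(\susp E^*)/(R^\perp)$ with generators supported on one-edge graphs, observe that the annihilator of each $r_i = b^i_1 - b^i_2$ under the pairing~(\ref{jdu_si_lepit_142}) is spanned by the sum $d^i_1 + d^i_2$, and then rerun the endomorphism-operad argument of Proposition~\ref{pozitri_turnaj_v_Patku} to match the four resulting sign-flipped relations with the axioms of odd modular operads. The only quibble is the degree convention: the suspension $\susp$ places the generators (and hence the operations $\twooo{u}{v}$, $\twxxi_{uv}$) in degree $+1$, as in Definition~\ref{odd_modular} of the Appendix, not in degree $-1$; also note that the anticommutator arises already from the plain linear-algebra orthogonality, independently of any Koszul sign in the pairing.
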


\begin{proof}
The Koszul dual  $\oddGr := \termGr^!$ is, by definition, generated by
the collection
\[
\susp E^* :=
\begin{cases}
\susp\, \bfk & \hbox {if $\Gamma$ has exactly one internal edge}
\\
0 & \hbox {otherwise} . 
\end{cases}
\]
We get the similar type of generators $d^i_1,d^i_2$, $1 \leq i \leq
4$, for $\Free^2(\susp E^*)$ as in the proof of
Theorem~\ref{v_nedeli_si_snad_zaletam_s_NOE} except that now they will be in
degree $2$. The pairing~(\ref{jdu_si_lepit_142}) in this particular
case is given by
\[
\langle \ b^i_k\ |\  d^j_l\ \rangle =
\begin{cases}
1 & \hbox {if $i=j$, $k=l$}  
\\
0 & \hbox {otherwise}. 
\end{cases}
\]
Therefore the annihilator $R^\perp$ of the relations~(\ref{Krammer})
is spanned by 
\[
o_1:= d_1^1 + d_2^1, \ o_2:= d_1^2 + d_2^2, \  o_3:= d_1^3 + d_2^3
\hbox { and } \ 
o_4:= d_1^4 + d_2^4.
\]
Repeating the arguments in the proof of
Theorem~\ref{v_nedeli_si_snad_zaletam_s_NOE} 
we identify algebras  
over $\Free(\susp E^*)/(R^\perp)$ with odd modular operads whose
definition is recalled in Appendix~\ref{a1}.
\end{proof}

\begin{remark}
\label{zitra_vycvik_vlekare}
As observed in
\cite[Example~4.19]{part1}, the category
$\ggGrc$ is similar to the category of graphs of 
\cite[\S 2.15]{getzler-kapranov:CompM98}. The difference is the
presence of the local orders of graphs in $\ggGrc$ manifested e.g.\ by 
the fact that, while the category in 
\cite[\S 2.15]{getzler-kapranov:CompM98} has only one local terminal object for each
arity $n$ and genus $g$, the local terminal objects in $\ggGrc$ are
indexed by \hbox {$n$, $g$} and by a permutation $\sigma \in \Sigma_n$,
cf.~Example~\ref{Snad_se_ta_Achylovka_trochu_lepsi.}.
The category of
operads over the operadic category $\ggGrc$ is however equivalent to
the category of hyperoperads in
the sense of~\cite[\S 4.1]{getzler-kapranov:CompM98}. Moreover, there
is a canonical isomorphism between the category of algebras for a
$\ggGrc$-operad and the category of algebras for the corresponding hyperoperad.

\def\det{{\rm det}}
This relation enables one to compare the operad  $\oddGr$ of
Theorem~\ref{vcera_jsem_podlehl} to
a similar object considered in~\cite{getzler-kapranov:CompM98}.
Recall that the {\em determinant\/} $\det(S)$ of a finite set~$S$ 
is the top-dimensional piece of the  exterior
(Grassmann) algebra generated by the elements of $S$ placed in 
degree~$+1$. In particular,  $\det(S)$ is a one-dimensional vector space
concentrated in degree $k$,  with
$k$ the cardinality of $S$. Mimicking the arguments in the second
half of the proof of Theorem~\ref{v_nedeli_si_snad_zaletam_s_NOE} one
can establish that  $\oddGr(\Gamma) \cong \det(\edg(\Gamma))$, the
determinant of the set of internal edges of $\Gamma$. This relates
$\oddGr$ directly to the {\em dualizing cocycle\/} of
\cite[\S 4.8]{getzler-kapranov:CompM98}, 
cf.~also Example ~II.5.52 of \cite{markl-shnider-stasheff:book}. 
\end{remark}

\section{Other operad-like structures}
\label{Asi_pojedu_vecer.}

In this section we analyze other operad-like structures whose pasting
schemes are obtained from the basic
operadic category  $\Gr$ of graphs by means of the iterated Grothendieck
construction.
For all these categories the properties 
$\UFB$ and $\SGrad$ can be easily checked
``manually.''  By the
reasoning from the beginning of Section~\ref{zitra_budu_pit_na_zal} 
they are rigid and fulfill \hbox{\AA.}

\def\CGr{{\Tr}}\def\termCGr{{\sf 1}_{\CGr}}
\def\twxxi{\bullet}
\def\bl{\big(} \def\br{\big)} 
 \def\({{\hbox{$(\!($}}}\def\){{\hbox{$)\!)$}}}
\newcommand{\twooo}[2]{\sideset{_{#1}}{_{#2}}{\mathop{\bullet}}}
\newcommand{\twoooprime}[2]{\sideset{_{#1}}{_{\hskip -.2em#2}}{\mathop{{\bullet'}}}}
\newcommand{\twoxiprime}[1]{{\bullet'}_{\hskip -.2em#1}} 
\def\longiso{{\redukce{$\, \stackrel\cong\lra\, $}}}
\newcommand{\oxitl}[1]{\underline{\hbox{\scriptsize
      $\blacklozenge$}}_{\, #1}}
\newcommand{\twoootl}[2]{\sideset{_{#1}}{_{\, #2}}{\mathop{\overset{L}{\rule{0em}{.4em}\redukce{${\underline\bullet}$}}}}}
\newcommand{\twoootr}[2]{\sideset{_{#1}}{_{\, #2}}{\mathop{\overset{R}{\rule{0em}{.4em}\redukce{${\underline\bullet}$}}}}}
\newcommand{\ooo}[2]{\sideset{_{#1}}{_{#2}}{\mathop{\circ}}}
\def\xxi{\circ}\def\ush{{\rm uSh}}
\def\Chain{{\tt Vect}}
\def\oddCGr{{\mathfrak {K}}_\CGr}

\subsection{Cyclic operads}
Cyclic operads introduced in~\cite{getzler-kapranov:CPLNGT95} 
are, roughly speaking, modular operads without the
genus grading and contractions~(\ref{Galway}). Explicitly, 
a~cyclic operad is a~functor $\oC : \Fin  \to
\Vect$ along with operations
\begin{equation}
\label{v_Galway_jsem_byl}
\ooo ab:\oC\big(S_1 \sqcup \stt a \big)
\otimes \oC\big(S_2\sqcup \stt b\big)  
\longrightarrow \oC ( S_1\sqcup S_2)
\end{equation}
indexed by disjoint finite 
sets $S_1$, $S_2$ and symbols $a,b$.  These operations 
satisfy axioms~(\ref{piji_caj_z_Jarcina_hrnku}), (\ref{eq:24})
and~(\ref{Dnes_s_Jaruskou_k_Pakouskum}) of modular operads (without
the genus grading).
Let $\CGr$ be the full subcategory of $\Gr$ consisting of graphs of
genus zero
whose geometric realizations are contractible, i.e.~which are~trees.  
The local terminal objects of $\Tr$ are corollas $c(\sigma)$, $\sigma \in
\Sigma_n$, as in~\eqref{Dasa_Vokata} but without the genus
labeling the vertex. 
The chosen local terminal objects are corollas $c_n := c(\id_n)$,~$n \in \bbN$.

\begin{theorem}
\label{Zitra_letim_do_Bari.}
The terminal $\CGr$-operad $\termCGr$ is binary quadratic. Its algebras are
cyclic operads. Its Koszul dual $\oddCGr :=\termCGr^!$ is the operad 
whose algebras are anticyclic operads.
\end{theorem}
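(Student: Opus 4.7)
The plan is to mimic the strategy used for $\termGr$ over $\ggGrc$ in Theorem~\ref{v_nedeli_si_snad_zaletam_s_NOE}, Proposition~\ref{pozitri_turnaj_v_Patku} and Theorem~\ref{vcera_jsem_podlehl}, but tracking which pieces survive the restriction to trees. First I would classify the small objects of $\CGr$. Trees with one internal edge have two vertices connected by an edge, as in the right-hand picture of~\eqref{dnes_na_Rusalku right} (without the genus labels). Trees with two internal edges come only in two shapes: three vertices in a line (which is the soul of~\eqref{Mozna_bude_Ovara_volit_i_Jana.} after deleting genera). The loop-shaped graphs underlying relations $r_1, r_2, r_3$ in the proof of Theorem~\ref{v_nedeli_si_snad_zaletam_s_NOE} simply do not lie in $\CGr$, since they have positive genus.

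Next I would define the generating collection $E\in\Coll(\CGr)$ by $E[\Gamma]:=\bfk$ if $\Gamma$ has exactly one internal edge and $E[\Gamma]:=0$ otherwise, with constant $\QV(e)$-presheaf structure. By Proposition~\ref{uz_melo_prvni_cislo_davno_vyjit}, for a tree $\Gamma$ the space $\Free^2(E)(\Gamma)$ is two-dimensional precisely on the three-vertex shape, with basis $b^4_1,b^4_2$ as in~(\ref{zni_mi_hlavou_Joe_Karafiat}) (transposed to this setting). Let $R\subset\Free^2(E)$ be generated by the single relation $r_4:=b^4_1-b^4_2$. I would then show $\termCGr\cong \Free(E)/(R)$ exactly as in~(\ref{dnes_jsem_predsedal_vedecke_rade}): Proposition~\ref{uz_melo_prvni_cislo_davno_vyjit} presents $\Free(E)(\Gamma)$ as total orders on $\edg(\Gamma)$ modulo the interchange of disjoint edges, and $r_4$ suffices in the tree case to interchange the two orders of any pair of adjacent edges, so the quotient is one-dimensional.

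For the algebra identification I would invoke Proposition~\ref{Udelal_jsem_si_ciruvky_zelanky.}. Since the local terminal objects of $\CGr$ are the corollas $c(\sigma)$, the set $\pi_0(\CGr)$ is indexed by a single natural number (arity), and the underlying collection of $\End_V$ for $V=\{V_n\}_{n\in\bbA}$ is precisely a cyclic $\Sigma$-module. Repeating the argument of Proposition~\ref{pozitri_turnaj_v_Patku}, the one-edge generators $\tilde a_{[\Gamma]}$ produce operations $\ooo ab$ as in~(\ref{v_Galway_jsem_byl}), the $\Sigma_2$-symmetry of the edge gives commutativity~(\ref{piji_caj_z_Jarcina_hrnku}), the equivariance and unitality axioms come for free from the presheaf structure, and the single relation $r_4$ translates into the associativity axiom~(\ref{Dnes_s_Jaruskou_k_Pakouskum}) of cyclic operads. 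The other modular axioms are absent because there are no corresponding generating graphs in $\CGr$.

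For the Koszul dual, Definition~\ref{vice nez 10 000 pripadu denne} yields $\oddCGr=\Free(\susp E^*)/(R^\perp)$. With the pairing~(\ref{jdu_si_lepit_142}) paired basis $d^4_1,d^4_2$ in degree $2$, the annihilator of $r_4$ is spanned by $o_4:=d^4_1+d^4_2$. The same algebra calculation as above, with a sign flip in~(\ref{Dnes_s_Jaruskou_k_Pakouskum}), identifies algebras over $\Free(\susp E^*)/(R^\perp)$ with anticyclic operads. The main obstacle I expect is purely bookkeeping: verifying that the appendix axioms of (anti)cyclic operads match exactly the relations produced on each side, and checking carefully that no extra relations sneak in from pairs of disjoint edges, for which I would use Proposition~\ref{uz_melo_prvni_cislo_davno_vyjit} directly.
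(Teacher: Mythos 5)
Your proposal is correct and follows essentially the same route as the paper's own (very terse) proof, which likewise restricts the generating collection~(\ref{vcera}) to trees and imposes the single relation supported on the three-vertex linear soul of~\eqref{Mozna_bude_Ovara_volit_i_Jana.}; note that the paper's proof cites $r_2$ where its own labelling in~(\ref{Krammer}) assigns $r_4$ to that configuration, so your choice of $r_4$ is the internally consistent one. The only slip is the phrase ``two shapes'' for trees with two internal edges --- there is exactly one such soul, the three-vertex path --- but this does not affect the argument.
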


Anticyclic operads introduced in 
\cite[\S 2.11]{getzler-kapranov:CPLNGT95} are ``odd'' versions of cyclic
operads, see also \cite[Definition~II.5.20]{markl-shnider-stasheff:book}.
Due to the absence of the operadic units in our setup, 
  the category of anticyclic operads is however isomorphic to the category of
ordinary cyclic operads, via the isomorphism given by the suspension of
the underlying collection.

\begin{proof}[Proof of Theorem~\ref{Zitra_letim_do_Bari.}]
The proof is a simplified version of calculations in
Section~\ref{zitra_budu_pit_na_zal}. The soul of the only graph in $\CGr$ with
one internal edge
is the one on the left of~\eqref{hluboka_deprese_z_voleb} (without the
genera, of course), the
corresponding operation is~(\ref{v_Galway_jsem_byl}). The souls of the 
only graphs in $\CGr$ with two internal edges are portrayed
in~\eqref{Mozna_bude_Ovara_volit_i_Jana.}. 
Let $E$ be the restriction of the collection~(\ref{vcera}) to the
virtual isomorphism classes of trees in $\CGr$. If $R$
denotes the subspace of $\Free^2(E)$ spanned by $r_2$
in~(\ref{Krammer}), 
then $\termCGr
\cong \Free(E)/(R)$. The arguments are the same as in the proof of
Theorem~\ref{v_nedeli_si_snad_zaletam_s_NOE}. With the material of
Section~\ref{zitra_budu_pit_na_zal} at hand, the identification of
$\termCGr$-algebras with cyclic operads is~immediate.

Algebras over $\oddCGr$ can be analyzed in the same
way as 
$\oddGr$-algebras in the proof of Theorem~\ref{vcera_jsem_podlehl}. 
$\oddCGr$-algebras posses degree $+1$ operations 
\begin{equation}
\label{v_Galway_jsem_byl_jednou}
\twooo ab:\oC\big(S_1 \sqcup \stt a \big)
\otimes \oC\big(S_2\sqcup \stt b\big)  
\longrightarrow \oC ( S_1\sqcup S_2)
\end{equation}
satisfying non-genus graded variants 
of~(\ref{neni}),~(\ref{kdy_zacnu_byt_rozumny})
and~(\ref{neni1}). The level-wise suspension $\susp \oC$ with
operations 
\[
\ooo ab:\susp \oC\big(S_1 \sqcup \stt a \big)
\!\otimes\! \susp \oC\big(S_2\sqcup \stt b\big) \longrightarrow  
\susp \oC ( S_1\sqcup S_2)
\]
defined as the composite
\[
\susp \oC\big(S_1 \sqcup \stt a \big)
\!\otimes\! \susp \oC\big(S_2\sqcup \stt b\big)  \stackrel{\susp \ot
  \susp}\longrightarrow  \oC\big(S_1 \sqcup \stt a \big)
\!\otimes\! \oC\big(S_2\sqcup \stt b\big) \stackrel{\twooo ab}\longrightarrow
 \oC ( S_1\sqcup S_2) \stackrel{\susp}\to \susp \oC ( S_1\sqcup S_2)
\]
can easily be shown to be an anticyclic 
operad~\cite[Definition~II.5.20]{markl-shnider-stasheff:book}.
\end{proof}

As in Remark~\ref{zitra_vycvik_vlekare}, one may observe  that
$\oddCGr(T)$ equals the determinant of the set of internal edges
of the tree $T$. Our description of anticyclic operads as
$\oddCGr$-algebras is therefore parallel to the definition as
${\mathbb T}_-$-algebras given
in~\cite[page~178]{getzler-kapranov:CPLNGT95}.  

\def\Tre{{\CGr}}
\def\RTre{\RTr}
\def\ssRTre{{\RTr}}

\subsection{Ordinary operads}
\label{Cinani_jsou_hovada.}
Let us consider a variant $\RTre$  of the operadic category $\Tre$
consisting of  trees that are rooted in the sense explained in
\cite[Example~4.8]{part1}.
By definition, the output half-edge of each vertex  is the minimal
element in the local order; we will denote this minimal element 
by~$0$ in the context of rooted trees. 
We use the same convention  also for the smallest leg in
the global order, i.e.\ for the root. Since $\RTre$ was obtained from
the basic operadic category $\Gr$ by a Grothendieck construction, it is
again an operadic category sharing all the nice properties of~$\Gr$.

\def\termRTre{{\sf 1}_\ssRTre}\def\oddRTre{{\mathfrak K}_\ssRTre}
\begin{theorem}
\label{markl1}
The terminal\/ $\RTre$-operad  $\termRTre$ is binary quadratic. 
Its algebras are nonunital Markl operads recalled in
Definition~\ref{b3} of Appendix~\ref{a1}.  
The category 
of algebras over its Koszul dual
$\oddRTre :=  \termRTre^!$
is isomorphic to the category of  Markl operads,  via the isomorphism
given by the suspension of the underlying~collection.
\end{theorem}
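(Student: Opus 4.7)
The plan is to mirror the template established by Theorems~\ref{v_nedeli_si_snad_zaletam_s_NOE}, \ref{pozitri_turnaj_v_Patku} and \ref{vcera_jsem_podlehl}, with the simplifications already exploited in the proof of Theorem~\ref{Zitra_letim_do_Bari.}.

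First I would set $E[T] := \bfk$ for each virtual isomorphism class of a rooted tree $T$ with exactly one internal edge and $E[T] := 0$ otherwise, with the trivial $\QV(e)$-presheaf structure. Up to virtual isomorphism, any such $T$ is the grafting of two corollas $c_m$ and $c_n$ along an edge identifying the root of $c_n$ with the $i$-th input of $c_m$; the generators of $E$ therefore correspond precisely to the partial composition symbols $\circ_i$.

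Next I would enumerate the virtual isomorphism classes of rooted trees with two internal edges. Because in a rooted tree every vertex has a unique outgoing half-edge towards the root, two adjacent internal edges can sit in only two configurations: the \emph{serial} one, where the edges form a directed path through three vertices, and the \emph{parallel} one, where two edges share the same sink vertex. For each such $X$, Proposition~\ref{uz_melo_prvni_cislo_davno_vyjit} equips $\Free^2(E)(X)$ with a basis $\{b_1^X, b_2^X\}$ indexed by the two linear orders on $\edg(X)$. I would then let $R$ be the sub-$\Iso$-presheaf of $\Free^2(E)$ spanned by the differences $b_1^X - b_2^X$ for all $X$ of the above two types. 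Repeating the counting argument in the proof of Theorem~\ref{v_nedeli_si_snad_zaletam_s_NOE} verbatim, every linear ordering of $\edg(\Gamma)$ collapses to a single class modulo $(R)$, whence $\termRTre \cong \Free(E)/(R)$, establishing binary quadraticity.

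For the algebras, I would apply Proposition~\ref{Udelal_jsem_si_ciruvky_zelanky.}: $\pi_0(\RTre)$ is indexed by arities $n \geq 1$, so the underlying collection $V$ of an endomorphism operad is a sequence of $\Sigma_n$-modules, the one-edge generators of $E$ produce the $\circ_i$-operations, and the vanishing of $a(b_1^X - b_2^X)$ on serial resp.\ parallel $X$ yields the sequential resp.\ parallel associativity axioms of a Markl operad. This identifies $\termRTre$-algebras with nonunital Markl operads as in Definition~\ref{b3}. For the Koszul dual, the generators become $\susp E^*$ (degree $+1$) and $R^\perp$ is spanned by the \emph{sums} $d_1^X + d_2^X$, so $\oddRTre$-algebras carry degree $+1$ operations $\twxxi_i$ satisfying signed versions of the two Markl axioms. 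The final step is to verify that componentwise desuspension $A \mapsto \desusp A$ turns these signed $\twxxi_i$-operations into standard $\circ_i$-operations satisfying the unsigned Markl axioms, giving the claimed isomorphism of categories.

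The main obstacle is the sign bookkeeping in this last step: one must check that the Koszul sign in axiom~(iii) of Definition~\ref{vcera_s_Mikesem_na_Jazz_Bluffers}, the degree shift induced by $\susp$, and the ``$+$'' in $d_1^X + d_2^X$ cancel perfectly, so that the correspondence $A \leftrightarrow \desusp A$ is a genuine isomorphism of categories rather than merely an equivalence up to a sign twist. This parallels the classical fact, recalled after the proof of Theorem~\ref{Zitra_letim_do_Bari.}, that anticyclic and cyclic operads become equivalent once the underlying collection is suspended.
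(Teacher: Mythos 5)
Your proposal is correct and, for the quadratic presentation and the identification of $\termRTre$-algebras with Markl operads, coincides with the paper's argument: the same generating collection supported on one-edge rooted trees, the same two adjacent-edge configurations (serial and parallel) producing the two relations $b_1^X-b_2^X$, and the same appeal to Proposition~\ref{Udelal_jsem_si_ciruvky_zelanky.} to read off the $\circ_i$-operations and the two halves of~\eqref{zitra_s_Mikesem_do_Salmovske}. Where you diverge is the last step. You propose to finish by a direct check that the Koszul signs, the degree shift, and the ``$+$'' in $d_1^X+d_2^X$ cancel under (de)suspension, and you correctly flag this as the delicate point; the paper acknowledges that this direct verification works but replaces it by a sign-free conceptual argument: one observes that $\oddRTre(T)\cong\det(\edg(T))$, uses the bijection~\eqref{Dnes_mam_vedecke_narozeniny.} between internal edges and non-root vertices to identify $\det(\edg(T))$ with the coboundary $\fdl(T)$ of Example~\ref{coboundary} for $\fl\equiv\desusp\bfk$, concludes $\oddRTre\cong\termRTre\ot\fdl$, and then invokes Proposition~\ref{za_chvili_zavolam_Jarusce} to get the isomorphism of algebra categories with the suspension built in automatically. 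I recommend adopting that route: it disposes of exactly the obstacle you name, and it is the same mechanism the paper reuses for wheeled properads. (Also note the theorem states the isomorphism via the \emph{suspension} of the underlying collection, consistent with $\inv{\fl}(c)=\susp\bfk$ in Proposition~\ref{za_chvili_zavolam_Jarusce}; your ``desuspension'' is just the inverse functor, so this is only a matter of direction, not of substance.)
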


\begin{proof}
The soul of graphs in $\RTre$ with one internal edge is
the oriented interval consisting of two oriented half-edges as 
in
\[
\psscalebox{1.0 1.0} 
{
\begin{pspicture}(-5,-0.32000703)(8.167188,0.32000703)
\psdots[linecolor=black, dotsize=0.16963564](0.083594486,-0.121504106)
\psdots[linecolor=black, dotsize=0.16963564](3.283593,-0.11848185)
\psline[linecolor=black, linewidth=0.04](1.7738773,0.08000693)(1.7741597,-0.3199929)
\psline[linecolor=black, linewidth=0.04, arrowsize=0.05291666666666667cm 3.0,arrowlength=3.0,arrowinset=0.0]{->}(0.08359377,-0.11999298)(3.2835937,-0.11999298)
\psline[linecolor=black, linewidth=0.04, arrowsize=0.05291666666666667cm 3.0,arrowlength=3.0,arrowinset=0.0]{->}(0.08359377,-0.11999298)(1.8,-0.11999298)
\rput[bl](2.4835937,0.08000702){$i$}
\rput[bl](3.4835937,-.2){.}
\rput[bl](0.8835938,0.08000702){$0$}
\end{pspicture}
}
\]
Since the label
of the out-going half-edge is always the minimal one in the local
order, we omit it from pictures and draw the internal edges  as arrows
acquiring the label of the in-going half-edge, see
\begin{equation}
\label{je_patek_a_melu_z_posledniho}
\psscalebox{1.0 1.0} 
{
\begin{pspicture}(4.75,-0.32000703)(8.167188,0.32000703)
\psdots[linecolor=black, dotsize=0.16963564](5.2835946,-0.121504106)
\psdots[linecolor=black, dotsize=0.16963564](8.083593,-0.11848185)
\psline[linecolor=black, linewidth=0.04, arrowsize=0.05291666666666668cm 3.0,arrowlength=3,arrowinset=0.0]{->}(5.2835937,-0.11999298)(8.083593,-0.11999298)
\rput[bl](6.6835938,0.08000702){$i$}
\rput[bl](8.3,-.2){.}
\end{pspicture}
}
\end{equation}
Let $E$ be an obvious modification of the constant
collection~(\ref{vcera}) to the category
$\RTre$. The display
\begin{equation}
\label{Noha porad boli.}
\raisebox{-7em}{
\psscalebox{1.0 1.0} 
{
\begin{pspicture}(0,-1.7993219)(12.822262,1.7993219)
\psline[linecolor=black, linewidth=0.04, arrowsize=0.05291666666666667cm 3.0,arrowlength=3.0,arrowinset=0.0]{->}(0.36226174,-0.6006781)(1.5622617,0.5993219)
\psdots[linecolor=black, dotsize=0.16963564](1.5622624,0.59781075)
\psline[linecolor=black, linewidth=0.04, arrowsize=0.05291666666666667cm 3.0,arrowlength=3.0,arrowinset=0.0]{->}(2.5622618,-0.6006781)(1.5622617,0.5993219)
\psline[linecolor=black, linewidth=0.04, arrowsize=0.05291666666666667cm 3.0,arrowlength=3.0,arrowinset=0.0]{->}(3.7622616,-0.6006781)(4.9622617,0.5993219)
\psdots[linecolor=black, dotsize=0.16963564](4.9622626,0.59781075)
\psline[linecolor=black, linewidth=0.04, arrowsize=0.05291666666666667cm 3.0,arrowlength=3.0,arrowinset=0.0]{->}(5.9622617,-0.6006781)(4.9622617,0.5993219)
\rput{43.622902}(0.26521635,-0.66406024){\psellipse[linecolor=black, linewidth=0.04, linestyle=dashed, dash=0.17638889cm 0.10583334cm, dimen=outer](0.96226174,0.0)(1.2,0.6)}
\rput{-230.65244}(8.761535,-4.147821){\psellipse[linecolor=black, linewidth=0.04, linestyle=dashed, dash=0.17638889cm 0.10583334cm, dimen=outer](5.362262,0.0)(1.2,0.6)}
\psdots[linecolor=black, dotsize=0.16963564](0.36226246,-0.60218924)
\psdots[linecolor=black, dotsize=0.16963564](5.9622626,-0.60218924)
\psdots[linecolor=black, dotsize=0.16963564](3.7622623,-0.60218924)
\psdots[linecolor=black, dotsize=0.16963564](2.5622625,-0.60218924)
\rput[bl](2.9622617,0.1993219){$\oplus$}
\psline[linecolor=black, linewidth=0.04, arrowsize=0.05291666666666667cm 3.0,arrowlength=3.0,arrowinset=0.0]{->}(9.762261,-1.4006782)(9.762261,0.0)
\psdots[linecolor=black, dotsize=0.16963564](9.762262,1.3978108)
\psdots[linecolor=black, dotsize=0.16963564](9.762262,-0.0021892267)
\psdots[linecolor=black, dotsize=0.17041689](9.762262,-1.4021893)
\psline[linecolor=black, linewidth=0.04, arrowsize=0.05291666666666667cm 3.0,arrowlength=3.0,arrowinset=0.0]{->}(9.762261,0.0)(9.762261,1.3993219)
\psline[linecolor=black, linewidth=0.04, arrowsize=0.05291666666666667cm 3.0,arrowlength=3.0,arrowinset=0.0]{->}(11.962262,-1.4006782)(11.962262,0.0)
\psdots[linecolor=black, dotsize=0.16963564](11.962262,1.3978108)
\psdots[linecolor=black, dotsize=0.16963564](11.962262,-0.0021892267)
\psdots[linecolor=black, dotsize=0.17041689](11.962262,-1.4021893)
\psline[linecolor=black, linewidth=0.04, arrowsize=0.05291666666666667cm 3.0,arrowlength=3.0,arrowinset=0.0]{->}(11.962262,0.0)(11.962262,1.3993219)
\psellipse[linecolor=black, linewidth=0.04, linestyle=dashed, dash=0.17638889cm 0.10583334cm, dimen=outer](9.762261,-0.7006781)(0.6,1.1)
\psellipse[linecolor=black, linewidth=0.04, linestyle=dashed, dash=0.17638889cm 0.10583334cm, dimen=outer](11.962262,0.6993219)(0.6,1.1)
\rput[bl](10.762261,0.0){$\oplus$}
\rput[bl](0.56226176,0.0){$a$}
\rput[bl](2.1622617,0.0){$b$}
\rput[bl](3.9622617,0.0){$c$}
\rput[bl](5.5622616,0.0){$d$}
\rput[bl](9.362262,0.5993219){$e$}
\rput[bl](9.362262,-0.8006781){$f$}
\rput[bl](11.562262,0.5993219){$g$}
\rput[bl](11.562262,-0.8006781){$h$}
\rput[bl](0.96226174,-0.4006781){$E$}
\rput[bl](5.2262,-0.6){$E$}
\rput[bl](5.762262,0.7993219){$E$}
\rput[bl](0.36226174,0.7993219){$E$}
\rput[bl](12.2262,-.9){$E$}
\rput[bl](10.3,-1.6006781){$E$}
\rput[bl](12.1,.3219){$E$}
\rput[bl](9.85,-1.0006781){$E$}
\end{pspicture}
}
}
\end{equation}
features souls of rooted trees with two internal edges. It shows
that  $\Free^2(E)$ has two families of bases, $(b^1_1,b^1_2)$
corresponding to the direct sum in the left part and  $(b^2_1,b^2_2)$
corresponding to the direct sum on the right of the display. Let $R$ be the subspace of
$\Free^2(E)$ spanned by the relations
\begin{equation}
\label{Rozbila se mi mycka nadobi.}
r_1 := b^1_1 - b^1_2 \ \hbox { and } \ r_2 := b^2_1 - b^2_2.
\end{equation}
The isomorphism $\termRTre \cong \Free(E)/(R)$ can be established 
as in the proof of Theorem~\ref{v_nedeli_si_snad_zaletam_s_NOE}.

\def\calS{{\EuScript S}}
To identify $\termRTre$-algebras with Markl operads  
we proceed as in the proof 
of Theorem~\ref{pozitri_turnaj_v_Patku}. 
We start by realizing that the local terminal
objects are rooted corollas  $c^\uparrow(\sigma)$, $\sigma \in
\Sigma_n$, shown~in 
\[
\psscalebox{1.0 1.0} 
{
\begin{pspicture}(0,-1.34791)(3.84,1.6534791)
\psdots[linecolor=black, dotsize=0.225](2.0,0.14652084)
\psline[linecolor=black, linewidth=0.04, arrowsize=0.05291666666666667cm 3.0,arrowlength=3,arrowinset=0.0]{->}(0.6,-1.0534792)(2.0,0.14652084)
\psline[linecolor=black, linewidth=0.04, arrowsize=0.05291666666666667cm 3.0,arrowlength=3,arrowinset=0.0]{->}(1.2,-1.2534791)(2.0,0.14652084)
\psline[linecolor=black, linewidth=0.04, arrowsize=0.05291666666666667cm 3.0,arrowlength=3.0,arrowinset=0.0]{->}(3.2,-1.0534792)(2.0,0.14652084)
\psline[linecolor=black, linewidth=0.04, arrowsize=0.05291666666666667cm 3.0,arrowlength=3,arrowinset=0.0]{->}(2.0,0.14652084)(2.0,1.465209)
\rput[bl](0.4,-.8){$\sigma_1$}
\rput[bl](.8,-1.2){$\sigma_2$}
\rput[bl](2.7,-1.2){$\sigma_n$}
\rput[bl](.3,-1.4){$1$}
\rput[bl](1.0,-1.791){$2$}
\rput[bl](3.2,-1.4792){$n$}
\rput[bl](2.2,0.6){$0$}
\rput[b](2,1.6){$0$}
\rput(-3.177993,1){\psarc[linecolor=black, linewidth=0.04, linestyle=dotted, dotsep=0.10583334cm, dimen=outer](5.275,-0.8477939){1}{-110}{-60}}
\end{pspicture}
}
\]
while the chosen local terminal objects
are  $c^\uparrow_n := c^\uparrow(\id_n)$.
The set $\pi_0(\RTre)$ of connected components is therefore 
identified with the natural numbers $\bbN$. Analyzing
the actions of local terminal objects in~\cite[display~(60)]{part1}
we conclude that
the underlying collections for $\termRTre$-algebras are sequences
$\calS(n)$, $n \in \bbN$, of $\Sigma_n$-modules.

As in the proof of Theorem~\ref{pozitri_turnaj_v_Patku} we establish
that the value of the generating collection $E$ on graphs whose
soul is the arrow in~\eqref{je_patek_a_melu_z_posledniho}
produces partial compositions~\eqref{zitra_s_Mikesem_na_Jazz}, that the
relation $r_1$ expresses the parallel associativity, i.e.\ the
first and the last cases of the relation 
in~\eqref{zitra_s_Mikesem_do_Salmovske} of the Appendix, and
$r_2$ the sequential associativity, i.e.\ the middle
case of that relation.
 
We are sure that at this stage  the reader will easily
describe the annihilator $R^\perp$ of the space $R$ of relations and 
identify algebras of the Koszul dual 
\[
\oddRTre := \termRTre^! =  \Free(\susp E^*)/(R^\perp)
\]
as structures with degree $+1$ operations
\begin{equation}
\label{Byl_jsem_se_142_v_Benesove.}
\bullet_i : \calS(m) \ot \calS(n) \to \calS(m +n -1)
\end{equation} 
satisfying~(\ref{Krtecek_na_mne_kouka.}) and the
associativities~(\ref{zitra_s_Mikesem_do_Salmovske}) with the minus
sign. It can be verified
directly that the level-wise suspension of such a
structure is an ordinary Markl operad. However, a more conceptual
approach based on coboundaries introduced 
in~\cite[Example~6.10]{part1} is available. 

As in the cases of modular and cyclic operads we notice
that, for a rooted tree $T \in \RTre$, we have 
$\oddRTre(T) \cong \det(\edg(T))$, the
determinant of the set of internal edges of $T$. On the other hand,
the correspondence that assigns to each vertex of $T$ its out-going
edge is an isomorphism
\begin{equation}
\label{Dnes_mam_vedecke_narozeniny.}
\edg(T) \cong   \{\hbox {vertices of $T$}\} \setminus \{\hbox {the root}\}
\end{equation}
which implies that $\det(\edg(T))$
is isomorphic to   $\fdl(T)$, where $\fdl$ is the 
coboundary with $\fl: \pi_0(\RTre) \to
\Vect$ the constant function with value the desuspension 
$\hbox {$\downarrow\!\bfk$}$ of
the ground field. Therefore
\[
\oddRTre =  \termRTre  \ot \fdl
\] 
and the identification of $\oddRTre$-algebras with
Markl operads via the suspension of the underlying collection  follows
from~\cite[Proposition~6.11]{part1}.
\end{proof}

Similar statements can be proved also for the operadic categories $\PTr$ and 
$\PRTr$ of planar resp.~planar rooted trees introduced 
in~\cite[Example~4.9]{part1}. The corresponding
terminal operads ${\sf 1}_\PTr$ resp.~ ${\sf 1}_\PRTr$ will again be
self-dual binary quadratic, with algebras nonsymmetric cyclic
operads~\cite[page~257]{markl-shnider-stasheff:book}
resp.~ nonsymmetric Markl
operads~\cite[Definition~II.1.14]{markl-shnider-stasheff:book}. 
We leave the details to the reader. 

\subsection{Pre-permutads.} Pre-permutads   introduced
in~\cite{loday11:_permut} form a link between nonsymmetric operads and
permutads. They are
structures satisfying all axioms of Markl operads as recalled in
Definition~\ref{b3} except the parallel associativity, i.e.\ the first
and the last case of~(\ref{zitra_s_Mikesem_do_Salmovske}). 
Pre-permutads  are algebras for a certain binary quadratic 
operad over the category $\RTre$
of rooted trees which is very far from being Koszul self-dual.

\begin{definition}
Let $\pperm : = \Free(E)/(R)$ be the  $\RTr$-operad with the same
collection $E$ of
generators as the  $\RTr$-operad  $\termRTre$ for ordinary operads,
cf.~the proof of Theorem~\ref{markl1}. The ideal of relations $ (R)$ 
is spanned by $r_2$ in~\eqref{Rozbila se mi mycka nadobi.} belonging to the
direct sum on the right of~\eqref{Noha porad boli.}.
\end{definition}

\begin{theorem}
\label{Vcera_s_Mikesem_v_Matu.}
Pre-permutads in the sense of~\cite[page~348]{loday11:_permut} are algebras over
$\pperm$. The~category of algebras
over the Koszul dual $\pperm^!$ is isomorphic to the category of 
structures satisfying all
axioms of Markl operads, except the
associativity~(\ref{zitra_s_Mikesem_do_Salmovske}) which is  replaced by
\[
(f \circ_j g)\circ_i h =
\begin{cases}
0& \mbox{for } 1\leq i< j
\\
f \circ_j(g \circ_{i-j+1} h)& \mbox{for }
j\leq i~< b+j
\\
0& \mbox{for }
j+b\leq i\leq a+b-1.
\end{cases}
\] 
\end{theorem}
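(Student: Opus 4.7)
The plan is to imitate the proof of Theorem~\ref{markl1}, using the same generating $\RTr$-collection $E$ (with $E[T]=\bfk$ if $T$ has exactly one internal edge and $0$ otherwise) and the same analysis of $\Free^2(E)$ via picture~\eqref{Noha porad boli.}, but with a strictly smaller relation subspace. Recall from that proof that at a two-edge rooted tree $\Free^2(E)$ has bases $(b^1_1,b^1_2)$ and $(b^2_1,b^2_2)$ corresponding respectively to the left (parallel) and right (sequential) diagrams of~\eqref{Noha porad boli.}, that $r_1=b^1_1-b^1_2$ encodes the parallel associativity (the first and last cases of~\eqref{zitra_s_Mikesem_do_Salmovske}), and that $r_2=b^2_1-b^2_2$ encodes the middle, sequential case. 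I would set $R':=\langle r_2\rangle$, define $\pperm:=\Free(E)/(R')$, and repeat the algebra-identification steps of Theorem~\ref{markl1}: now, with $r_1$ absent, $\pperm$-algebras are $\Sigma$-collections equipped with partial compositions $\circ_i$ satisfying equivariance and only the sequential middle case of~\eqref{zitra_s_Mikesem_do_Salmovske}, which is precisely the recalled definition of a pre-permutad.

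For the Koszul dual, let $d^i_k$ be the basis of $\Free^2(\susp E^*)$ dual to $b^i_k$ under~\eqref{jdu_si_lepit_142}. The annihilator of the line $\langle r_2\rangle$ is three-dimensional,
\[
R'^{\perp}=\langle d^1_1,\; d^1_2,\; d^2_1+d^2_2\rangle,
\]
so $\pperm^!=\Free(\susp E^*)/(d^1_1,d^1_2,d^2_1+d^2_2)$. An algebra over $\pperm^!$ therefore carries degree $+1$ operations $\bullet_i$ in which the two parallel composites (separately killed by $d^1_1$ and $d^1_2$) vanish identically, while the sequential composites satisfy the odd associativity $(f\bullet_j g)\bullet_i h=-f\bullet_j(g\bullet_{i-j+1} h)$ for $j\leq i<j+b$. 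Passing to the level-wise suspension of the underlying collection, exactly as in the closing paragraph of the proof of Theorem~\ref{markl1} -- the bijection~\eqref{Dnes_mam_vedecke_narozeniny.} and the resulting coboundary $\fdl$ depend only on the tree shape and not on which quadratic relations we have imposed -- the three displayed relations convert into the three cases stated in the theorem, and Proposition~\ref{za_chvili_zavolam_Jarusce} supplies the claimed isomorphism of algebra categories.

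The main point requiring care will be verifying that $d^1_1=0$ and $d^1_2=0$ kill the two parallel composites $(f\bullet_j g)\bullet_i h$ with $i<j$ and $i\geq j+b$ separately, rather than producing some linear combination of them. This amounts to tracking which basis vector $b^1_k$ represents which of the two indexing regimes in the soul diagram on the left of~\eqref{Noha porad boli.}, a bookkeeping step already implicit in the proof of Theorem~\ref{markl1}. Once this is settled, no new ingredients beyond those of Theorem~\ref{markl1} are needed, and the sign in the sequential case turns into a plus sign under the suspension, matching the theorem's statement.
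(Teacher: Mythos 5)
Your proposal is correct and follows essentially the same route as the paper's proof: same generating collection $E$, the relation subspace spanned by $r_2$ alone, the three-dimensional annihilator $\langle d^1_1, d^1_2, d^2_1+d^2_2\rangle$, and the identification of $\pperm^!$-algebras via level-wise suspension. Your explicit appeal to the coboundary $\fdl$ and Proposition~\ref{za_chvili_zavolam_Jarusce} just spells out the suspension step that the paper handles by reference back to the proof of Theorem~\ref{markl1}.
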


\begin{proof}
The first part of the theorem is an immediate consequence of the
definition of $\pperm$. Let $d^1_1,d^1_2$ resp.~$d^2_1,d^2_2$
be the bases  of $\Free^2(\susp E^*)$ dual to $b^1_1,b^1_2$
resp.~$b^2_1,b^2_2$. Then the annihilator $R^\perp$ is clearly spanned
by
\[
o := d^2_1 + d^2_2,\ d^1_1 \hbox { and } d^1_2. 
\]
As before, we identify algebras over $\pperm^! =
\Free(\susp^*E)/{R^\perp}$ with structures equipped with degree~$+1$
operations~(\ref{Byl_jsem_se_142_v_Benesove.}) satisfying
\[
(f \bullet_j g)\bullet_i h =
\begin{cases}
0& \mbox{for } 1\leq i< j
\\
- f \bullet_j(g \bullet_{i-j+1} h)& \mbox{for }
j\leq i < b+j
\\
0& \mbox{for }
j+b\leq i\leq a+b-1,
\end{cases}
\] 
whose first case corresponds to $d^1_1$, the middle to $o$, and the
last one to  $d^1_2$. The level-wise suspension of this object is
the structure described in Theorem~\ref{Vcera_s_Mikesem_v_Matu.}.
\end{proof}

\section{PROP-like structures and permutads}
\label{Ta_moje_lenost_je_strasna.}

In this section we treat some important 
variants of PROPs whose associated
operadic categories are sundry modifications
of the category $\Whe$ of connected ordered oriented  graphs
introduced in~\cite[Example~4.20]{part1}. 
The orientation divides 
the set of half-edges adjacent
to each vertex of the graphs involved
into two subsets -- inputs and outputs of that vertex.
The local terminal objects in these categories will thus be the ordered corollas
$c{\sigma \choose \lambda}$, $\sigma \in \Sigma_k$, $\lambda \in
\Sigma_l$, 
as in
\[
\psscalebox{1.0 1.0} 
{
\begin{pspicture}(0,-1.88)(3.64,1.88)
\psdots[linecolor=black, dotsize=0.225](1.8,-0.08)
\psline[linecolor=black, linewidth=0.04, arrowsize=0.05291666666666667cm 3.0,arrowlength=3,arrowinset=0.0]{->}(0.4,-1.28)(1.8,-0.08)
\psline[linecolor=black, linewidth=0.04, arrowsize=0.05291666666666667cm 3.0,arrowlength=3,arrowinset=0.0]{->}(1.0,-1.48)(1.8,-0.08)
\psline[linecolor=black, linewidth=0.04, arrowsize=0.05291666666666667cm 3.0,arrowlength=3,arrowinset=0.0]{->}(3.0,-1.28)(1.8,-0.08)
\rput(0.5,.7){$\sigma_1$}
\rput(1.05,1.1){$\sigma_2$}
\rput(2.7,1.1){$\sigma_k$}
\rput[b](0.2,1.32){$1$}
\rput[b](1.2,1.6){$2$}
\rput[b](3.3,1.3){$k$}
\rput[t](3.4,-.2){.}
\psline[linecolor=black, linewidth=0.04, arrowsize=0.05291666666666667cm 3.0,arrowlength=3,arrowinset=0.0]{->}(1.8,-0.08)(0.4,1.32)
\psline[linecolor=black, linewidth=0.04, arrowsize=0.05291666666666667cm 3.0,arrowlength=3,arrowinset=0.0]{->}(1.8,-0.08)(3.2,1.12)
\psline[linecolor=black, linewidth=0.04, arrowsize=0.05291666666666667cm 3.0,arrowlength=3,arrowinset=0.0]{->}(1.8,-0.08)(1.2,1.52)
\rput(0.5,-.7){$\lambda_1$}
\rput(.8,-1.3){$\lambda_2$}
\rput(2.5,-1.18){$\lambda_l$}
\rput[t](0.2,-1.28){$1$}
\rput[t](1.1,-1.6){$2$}
\rput[t](3.2,-1.38){$l$}
\rput(-3.4,.9){\psarc[linecolor=black, linewidth=0.04,
  linestyle=dotted, dotsep=0.10583334cm,
  dimen=outer](5.275,-0.8477939){1}{-110}{-60}}
\rput(-3.4,.7){\psarc[linecolor=black, linewidth=0.04, linestyle=dotted, dotsep=0.10583334cm, dimen=outer](5.275,-0.8477939){1}{55}{105}}
\end{pspicture}
}
\]
The chosen
local terminal objects are the ordered corollas $c^k_l
:= c{\id_k \choose \id_l}$, $k,l \in \bbN$.
The underlying collections of the
corresponding algebras will be  families
\begin{equation}
\label{Dnes_jsem_byl_s_NOE_v_Pribyslavi.}
D(m,n),\ m,n \in \bbN,
\end{equation}
of $\Sigma_m \times \Sigma_n$-modules. We will see that the
orientation of the underlying graphs implies that the corresponding
terminal operads are self-dual.

\noindent 
\subsection{Wheeled properads}
These structures were introduced in \cite{mms} as an
extension  of Vallette's properads~\cite{vallette:TAMS07} 
that allowed ``back-in-time'' edges in order to
capture traces and therefore also
master equations in mathematical physics. Surprisingly, this extended
theory is better behaved than the theory of properads
in that their composition laws are
iterated composites of elementary ones, that is, in terms of pasting
schemes, of those given by contraction of a single edge.

The operadic category relevant for wheeled properads is 
the category $\Whe$ of connected oriented  ordered graphs.
Since $\Whe$ was 
constructed in~\cite[Example~4.20]{part1} 
from the basic operadic category $\Gr$ by iterating
the Grothendieck construction, and since it clearly satisfies the conditions 
$\UFB$ and $\SGrad$, we conclude as in the previous
sections that our theory of Koszul duality applies to it.

\begin{theorem}
The terminal\/ $\Whe$-operad  $\Wheterm$ is binary quadratic. 
Its algebras are wheeled properads introduced in~\cite[Definition~2.2.1]{mms}.
The operad $\Wheterm$  
is self-dual in the sense of Definition~\ref{Uz_je_o_te_medaili_zapis.}.
\end{theorem}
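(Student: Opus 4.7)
The plan is to mimic the strategies from the proofs of Theorems~\ref{v_nedeli_si_snad_zaletam_s_NOE} and~\ref{markl1}. First I would define the generating $1$-connected $\QV(e)$-presheaf $E$ on $\Whe$ in analogy with~(\ref{vcera}): it takes the value $\bfk$ on virtual classes of connected oriented directed graphs with exactly one internal edge, and vanishes otherwise. Up to virtual isomorphism, there are precisely two such classes: a directed interval joining two distinct vertices, and a directed loop at a single vertex. Their souls play the role of the pictures in~(\ref{hluboka_deprese_z_voleb}), and they will produce, respectively, the vertical/horizontal composition of a properad and the wheel (contraction) operation.

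Next, for the quadratic relations, I would enumerate the virtual classes of two-edge graphs $X\in\Whe$. By Proposition~\ref{uz_melo_prvni_cislo_davno_vyjit}, the isomorphism classes of labeled towers in $\lTw(X)$ whose fiber sequence consists of one-edge graphs correspond to linear orders on $\edg(X)$ modulo the relation~$\bowtie$. Whenever the two edges of $X$ share at least one vertex, the associated summand of $\Free^2(E)(X)$ is two-dimensional with basis $(b^j_1,b^j_2)$, and I would take $R$ to be the span of the differences $r_j := b^j_1-b^j_2$. The isomorphism $\Wheterm\cong\Free(E)/(R)$ then follows from the same ``every pair of orders on $\edg(\Gamma)$ is equivalent modulo $\bowtie$ and $R$'' argument used in the proof of Theorem~\ref{v_nedeli_si_snad_zaletam_s_NOE}.

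For the identification of algebras, the local terminal objects of $\Whe$ are the directed corollas from the preamble of Section~\ref{Ta_moje_lenost_je_strasna.}, so $\pi_0(\Whe)\cong\bbN\times\bbN$ and the underlying collection of a $\Wheterm$-algebra is a bi-graded family $\{D(m,n)\}$ as in~(\ref{Dnes_jsem_byl_s_NOE_v_Pribyslavi.}). Proposition~\ref{Udelal_jsem_si_ciruvky_zelanky.}, together with the argument of Proposition~\ref{pozitri_turnaj_v_Patku}, then reduces the identification to specifying a morphism of $\QV(e)$-presheaves $E\to\oEnd$ that annihilates each $r_j$. The interval generator produces the biequivariant properadic compositions, the loop generator produces the contraction (wheel) operation, and the relations $r_j$ translate directly into the axioms of~\cite[Definition~2.2.1]{mms}.

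For the self-duality claim, I would follow the coda of the proof of Theorem~\ref{markl1}. An inspection of $R^\perp$ shows that $\Wheterm^!(\Gamma)\cong\det(\edg(\Gamma))$ canonically for every $\Gamma\in\Whe$, as in Remark~\ref{zitra_vycvik_vlekare}. The combinatorial identity peculiar to $\Whe$ is that, for a connected oriented graph $\Gamma$ with $n$ input legs and input-arities $\Rada k1v$ at its vertices, one has $|\edg(\Gamma)| = \sum_{i=1}^v k_i - n$, a quantity determined by $\pi_0(\Gamma)$ and $\pi_0(s(\Gamma))$. Taking $\fl:\pi_0(\Whe)\to\Vect$ to be $\fl(m,n) := (\hbox{$\desusp\!\bfk$})^{\otimes n}$, the coboundary $\fdl$ of Example~\ref{coboundary} satisfies $\fdl(\Gamma)\cong\det(\edg(\Gamma))$ naturally in $\Gamma$, so that $\Wheterm^!\cong\Wheterm\otimes\fdl$. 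Proposition~\ref{za_chvili_zavolam_Jarusce} then yields the required isomorphism between the categories of $\Wheterm$-algebras and of~$\Wheterm^!$-algebras.

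I expect the main obstacle to be verifying that the identification $\det(\edg(\Gamma))\cong\fdl(\Gamma)$ is natural with respect to the elementary circle products $\circ_\phi$, and therefore assembles into an isomorphism of Markl operads rather than merely of underlying $\QV(e)$-presheaves. Unlike the rooted-tree setting of Theorem~\ref{markl1}, where the clean bijection between edges and non-root vertices makes all signs transparent, the presence of wheels in $\Whe$ means that edges cannot in general be labeled by non-root vertices, so the sign analysis will have to be performed directly using the input/output orientation of each edge together with the identity $\sum_i k_i = n + |\edg(\Gamma)|$.
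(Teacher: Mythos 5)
Your proposal is correct and follows essentially the same route as the paper's (telegraphic) proof: a quadratic presentation with the directed interval and the loop as one-edge generators and relations indexed by the two-edge souls, identification of algebras via the endomorphism operad as in Proposition~\ref{pozitri_turnaj_v_Patku}, and self-duality via $\Wheterm^!(\Gamma)\cong\det(\edg(\Gamma))$ realized as a coboundary and Proposition~\ref{za_chvili_zavolam_Jarusce}. The only (immaterial) difference is that you count input flags, taking $\fl(m,n)=(\desusp\bfk)^{\ot n}$, whereas the paper uses the dual identification $\edg(\Gamma)\cong\bigcup_{v}\out(v)\setminus\out(\Gamma)$ and sets $\fl(c^k_l)=\desusp^k\bfk$ on outputs; both give the same degree $|\edg(\Gamma)|$ and the naturality you worry about is exactly what this flag-labelling of edges provides.
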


\begin{proof}
The proof goes along the same lines as the proofs of similar
statements, namely 
Theorems~\ref{v_nedeli_si_snad_zaletam_s_NOE}, \ref{vcera_jsem_podlehl},
\ref{Zitra_letim_do_Bari.}, \ref{markl1}, \ref{Vcera_s_Mikesem_v_Matu.}
and Proposition~\ref{pozitri_turnaj_v_Patku}, so we will be telegraphic.
As before, for a wheeled graph $\Gamma$ we put
\[
{\mathfrak {K}}_\Whe  (\Gamma):=\Wheterm^!(\Gamma) \cong
\det(\edg(\Gamma)), 
\]
the determinant of the set of internal edges of $\Gamma$. On the other hand,
the correspondence that assigns to each vertex $v$ of $\Gamma$ the set
$\out(v)$ of its out-going
edges defines an isomorphism
\begin{equation}
\label{Dnes_mam_vedecke_narozeniny_bis.}
\textstyle
\edg(\Gamma) \cong   \bigcup_{v \in \Vert(\Gamma)} \out(v) \setminus \out(\Gamma)
\end{equation}
which implies that $\det(\edg(\Gamma))$
is isomorphic to   $\fdl(\Gamma)$, where $\fdl$ is the 
coboundary with $\fl: \pi_0(\RTre) \to
\Vect$ the function defined by
\[
\fl(c^k_l) : = \downarrow^k \bfk,
\]
the desuspension of the ground field  iterated $k$ times. Therefore
\[
{\mathfrak {K}}_\Whe \cong  \Wheterm  \ot \fdl
\] 
which, by~\cite[Proposition~6.11]{part1},
implies the self-duality of $ \Wheterm$.

It follows from the description of the local terminal objects in
$\Whe$ that the underlying structure of a $\Wheterm$-algebra is 
a collection of bimodules as
in~(\ref{Dnes_jsem_byl_s_NOE_v_Pribyslavi.}). The composition laws
are given by wheeled graphs with one internal edge, whose souls are
depicted in
\[
\psscalebox{1.0 1.0} 
{
\begin{pspicture}(0,-1.0554386)(2.5108774,1.0554386)
\psdots[linecolor=black, dotsize=0.225](0.11087738,-0.9445613)
\psline[linecolor=black, linewidth=0.04, arrowsize=0.05291666666666667cm 3.0,arrowlength=3.0,arrowinset=0.0]{->}(0.11087738,-0.9445613)(0.11087738,0.8554387)
\psdots[linecolor=black, dotsize=0.225](0.11087738,0.8554387)
\psellipse[linecolor=black, linewidth=0.04, dimen=outer](2.0108774,0.05543869)(0.5,1.0)
\psdots[linecolor=black, dotsize=0.225](1.5108774,0.05543869)
\psline[linecolor=black, linewidth=0.04,
arrowsize=0.05291666666666667cm
3.0,arrowlength=3.0,arrowinset=0.0]{->}(1.5108774,0.2554387)(1.7108774,0.8554387)
\rput[b](3,0){.}
\end{pspicture}
}
\]
We recognize them as the operations
\begin{subequations}
\begin{align}
\label{musim_to_dokoukat}
\circ^i_j :\ &  D(m,n) \ot D(k,l) 
\longrightarrow D(m\! +\! k\! -\!1,n\!+ \!l \!-\!1),\
1 \leq i \leq n, \ 1 \leq j \leq k, \hbox { and}   
\\
\label{Budu_mit_silu_to_dokoukat?}
\xi^i_j :\ & D(m,n) \longrightarrow D(m\!-\!1,n\!-\!1),\
1\leq i \leq m, \ 1 \leq j \leq n
\end{align}
\end{subequations}
in formulas~(16) and~(17) of~\cite{mms}.

As in the previous cases, the axioms that these operations satisfy are
determined by graphs with two internal edges whose souls are depicted
in the following display.
\begin{equation}
\label{Ten_film_mne_oddelal.}
\raisebox{-12em}{
\psscalebox{1.0 1.0} 
{
\begin{pspicture}(0,-2.9354386)(10.510878,2.9354386)
\psdots[linecolor=black, dotsize=0.225](1.7108774,0.8245613)
\psdots[linecolor=black, dotsize=0.225](0.9108774,2.2245612)
\psellipse[linecolor=black, linewidth=0.04, dimen=outer](7.7108774,1.4245613)(0.8,1.0)
\psline[linecolor=black, linewidth=0.04, arrowsize=0.05291666666666668cm 3.0,arrowlength=3.0,arrowinset=0.0]{->}(0.11087738,0.8245613)(0.9108774,2.2245612)
\psline[linecolor=black, linewidth=0.04, arrowsize=0.05291666666666668cm 3.0,arrowlength=3.0,arrowinset=0.0]{->}(1.7108774,0.8245613)(0.9108774,2.2245612)
\psline[linecolor=black, linewidth=0.04, arrowsize=0.05291666666666668cm 3.0,arrowlength=3.0,arrowinset=0.0]{->}(2.9108775,0.2245613)(2.9108775,1.6245613)
\psline[linecolor=black, linewidth=0.04, arrowsize=0.05291666666666668cm 3.0,arrowlength=3.0,arrowinset=0.0]{->}(2.9108775,1.6245613)(2.9108775,2.8245614)
\psdots[linecolor=black, dotsize=0.225](2.9108775,2.8245614)
\psdots[linecolor=black, dotsize=0.225](2.9108775,1.6245613)
\psdots[linecolor=black, dotsize=0.225](0.11087738,0.8245613)
\psdots[linecolor=black, dotsize=0.225](4.1108775,2.2245612)
\psdots[linecolor=black, dotsize=0.225](4.910877,0.8245613)
\psline[linecolor=black, linewidth=0.04, arrowsize=0.05291666666666668cm 3.0,arrowlength=3.0,arrowinset=0.0]{->}(4.910877,0.8245613)(5.7108774,2.2245612)
\psline[linecolor=black, linewidth=0.04, arrowsize=0.05291666666666668cm 3.0,arrowlength=3.0,arrowinset=0.0]{->}(4.910877,0.8245613)(4.1108775,2.2245612)
\psdots[linecolor=black, dotsize=0.225](5.7108774,2.2245612)
\psdots[linecolor=black, dotsize=0.225](2.9108775,0.2245613)
\psellipse[linecolor=black, linewidth=0.04, dimen=outer](1.3858774,-1.7754387)(0.5,1.0)
\psdots[linecolor=black, dotsize=0.225](1.9108773,-1.7754387)
\psline[linecolor=black, linewidth=0.04, arrowsize=0.05291666666666668cm 3.0,arrowlength=3.0,arrowinset=0.0]{->}(1.8858774,-1.5754387)(1.6858773,-0.9754387)
\psline[linecolor=black, linewidth=0.04, arrowsize=0.05291666666666668cm 3.0,arrowlength=3.0,arrowinset=0.0]{->}(1.8858774,-1.7754387)(1.8858774,-0.17543869)
\psellipse[linecolor=black, linewidth=0.04, dimen=outer](8.810878,-1.3754387)(0.5,1.0)
\psdots[linecolor=black, dotsize=0.225](8.310878,-1.3754387)
\psline[linecolor=black, linewidth=0.04, arrowsize=0.05291666666666668cm 3.0,arrowlength=3.0,arrowinset=0.0]{->}(8.310878,-1.1754386)(8.510878,-0.5754387)
\psellipse[linecolor=black, linewidth=0.04, dimen=outer](7.785877,-1.3754387)(0.5,1.0)
\psline[linecolor=black, linewidth=0.04, arrowsize=0.05291666666666668cm 3.0,arrowlength=3.0,arrowinset=0.0]{->}(8.285877,-1.1754386)(8.085877,-0.5754387)
\psdots[linecolor=black, dotsize=0.225](7.7108774,0.42456132)
\psdots[linecolor=black, dotsize=0.225](7.7108774,2.4245613)
\psline[linecolor=black, linewidth=0.04, arrowsize=0.05291666666666667cm 3.0,arrowlength=3.0,arrowinset=0.0]{<-}(7.7108774,2.4245613)(7.3108773,2.2245612)
\psline[linecolor=black, linewidth=0.04, arrowsize=0.05291666666666667cm 3.0,arrowlength=3.0,arrowinset=0.0]{<-}(7.7108774,2.4245613)(8.110877,2.2245612)
\psellipse[linecolor=black, linewidth=0.04, dimen=outer](9.710877,1.4245613)(0.8,1.0)
\psdots[linecolor=black, dotsize=0.225](9.710877,2.4245613)
\psline[linecolor=black, linewidth=0.04, arrowsize=0.05291666666666667cm 3.0,arrowlength=3.0,arrowinset=0.0]{<-}(9.710877,2.4245613)(9.310878,2.2245612)
\psdots[linecolor=black, dotsize=0.225](9.710877,0.42456132)
\psline[linecolor=black, linewidth=0.04, arrowsize=0.05291666666666667cm 3.0,arrowlength=3.0,arrowinset=0.0]{<-}(9.710877,0.42456132)(10.110877,0.6245613)
\psellipse[linecolor=black, linewidth=0.04, dimen=outer](3.5858774,-1.1754386)(0.5,1.0)
\psdots[linecolor=black, dotsize=0.225](4.1108775,-1.1754386)
\psline[linecolor=black, linewidth=0.04, arrowsize=0.05291666666666668cm 3.0,arrowlength=3.0,arrowinset=0.0]{->}(4.0858774,-0.9754387)(3.8858774,-0.3754387)
\psline[linecolor=black, linewidth=0.04, arrowsize=0.05291666666666668cm 3.0,arrowlength=3.0,arrowinset=0.0]{->}(4.0858774,-2.7754388)(4.0858774,-1.1754386)
\rput(6.7108774,1.4245613){$1$}
\rput(8.310878,1.4245613){$2$}
\rput(3.3108773,-1.1754386){$1$}
\rput(1.1108774,-1.7754387){$1$}
\rput(9.110877,1.4245613){$1$}
\rput(7.5108776,-1.3754387){$1$}
\rput(2.1108773,-0.9754387){$2$}
\rput(4.3108773,-2.1754386){$2$}
\rput(9.110877,-1.3754387){$2$}
\rput(10.310878,1.4245613){$2$}
\rput(2.9108775,-2.7754388){lollipops}
\rput(8.310878,-2.7754388){eyes}
\rput(8.710877,0.2245613){circles}
\end{pspicture}
}
}
\end{equation}
The graphs with three vertices
induce the parallel and sequential associativity of the
$\xi$-operations, similar to that for Markl
operads~(\ref{zitra_s_Mikesem_do_Salmovske}). 
They were explicitly given in the dioperadic context as axioms
(a) and (b) in \cite[page 111]{gan}. 

The circles in~\eqref{Ten_film_mne_oddelal.} represent
the rules of the type
$
\circ_1 \xi_2 = \circ_2 \xi_1,
$  
where $\xi_1$ resp.~$\xi_2$ is the operation corresponding to the
shrinking of the edge labeled $1$ resp.~$2$, and similarly for
$\circ_1$ and $\circ_2$. The lollipops  in~\eqref{Ten_film_mne_oddelal.}
force the interchange rule $\xi_2 \circ_1 = \circ_1 \xi_2$, and the eyes 
the rule $\circ_1 \circ_2 =  \circ_2 \circ_1$. To
expand these remaining axioms into explicit forms similar to that on
\cite[page 111]{gan} would
not be very helpful; we thus leave it as an exercise for a
determined reader.
\end{proof}

\subsection{Dioperads} They
were introduced in \cite{gan} as PROP-like structures whose algebras
are objects such as Lie or infinitesimal
bialgebras (called mock bialgebras in \cite{markl:dl}). 
A short definition is that a dioperad is a wheeled
properad without the $\xi^i_j$-operations~(\ref{Budu_mit_silu_to_dokoukat?}). 
The underlying operadic category is the category 
$\Dio$  of ordered simply connected oriented graphs introduced in
\cite[Example~4.21]{part1}. As before, one may check that $\Dio$ meets
all requirements of our theory. One has the expected:

\begin{theorem}
Dioperads  are algebras over the
terminal $\Dio$-operad ${\sf 1}_\Dio$, which is binary quadratic and
self-dual. 
\end{theorem}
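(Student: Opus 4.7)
The proof follows the template of the wheeled properad case, only simpler because $\Dio$ consists of simply connected oriented directed graphs, so many of the graph types appearing in~\eqref{Ten_film_mne_oddelal.} are absent. The plan is as follows.

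First I would verify that $\Dio$ meets all the axioms required in Section~\ref{pairing}; this is by now routine since $\Dio$ was constructed from $\Gr$ by iterated Grothendieck construction and the properties $\UFB$ and $\SGrad$ are checked directly. Next, let $E$ be the restriction to $\QV(e)$ of the constant collection analogous to~(\ref{vcera}), supported on virtual isomorphism classes of graphs with exactly one internal edge. In $\Dio$ the only such graphs are oriented intervals joining two vertices, so $E$ produces exactly the family of operations
\[
\circ^i_j : D(m,n) \ot D(k,l) \longrightarrow D(m\!+\!k\!-\!1, n\!+\!l\!-\!1),\
1 \leq i \leq n,\ 1 \leq j \leq k,
\]
on the underlying collection~(\ref{Dnes_jsem_byl_s_NOE_v_Pribyslavi.}) determined by the local terminal objects of $\Dio$.

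To compute $\Free^2(E)$ I would invoke Proposition~\ref{uz_melo_prvni_cislo_davno_vyjit} and enumerate the simply connected graphs in $\Dio$ with two internal edges. Since loops, circles, lollipops and eyes are forbidden by simple-connectedness, only graphs with three vertices arranged as an oriented path or as two edges sharing a common vertex remain. Each such graph contributes a two-dimensional piece to $\Free^2(E)$, giving bases $(b^1_1,b^1_2)$, $(b^2_1,b^2_2)$ corresponding to the two possible orders in which the edges can be contracted. Let $R \subset \Free^2(E)$ be the sub-$\Iso$-presheaf spanned by $r_1 := b^1_1 - b^1_2$ and $r_2 := b^2_1 - b^2_2$. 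By the argument of Theorem~\ref{v_nedeli_si_snad_zaletam_s_NOE}, ${\sf 1}_\Dio \cong \Free(E)/(R)$, proving that ${\sf 1}_\Dio$ is binary quadratic. Translating the relations $r_1,r_2$ along the identification of ${\sf 1}_\Dio$-algebras with morphisms into the endomorphism operad yields the parallel and sequential associativity axioms~(a) and~(b) of~\cite[p.~111]{gan}, identifying ${\sf 1}_\Dio$-algebras with dioperads.

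For self-duality, let $R^\perp$ be the annihilator of $R$ in $\Free^2(\susp E^*)$ under the pairing~(\ref{jdu_si_lepit_142}); it is spanned by $d^1_1 + d^1_2$ and $d^2_1 + d^2_2$. Using Proposition~\ref{uz_melo_prvni_cislo_davno_vyjit} once again, one obtains the component-wise identification ${\sf 1}_\Dio^!(\Gamma) \cong \det(\edg(\Gamma))$, the determinant of the set of internal edges of $\Gamma$. The assignment $v \mapsto \out(v)$ produces, for any simply connected oriented graph $\Gamma$, a bijection analogous to~(\ref{Dnes_mam_vedecke_narozeniny_bis.}) between $\edg(\Gamma)$ and $\bigcup_{v \in \Vert(\Gamma)} \out(v) \setminus \out(\Gamma)$. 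Hence ${\sf 1}_\Dio^! \cong {\sf 1}_\Dio \otimes \fdl$ for the coboundary $\fdl$ of Example~\ref{coboundary} associated to the function $\fl: \pi_0(\Dio) \to \Vect$ given by $\fl(c^k_l) := \hbox{$\downarrow^k\!\bfk$}$. Proposition~\ref{za_chvili_zavolam_Jarusce} then furnishes an isomorphism between the category of ${\sf 1}_\Dio$-algebras and the category of ${\sf 1}_\Dio^!$-algebras, establishing self-duality.

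The main obstacle is the bookkeeping in the second step: one must convince oneself that simply-connectedness really rules out all the graph types from~\eqref{Ten_film_mne_oddelal.} except the two path-like ones, and that the resulting two relations $r_1,r_2$ reproduce exactly Gan's axioms rather than a weaker or stronger variant. Once this is done, both the quadratic presentation and the self-duality via the $\det$-coboundary identification follow from the machinery already in place.
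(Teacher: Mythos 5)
Your proposal follows the paper's own proof, which is itself only a sketch deferring to the wheeled--properad case: the same binary generating collection supported on one-internal-edge graphs (yielding the operations~\eqref{musim_to_dokoukat}), the same tower analysis of $\Free^2(E)$, and the same self-duality argument via ${\sf 1}_\Dio^!(\Gamma)\cong\det(\edg(\Gamma))$ together with the coboundary attached to $\fl(c^k_l):=\downarrow^k\!\bfk$ and Proposition~\ref{za_chvili_zavolam_Jarusce}.

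The one genuine slip is your count of the two-edge graphs. In a \emph{directed} graph, ``two edges sharing a common vertex'' splits into two distinct isomorphism classes: the shared vertex may be the target of both edges or the source of both. Together with the directed path this gives the \emph{three} upper-left souls of~\eqref{Ten_film_mne_oddelal.}, exactly as the paper states. Each of the three classes contributes its own two-dimensional summand of $\Free^2(E)$ and hence its own relation, so $R$ needs three generators, not two; compare the rooted-tree case~\eqref{Noha porad boli.}, where only two configurations survive precisely because every vertex there has a unique output. With only two relations you would present a strictly larger quotient than ${\sf 1}_\Dio$ and would lose one of the two parallel associativities among Gan's axioms. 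This does not change the strategy --- the argument of Theorem~\ref{v_nedeli_si_snad_zaletam_s_NOE} and the annihilator computation go through verbatim once the third generator is added --- but the count must be corrected for the presentation, and for $R^\perp$, to be right.
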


\begin{proof}
The proof is a simplified version of the wheeled case. The
self-duality of  ${\sf 1}_\Dio$ is established in precisely the same 
way as the self-duality of the terminal $\Whe$-operad 
$\Wheterm$; the existence of the relevant coboundary is given by
isomorphism~(\ref{Dnes_mam_vedecke_narozeniny_bis.}) which clearly
holds in $\Dio$ as well. 
The soul of graphs in $\Dio$ with one internal edge is the oriented
interval, with the corresponding operation as in~(\ref{musim_to_dokoukat}).
The souls of graphs in $\Dio$ with two internal edges are the three upper left
graphs in~\eqref{Ten_film_mne_oddelal.}. 
The resulting axioms are the parallel and
sequential associativities which are the same as for
$\xi^i_j$-operations of wheeled
properads, see~\cite[\S1.1]{gan}.
 \end{proof}

\def\jcirc#1{{\hskip 1mm {}_{#1}\circ \hskip .2mm}}
\noindent 
\subsection{$\frac12$PROPs} These structures 
were introduced, following a 
suggestion of Kontsevich, in~\cite{mv} as a link between dioperads
and PROPs. A  $\frac12$PROP is a collection of
bimodules~(\ref{Dnes_jsem_byl_s_NOE_v_Pribyslavi.}) which is {\em
  stable\/} in that it fulfills
\[
D(m,n) = 0\ \hbox { if } \ m+n < 3,
\]
together with partial vertical compositions
\begin{align*}
\circ_i:\ &   D(m_1,n_1) \ot D(1,l) 
\to D(m_1,n_1+l-1),\ 1 \leq i \leq n_1, \hbox { and}
\\
\jcirc j :\ & D(k,1) \ot D(m_2,n_2) 
\to D(m_2 + k -1,n_2),\ 1 \leq j \leq m_2,
\end{align*}
that satisfy the axioms of vertical compositions in PROPs. 
The corresponding operadic category 
$\hGr$ is introduced in \cite[Example~4.22]{part1}.
We have the expected statement whose proof is left to the reader.

\begin{theorem}
$\frac12$PROPs  are algebras over the
terminal $\hGr$-operad ${\sf 1}_\hGr$. This operad is binary quadratic and
self-dual. 
\end{theorem}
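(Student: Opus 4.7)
The plan is to follow the template established in the previous two subsections for wheeled properads and dioperads, adapting it to the more restricted graph category $\hGr$. First I would identify the local terminal objects of $\hGr$ as the directed corollas $c{\sigma \choose \lambda}$ with chosen local terminals $c_l^k$, so that $\pi_0(\hGr) \cong \bbN \times \bbN$ and the underlying collection of an algebra over any $\hGr$-operad is the family of $\Sigma_m \times \Sigma_n$-bimodules $\{D(m,n)\}_{m,n \in \bbN}$ in~(\ref{Dnes_jsem_byl_s_NOE_v_Pribyslavi.}), subject to the stability condition $D(m,n)=0$ for $m+n<3$ coming from the non-existence of graphs with fewer than three legs in $\hGr$.

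Next I would analyze the generating collection. The graphs in $\hGr$ with one internal edge have soul a single oriented edge connecting two vertices, but stability forces that at each such edge one of the two vertices has exactly one half-edge incident to that edge from above or below; this produces precisely two families of generating operations $\circ_i$ and $\jcirc j$ as in the definition of $\frac12$PROP. Let $E$ be the constant $1$-dimensional collection on graphs with one internal edge. For the quadratic relations I would enumerate graphs in $\hGr$ with two internal edges; these are exactly the three subgraph types that remain from~\eqref{Ten_film_mne_oddelal.} after discarding the circles (which involve cycles, forbidden by simple-connectedness) and the lollipops-type and eyes-type configurations that violate stability at the middle vertex. The surviving configurations give exactly the parallel and sequential associativities of the $\circ_i$ and $\jcirc j$ operations, as well as the interchange between a $\circ_i$ and a $\jcirc j$ at a common vertex. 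Defining $R$ to be the span of the $r = b_1 - b_2$ relations coming from each of these isomorphism classes in $\Free^2(E)$ (via the analog of Proposition~\ref{uz_melo_prvni_cislo_davno_vyjit} in $\hGr$), the same argument as in Theorem~\ref{v_nedeli_si_snad_zaletam_s_NOE} establishes $\Free(E)/(R) \cong {\sf 1}_\hGr$; applying Proposition~\ref{Udelal_jsem_si_ciruvky_zelanky.} and matching relations to axioms then identifies ${\sf 1}_\hGr$-algebras with $\frac12$PROPs exactly as defined in~\cite{mv}.

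For self-duality, I would observe that exactly as in the wheeled and dioperadic cases one has ${\sf 1}_\hGr^!(\Gamma) \cong \det(\edg(\Gamma))$, because each of the quadratic relations has precisely two terms and the annihilator $R^\perp$ is obtained by switching $-$ to $+$, so the total sign count on a tower is governed by the order of contracted edges. The isomorphism~(\ref{Dnes_mam_vedecke_narozeniny_bis.}) used in the wheeled case persists in $\hGr$ since orientation assigns to each internal edge a unique outgoing endpoint, yielding $\edg(\Gamma) \cong \bigcup_{v \in \Vert(\Gamma)} \out(v) \setminus \out(\Gamma)$. With $\fl : \pi_0(\hGr) \to \Vect$ defined by $\fl(c_l^k) := \downarrow^k \bfk$, the associated coboundary satisfies $\fdl \cong {\sf 1}_\hGr^!$, so that ${\sf 1}_\hGr^! \cong {\sf 1}_\hGr \ot \fdl$ and self-duality follows from Proposition~\ref{za_chvili_zavolam_Jarusce}.

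The main obstacle will be the enumeration of quadratic relations: one must verify carefully that the stability condition and the simple-connectedness of graphs in $\hGr$ leave precisely the right family of two-edge configurations to produce the vertical associativity of $\frac12$PROPs without introducing spurious relations and without missing any. In particular, one needs to check that no two-edge graph configuration forces a nontrivial relation between $\circ_i$ and $\jcirc j$ at distinct vertices that would over-constrain the structure, and that the stability-induced absence of certain graphs is consistent with the well-known fact that $\frac12$PROPs, unlike PROPs, do not possess the full horizontal (parallel) composition.
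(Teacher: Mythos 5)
The paper gives no proof of this statement (``the proof is left to the reader''), and your proposal is exactly the intended adaptation of the wheeled-properad and dioperad arguments: read the bimodule collection~(\ref{Dnes_jsem_byl_s_NOE_v_Pribyslavi.}) off the local terminal objects, take the generators from the one-internal-edge graphs, the relations from the two-internal-edge graphs, and establish self-duality via $\det(\edg(\Gamma))$, the isomorphism~(\ref{Dnes_mam_vedecke_narozeniny_bis.}), the coboundary $\fdl$ with $\fl(c^k_l)=\hbox{$\downarrow^k\bfk$}$, and Proposition~\ref{za_chvili_zavolam_Jarusce}. The structure is sound, but two attributions should be corrected. First, the constraint that each internal edge is either the unique output of its source vertex or the unique input of its target vertex is part of the \emph{definition} of the category $\hGr$ (it is what forces the two generating operations to land in $D(1,l)$ and $D(k,1)$ respectively); it is not a consequence of stability, which is the separate condition $D(m,n)=0$ for $m+n<3$ on corollas. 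Second, the lollipops and eyes of~\eqref{Ten_film_mne_oddelal.} are excluded from $\hGr$ for the same reason as the circles --- they contain cycles, which simple-connectedness forbids --- not because of stability at a vertex. With these corrections, the two-edge souls are the three upper-left configurations of~\eqref{Ten_film_mne_oddelal.} further restricted by the edge condition of $\hGr$ (for instance, in the configuration with two edges entering a common vertex both lower vertices must have a single output), and the resulting quadratic relations are precisely the vertical associativities of $\frac12$PROPs, as your proposal anticipates.
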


The operadic categories considered so far in this section were based on
graphs. Let us give one example where this is not the case.

\subsection{Permutads}
\label{Michael_do_Prahy.}
They are structures introduced by Loday and Ronco 
in~\cite{loday11:_permut} 
to handle the combinatorial structure of objects like the
permutahedra. We will describe an operadic category  $\Per$ such that
permutads are algebras over the terminal operad for this
category.
 
Let $\underline n$ denote the 
finite ordered set  $(\rada 1n)$,  $n \geq 1$. Objects
of $\Per$ are surjections $\alpha: \underline n \epi \underline k$,
$n \geq 1$, and the morphisms are diagrams
\begin{equation}
\label{nikdo_mi_nepopral_k_narozeninam}
\xymatrix@C=2em{\underline n \ar@{=}[r]
\ar@{->>}[d]_{\alpha'} & \underline n \ar@{->>}[d]^{\alpha''}
\\
\underline k' \ar[r]^\gamma & \underline k''
}
\end{equation}
in which $\gamma$ is order preserving (and necessarily a
surjection). 

The cardinality functor is defined by $|\alpha:
\underline n \epi \underline k| := k$. The $i$-th fiber of the
morphism in~(\ref{nikdo_mi_nepopral_k_narozeninam}) is the surjection
$(\gamma\alpha')^{-1}(i) \epi \gamma^{-1}(i)$, $i \in \underline
k$. The only local terminal objects are $\underline n \to \underline
1$, $n \geq 1$, which are also the chosen ones. The
category $\Per$ is graded by $e(\underline n \epi \underline k) :=
k-1$. All \qb{s}, and isomorphisms in general, are identities. The
first sentence of the following theorem is the content 
of~\cite[Proposition~26]{markl:perm}.

\def\Pterm{{\sf 1}_{\Per}}
\begin{theorem}
\label{Treti_den_je_kriticky.}
Algebras over the terminal $\Per$-operad $\Pterm$ are the
permutads of\/~\cite{loday11:_permut}. The~operad  $\Pterm$ is binary
quadratic. It is  self-dual in the sense that the category of algebras over
$\Pterm^!$ is isomorphic to the category of permutads via the functor
induced by the suspension of the underlying collection.
\end{theorem}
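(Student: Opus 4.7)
The plan is to parallel closely the proofs of Theorem~\ref{v_nedeli_si_snad_zaletam_s_NOE} and Theorem~\ref{markl1}. The first step is to identify the generating collection: since $e(\alpha:\underline n \to \underline k) = k-1$, the grade-one objects are exactly the surjections $\alpha : \underline n \to \underline 2$, equivalently ordered partitions of $\underline n$ into two non-empty blocks. Define $E \in \Coll(\Per)$ by $E[\alpha] := \bfk$ when $k=2$ and $E[\alpha]:=0$ otherwise; since all isomorphisms in $\Per$ are identities, the $\QV(e)$-presheaf structure is the constant one.

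Next I would analyze $\Free^2(E)$. An elementary morphism in $\Per$ is a square~\eqref{nikdo_mi_nepopral_k_narozeninam} whose lower map $\gamma$ is an order-preserving surjection with all fibers singletons save one, of size $\geq 2$. For a grade-two object $\alpha : \underline n \to \underline 3$ the only such maps $\gamma : \underline 3 \to \underline 2$ of grade $1$ are the two that merge $\{1,2\}$ or $\{2,3\}$, so by Proposition~\ref{uz_melo_prvni_cislo_davno_vyjit}'s analogue for $\Per$ the groupoid $\tlTw^2(\alpha)$ has exactly two isomorphism classes. (The second-type identifications are vacuous here since the second elementary map in a length-two tower is forced to be the unique map to $U_c$.) Formula~\eqref{dnes_bude_vedro_b} then gives $\Free^2(E)(\alpha) \cong \bfk \oplus \bfk$ with basis elements $b_1^\alpha, b_2^\alpha$. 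Let $R$ be the sub-$\Iso$-presheaf spanned by the relations $r^\alpha := b_1^\alpha - b_2^\alpha$ for each $\alpha$ of grade two. Mimicking the argument of Theorem~\ref{v_nedeli_si_snad_zaletam_s_NOE} (via the associated linear orders on the `merging steps' of $\alpha$) one obtains $\Pterm \cong \Free(E)/(R)$, which establishes binary quadraticity.

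For the identification with permutads one uses Proposition~\ref{Udelal_jsem_si_ciruvky_zelanky.}. The chosen local terminal objects are $\underline n \to \underline 1$, so $\pi_0(\Per) \cong \bbN_{\geq 1}$ and the underlying structure of a $\Pterm$-algebra is a family $\{A_n\}_{n \geq 1}$. For a grade-one generator $\alpha : \underline n \to \underline 2$ with fibers of sizes $p,q$, the associated structure map is a binary composition $\circ_\alpha : A_p \otimes A_q \to A_{p+q}$ indexed by the shuffle~$\alpha$; each relation $r^\alpha$ for $\alpha : \underline n \to \underline 3$ expresses the permutad associativity of Loday--Ronco, so $\Pterm$-algebras are precisely permutads. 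The Koszul dual $\Pterm^!$ is one-dimensional and concentrated in degree $e(\alpha) = k-1$ on each $\alpha : \underline n \to \underline k$. To obtain self-duality, let $\fl : \pi_0(\Per) \to \Vect$ be the constant function $\fl(n) := \desusp\bfk$ and let $\fdl$ be the associated coboundary from Example~\ref{coboundary}. A direct count gives
\[
\fdl(\alpha) \;=\; \fl(n) \otimes \bigotimes_{i \in \underline k} \inv{\fl}(|\alpha^{-1}(i)|)
\;\cong\; \desusp\bfk \otimes (\susp\bfk)^{\otimes k}
\;\cong\; \susp^{k-1}\bfk,
\]
so $\Pterm^! \cong \Pterm \otimes \fdl$ as cocycles. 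Proposition~\ref{za_chvili_zavolam_Jarusce} then yields the claimed isomorphism of algebra categories, the underlying collection of a $\Pterm^!$-algebra $A = \{A_n\}$ corresponding to the permutad with underlying collection $\{\susp A_n\}$.

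The main obstacle I expect is the careful verification in the second step that the two elementary decompositions of a tower over $\alpha : \underline n \to \underline 3$ really exhaust the isomorphism classes in $\tlTw^2(\alpha)$ and that the quotient $\Free(E)/(R)$ is exactly one-dimensional on every object; this is a counting argument in the spirit of Proposition~\ref{uz_melo_prvni_cislo_davno_vyjit}, but must be adapted to $\Per$ where the `internal edges' of a graph are replaced by the order-contractions encoded in $\gamma$. Everything else---the axioms of $\Per$ required in Section~\ref{pairing}, the matching of $r^\alpha$ with permutad associativity, and the coboundary computation---is straightforward.
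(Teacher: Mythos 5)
Your proposal is correct and follows essentially the same route as the paper: the same generating collection supported on $\alpha:\underline n\epi\underline 2$, the same identification of height-two towers over $\alpha:\underline n\epi\underline 3$ with the two order-preserving surjections $\nu_{\{1,2\}},\nu_{\{2,3\}}$, the same relation $b_1-b_2$ matched with the Loday--Ronco associativity, and the same counting of towers via linear orders on the $k-1$ ``edges'' of $\underline k$ modulo interchanges of non-adjacent ones. The only cosmetic difference is in the self-duality step, where you spell out the coboundary $\fdl$ with $\fl(n)=\desusp\bfk$ explicitly, while the paper describes the odd associativity for $\Pterm^!$-algebras directly and leaves the suspension argument as elementary --- but this is exactly the ``conceptual approach'' via Proposition~\ref{za_chvili_zavolam_Jarusce} that the paper itself advocates in the analogous rooted-tree case.
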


\begin{proof}
Let us give a quadratic presentation of the terminal operad
$\Pterm$. As noticed in Example~\ref{Budu_mit_nova_sluchatka.}, the
category $\QV(e)$ of virtual isomorphisms related to $\Per$ 
is isomorphic to the category $\Iso$ of isomorphisms in $\Per$. Since
all isomorphisms in $\Per$ are identities, we infer that in
fact $\QV(e) \cong \Per_{\rm disc}$, the discrete category with the same
objects as $\Per$. Therefore a $\QV(e)$-presheaf is just a rule that
assigns to each $\alpha \in \Per$ a vector space $E(\alpha) \in \Vect$.
Let us define a $1$-connected $\Per$-collection, in the sense of
Definition~\ref{Jarka_dnes_u_lekare}, by
\begin{equation}
\label{po_GGP}
E(\alpha) := 
\begin{cases}
\bfk & \hbox { if $|\alpha| = 2$}
\\
0 & \hbox {otherwise,}
\end{cases}
\end{equation}
and describe the free operad $\Free(E)$ generated by $E$. 

The first step
is to understand the labeled towers in $\ltw(\alpha)$. As all \im{s}
in $\Per$
are identities, the labeling is the identity map, so  
these towers are of the form
\[
{\mathbf \alpha}: =
\alpha \stackrel {\tau_1} \longrightarrow \alpha_1  \stackrel  {\tau_2} \longrightarrow
\alpha_2\stackrel  {\tau_3} \longrightarrow 
\cdots  \stackrel  {\tau_{s-1}}   \longrightarrow  \alpha_{s-1}.
\]
Since the generating collection $E$ is such that $E(\alpha)
\not=0$ only if $|\alpha| =2$, we may consider only towers
in which each $\tau_i$, $1 \leq i \leq s-1$, decreases the cardinality by one. 
For $\alpha : \underline n \epi \underline k$,
such a tower is a diagram
\begin{equation}
\label{Michael_chce_do_Prahy.}
\xymatrix{
\underline n \ar@{=}[r] \ar@{->>}[d]_\alpha
& \underline n  \ar@{=}[r] \ar@{->>}[d]_{\alpha_1}& 
\underline n  \ar@{=}[r]\ar@{->>}[d]_{\alpha_2} & \cdots  \ar@{=}[r]
&\ar@{->>}[d]_{\alpha_{k-1}}
 \underline n
\\
\underline k  \ar@{->>}[r]^{\nu_1}& \underline {k\!-\!1}  \ar@{->>}[r]^{\nu_2} 
&  \underline {k\!-\!2}
\ar@{->>}[r]^{\nu_3} 
& \cdots  \ar@{->>}[r]^{\nu_{k-2}}&   \underline 2
}
\end{equation}
with $\Rada \nu1{k-2}$ order-preserving surjections.
Notice that all vertical maps are determined by $\alpha$ and  $\Rada \nu1{k-2}$. 
It will be convenient to represent
$\underline k$ by a linear graph with $k$ vertices:
\begin{center}
\psscalebox{1.0 1.0} 
{
\begin{pspicture}(0,-0.3167067)(6.8168273,0.3167067)
\psline[linecolor=black, linewidth=0.04, linestyle=dotted, dotsep=0.10583334cm](4.5,-0.2082933)(5.108413,-0.2082933)
\rput(0.10841339,0.1917067){$1$}
\rput(1.5084134,0.1917067){$2$}
\rput(2.9084134,0.1917067){$3$}
\psline[linecolor=black, linewidth=0.04, arrowsize=0.05291666666666667cm 3.0,arrowlength=3.0,arrowinset=0.0]{<-}(1.5084134,-0.2082933)(0.10841339,-0.2082933)
\psdots[linecolor=black, dotsize=0.22](1.5084134,-0.2082933)
\psline[linecolor=black, linewidth=0.04, arrowsize=0.05291666666666667cm 3.0,arrowlength=3.0,arrowinset=0.0]{<-}(6.7084136,-0.2082933)(5.3084135,-0.2082933)
\psdots[linecolor=black, dotsize=0.22](0.10841339,-0.2082933)
\psline[linecolor=black, linewidth=0.04, arrowsize=0.05291666666666667cm 3.0,arrowlength=3.0,arrowinset=0.0]{<-}(4.3084135,-0.2082933)(2.9084134,-0.2082933)
\psdots[linecolor=black, dotsize=0.22](6.7084136,-0.2082933)
\psline[linecolor=black, linewidth=0.04, arrowsize=0.05291666666666667cm 3.0,arrowlength=3.0,arrowinset=0.0]{<-}(2.9084134,-0.2082933)(1.5084134,-0.2082933)
\psdots[linecolor=black, dotsize=0.22](2.9084134,-0.2082933)
\rput(6.7084136,0.1917067){$k$}
\end{pspicture}
}
\end{center}
and denote by $\edg(\underline k)$ or $\edg(\alpha)$
the set of $k\!-\!1$ edges of this
graph. In this graphical presentation, each $\Rada \nu1{k-2}$
contracts one of the edges of our linear graph; thus $\Rada \nu1{k-2}$
and therefore also the tower~(\ref{Michael_chce_do_Prahy.}) is
determined by the linear order on $\edg(\underline k)$ in which the
edges are contracted. We readily get the following analog of
Proposition~\ref{uz_melo_prvni_cislo_davno_vyjit}, which we formulate
as a separate claim so we can refer to it later in the proof.

\begin{claim}
\label{Uz_melo_prvni_cislo_davno_vyjit.}
The isomorphism classes of labeled towers~(\ref{Michael_chce_do_Prahy.})
are in one-to-one correspondence with the linear orders on
$\edg(\underline k)$ modulo the relation $\bowtie$ that
interchanges two edges  adjacent in this linear order
that do not share a common vertex.  
\end{claim}

Let us continue the proof of
Theorem~\ref{Treti_den_je_kriticky.}. By the above claim, $\Free(E)(\alpha)$
equals the span of the set of linear orders on $\edg(\underline k)$ modulo the
equivalence   $\bowtie$. Let us inspect in detail
its component $\Free^2(E)(\alpha)$. 
It might be nonzero only for $\alpha : \underline n \to \underline k
\in \Per$
with $k=3$, for which~(\ref{Michael_chce_do_Prahy.}) takes the form
\[
\xymatrix@C=3em{
\underline n \ar@{=}[r] \ar@{->>}[d]_\alpha
& \underline n   \ar@{->>}[d]_{\alpha_1}
\\
\underline 3  \ar@{->>}[r]^{\nu}& \underline 2  
}
\]
and the relation $\bowtie$ is vacuous.

There are two possibilities for the map $\nu$ and therefore also for
$\alpha_1$. 
The map $\nu$ may either equal
$\nu_{\{1,2\}}: \underline 3 \to \underline 2$ defined by
\[
\nu_{\{1,2\}}(1) = \nu_{\{1,2\}}(2) :=1,\ \nu_{\{1,2\}}(3) := 2
\]
which corresponds to the linear order
\begin{center}
\psscalebox{1.0 1.0} 
{
\begin{pspicture}(-4.5,-0.3167067)(6.8168273,0.3167067)
\rput(0.8,0.14){$1$}
\rput(2.2,0.14){$2$}
\psline[linecolor=black, linewidth=0.04, arrowsize=0.05291666666666667cm 3.0,arrowlength=3.0,arrowinset=0.0]{<-}(1.5084134,-0.2082933)(0.10841339,-0.2082933)
\psdots[linecolor=black, dotsize=0.22](1.5084134,-0.2082933)
\psdots[linecolor=black, dotsize=0.22](0.10841339,-0.2082933)
\psline[linecolor=black, linewidth=0.04, arrowsize=0.05291666666666667cm 3.0,arrowlength=3.0,arrowinset=0.0]{<-}(2.9084134,-0.2082933)(1.5084134,-0.2082933)
\psdots[linecolor=black, dotsize=0.22](2.9084134,-0.2082933)
\end{pspicture}
}
\end{center}
on $\edg(3)$, or equal
$\nu_{\{2,3\}}: \underline 3 \to \underline 2$ defined by
\[
\nu_{\{2,3\}}(1) :=1,\  \nu_{\{2,3\}}(2) = \nu_{\{2,3\}}(3) := 2,
\]
corresponding  to the order
\begin{center}
\psscalebox{1.0 1.0} 
{
\begin{pspicture}(-4.5,-0.3167067)(6.8168273,0.3167067)
\rput(0.8,0.14){$2$}
\rput(2.2,0.14){$1$}
\psline[linecolor=black, linewidth=0.04, arrowsize=0.05291666666666667cm 3.0,arrowlength=3.0,arrowinset=0.0]{<-}(1.5084134,-0.2082933)(0.10841339,-0.2082933)
\psdots[linecolor=black, dotsize=0.22](1.5084134,-0.2082933)
\psdots[linecolor=black, dotsize=0.22](0.10841339,-0.2082933)
\psline[linecolor=black, linewidth=0.04, arrowsize=0.05291666666666667cm 3.0,arrowlength=3.0,arrowinset=0.0]{<-}(2.9084134,-0.2082933)(1.5084134,-0.2082933)
\psdots[linecolor=black, dotsize=0.22](2.9084134,-0.2082933)
\rput(3.2,-.3){.}
\end{pspicture}
} 
\end{center}
The fiber sequence associated to $\nu_{\{1,2\}}$ is
$\alpha|_{\alpha^{-1}\{1,2\}}, \nu_{\{1,2\}}\alpha$, 
and the one associated to $\nu_{\{2,3\}}$ is
$\alpha|_{\alpha^{-1}\{2,3\}}, \nu_{\{2,3\}}\alpha$;
therefore
\[
\Free^2(E)(\alpha) \cong \{ E(\alpha|_{\alpha^{-1}\{1,2\}}) \ot
E(\nu_{\{1,2\}}\alpha)\} \oplus  \{ E(\alpha|_{\alpha^{-1}\{2,3\}}) \ot
E(\nu_{\{2,3\}}\alpha)\}.
\]
Since $ E(\alpha|_{\alpha^{-1}\{1,2\}}) =
E(\alpha|_{\alpha^{-1}\{2,3\}}) = E(\nu_{\{1,2\}}\alpha) =
E(\nu_{\{2,3\}}\alpha) =\bfk$ by definition,
$\Free^2(E)(\alpha)$ has a basis formed~by
\[
b_1 : =[ 1\ot 1] \oplus [0 \ot 0] \ \hbox { and } \ 
b_2 : = [0\ot 0] \oplus [1 \ot 1].
\]
Let $R$ be the subspace of $\Free^2(E)$ spanned by $b_2-b_1$. 
Quotienting by the ideal $(R)$ generated by $R$ extends the relation
$\bowtie$ of Claim~\ref{Uz_melo_prvni_cislo_davno_vyjit.} by 
allowing edges that do share a~common vertex,  thus 
$\Free(E)/(R)(\alpha) \cong \bfk$ for any $\alpha$, in other words, 
\[
\Pterm \cong \Free(E)/(R).
\]

Now we describe  $\Pterm$-algebras. Since $\pi_0(\Per) = \{1,2,\ldots\}$, their
underlying collections are sequences 
of vector spaces $P(n)$,
$n\geq 1$. As we saw several times before, the structure operations
of $\Pterm$-algebras are parametrized by the generating collection $E$,
therefore, by~(\ref{po_GGP}), by surjections
$r : \underline n \epi \underline 2 \in \Per$.
If
$n_i := |r^{-1}(i)|$, $i
=1,2$, the operation corresponding to $r$ is of the form
\begin{equation}
\label{Nevim_jestli_do_ty_Ciny_opravdu_pojedu.}
\circ_r : P(n_1) \ot P(n_2) \to P(n_1+n_2)
\end{equation}
by~\cite[display~(57)]{part1}.    
It is easy to verify that the vanishing of the induced map 
$\Free(E) \to \End_P$  on the generator  $b_2-b_1$ of the ideal of
relations $(R)$  
is equivalent to the associativity  
\begin{equation}
\label{Oslava_premie_bude_za_14_dni.}
\circ_t(\circ_s \ot \id) = \circ_u(\id \ot \circ_v)
\end{equation} 
with 
$s := \alpha|_{\alpha^{-1}\{1,2\}},\ t:= \nu_{\{1,2\}}\alpha,\
u := \alpha|_{\alpha^{-1}\{2,3\}}$ and  $v:= \nu_{\{2,3\}}\alpha$. We
recognize it as the associativity 
of \cite[Lemma~2.2]{loday11:_permut} featured in the biased
definition of permutads.

It can easily be seen that ${\mathfrak{K}}_\Per(\alpha) := \Pterm^!(\alpha) \cong
\det(\edg(\alpha))$.  
As in \S\ref{Cinani_jsou_hovada.}
we identify 
${\mathfrak{K}}_\Per$-algebras as structures with degree $+1$ operations 
\[
\bullet_r : P(n_1) \ot P(n_2) \to P(n_1+n_2)
\]
with $r$ as in~(\ref{Nevim_jestli_do_ty_Ciny_opravdu_pojedu.})
satisfying an odd version
\[
\bullet_t(\bullet_s \ot \id) + \bullet_u(\id \ot \bullet_v) =0
\]
of~(\ref{Oslava_premie_bude_za_14_dni.}). It is elementary to show
that the structure induced on the component-wise suspension of the
underlying collection is that of a permutad.
\end{proof}

In~\cite{markl:perm} we prove the following theorem:

\begin{theorem}
The terminal $\ttP$-operad $\Pterm$ is Koszul.  
\end{theorem}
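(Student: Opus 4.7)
The plan is to establish Koszulity of $\Pterm$ along the strategy outlined in the introduction: by exhibiting an isomorphism between the dual dg operad $\D(\Pterm^!)$ and the minimal model of $\Pterm$ furnished by the general construction of~\cite{BMO}. Once such an isomorphism is produced, Koszulity is automatic by the same reasoning that, in the classical case, yields~\cite[Proposition~2.6]{markl:zebrulka}. This is exactly the route taken for the Koszul operads treated in the main body of the paper.

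First I would spell out $\D(\Pterm^!)$ explicitly. From the proof of Theorem~\ref{Treti_den_je_kriticky.} we have $\Pterm^!(\alpha) \cong \det(\edg(\alpha))$, a one-dimensional vector space of degree $k-1$ for $\alpha : \underline n \epi \underline k$. The underlying graded operad of the cobar construction is therefore free on a collection whose value on $\alpha$ is one-dimensional in degree $k-2$, and the cobar differential is determined entirely by the quadratic composition laws of $\Pterm^!$. By Claim~\ref{Uz_melo_prvni_cislo_davno_vyjit.}, those laws are in turn indexed by the edges $e \in \edg(\underline k)$ of the linear graph attached to~$\underline k$, each such edge corresponding to an order-preserving surjection $\underline k \epi \underline{k-1}$ that contracts~$e$ and yielding a unique factorization of~$\alpha$.

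Second, I would invoke~\cite{BMO} applied to the operadic category $\Per$ --- which enjoys all the axioms $\Fac$, $\SBU$, $\QBI$, $\UFB$, $\Rig$ and $\SGrad$ required by that paper --- to obtain the minimal model $(\Free(G),\partial)$ of $\Pterm$. The collection $G$ of generators is one-dimensional in degree $k-2$ on each $\alpha$ with $|\alpha|=k\ge 2$, and the differential $\partial$ is the alternating sum, over $e \in \edg(\underline k)$, of the two-level composites corresponding to the factorization of~$\alpha$ through the contraction of~$e$. The obvious identification of generators defines a morphism $\Phi : \D(\Pterm^!) \to \Free(G)$ of free graded operads; it is a componentwise isomorphism by construction, and the identification of $\partial$ with the cobar differential on generators promotes $\Phi$ to an isomorphism of dg operads.

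The main obstacle will be checking that the signs produced by the cobar differential via the pairing~\eqref{jdu_si_lepit_142} and the Koszul sign rule coincide with the signs dictated by the determinantal orientation of $\edg(\alpha)$ built into the differential $\partial$ of~\cite{BMO}. The combinatorial skeleton on both sides is identical --- it is governed by the linear orders on $\edg(\underline k)$ modulo the relation $\bowtie$ recalled in Claim~\ref{Uz_melo_prvni_cislo_davno_vyjit.} --- but tracking the suspension isomorphisms and the Koszul signs through the cobar construction requires patience. Once that bookkeeping is complete, $\Phi$ becomes an isomorphism of dg operads, the minimal model of $\Pterm$ is identified with $\D(\Pterm^!)$, and Koszulity of $\Pterm$ follows.
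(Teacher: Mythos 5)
There is a genuine gap, and it sits exactly where your proposal leans on \cite{BMO}. That paper constructs minimal models only for \emph{graph-related} operadic categories obtained from the basic category $\Gr$ by the Grothendieck construction; the cases covered are those invoked in the proof of Theorem~\ref{Privezu Tereje?}, namely $\term_\ggGrc$, $\termCGr$, $\termRTre$ and $\Wheterm$. The category $\Per$ is explicitly introduced in Section~\ref{Ta_moje_lenost_je_strasna.} as an example that is \emph{not} based on graphs, and accordingly no minimal model for $\Pterm$ is supplied by \cite{BMO}. Your second step --- ``invoke \cite{BMO} applied to the operadic category $\Per$ to obtain the minimal model $(\Free(G),\partial)$ of $\Pterm$'' --- therefore has no source. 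Worse, asserting that the free operad on the collection $G$ with the stated differential \emph{is} a minimal model of $\Pterm$ is equivalent to asserting that $\can : \D(\Pterm^!) \to \Pterm$ is a componentwise homology isomorphism, which is precisely the Koszulity you are trying to prove. The combinatorial identification of generators and the sign bookkeeping you describe only show that $\D(\Pterm^!)$ and the \emph{candidate} model agree as dg operads; the actual mathematical content --- acyclicity of this complex in positive degrees --- is untouched by that comparison and does not follow from Claim~\ref{Uz_melo_prvni_cislo_davno_vyjit.}.

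For what it is worth, the paper does not prove this theorem internally at all: it is imported from \cite[Corollary~49]{markl:perm}, and Theorem~\ref{Privezu Tereje?} explicitly excludes $\Pterm$ from the \cite{BMO}-based argument for this reason. In \cite{markl:perm} the acyclicity is established by a separate combinatorial argument relating the relevant complexes to the permutohedra (the ``hidden associahedron'' of the title), not by matching $\D(\Pterm^!)$ against a previously constructed minimal model. To repair your proposal you would need either to prove directly that $\can$ is a homology isomorphism --- e.g.\ by exhibiting the component $\D(\Pterm^!)(\alpha)$ as (a shift of) the cellular chain complex of a contractible polytope --- or to carry out the minimal-model construction for $\Per$ from scratch; neither is a routine adaptation of the graph-based cases.
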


Its meaning is that the canonical map $\Omega(\Pterm^!) \to \Pterm$
from a suitably defined bar construction of $\Pterm^!$ to $\Pterm$ is
a component-wise homology equivalence. In other words, the dg-$\Per$ operad
$\Omega(\Pterm^!)$ is the minimal model of $\Pterm$; therefore,
according to the philosophy of~\cite[Section~4]{markl:zebrulka},
$\Omega(\Pterm^!)$-algebras are {\em strongly homotopy\/}
  permutads. An explicit description of these objects is given
in~\cite{markl:perm} as well.

\def\twr#1#2#3#4#5#6#7{
\xymatrix@R=-.3em@C=1em{
&&#2
\\
&&\triangledown
\\
#1  \ar[rr]^{#5} & &#3 \ar[ddddd]^{#6}
\\
\\
&#7&
\\
\\
\\
&&#4
}
}

\section{Derivations and the cobar construction}
\label{Ceka mne nehezky vikend.}

Derivations of traditional operads defined in terms of partial
compositions~\eqref{zitra_s_Mikesem_na_Jazz} (i.e.\
``traditional'' Markl operads) were introduced in 
\cite[Definition~1.5]{markl:zebrulka}, and the cobar construction
in that context then 
implicitly in \cite[Theorem~1.9]{markl:zebrulka}.
The aim of this section is to generalize these notions to Markl
(co)operads over operadic categories.

We require that the base operadic category $\ttO$ fulfills \AA. 
Then $e(X) = 0$ if and only if
$X\in \ttO$ is local terminal by~\cite[Lemma~3.25]{part1}. 
All Markl operads are tacitly assumed to be strictly
unital and $1$-connected.  The base monoidal category 
$\ttV$ will be the category $\Vect$ of graded vector spaces
over a field $\bfk$ of characteristic~$0$.

\subsection{Derivations, cooperads}
To simplify the notation, we will use the same symbol both for  
a (co)operad and for its underlying collection when 
the meaning is clear from the context.

\begin{definition}
\label{Uz mam 6,5 hodin na Tereji tento rok.}
A degree $s$ {\em derivation\/} of a Markl operad $\Markl$ is defined as a degree $s$
endomorphism  \hbox{$\varpi
: \Markl \to  \Markl$} of the underlying collection such that, for each  elementary morphism
\redukce{$F\fib T \stackrel\phi\to S$} and the related composition
law $\circ_{\phi}: \Markl(S)\otimes \Markl(F)\to \Markl(T)$, one has the equality
\begin{equation}
\label{zase mi nejde aktuator}
\varpi_T\,\circ_\phi =  \circ_\phi(\id_S \ot
\varpi_F) + \circ_\phi (\varpi_S \ot \id_T)
\end{equation}
of maps $\Markl(S)\otimes \Markl(F)\to \Markl(T)$. A {\em dg Markl
  operad\/} is a pair $(\Markl, \pa)$ consisting of a Markl operad
$\Markl$ and a degree $-1$ derivation $\pa$ such that $\pa\pa = 0$.

\end{definition}

In the following proposition, $\Free(E)$ is the free Markl operad
generated by a $1$-connected collection $E$, which is considered as a
subcollection of $\Free(E)$ via the inclusion $\iota :E
\hookrightarrow \Free(E)$ in~\eqref{Kdy bude hotovy Terej?}.

\begin{proposition}
\label{a zaskakuje pastorek}
Each degree $s$ map of collections $\zeta :E \to \Free(E)$ uniquely
extends to~a~degree $s$ derivation $\varpi$  of the free operad \,
$\Free(E)$ satisfying $\varpi|_E = \zeta$.
\end{proposition}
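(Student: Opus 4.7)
The proof follows the standard template for extending derivations from generators of a free object, adapted to the setting where the free operad is presented as a colimit over the groupoid $\lTw$. The plan is to first establish uniqueness using the Leibniz rule~\eqref{zase mi nejde aktuator}, then to construct $\varpi$ component-wise via the explicit decomposition~\eqref{dnes_bude_vedro_b}, and finally to verify that the construction is well-defined on the colimit and satisfies~\eqref{zase mi nejde aktuator}.

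For uniqueness, suppose $\varpi_1,\varpi_2$ are two derivations with $\varpi_i|_E=\zeta$. Their difference is a derivation vanishing on $E$, so it suffices to show that any derivation $\varpi$ with $\varpi|_E=0$ is zero. By the description of elements of $\Free^k(E)(X)$ recorded in the proof of Theorem~\ref{dnes_bylo_40}, an arbitrary element has the form $\ell^*\bigl(e_1\circ_{\tau_1}(e_2\circ_{\tau_2}\cdots(e_{k-1}\circ_{\tau_{k-1}}e_{k})\cdots)\bigr)$ with $e_i\in E[t_i]$. Iteratively applying~\eqref{zase mi nejde aktuator} together with the fact that $\varpi$ commutes with the $\Iso$-pullbacks (as a morphism of the underlying $\QV(e)$-presheaves) reduces $\varpi$ on such an element to a signed sum in which each summand contains a factor of the form $\varpi(e_j)=0$, proving uniqueness.

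For existence, define $\varpi$ on $\Free^0(E)=\bfk$ to be zero and on $\Free^1(E)\cong E$ to be $\zeta$. For $k\geq 2$, using the direct-sum decomposition~\eqref{dnes_bude_vedro_b}, choose for each isomorphism class $c\in\pi_0(\tlTw^k(X))$ a representative labeled tower $(\ell^c,\bfT^c)$ with fiber sequence $t_1^c,\ldots,t_k^c$, and set
\[
\varpi\bigl([\ell^c,e_1\ot\cdots\ot e_k]\bigr)\ :=\ \sum_{i=1}^{k}(-1)^{s(|e_1|+\cdots+|e_{i-1}|)}\,
(\ell^c)^*\bigl(e_1\circ_{\tau_1^c}\cdots\circ_{\tau_{i-1}^c}\bigl(\zeta(e_i)\circ_{\tau_i^c}\cdots\circ_{\tau_{k-1}^c}e_k\bigr)\cdots\bigr),
\]
where $\zeta(e_i)\in\Free(E)[t_i^c]$ is inserted into position $i$ via the iterated operad composition in $\Free(E)$. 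The task is to check that the value of $\varpi$ does not depend on the choice of representative within each isomorphism class of $\tlTw^k(X)$, which reduces, by the description of $\tlTw(X)$ in Section~\ref{zitra_letim_do_Pragy}, to two kinds of compatibilities. For morphisms of the first type, compatibility follows from the fact that $\zeta$ is by hypothesis a morphism of $\QV(e)$-presheaves, so it commutes with the pullbacks along \qb{s} that define these morphisms. For morphisms of the second type, which according to Lemma~\ref{spousta-prace-1} correspond to swapping two adjacent elementary contractions with disjoint fibers as in~\eqref{eq:11}, compatibility rests on Axiom~(ii) of Markl operads (cf.~the version of diagram~\eqref{Napadne jeste tento rok snih?} valid under the strong blow-up axiom): the two orderings of the non-joint contractions produce the same element of $\Free(E)$ once the symmetry constraint of $\ttV$ is applied, and this is precisely the sign convention built into the Leibniz formula above.

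Finally, to verify the derivation identity~\eqref{zase mi nejde aktuator} for an elementary $F\fib T\stackrel\phi\to S$, one uses that the circle product $\circ_\phi$ on $\Free(E)$ is, by~\eqref{Taronaga_Zoo}, induced by concatenation of towers: if $(\ell_a,\bfT_a)\in\lTw(S)$ represents $a\in\Free^k(E)(S)$ and $(\ell_b,\bfF)\in\lTw(G)$ represents $b\in\Free^l(E)(F)$ (with $G\fib W\stackrel\phi\to S$ the fiber rewritten appropriately), the composite $\circ_\phi(a\ot b)$ is represented by a labeled tower whose fiber sequence is the concatenation of those of $\bfT_a$ and $\bfF$. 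Applying the explicit formula for $\varpi$ to this concatenated tower splits the resulting sum into the contributions $\circ_\phi(\varpi(a)\ot b)$ and $\circ_\phi(a\ot\varpi(b))$, with signs agreeing with the graded-Leibniz convention. The main technical obstacle is the well-definedness across second-type morphisms of towers: one must verify that the signs introduced by the symmetry constraint in the swap of $\zeta(e_i)$ past adjacent tensor factors match the signs coming from the swap of $e_i,e_{i+1}$ that defines the second-type morphism, so that the summands of the Leibniz formula are permuted without change of value.
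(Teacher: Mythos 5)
Your proof is correct in outline, but it takes a genuinely different route from the paper. The paper's argument is a two-line reduction to the universal property of the free operad: it equips the componentwise direct sum $\Markl\oplus\Markl$ (or $\Markl\oplus\susp^s\Markl$ for $s\neq 0$) with a square-zero operad structure
$\circ^\oplus_\phi(a'\oplus b',a''\oplus b''):=\circ_\phi(a',a'')\oplus\bigl(\circ_\phi(a',b'')+(-1)^{|a''||b'|}\circ_\phi(a'',b')\bigr)$,
observes that $\varpi$ is a derivation if and only if $\id\oplus\varpi$ is an operad morphism $\Free(E)\to\Free(E)\oplus\Free(E)$, and then invokes Theorem~\ref{dnes_bylo_40} applied to $\iota\oplus\zeta$ to get existence and uniqueness simultaneously. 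You instead construct $\varpi$ by an explicit Leibniz formula on the decomposition~\eqref{dnes_bude_vedro_b} and verify well-definedness under first- and second-type morphisms of towers and the identity~\eqref{zase mi nejde aktuator} by hand. Both work; the trade-off is that the paper's reduction offloads every well-definedness issue onto the already-established freeness (where the analogous verification for morphisms is itself left to the reader), whereas your route must redo that bookkeeping explicitly --- and you correctly isolate the one delicate point, namely the sign compatibility across second-type swaps when the inserted element $\zeta(e_i)$ has height greater than one, which you flag but do not fully carry out. That check does go through (it is an application of Axiom~(ii) for the free operad together with the Koszul sign convention), so I regard your argument as complete at the same level of detail as the paper's own treatment of the analogous step; the paper's method is simply the more economical one.
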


\begin{proof}
Our proof follows the scheme of the proof of an analogous statement for operad
algebras~\cite[Proposition~12.3.11]{loday-vallette}. To avoid cumbersome but conceptually insignificant sign issues, we
assume that $s=0$. The modification for a general $s$ is
indicated at the end of the proof.  
Given a Markl operad $\Markl$, we give the component-wise
direct sum $\Markl \oplus \Markl$
of the underlying collections  the structure of an operad with
composition laws $\circ^\oplus_\phi$ given by
\[
\circ^\oplus_\phi (a' \oplus b',a'' \oplus b'')
:= \circ_\phi(a',a'') \oplus\big (\circ_\phi(a',b'') +
(-1)^{|a''|\cdot |b'|}
 \circ_\phi(a'',b') \big),
\]
for $a',a'' \in \Markl(S),\ b',b'' \in \Markl(F)$,
where  $\circ_\phi$ is the composition law of $\Markl$ associated to
an elementary morphism \redukce{$F\fib T \stackrel\phi\to S$}. Then
clearly $\varpi : \Markl \to \Markl$ is a degree $0$ derivation if and only if
the map $(\id,\varpi): \Markl \to  \Markl \oplus \Markl$ is a morphism
of Markl operads. 

Let $\iota:E \to \Free(E)$ be the 
inclusion~\eqref{Kdy bude hotovy Terej?}.
The map $(\iota ,\zeta) : E \to \Free(E)
\oplus \Free(E)$ 
extends to a~unique operad morphism $\Free(E) \to  \Free(E)
\oplus  \Free(E)$ by the freeness of
$\Free(E)$. This extension is of the form $(\id ,\varpi)$, where 
$\varpi$ is the required derivation. If $s \not= 0$, we 
replace the direct sum $\Markl \oplus \Markl$ by $\Markl \oplus \susp^s \Markl$ and
introduce the canonical Koszul signs.
\end{proof}

\begin{definition}
\label{Jarca byla na chalupe sama.}
A {\em Markl $\ttO$-cooperad\/} 
is a functor $\CO: \Iso \to \ttV$ equipped, for each
elementary morphism \redukce{$F\fib T \stackrel\phi\to S$} as in
Definition~\ref{plysacci_postacci}, 
with the ``partial coproduct''
\begin{equation}
\label{ten_prelet_jsem_podelal1}
\delta_{\phi}: \CO(T) \to  \CO(S)\otimes \CO(F).
\end{equation}
These operations must fulfill the axioms obtained by reversing the
arrows in the diagrams in Definition~\ref{markl} of Markl operads.
\end{definition}

\begin{example}
\label{Vse roztaje.}
Assume that $\Markl$ is a Markl operad whose components $\Markl(T)$, $T
\in \ttO$, are either
finite-dimensional, or non-negatively, or non-positively graded
dg vector spaces of finite type.  Then its component-wise 
linear dual $\Markl^* := \{\Markl(T)^*\}_{T \in \ttO}$ is a Markl
cooperad. The partial coproducts~\eqref{ten_prelet_jsem_podelal1} 
are given by dualizing the
operations~\eqref{ten_prelet_jsem_podelal}, i.e.\
\[
\delta_\psi := \circ_\psi^* : \Markl(T)^* \to (\Markl(S) \ot \Markl(F))^* \cong
\Markl(S)^* \ot \Markl(F)^*.
\]
The finiteness assumption guarantees that the inclusion 
$(\Markl(S) \ot \Markl(F))^*   \hookleftarrow \Markl(S)^* \ot
\Markl(F)^*$ is an isomorphism. Since
$\Markl$ is a  $\ttV$-presheaf on $\Iso$, its dual is a functor $\Markl^*: \Iso
\to \ttV$ as required in our definition of a cooperad.
\end{example}

Markl cooperad $\CO$ is {\em  counital\/} if there is given, for each
trivial $U$, a ``counit'' $\epsilon_U
:\CO(U) \to \bfk$ such that the diagram
\[
\xymatrix{
\CO(T) \ar[r]^(.4){\delta_!}   &
\CO(U) \ot \CO(T) \ar[d]^{\epsilon_U \ot \id}
\\
 \CO(T) \ar@{=}[u]  \ar@{=}[r]^(.42)\cong   & \bfk \ot \CO(T) 
}
\]
commutes whenever $T$ is such that $e(T) \geq 1$ and 
$T \fib T \stackrel!\to U$  the unique map.
By reversing the arrows of~\eqref{proc_ty_lidi_musej_porad_hlucet} we
obtain a map  
$\vartheta(T,u) :  \CO(T) \to  \CO(F)$
for each $T$ with $e(T) \geq 1$ and \redukce{$F \fib T \stackrel!\to u$}, with
$u$ a local terminal object. 

\begin{definition}
\label{Jak bude?}
A counital Markl cooperad $\CO$ is {\em strictly counital\/} if all
the maps  $\vartheta(T,u)$ are identities. It is {\em
  $1$-connected\/} if  $\epsilon_U
:\CO(U) \to \bfk$ is an isomorphism for each trivial $U \in \ttO$.
\end{definition}
 
From this moment on, all Markl cooperads
will be tacitly assumed to be {\em strictly counital\/} and \hbox{\em
    $1$-connected\/}. The main source of examples will be
  component-wise linear duals of  strictly unital
  $1$-connected Markl operads that satisfy the finiteness assumption of
  Example~\ref{Vse roztaje.}.

\subsection{The cobar construction}
The underlying collection of a Markl cooperad $\CO$ is a covariant
functor $\Iso \to \ttV$. Since $\Iso$ is a groupoid, we may consider
$\CO$ also as a $\ttV$-presheaf with the
contravariant action of $\omega \in \Iso$ given by $\omega^* :=
(\omega^{-1})_*$.  With this convention in mind, the family $\uCO =
\{\CO(T)\}_{T \in \ttO}$ defined by
\[
\uCO(T) := 
\begin{cases}
\CO(T) &\hbox {if $e(T) \geq 1$}
\\
0 & \hbox {otherwise,} 
\end{cases}
\]
and also its component-wise desuspension $\dus$,
becomes a
$1$-connected $\ttO$-collection in the sense of
Definition~\ref{Jarka_dnes_u_lekare}, so it make sense to form the
free operad $\Free(\dus)$ which it generates.
We denote the restrictions of the partial coproducts  $\delta_\phi$
of~\eqref{ten_prelet_jsem_podelal1} 
to $\uCO$ by
\[
\overline\delta_{\phi}: \uCO(T) \to  \uCO(S)\otimes \uCO(F),\ \hbox {for }
F \fib T \stackrel\phi\to S.
\]
Note that $\uCO$ with the above operations is an analog of the
coaugmentation coideal of an coaugmented coalgebra featured in the
classical cobar construction.
We finally define, for every elementary map $F \fib T \stackrel\phi\to
S$, degree~$-1$ operations
\[
\dusd\phi :=  (\susp \ot \susp) \, \overline \delta_{\phi}\, \desusp: 
\dus(T) \to  \dus(S)\otimes \dus(F).
\]

A prominent r\^ole in the calculations below will be played by
labeled towers of height~$2$. Recall that such a tower
$\bftau = (\ell,\bfT) \in \lTw^2(X)$ consists of an elementary
morphism $\tau:T \to S$ and an isomorphism $\ell : X \stackrel\cong\to
T$ which can always be replaced by a \qb. Its
associated fiber sequence is the pair $(F,S)$, with $F$ the
unique nontrivial fiber of~$\tau$. We will denote such a tower by
\begin{equation}
\label{Zitra dopoledne zavolam co je s Terejem.}
\twr XF{\ T\,.}S\ell\tau\bftau
\end{equation}
In the rest of this section we also assume that the groupoid
$\lTw^2(X)$ has, for each $X \in \ttO$, only {\em finitely many\/} 
connected components. 

Let us denote by $\C(\CO)$ the free operad $\Free(\dus)$ generated by
the $1$-connected collection~$\dus$, with the natural grading $\C(\CO)
=\bigoplus_{n \geq 0} \C^n(\CO)$ inherited from $\Free(\dus)$  
by $\C^n(\CO) := \Free^n(\dus)$.
We are going to introduce a degree $-1$ 
derivation $\pa_{\C} :  \C(\CO) \to \C(\CO)$
that squares to zero, 
thus making $\C(\CO)$ a differential graded
Markl operad. Referring to Proposition~\ref{a zaskakuje pastorek}, 
$\pa_{\C}$~will be defined as the unique extension of its restriction
to the generators of $\C(\CO)$.
For $x \in \dus(X) \cong \C^1(\CO)(X)$ and $X\in \ttO$, this
restriction is the finite~sum
\begin{equation}
\label{Terej bude mit elektrickou zarazku.}
\pa_{\C}(x) = \sum_{[\bftau] \in \pi_0(\lTw^2(X))} \pa_\bftau(x),
\end{equation}
with $\bftau$ running over representatives of the connected components
of the groupoid $\lTw^2(X)$. If $\bftau = (\ell, \bfT)$ as
in~\eqref{Zitra dopoledne zavolam co je s Terejem.} is such a
representative, we put
\begin{equation}
\label{Jarka mne chce prestavet byt.}
\pa_\bftau (x)   := \ell^*\! \circ_\tau\! (\dusd\tau(\ell_* x))  \in \C^2(\CO)(X),
\end{equation}
where $\circ_\tau : \C(\CO)(S) \otimes \C(\CO)(F)\to
\C(\CO)(T)$ is the partial composition in
the operad $\C(\CO)$ associated to the elementary map $\tau$. 
In detail, $\ell_*x \in \dus(T)$, thus $\dusd\tau(\ell_* x) \in
\dus(S) \ot \dus(F)$ which is canonically a subspace of $ \C(\CO)(S)
\otimes \C(\CO)(F)$, so the application of $\circ_\tau$ makes sense.

We must show that $\pa_\bftau (x)$ does not depend on the choices 
of the representatives of the
connected components of $\lTw^2(X)$. Suppose thus that $\bftau' = (\ell',\bfT')$ 
and $\bftau'' =  (\ell'',\bfT'')$ are two
isomorphic labeled towers as in
\[
\xymatrix{&X  \ar[ld]_{\ell'}^\cong  \ar[rd]^{\ell''}_\cong     &
\\
T'  \ar[d]_{\tau'} \ar[rr]^{\sigma_1}_\cong  &&T'' \ar[d]^{\tau''}
\\
S' \ar[rr]^{\sigma_2}_\cong   &&S''
}
\]
with associated fiber sequences $(F',S')$
resp.~$(F'',S'')$. Denote by
$F = \inv{\phi}(j)$  the unique nontrivial fiber of the auxiliary morphism 
$\phi := \sigma_2 \tau' : T' \to S''$. By the strong counitality
assumption, we have $\CO(F) = \CO(F'')$, so the dual
of commutative diagram~\eqref{Ve_ctvrtek_letim_do_Prahy.} leads~to
\[
\xymatrix@C=4em@R=1em{
\dus(S'') \ot \dus(F'')
& \dus(T'') \ar[l]_(.35){\dusd{\tau''}}&
\\
&&  \ar[lu]_{\ell''_*} \ar[ld]^{\ell'_*}     \dus(X)\,.
\\
\ar[uu]^{ {\sigma_2}_* \ot \, {\sigma_{1j}}}_\cong  \dus(S') \ot
\dus(F')& \ar[l]_(.35){\dusd{\tau'}}  \dus(T') \ar[uu]_{{\sigma_1}_*}^\cong&
}
\]
We conclude from this diagram that $\dusd{\tau'}(\ell'_*x) \in \dus(\bfT')$
and~$\dusd{\tau''}(\ell''_*x)  \in \dus(\bfT'')$ are related  
by the isomorphism $\bfsigma^*$ associated to $(\sigma_2,\sigma_1)$
as in~\eqref{Dnes snad uvidim Tereje.},    
thus  $\pa_{\bftau'} (x)  = \ell'^*\! \circ_{\tau'}\! (\dusd{\tau'}(\ell'_* x))$
and~$\pa_{\bftau''} (x)  =  \ell''^*\! \circ_{\tau''}\! (\dusd{\tau''}(\ell''_*
x))$ are the same.

\begin{proposition}
The derivation $\pa_{\C} : \C(\CO) \to \C(\CO)$ introduced above squares to zero.
\end{proposition}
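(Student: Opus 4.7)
The plan is to prove $\pa_{\C}^2=0$ by reducing to generators. Since $\pa_{\C}$ is a derivation of odd degree, the graded commutator $\tfrac12[\pa_{\C},\pa_{\C}] = \pa_{\C}^2$ is again a derivation (of degree $-2$), so by Proposition~\ref{a zaskakuje pastorek} it suffices to check that it vanishes on the generating subcollection $\dus$, i.e.\ on $x \in \dus(X)$ for each $X \in \ttO$ with $e(X)\geq 1$.

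For such $x$, I would expand $\pa_{\C}^2(x)$ by applying~\eqref{Terej bude mit elektrickou zarazku.}--\eqref{Jarka mne chce prestavet byt.} twice and using the Leibniz rule~\eqref{zase mi nejde aktuator}. The first application yields a finite sum over $[\bftau]\in\pi_0(\lTw^2(X))$ with representative $\bftau=(\ell,T\stackrel{\tau}\to S)$ having fiber $F$; the second application splits each summand into two pieces, one arising from $\pa_{\C}$ acting on the $\dus(S)$ factor and one from its action on the $\dus(F)$ factor of $\dusd{\tau}(\ell_*x)$. Invoking Lemmas~\ref{spousta-prace} and~\ref{spousta-prace-1}, each such composite datum corresponds, up to isomorphism, to a labelled tower of height $3$ in $\lTw^3(X)$; equivalently, to a chain $X\stackrel\cong\to T \stackrel{\rho_1}\to T_1 \stackrel{\rho_2}\to T_2$ of two consecutive elementary morphisms.

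I would then split the resulting triple sum according to whether $(\rho_1,\rho_2)$ has joint or disjoint fibers in the sense of Definition~\ref{d3}. For a joint-fiber chain, Lemma~\ref{l7} produces exactly two intermediate $2$-towers that refine it---namely the two diagonals of diagram~\eqref{vymena}---and their contributions to $\pa_{\C}^2(x)$ are swapped by the dualization of axiom~(i) of Definition~\ref{markl}; the Koszul sign introduced by the conjugation $(\susp\!\ot\!\susp)(-)\desusp$ in the definition of $\dusd{\phi}$ turns this coassociativity identity into an exact cancellation. For a disjoint-fiber chain, Corollary~\ref{move} identifies the two orderings of the independent contractions as the two diagonals of diagram~\eqref{den_pred_Silvestrem_jsem_nachlazeny}, and the corresponding contributions cancel by the dualization of axiom~(ii) of Definition~\ref{markl}, where the symmetry constraint $\tau$ appearing in~\eqref{Napadne jeste tento rok snih?} is again compensated by the signs from the desuspensions.

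The main obstacle is sign bookkeeping: one must check that the Koszul signs produced by the two desuspensions in $\dusd{\phi}:= (\susp\!\ot\!\susp)\,\overline\delta_\phi\,\desusp$, combined with the signs coming from the Leibniz rule~\eqref{zase mi nejde aktuator}, are exactly those required to convert the (sign-free) cooperadic associativity and commutativity dual to~\eqref{vymena} and~\eqref{Napadne jeste tento rok snih?} into cancellations rather than additions. This is a routine but careful calculation; once it is in place, the three-step reduction above forces $\pa_{\C}^2=0$ directly from the axioms of a strictly counital $1$-connected Markl cooperad.
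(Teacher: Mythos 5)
Your proposal follows essentially the same route as the paper's proof: reduce to generators via the derivation property, expand by the Leibniz rule, organize the resulting terms by chains of two elementary morphisms (the paper phrases this as a splitting $\pa_{\C}^2 = \pa_L^2 + \pa_R^2$ and then converts the $\pa_R^2$-terms into joint-fiber chains via the blow-up axiom), and cancel joint-fiber contributions against each other through the dual of~\eqref{vymena} and disjoint-fiber contributions pairwise through the dual of~\eqref{Napadne jeste tento rok snih?}, with the Koszul signs from the desuspensions supplying the minus signs. The only point treated more lightly than in the paper is the existence of the matching partner chain in the disjoint-fiber case (the paper's explicit Claim, proved with the weak blow-up axiom), but this is exactly the kind of step your cited lemmas are designed to supply.
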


\begin{proof}
It is simple to verify, using the defining equation~(\ref{zase mi
  nejde aktuator}), that the square of an odd-degree derivation is a
derivation again. In particular, $\pa_{\C}^2$ is a derivation, thus it
suffices, by Proposition~\ref{a zaskakuje pastorek}, to verify that
$\pa_{\C}^2(x) = 0$ for  $x \in \dus(X)$, $X\in \ttO$, i.e.\ that 
\[
\sum_\bftau
\pa_{\C} \pa_\bftau (x) = \sum_\bftau 
\pa_{\C} \ell^* \! \circ_\tau \! (\dusd\tau(\ell_* x)) =
\sum_\bftau  \ell^* \pa_{\C}  \! \circ_\tau \! (\dusd\tau(\ell_* x))=  0.
\]
In the above display, as well as in the following ones, we will not
specify the summation range where it is clear from the context.
By the derivation
rule~(\ref{zase mi nejde aktuator}) we have
\[
\sum_\bftau \ell^*
\pa_{\C} \! \circ_\tau \! (\dusd\tau(\ell_* x)) =\sum_\bftau 
\ell^*\! \circ_\tau \! (\pa_{\C} \ot \id) (\dusd\tau(\ell_* x) ) +
\sum_\bftau \ell^*
\! \circ_\tau \! (\id \ot \pa_{\C}) (\dusd\tau(\ell_* x) ),
\]
thus $\pa_{\C}^2(x)$ decomposes as $\pa_{\C}^2(x) = \pa^2_L(x) + \pa^2_R(x)$, where
$\pa^2_L(x)$, resp.~$\pa^2_R(x)$, is the left, resp.~right, term at the
left-hand side of the above equation.

\noindent 
{\it Analyzing $\pa^2_L(x)$.}
 Let us focus on $\pa^2_L(x)$ first. Expanding
further using definitions, we get
\begin{align*}
\pa^2_L(x) =& \sum_{\bfpsi'} 
{\ell^{\, *}_{\psi'}}
\circ_{\psi'} (\pa_{\C} \ot \id) (\dusd{\psi'}({\ell^{\, *}_{\psi'}} x) )=
\sum_{\bfphi,\bfpsi'}
{\ell^{\, *}_{\psi'}}
\circ_{\psi'} (\pa_{\bfphi} \ot \id) (\dusd{\psi'}({\ell_{\psi'}}_* x) )
\\
&=
\sum_{\bfphi,\bfpsi'} 
{\ell^{\, *}_{\psi'}}
\circ_{\psi'} ({\ell^{\, *}_{\phi}}\circ_\phi \dusd\phi {\ell_{\phi}}_* \ot \id) 
(\dusd{\psi'}({\ell_{\psi'}}_* x) )
\\
&= \sum_{\bfphi,\bfpsi'} {\ell^{\, *}_{\psi'}} \circ_{\psi'}
({\ell^{\, *}_{\phi}} \ot \id)
 (\circ_\phi \ot \id)
(\dusd\phi  \ot \id)( {\ell_{\phi}}_* \ot
     \id)(\dusd{\psi'}({\ell_{\psi'}}_* x)),
\end{align*}
where $\bfpsi'$ and $\bfphi$ run over the representatives of the
isomorphism classes  of labeled towers of the form
\[
\twr X{F}{\ T'\ ,}{H'}{\ell_{\psi'}}{\psi'}{{\boldsymbol \psi'}}
\twr {H'}B{\ H\,.}S{\ell_\phi}{\phi}{{\boldsymbol \phi}}
\]
Consider now the diagram
\begin{equation*}
\xymatrix@R=-.3em@C=1em{
&F  \ar@{~>}[r]^\id  &F
\\
&\triangledown&\triangledown 
\\
X  \ar[r]^{\ell_{\psi'}}  &T' \ar[ddddd]_{\psi'}  \ar[r]^{\tilde
  \ell_\phi}   &T\ar[ddddd]^{\psi}
\\
\\ & \hskip 5em \BU&
\\
\\
\\
\rule{0pt}{2em}&H'\ar[r]^{\ell_\phi}& H  \ar[dddddd]^\phi 
\\
\\
\\
\\
\\
\\
&&S
}
\end{equation*}
whose square was obtained using the blow-up axiom with the
condition that the prescribed maps of fibers are identities, which is
symbolized by the wavy arrow decorated by
$\id$. From~\eqref{Ve_ctvrtek_letim_do_Prahy.} we conclude that 
$\circ_{\psi'} (\ell_\phi^{\, *} \ot \id) = {{\tilde \ell}_\phi}^{\, *}
\circ_\psi$ and, dually, 
$({\ell_{\phi}}_* \ot   \id)\delta_{\psi'} 
=  \delta_\psi\hbox{${{{\tilde \ell}_\phi}}$}_*$, thus also 
$({\ell_{\phi}}_* \ot   \id)\dusd{\psi'} 
=  \dusd\psi\hbox{${{{\tilde \ell}_\phi}}$}_*$, and therefore
\[
\pa^2_L(x) = \sum_{\bfphi,\bfpsi'} 
({\hbox{${{{\tilde \ell}_\phi}}$}
{\ell_{\psi'}}})^* \circ_{\psi'} (\circ_\phi \ot \id)
(\dusd\phi  \ot \id)
\dusd\psi({\hbox{${{{\tilde \ell}_\phi}}$}
{\ell_{\psi'}}})_* (x).
\]
This can clearly be rewritten as
\begin{equation}
\label{Zitra davam Zoom talk.}
\pa^2_L(x) =
\sum_{\bfphi,\bfpsi} 
\ell_{*} \!
 \circ_{\psi}\! (\circ_\phi \ot \id)
(\dusd\phi  \ot \id)\,
\dusd\psi\ell_* (x),
\end{equation}
where $\bfphi$, $\bfpsi$ runs over  the representatives of the
isomorphism classes  of labeled towers as in
\begin{equation}
\label{Snehu bude az moc.}
\twr XF{\ T\ ,}H{\ell}\psi{\boldsymbol \psi}
\twr HB{\ H\,.}S{\id}\phi{{\boldsymbol \phi}}
\end{equation} 
Let us further decompose
\[
\pa_L^2(x) = \pa_L^2(x)'+ \pa_L^2(x)'',
\]
where $\pa_L^2(x)'$, resp.~$\pa_L^2(x)''$, is the part of the sum
in~\eqref{Zitra davam Zoom talk.} running over $\phi$ and $\psi$ with
joint, resp.~disjoint, fibers in the sense of Definition~\ref{d3}. We
are going to show that $\pa_L^2(x)'' = 0$. 

To start, let $k,i \in |S|$, $k \not= i$. Recall that
$(\phi,\psi)$ is a $(k,i)$-pair if there is $j \in \inv{|\phi|}(k)$ such that
$\inv \psi(j)$ is the only nontrivial fiber of $\psi$, 
and  $\inv \phi(i)$ the only nontrivial fiber of $\phi$.
This results in the decomposition
\[
\pa_L^2(x)'' = \sum_{k \not=  i} \pa_L^2(x)_{(k,i)}''
\]
where $\pa_L^2(x)_{(k,i)}''$ is the part of the sum in~\eqref{Zitra davam
  Zoom talk.} taken over  $(k,i)$-pairs. Out next aim will be to prove that 
\begin{equation}
\label{Kimci}
\pa_L^2(x)''_{(k,i)} + \pa_L^2(x)''_{(i,k)} =0,
\end{equation}
for each  $k \not=i \in |S|$, using Lemma~\ref{Dnes naposledy jede Lednacek.}.
The commutative diagram 
\[
\xymatrix@R = 1em@C=4em{& {P'} \ar[dr]^{(\phi',\, i)} &
\\
T\ar[dr]^{(\psi'',\, l)} \ar[ur]^{(\psi',\, j)} && S
\\ 
&{P''}\ar[ur]^{(\phi'',\, k)}&
}
\]
leads, with the help of the dual of~\eqref{Napadne jeste tento rok
  snih?}, to
\[
\xymatrix@C=1em@R=1.3em{
\CO(P') \ot \CO(F) \ar[rr]^(.42){\delta_{\phi'} \ot \id}
&&\CO(S) \ot \CO(G) \ot \CO(F)   \ar[dd]^{\id \ot {\, \rm transposition}}
\\
\CO(T)\ar[d]_{\delta_{\psi''}}\ar[u]^{\delta_{\psi'}}  &&
\\
\CO(P'') \ot \CO(G) \ar[rr]^(.42){\delta_{\phi''} \ot \id} 
&&\CO(S) \ot \CO(F) \ot \CO(G) \, .
}
\]
As a simple application of the Koszul sign rule we obtain the diagram
\[
\xymatrix@C=1em@R=1.5em{
\dus(P') \ot \dus(F) \ar[rr]^(.42){\dusd{\phi'} \ot \id}
&&\dus(S) \ot \dus(G) \ot \dus(F)   \ar[dd]^{\id \ot {\, \rm transposition}}
\\
\dus(T)\ar[d]_{\dusd{\psi''}}\ar[u]^{\dusd{\psi'}}  & \hbox{\rm commutes up
  to $-1$}&
\\
\dus(P'') \ot \dus(G) \ar[rr]^(.42){\dusd{\phi''} \ot \id} 
&&\dus(S) \ot \dus(F) \ot \dus(G)
}
\]
commuting up to multiplication by $-1$. Combining it
with diagram~\eqref{Napadne jeste tento rok snih?} applied to the composition
laws of the free Markl operad $\C(\CO)$, we get
\[
\xymatrix{
\dus(P') \ot \dus(F) \ar[r]^(.42){\dusd{\phi'} \ot \id}
&\dus(S) \ot \dus(G) \ot \dus(F)\ar[r]^(.58){\circ_{\phi'} \ot \id} 
& \dus(P') \ot \dus(F) \ar[d]^{\circ_{\psi'}}
\\
\dus(T)\ar[d]_{\dusd{\psi''}}\ar[u]^{\dusd{\psi'}}  & \hbox{\rm commutes up
  to $-1$}& \dus(T)
\\
\dus(P'') \ot \dus(G) \ar[r]^(.42){\dusd{\phi''} \ot \id} 
&\dus(S) \ot \dus(F) \ot \dus(G)
\ar[r]^(.58){\circ_{\phi''} \ot \id}  & \dus(P'') \ot \dus(G) \,,
\ar[u]_{\circ_{\psi''}}
}
\]
expressing the equation
\[
\circ_{\psi'} (\circ_{\phi'} \ot \id)(\dusd{\phi'}  \ot \id)
\dusd{\psi'}  + 
\circ_{\psi''} (\circ_{\phi''} \ot \id)(\dusd{\phi''}  \ot \id)
\dusd{\psi''} =0.
\]
Invoking~\eqref{Zitra davam Zoom talk.}, 
we conclude that each summand of $\pa_L^2(x)''_{(k,i)}$ has its counterterm in
$\pa_L^2(x)''_{(i,k)}$. 
This establishes~(\ref{Kimci}), and therefore $\pa_L^2(x)'' = 0$, so we
may assume that the sum in~\eqref{Zitra davam Zoom talk.} runs over isomorphism
classes of pairs~(\ref{Snehu bude az moc.}) with $\phi$ and $\psi$
having {\em joint fibers\/}.

\noindent 
{\it Analyzing $\pa^2_R(x)$.\/}
Let us perform a similar analysis of $\pa^2_R(x)$. We have
\begin{align*}
\pa^2_R(x) = &
 \sum_{\bfalpha'} \ell^{\, *}_{\alpha'} \circ_{\alpha'}(\id \ot
            \pa_{\C})\dusd{\alpha'}{\ell_{\alpha'}}_*(x)
= \sum_{\bfbeta,\bfalpha'} \ell^{\, *}_{\alpha'} \circ_{\alpha'}(\id \ot
            \pa_{\bfbeta})\dusd{\alpha'}{\ell_{\alpha'}}_*(x)
\\
&= \sum_{\bfbeta,\bfalpha'} \ell^{\, *}_{\alpha'} \circ_{\alpha'}(\id
     \ot
\ell_\beta^{\, *} \circ_\beta \dusd\beta {\ell_\beta}_*)
\dusd{\alpha'}{\ell_{\alpha'}}_*(x)
\\
&= 
\sum_{\bfbeta,\bfalpha'} \ell^{\, *}_{\alpha'} \circ_{\alpha'}
(\id \ot \ell_\beta^{\, *})(\id \ot  \circ_\beta)(\id \ot
     \dusd\beta)(\id \ot  
	 {\ell_\beta}_*)\dusd{\alpha'}{\ell_{\alpha'}}_*(x)\,,
\end{align*}
where $\bfalpha'$  and $\bfbeta$ run over  representatives of the
isomorphism classes  of labeled towers  as in
\[
\twr X{A'}{\ T'\ ,}S{\ell_{\alpha'}}{\alpha'}{{\boldsymbol \alpha'}}
\twr {A'}F{\ A\,.}B{\ell_\beta}\beta{{\boldsymbol \beta}}
\]
Let us construct, out of $\bfalpha'$ and $\bfbeta$, the commutative diagram
\[
\xymatrix@R=-.3em@C=1em{
&&A'  \ar@{~>}[rr]^{\ell_\beta}  &&A
\\
&&\triangledown&&\triangledown 
\\
X  \ar[rr]^{\ell_{\alpha'}}&  &T' \ar[dddr]_{\alpha'}
\ar[rr]^{\underline \ell_\beta}
& &T\ar[dddl]^{\alpha}
\\ 
&&&    \WBU&
\\ 
&&&  \rule{0pt}{2em} &&
\\
&&& S& 
}
\]
whose triangle is given by the weak blow-up axiom, under the condition
that the prescribed map between the only nontrivial fibers is $\ell_\beta$ and
the remaining maps of the fibers are identities. 
Using~\eqref{Ve_ctvrtek_letim_do_Prahy.} and its dual as before, we
rewrite $\pa^2_R(x)$ as 
\[
\pa^2_R(x) = \sum_{\bfbeta,\bfalpha'}
(\underline \ell_\beta \ell_{\alpha'})^*\!
\circ_\alpha \!(\id \ot \circ_\beta)(\id \ot \dusd\beta)\dusd\alpha
(\underline \ell_\beta \ell_{\alpha'})_*(x)
\]
which is the same as
\begin{equation}
\label{2-2-3-4}
\pa^2_R(x) = \sum_{\bfbeta,\bfalpha}
\ell^{*} \!
\circ_\alpha  \!(\id \ot \circ_\beta)(\id \ot \dusd\beta)\,\dusd\alpha
\ell_*(x),
\end{equation}
where $\bfalpha$ and $\bfbeta$ run over representatives of the 
isomorphism classes of towers as in
\begin{equation}
\label{po dvou dnech}
\twr XA{\ T\, ,}S{\ell}\alpha{\boldsymbol \alpha}
\twr AF{\ A\,.}B{\id}\beta{\boldsymbol \beta} 
\end{equation}
The weak blow-up axiom  produces a unique factorization
$\alpha = \phi\psi$ in the diagram
\begin{equation}
\label{na lyzich do Osecka}
\xymatrix@R=-.3em@C=1em{
A  \ar@{~>}[rr]^{\beta}  &&B
\\
\triangledown&&\triangledown 
\\
S \ar[ddr]_{\alpha}
\ar[rr]^{\psi}
& &H\ar[ddl]^{\phi}
\\ 
&  \raisebox{.7em}{\WBU} \rule{0pt}{3em} &&
\\
& T& 
}
\end{equation}
in which $\alpha$ is elementary and the map between the only
nontrivial fibers of $\alpha$ resp.~$\phi$ is $\beta: A \to B$, as
symbolized by the wavy arrow. Notice that the pair $(\phi,\psi)$ has
joint fibers.
The dual of~\eqref{vymena} gives
\[
\xymatrix@R = 1.3em@C=1em{
&\CO(S)\otimes \CO(A)
\ar[rd]^{\id \ot \delta_\beta}
&
\\
\CO(T) \ar[ur]^(.4){\delta_\alpha}\ar[dr]_(.4){\delta_\psi}
&&\CO(S)\otimes \CO(B)\otimes \CO(F) 
\\ 
&\CO(H)\otimes \CO(F) 
\ar[ur]_{\delta_\phi \ot \id}
&&
}
\]
which, combined with the Koszul sign rule, leads to
\[
\xymatrix@R = 1.3em@C=1em{
&\dus(S)\otimes \dus(A)
\ar[rd]^{\id \ot \dusd\beta}
&
\\
\dus(T) \ar[ur]^(.4){\dusd\alpha}\ar[dr]_(.4){\dusd\psi}
&\hbox{\rm commutes up
  to $-1$}&\dus(S)\otimes \dus(B)\otimes \dus(F) 
\\ 
&\dus(H)\otimes \dus(F) 
\ar[ur]_{\dusd\phi \ot \id}
&&
}
\]
which commutes up to multiplication by $-1$. Combining it with
diagram~\eqref{vymena} for the composition laws of the free operad 
$\C(\CO)$, we obtain the diagram
\[
\xymatrix@R = 1.3em@C=.5em{
&\dus(S)\otimes \dus(A)
\ar[rd]^{\id \ot \dusd\beta}
&& {\dus(S)\otimes \dus(A)} 
\ar[dr]^(.65){\circ_\alpha} &
\\
\dus(T) \ar[ur]^(.4){\dusd\alpha}\ar[dr]_(.4){\dusd\psi}
&\hbox{\rm commutes up
  to $-1$}&\dus(S)\otimes \dus(B)\otimes \dus(F) 
\ar[dr]_{\circ_\phi\ot \id} \ar[ur]^{\id \ot \circ_{\beta}}  &&  \dus(T)
\\ 
&\dus(H)\otimes \dus(F) 
\ar[ur]_{\dusd\phi \ot \id}
&&{\dus(H)\otimes \dus(F)}\ar[ur]_(.65){\circ_\psi}&
}
\]
which translates into the equation
\begin{equation}
\label{Jarka uz je 14 dni na chalupe.}
\circ_{\psi} (\circ_\phi \ot \id)(\dusd\phi \ot \id)\dusd\psi 
+ \circ_\alpha(\id \ot \circ_\beta)(\id \ot \dusd\beta) \dusd\beta=0.
\end{equation}

Notice that the assignment $ (\alpha,\beta) \mapsto (\phi,\psi)$
described by diagram~\eqref{na lyzich do Osecka} induces a
bijection between the set of isomorphism classes of towers~(\ref{po
  dvou dnech}) and~(\ref{Snehu bude az moc.}) with $ (\phi,\psi)$
having joint fibers. Its inverse
$(\phi,\psi) \mapsto (\alpha,\beta)$ produces $\alpha$ as the
composite $\phi\psi$ while $\beta$ is the induced map between the
unique nontrivial fibers as in
\[
\xymatrix@R=-.3em@C=1em{
A  \ar@{~>}[rr]^{\beta}  &&B
\\
\triangledown&&\triangledown 
\\
S \ar[ddr]_{\phi\psi}
\ar[rr]^{\psi}
& &H\ar[ddl]^{\phi}
\\ 
&  \rule{0pt}{2em} &&
\\
& \phantom{.}T\,.& 
}
\] 
Therefore, by~(\ref{Jarka uz je 14 dni na chalupe.}), each summand
$\ell^* \!\circ_{\psi}\! (\circ_\phi \ot \id)(\dusd\phi \ot
\id)\dusd\psi \ell_*(x)$ 
of~\eqref{Zitra davam Zoom talk.} possesses a unique counterterm $\ell^* \!\circ_\alpha \!(\id \ot \circ_\beta)
(\id \ot \dusd\beta)\dusd\beta \ell_*(x)$ in the sum~(\ref{2-2-3-4}),
which proves that 
\[
\pa_{\C}^2(x) = \pa^2_L(x) + \pa^2_R(x) = 0
\] 
as claimed.
\end{proof}

\begin{definition}
\label{Za chvili zavolam Taxovi.}
We call the dg operad $\C(\CO) = (\C(\CO),\pa_{\C})$ the {\em cobar
  construction\/} of a~strictly counital $1$-connected
Markl cooperad $\CO$.
\end{definition}

\section{The dual dg operad and Koszulity}
\label{Prevezu Tereje?}

The dual dg operad of a traditional operad was introduced in Section~3
of~\cite{ginzburg-kapranov:DMJ94}, and the Koszulity for
quadratic operads
in Section~4 of the same article. In this section we generalize
these notions to Markl operads over operadic categories and prove 
that operads whose algebras are the most common structures are~Koszul.

\begin{remark}
The duality for the (colored) operad whose algebras are non-unital
nonsymmetric operads was 
studied in 2002 by van der Laan~\cite{van}. Nineteen
years later, Dehling and Vallette
proved in~\cite{deh-val} that the
colored operad whose algebras are the classical (symmetric) operad is
curved Koszul. Koszulity of the groupoid-colored
operad whose algebras are modular operads was proved by Ward~\cite{Ward}.
A general approach to graph-based operadic 
structures in the language of
Feynman categories was suggested by Kaufmann and Ward in~\cite{KaWa}.
\end{remark}

The base operadic category $\ttO$ is required to fulfill \AA.
Let $\Markl$ be a strictly unital $1$-connected Markl operad satisfying
the assumptions of Example~\ref{Vse roztaje.}, so that the component-wise 
linear dual $\Markl^* := \{\Markl(T)^*\}_{T \in \ttO}$ is a   strictly counital
$1$-connected  Markl cooperad.
 
\begin{definition}
\label{Nebrzdi mi kolo.}
 The {\em dual dg
  operad\/} of a Markl operad $\Markl$ as above is the dg operad
\[
\D(\Markl) = (\D(\Markl),\pa_\D)   := (\C(\Markl^*),\pa_{\C}),
\] 
the cobar construction of the component-wise
linear dual of $\Markl$. 
\end{definition}

Assume that $\Markl =  \Free(E)/(R)$  is a quadratic Markl operad as in
Definition~\ref{zitra_seminar} and $\Markl^!$ its
Koszul dual, cf.\ Definition~\ref{vice nez 10 000 pripadu denne}.
To introduce the Koszulity, we
start from the injection $\susp E \hookrightarrow  \Markl^!$ of
collections defined as the composite
\[
\susp E \hookrightarrow \Free(\susp E) 
\twoheadrightarrow \Free(\susp E)/(R^\perp) = \Markl^!.
\]
Its linear dual  ${\Markl^!}^* \twoheadrightarrow\ \susp
E$ desuspends to a map $\pi:\ \desusp {\Markl^!}^*
\twoheadrightarrow E$. The related twisting
morphism  $\desusp {\Markl^!}^*  \to \Markl$, defined as
the composite
\[
 \desusp  {\Markl^!}^*\stackrel\pi\twoheadrightarrow  E 
 \hookrightarrow \Free(E) 
\twoheadrightarrow \Free(E)/(R) = \Markl,
\]
extends, by the freeness of $\Free(\desusp
{\Markl^!}^*)$, to a  morphism
$\rho_\Markl : \Free(\desusp {\Markl^!}^*) \to \Markl$  of Markl $\ttO$-operads. 
One verifies by direct calculation:

\begin{proposition}
The morphism $\rho_\Markl$ induces the {\em
  canonical map\/} 
\begin{equation}
\label{Jarce_prijede_M1_a_bude_do_soboty.}
\can_\Markl:
\D(\Markl^!) = (\Free(\desusp {\Markl^!}^*), \pa_\D)    \longrightarrow (\Markl,0)
\end{equation}  
of Markl dg $\ttO$-operads.
\end{proposition}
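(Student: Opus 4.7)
The plan is to establish the differential compatibility $\rho \pa_\D = \pa_\Markl \rho$, which, as $\pa_\Markl = 0$, reduces to showing $\rho \pa_\D = 0$. Since $\rho$ is a morphism of Markl operads and $\pa_\D$ satisfies the derivation identity~\eqref{zase mi nejde aktuator}, the composite $\rho \pa_\D$ is a derivation from $\Free(\desusp {\Markl^!}^*)$ into $\Markl$, where the latter is viewed as a bimodule over the former via $\rho$. The bimodule-valued analogue of Proposition~\ref{a zaskakuje pastorek}, proved by the same argument as the one in the text (replacing $\Markl \oplus \Markl$ by the square-zero extension $\Free(\desusp{\Markl^!}^*)\oplus \Markl$), reduces the desired vanishing to checking that
\[
\rho(\pa_\D x) = 0 \quad\text{for all generators } x \in \desusp {\Markl^!}^*(X),\ X \in \ttO.
\]

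Using the explicit formulae~\eqref{Terej bude mit elektrickou zarazku.}--\eqref{Jarka mne chce prestavet byt.}, together with the definition of $\rho$ as the operadic extension of $\iota\pi$, one computes
\[
\rho(\pa_\D x) \;=\; \sum_{[\bftau]\in \pi_0(\lTw^2(X))} \ell^*\, \circ^{\Markl}_{\tau}\!\bigl((\iota\pi \otimes \iota\pi)\, \dusd\tau(\ell_* x)\bigr),
\]
where $\bftau = (\ell,\bfT)$ represents a connected component of $\lTw^2(X)$, $\tau:T\to S$ is the top elementary morphism of $\bfT$, and $F$ is its unique nontrivial fiber. By construction, $\pi$ is the desuspension of the dual of the cogenerator inclusion $\susp E \hookrightarrow \Markl^!$, so it annihilates the part of $\desusp {\Markl^!}^*$ dual to decomposable elements of $\Markl^!$. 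A summand therefore contributes nontrivially only if both tensor factors of $\dusd\tau(\ell_*x)$ pair nontrivially with the cogenerators $\susp E^*\subset\Markl^{!*}$; that forces $x$ to lie in the component of $\desusp {\Markl^!}^*(X)$ dual to the weight-$2$ piece $\Free^2(\susp E^*)(X)/R^\perp(X)\subset \Markl^!(X)$. All other summands vanish.

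For such an $x$, the decomposition~\eqref{dnes_bude_vedro_b} together with the perfect pairing~\eqref{jdu_si_lepit_142} yields, up to suspensions, the identification
\[
\bigl(\desusp {\Markl^!}^*(X)\bigr)_{\text{wt }2} \;\cong\; R^{\perp}(X)^{\perp} \;=\; R(X) \;\subset\; \Free^2(E)(X),
\]
the middle equality being the standard consequence of the non-degeneracy of the pairing on the finite direct sums~\eqref{dnes_bude_vedro_b}. Unwinding the remaining definitions—the coproducts $\dusd\tau$ are by construction dual to the circle products $\circ_\tau$ in $\Markl^!$, whose action on $\susp E^*\otimes\susp E^*$ is dictated precisely by $R^\perp$—shows that the surviving sum $\rho(\pa_\D x)$ recovers, in $\Free^2(E)(X)$, the element $\tilde x\in R(X)$ corresponding to $x$ under the above identification. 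Since $\tilde x\in R(X)$, its image in $\Markl(X)=\Free(E)(X)/(R)(X)$ is zero, finishing the argument.

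The principal technical obstacle will be the careful bookkeeping of suspensions, Koszul signs, and the explicit matching of the cooperad coproducts $\dusd\tau$ with the operadic compositions $\circ_\tau$ of $\Markl^!$ under the perfect pairing~\eqref{jdu_si_lepit_142}; the finiteness hypothesis on $\pi_0(\lTw^2(X))$ imposed in Section~\ref{Ceka mne nehezky vikend.} guarantees that the relevant $(-)^*$ operations behave as genuine finite-dimensional duals, so that the identifications are literal rather than pro-finite. Beyond this bookkeeping, the argument is essentially the operadic-category avatar of the classical bar-cobar canonical-map computation, cf.~\cite[\S II.3]{markl-shnider-stasheff:book}.
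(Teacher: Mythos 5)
Your argument is correct and is essentially the ``direct calculation'' that the paper alludes to but does not write out: the Leibniz-rule reduction to the generators $\desusp{\Markl^!}^*$, the observation that $\pi$ kills everything except the component dual to the weight-two piece of $\Markl^!$, and the biorthogonality $(R^\perp)^\perp=R$ under the pairing~(\ref{jdu_si_lepit_142}) are exactly the ingredients the authors intend. The only remaining work is the suspension/Koszul sign bookkeeping you already flag, which must be checked to be uniform across the summands of~(\ref{dnes_bude_vedro_b}) so that the surviving element genuinely lands in $R(X)$ rather than a sign-twisted copy of it.
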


\begin{definition}
\label{hodnoceni snad odlozeno}
A quadratic Markl $\ttO$-operad $\Markl$ is {\em Koszul\/} if the 
canonical map~\eqref{Jarce_prijede_M1_a_bude_do_soboty.} is 
a~component-wise homology isomorphism.
\end{definition}

\begin{remark}
In the ``classical'' operad theory one proves that a quadratic operad is
Koszul if and only if its Koszul dual is Koszul~\cite[Proposition~4.1.4]{ginzburg-kapranov:DMJ94}. We believe
the same is true also in our setup, but postpone the proof for future work. 
\end{remark}

In the rest of this section we establish the Koszulity of some of the binary
quadratic operads introduced in 
Sections~\ref{zitra_budu_pit_na_zal}--\ref{Ta_moje_lenost_je_strasna.}. Namely,
we prove the Koszulity of the operad $\term_\ggGrc$ whose algebras are
modular operads, of the operad $\termCGr$ whose algebras are cyclic operads, of
the operad  $\termRTre$ whose algebras are ordinary Markl operads and of the operad
$\Wheterm$ whose algebras are wheeled properads. The Koszulity of the operad
$\Pterm$ whose algebras are 
permutads was already established \hbox{in~\cite[Corollary~49]{markl:perm}.}

\begin{theorem}
\label{Privezu Tereje?}
The binary quadratic operads $\term_\ggGrc$, $\termCGr$, $\termRTre$
and $\Wheterm$ are Koszul.
\end{theorem}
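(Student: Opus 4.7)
The strategy, already indicated in the introduction, is to identify, for each of the four operads $\Markl$ under consideration, the dual dg operad $\D(\Markl^!)$ with the minimal model of $\Markl$ constructed in~\cite{BMO}, and then to invoke an analog of~\cite[Proposition~2.6]{markl:zebrulka}: namely, the fact that a quadratic Markl operad admitting $\D(\Markl^!)$ as a minimal resolution is automatically Koszul, because the canonical map~\eqref{Jarce_prijede_M1_a_bude_do_soboty.} is then itself the minimal-model quasi-isomorphism.

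First I would gather the explicit form of the Koszul duals obtained in Sections~\ref{zitra_budu_pit_na_zal}--\ref{Ta_moje_lenost_je_strasna.}. In each case the Koszul dual turns out to be a cocycle supported on graphs: Theorem~\ref{vcera_jsem_podlehl} yields $\term_\ggGrc^! = \oddGr$ with $\oddGr(\Gamma)\cong\det(\edg(\Gamma))$; Theorem~\ref{Zitra_letim_do_Bari.} together with Remark~\ref{zitra_vycvik_vlekare} gives $\termCGr^! = \oddCGr$ with the same determinantal description on trees; Theorem~\ref{markl1} identifies $\termRTre^!$ as $\termRTre\otimes\fdl$ for the coboundary computed via~\eqref{Dnes_mam_vedecke_narozeniny.}, so that $\termRTre^!(T)\cong\det(\edg(T))$ up to a canonical shift; and the calculation of~\eqref{Dnes_mam_vedecke_narozeniny_bis.} yields $\Wheterm^!(\Gamma)\cong\det(\edg(\Gamma))$. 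Consequently, in each case $\desusp(\Markl^!)^*$ is (up to a canonical overall degree shift depending on the number of vertices) the one-dimensional collection assigning to a graph $\Gamma$ the space $\det(\edg(\Gamma))$, concentrated on objects with at least one internal edge.

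Next I would unpack the underlying collection of $\D(\Markl^!) = \Free\bigl(\desusp(\Markl^!)^*\bigr)$ using the basis~\eqref{dnes_bude_vedro_b} by isomorphism classes of labelled towers. By Proposition~\ref{uz_melo_prvni_cislo_davno_vyjit} and its evident analogs for the tree-, rooted-tree-, and wheeled-graph categories, these classes correspond to linear orders on $\edg(\Gamma)$ modulo the harmonic equivalence $\bowtie$. Combining this combinatorial description with the determinantal sign coming from each layer of the tower, one obtains a canonical identification of $\D(\Markl^!)(X)$ with the vector space spanned by pairs $(\Gamma,\mu)$ where $\Gamma\to X$ is a graph obtained from $X$ by contracting a subset of internal edges and $\mu\in\det(\edg(\Gamma))$; this is precisely the generating data for the minimal models in~\cite{BMO}.

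The central step, and the one I expect to absorb most of the work, is checking that the cobar differential $\pa_\D$ of Definition~\ref{Nebrzdi mi kolo.} coincides with the BMO minimal-model differential under this identification. From formulas~\eqref{Terej bude mit elektrickou zarazku.}--\eqref{Jarka mne chce prestavet byt.}, $\pa_\D$ evaluated on a generator is a sum over representatives of $\pi_0(\tlTw^2(X))$ of the partial coproducts $\dusd\tau$ in $(\Markl^!)^*$. Under our identification, each $\dusd\tau$ becomes the canonical coproduct $\det(\edg(\Gamma))\to\det(\edg(\Gamma')\sqcup\{e\})\cong\det(\edg(\Gamma'))\otimes\det(\{e\})$ associated to the contraction of a single edge $e$, so the total differential is the signed sum over edges of $\Gamma$ of the corresponding ``uncontraction'' operations — exactly the differential used in~\cite{BMO}. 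The delicate part is matching the signs: the desuspension in $\desusp(\Markl^!)^*$, the determinantal signs, and the Koszul signs introduced by the reorganization of the cobar differential must all combine consistently, and this will be the main technical obstacle. Once the isomorphism of dg operads $\D(\Markl^!)\cong M(\Markl)$ with the BMO minimal model is established for each of the four operads, the map $\can$ becomes the minimal-model quasi-isomorphism, hence is a component-wise homology isomorphism, and Koszulity in the sense of Definition~\ref{hodnoceni snad odlozeno} follows.
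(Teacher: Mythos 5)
Your plan follows essentially the same route as the paper: identify each Koszul dual with the determinant cocycle on internal edges, recognize $\D(\Markl^!)$ as the free operad on $\desusp(\Markl^!)^*$ whose underlying non-dg operad coincides with the minimal model of~\cite{BMO}, and conclude from the quasi-isomorphism property of the resolving map. The sign discrepancy you flag as the main technical obstacle is real, and the paper resolves it by an explicit rescaling automorphism $\chi$ multiplying the generator over $\Gamma$ by $(-1)^{e(\Gamma)(e(\Gamma)-1)/2}$, which intertwines the two differentials and makes the triangle over $\term_\ggGrc$ commute.
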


\begin{proof}
In the proof we drop the subscripts of $\rho$ and $\can$ since they will
always be clear from the context.
We start with the most complex case of the terminal $\ggGrc$-operad  
$\term_\ggGrc$.
Our strategy will be to show that its minimal model $\minggGrc$
constructed in~\cite[Section~3]{BMO} is isomorphic to the dual dg operad $\D(\oddGr)$ of its
Koszul dual $\oddGr = \term_\ggGrc^!$, via an isomorphism compatible with the
resolving maps $\minggGrc \stackrel\rho\to \term_\ggGrc$
resp.~$\D(\oddGr) \stackrel\can\longrightarrow 
\term_\ggGrc$. 

It is not difficult to determine, using the presentation in the proof
of Theorem~\ref{vcera_jsem_podlehl},
the operad structure of  $\oddGr$. As we noticed in Remark~\ref{zitra_vycvik_vlekare}, given a genus-graded connected
graph $\Gamma \in \ggGrc$, the corresponding piece $\oddGr(\Gamma)$  can be
identified with the one-dimensional vector space  
$\det(\Gamma):= \det(\edg(\Gamma))$, the
determinant of the set of internal edges of $\Gamma$, placed in degree
$e(\Gamma)+1$, where by definition $e(\Gamma)$, the grade of $\Gamma$ is
the number of internal edges.
The unit map associated to a corolla $c \in \ggGrc$ is the canonical
isomorphism 
\[
\bfk = \det(\emptyset) \cong \oddGr(c).
\] 
Given two finite sets $S_1 = \{e^1_1,\ldots,e^1_a\}$ and 
$S_2 = \{e^2_1,\ldots,e^2_b\}$, we define the canonical isomorphism
\[
\omega_{S_1,S_2}:
\det(S_1 \sqcup S_2) \to \det(S_1) \ot \det(S_2)
\]
by
\[
\omega_{S_1,S_2}(e^1_1 \land \cdots \land e^1_a \land  
e^2_1 \land \cdots \land e^2_b) := (e^1_1 \land \cdots \land e^1_a) \ot
(e^2_1 \land \cdots \land e^2_b).
\] 
Notice that if $\Gamma \fib \Gamma' \stackrel\tau\to \Gamma''$ is an
elementary morphism, there is a natural isomorphism
\[
\edg(\Gamma') \cong \edg(\Gamma'') \sqcup \edg(\Gamma)
\]
of the sets of internal edges. The partial composition related to
$\tau$ then equals
\[
\xymatrix@1@C=5em{
\circ_\tau :
\oddGr(\Gamma'') \ot \oddGr(\Gamma) \cong \det(\Gamma'') \ot
\det(\Gamma)
\ar[r]^(.63){ \omega^{-1}_{\edg(\Gamma''),\edg(\Gamma)}} &\
\det(\Gamma') \cong \oddGr(\Gamma').
}
\]

Our next step will be to describe the dual dg operad
$\D(\oddGr)$ which, by definition, equals the cobar construction
$\C(\rGddo)$ of the cooperad $\rGddo$.
Paying attention to the Koszul sign rule, we determine the
cooperad structure operation of $\rGddo$ related to $\tau$ as
\[
\delta_\tau :=  (-1)^{(e(\Gamma'')+1)(e(\Gamma)+1)}\cdot
\circ^*_\tau
: \rGddo(\Gamma') \to \rGddo(\Gamma'') \ot \rGddo(\Gamma).
\]
The underlying collection of the reduced cooperad $\overline{\rGddo}$ is given by 
\[
\overline{\rGddo}(\Gamma) :=     
\begin{cases}
\rGddo(\Gamma)& 
\hbox {if $\Gamma$ has at least one internal edge}
\\
0& \hbox {otherwise.}
\end{cases}
\]
The desuspended operation
\[
\dusd\tau 
:= (\desusp \ot \desusp) \kompozice \overline\delta_\tau \kompozice \susp:
\desusp \overline\rGddo(\Gamma') \to \desusp \overline\rGddo(\Gamma'') \ot \overline\desusp \rGddo(\Gamma)
\]
is then,  with the help of the identification $\overline{\rGddo}(\Gamma) \cong
\det(\Gamma)$ for $\Gamma$ with at least one internal edge, 
described as 
\[
\dusd\tau =  (-1)^{(e(\Gamma'')+1) +(e(\Gamma'')+1)(e(\Gamma)+1)}\cdot
\omega_{\edg(\Gamma''),\edg(\Gamma)} =  (-1)^{e(\Gamma)(e(\Gamma'')+1)}\cdot
\omega_{\edg(\Gamma''),\edg(\Gamma)} . 
\]
The new sign $ (-1)^{(e(\Gamma'')+1)}$ is the contribution of the
commutation of the first tensor factor  of the image of $\overline\delta_\tau
\kompozice \susp$ over the desuspension $\desusp$\,. 

Let $\bftau = (\ell,\bfT) \in \lTw^2(X)$ be the labeled tower
\[
\twr X{\Gamma}{\ \Gamma'\,.}{\Gamma''}\ell\tau\bftau
\]
We may assume that $\ell$ is a \qb\ by
Proposition~\ref{22_hodin_cesty}. 
Since \qb{s} obviously act trivially on
$\oddGr$ and thus also on $\desusp\overline{\rGddo}$, 
formula~\eqref{Jarka mne chce prestavet byt.} for the component $\pa_{\bftau}$ of the
differential~\eqref{Terej bude mit elektrickou zarazku.} associated to
$\bftau$ reads
\begin{subequations} 
\begin{equation}
\label{k}
\pa_{\bftau}(x) = 
(-1)^{e(\Gamma)(e(\Gamma'')+1)}\cdot \circ_\tau \,    
\omega_{\edg(\Gamma),\edg(\Gamma'')}(x) \in \Free^2(\desusp\overline{\rGddo})(X),
\end{equation}
for $x \in \desusp\overline\rGddo(X) =
\desusp\overline\rGddo(\Gamma)$.
Thus $\D(\oddGr) = (\Free(\desusp\overline{\rGddo}), \pa_\D)$, with
$\pa_\D$ given by the right-hand side of~\eqref{Jarka mne chce
  prestavet byt.} with $\pa_\bftau$ as in~\eqref{k}.

To describe the map $\can : \D(\oddGr) \to \term_\ggGrc$
notice that, for $\Gamma \in \ggGrc$ with precisely one internal edge,
the graded vector space
$\desusp\overline{\rGddo}(\Gamma)$ is {\em canonically\/} isomorphic
to $\bfk$ placed in degree $0$, since  it is the desuspension of the dual
of the determinant of a~one-point set. The operad morphism $\can$ is the unique extension of
the map $\desusp\overline{\rGddo} \to \term_\ggGrc$ of collections  whose
component $\desusp\overline{\rGddo}(\Gamma) \to \term_\ggGrc(\Gamma)$
is trivial if $\Gamma$ has at least two internal edges, and which is
the canonical isomorphism $\desusp\overline{\rGddo}(\Gamma) \cong
\bfk \cong \term_\ggGrc(\Gamma)$ if $\Gamma$ has exactly one internal edge.
This finishes the description of the dual dg operad $\D(\oddGr)$ and
the associated canonical map.

Recall that the underlying non-dg operad of the 
minimal model $\minggGrc$ of $\term_\ggGrc$
in~\cite{BMO} is the free operad $\Free(D)$ generated by the
collection $D$, which in fact coincides with $\desusp \overline{\rGddo}$, so
$\D(\oddGr) = \minggGrc$ as non-dg operads. With this
identification, the resolving map \hbox{$\rho: \minggGrc \to \term_\ggGrc$}
equals the  morphism $\can: \D(\oddGr) \to  \term_\ggGrc$ described
above.

The differential $\pa$ of the minimal model  translated to the formalism
used in this article is given by the 
sum in the right-hand side of~\eqref{Terej bude mit elektrickou
  zarazku.}, but the component
$\pa_{\bftau}$ is now
\begin{equation}
\label{l}
\pa_{\bftau}(x) = 
(-1)^{e(\Gamma)}\cdot \circ_\tau \,    
\omega_{\edg(\Gamma),\edg(\Gamma'')}(x) \in \Free^2(D)(X),
\end{equation}
\end{subequations} 
cf.\ formula~(18b) of~\cite{BMO}. We see that the expressions in~\eqref{k}
and~\eqref{l} agree up to the sign factor
$(-1)^{e(\Gamma'')e(\Gamma)}$. To compensate this discrepancy,  we define a map
$\chi : \overline\rGddo \to D$ of collections by
\[
\chi(x) := (-1)^{\frac{e(\Gamma)(e(\Gamma)-1)}2}\cdot x
\hbox { for } x \in \desusp \overline\rGddo(\Gamma)
\]
and denote by the same symbol also its unique extension $\chi :
\Free(\desusp \overline\rGddo)
\to \Free(D)$ to an operad morphism. It is simple to verify
that $\chi$ commutes with the differentials and that the
diagram
\[
\xymatrix@R=.6em{\D(\oddGr) \ar[rd]^\can  \ar[dd]^\cong_\chi   &
\\
&\term_\ggGrc
\\
\minggGrc \ar[ur]^{\rho}&
}
\] 
of dg operads commutes. Since $\rho$ is a component-wise homology isomorphism
by~\cite[Theorem~31]{BMO}, so is $\can$. This establishes the
Koszulity of $\term_\ggGrc$.

The proofs of the remaining cases are similar. One describes the dual
dg operad by an obvious modification of the method above, and compares
it with the corresponding minimal models described in Theorem~32 and
Sections~3.4 and 3.5 of~\cite{BMO}.
\end{proof}

\appendix
\section {(Odd) modular operads, and classical Markl operads.}
\label{a1}

In this appendix we recall 
three structures referred to in this work. 
All definitions given here  are standard today, see
e.g.~\cite{getzler-kapranov:CompM98,markl-shnider-stasheff:book}, 
so the purpose is merely to fix the notation and terminology.
Let $\fSet$ denote the category of finite sets and $\Vect$,
the category of graded vector spaces.
For $S_1,S_2 \in \fSet$, we write $S_1 \sqcup S_2 \in  \fSet$ to denote the
disjoint union, the notation implying that $S_1$ and $S_2$ are disjoint.
When we write e.g.~``elements $a,b,c$'' we tacitly assume that $a,b$
and $c$ are mutually~distinct.

Recall that a {\em modular module\/} is a functor $\fSet \!\times\! \bbN \to
\Vect$, with $\bbN$ interpreted as a~discrete category with
objects called {\em genera\/} in this context.

\begin{definition}
\begin{subequations}
\label{modular}
A {\em modular operad\/} is a  modular module
\begin{equation*}
\oM = \big\{\oM(S;g) \in \Chain\  \vrt \  (S;g)   \in  \fSet \times \bbA \big\}
\end{equation*}
together with degree $0$ morphisms (composition laws)
\begin{equation}
\label{v_Galway1}
\ooo ab:\oM\big(S_1 \sqcup \stt a;g_1\big)
\otimes \oM\big(S_2\sqcup \stt b;g_2\big)  
\to \oM ( S_1\sqcup S_2;g_1\+g_2)
\end{equation}
given for arbitrary finite 
sets $S_1$, $S_2$, elements $a,b$, and 
genera $g_1,g_2 \in \bbA$.  There are, moreover, 
degree $0$ contractions
\begin{equation}
\label{Galway}
\xxi_{uv} = \xxi_{vu} : \oM\bl S \sqcup \stt {u} \sqcup \stt {v} ;g\br  
\to \oM(S ;g\+1) 
\end{equation}
given for any finite
set $S$, genus $g \in \bbA$, and elements $u,v$.
These data are required to satisfy the following axioms.
\begin{enumerate}
\itemindent -1em
\itemsep .3em 
\item  [(i)]
For arbitrary isomorphisms $\rho : S_1 \sqcup \stt a \to  T_1$
  and $\sigma :  S_2 \sqcup \stt b \to  T_2$ of finite  
 sets and genera $g_1$, $g_2 \in \bbA$, one has the equality
\begin{equation}
\label{piji_caj_z_Jarcina_hrnku}
\oM\bl\rho|_{S_1}\sqcup\sigma|_{S_2}\br 
\ooo ab =
\ooo{\rho(a)}{\sigma(b)} \ \big(\oM(\rho)\ot\oM(\sigma)\big)
\end{equation}
of maps 
\[
\oM\bl S_1 \sqcup \stt a ;g_1\br \otimes \oM \bl S_2 \sqcup \stt b
;g_2\br 
\to
\oM\bl T_1\sqcup T_2
\setminus  \{\rho(a),\sigma(b)\};g_1+g_2\br.
\]

\item [(ii)] 
For an isomorphism $\rho : S \sqcup \stt u  \sqcup \stt v \to T
  $ of finite sets and a genus $g \in \bbA$, one has the
  equality
\begin{equation}
\label{eq:27}
\oM\bl\rho|_S\br \ \xxi_{uv} = 
\xxi_{\rho(u)\rho(v)}\oM(\rho)
\end{equation}
of maps $\oM \bl S \sqcup \stt u \sqcup \stt v;g \br \to \oM\bl T \setminus
\{\rho(u),\rho(v)\} ;g\!+\! 1\br$.

\item [(iii)]
For $S_1$, $S_2$, $a$, $b$ and $g_1$, $g_2$ as in~(\ref{v_Galway1}),
one has the equality
\begin{equation}
\label{eq:24}
\ooo{a}{b} =  \ooo{b}{a} \tau
\end{equation}
of maps $\oM(S_1 \sqcup \stt a;g_1)\otimes \oM(S_2 \sqcup \stt b;g_2)
\to \oM\bl S_1\sqcup S_2;g_1\!+\!g_2\br$; here $\tau$ denotes
  the commutativity constraint in $\Vect$.

\item  [(iv)]
For finite sets $S_1,S_2,S_3$,  elements
$a,b,c,d$ and genera $g_1,g_2,g_3 \in \bbA$,   one has the equality
\begin{equation}
\label{Dnes_s_Jaruskou_k_Pakouskum}
\ooo ab (\id \ot \ooo cd)  = \ooo cd (\ooo ab \ot
\id)
\end{equation} 
of maps 
$$
\oM \bl S_1 \sqcup \stt a;g_1 \br \ot \oM\bl S_2  \sqcup \stt
{b}\sqcup \stt {c};g_2\br 
\ot  \oM\bl S_3 \sqcup \stt d;g_3\br \longrightarrow 
\oM\bl S_1 \sqcup S_2 \sqcup S_3;
g_1\!+\!
g_2\! +\! g_3\br .
$$

\item  [(v)]
For a finite  set $S$, elements $a,b,c,d$ and a genus $g \in \bbA$
one has the equality
\begin{equation}
\label{eq:33}
\xxi_{ab} \ \xxi_{cd} = \xxi_{cd} \ \xxi_{ab}
\end{equation}
of maps $\oM\bl S \sqcup \stt a\sqcup \stt b \sqcup \stt c
\sqcup \stt d;g\br 
\to \oM (S;g+2)$.

\item [(vi)] 
For finite sets $S_1, S_2$,   elements $a,b,c,d$ and
genera $g_1,g_2   \in \bbA$, one has the equality
\begin{equation}
\label{eq:38}
\xxi_{ab} \ \ooo{c}{d} = \xxi_{cd} \ \ooo{a}{b}
\end{equation}
of maps $\oM\bl S_1 
\sqcup \stt a \sqcup \stt c;g_1\br \ot \oM \bl S_2 \sqcup \stt b
\sqcup \stt d;g_2\br
\to \oM( S_1 \sqcup S_2;g_1+g_2+1)$.

\item [(vii)] 
For  finite sets $S_1, S_2$,  elements
$a,b,u,v$, and genera  $g_1,g_2 \in \bbA$, one has the equality
\begin{equation}
\label{eq:39}
\ooo{a}{b} \ (\xxi_{uv}\ot\id) = \xxi_{uv} \ \ooo{a}{b}
\end{equation}
of maps $\oM\bl
S_1 \sqcup \stt a \sqcup \stt  u  \sqcup \stt v;g_1\br \ot \oM \bl S_2 \sqcup 
\stt b;g_2\br \to \oM (S_1 \sqcup S_2 ;g_1+g_2+1)$.
\end{enumerate} 
\end{subequations}
\end{definition}

\def\op{operad}
\def\oM{{\EuScript O}}

\begin{subequations}
\begin{definition}
\label{odd_modular}
An {\em odd modular {\op}\/} is  a modular module
\[
\oM = \big\{\oM(S;g) \in \Vect\  \vrt \  (S;g)   \in  \fSet \times \bbA \big\}
\]
together with degree $+1$ morphisms ($\twooo ab$-operations)
\begin{equation}
\label{v_GGalway}
\twooo ab:\oM\big(S_1 \sqcup \stt a;g_1\big)
\otimes \oM\big(S_2\sqcup \stt b;g_2\big)  
\to \oM ( S_1\sqcup S_2;g_1+g_2)
\end{equation}
given for arbitrary finite 
sets $S_1$, $S_2$, elements $a,b$, and 
arbitrary $g_1,g_2 \in \bbA$.  There are, moreover, 
degree $1$ morphisms (the contractions)
\[
\twxxi_{uv} = \twxxi_{vu} : \oM\bl S \sqcup \stt u  \sqcup \stt v ;g\br  
\to \oM(S ;g+1)
\]
given for any finite
set $S$,  $g \in \bbA$, and elements $u,v$; we are using the 
notation for composition laws of odd modular operads introduced in
\cite{KWZ}.
These data are required to satisfy the following axioms.
\begin{enumerate}
\itemindent -1em
\itemsep .3em 
\item  [(i)]
For arbitrary isomorphisms $\rho : S_1 \sqcup \stt a \to  T_1$
  and $\sigma :  S_2 \sqcup \stt b \to  T_2$ of finite  
 sets and  $g_1$, $g_2 \in \bbA$, one has the equality
\begin{equation}
\label{neni}
\oM\bl\rho|_{S_1}\sqcup\sigma|_{S_2}\br 
\twooo ab =
\twooo{\rho(a)}{\sigma(b)} \ \big(\oM(\rho)\ot\oM(\sigma)\big)
\end{equation}
of maps 
\[
\oM\bl S_1 \sqcup \stt a ;g_1\br \otimes \oM \bl S_2 \sqcup \stt b
;g_2\br 
\to
\oM\bl T_1\sqcup T_2
\setminus  \{\rho(a),\sigma(b)\};g_1+g_2\br.
\]

\item [(ii)] 
For an isomorphism $\rho : S \sqcup \stt u  \sqcup \stt v \to T
  $ of finite sets and  $g \in \bbA$, one has the
  equality
\[
\oM\bl\rho|_S\br \ \twxxi_{uv} = 
\twxxi_{\rho(u)\rho(v)}\oM(\rho)
\]
of maps $\oM \bl S \sqcup \stt u \sqcup \stt v;g \br \to \oM\bl T \setminus
\{\rho(u),\rho(v)\} ;g+1\br$.

\item [(iii)]
For $S_1$, $S_2$, $a$, $b$ and $g_1$, $g_2$ as in~(\ref{v_GGalway}),
one has the equality
\begin{equation}
\label{kdy_zacnu_byt_rozumny}
\twooo{a}{b} =  \twooo{b}{a} \tau
\end{equation}
of maps $\oM(S_1 \sqcup \stt a;g_1)\otimes \oM(S_2 \sqcup \stt b;g_2)
\to \oM\bl S_1\sqcup S_2;g_1+g_2\br$.

\item  [(iv)]
For   finite sets
  $S_1,S_2,S_3$, elements  
$a, b,c, d$ and $g_1,g_2,g_3 \in \bbA$,   one has the equality
\begin{equation}
\label{neni1}
\twooo ab (\id \ot \twooo cd)  = -\twooo cd (\twooo ab \ot
\id)
\end{equation}
of maps
$$
\oM \bl S_1 \sqcup \stt a;g_1 \br \ot \oM\bl S_2  \sqcup \stt b \sqcup
\stt c;g_2\br 
\ot  \oM\bl S_3 \sqcup \stt d;g_3\br \longrightarrow
\oM\bl S_1 \sqcup S_2 \sqcup S_3;
g_1\!+\!
g_2\! +\! g_3\br .
$$

\item  [(v)]
For a finite  set $S$, elements $a,b,c,d$ and  $g \in \bbA$
one has the equality
\[
\twxxi_{ab} \ \twxxi_{cd} =- \twxxi_{cd} \ \twxxi_{ab}
\]
of maps $\oM\bl S \sqcup \stt a  \sqcup \stt b  \sqcup \stt c \sqcup
\stt d;g\br 
\to \oM (S;g+2)$.

\item [(vi)] 
For finite sets $S_1, S_2$, elements $a,b,c,d$ and  $g_1,g_2
  \in \bbA$, one has the equality
\[
\twxxi_{ab} \ \twooo{c}{d} = -\twxxi_{cd} \ \twooo{a}{b}
\]
of maps $\oM\bl S_1 
\sqcup \stt a  \sqcup \stt c;g_1\br \ot \oM \bl S_2 \sqcup \stt b
\sqcup \stt d;g_2\br
\to \oM( S_1 \sqcup S_2;g_1+g_2+1)$.

\item [(vii)] 
For  finite sets $S_1, S_2$, elements
$a,b,u,v$, and $g_1,g_2 \in \bbA$, one has the equality
\[
\twooo{a}{b} \ (\twxxi_{uv}\ot\id) = -\twxxi_{uv} \ \twooo{a}{b}
\]
of maps $\oM\bl
S_1 \sqcup \stt a  \sqcup \stt u  \sqcup \stt v;g_1\br \ot \oM \bl S_2 \sqcup 
\stt b;g_2\br \to \oM (S_1 \sqcup S_2 ;g_1+g_2+1)$.
\end{enumerate} 
\end{definition}
\end{subequations}

\begin{remark}
Odd modular operads appeared
in~\cite[Section~4]{getzler-kapranov:CompM98} as {\em modular ${\mathfrak
    {K}}$-operads\/} for the dualizing cocycle ${\mathfrak {K}}$. The
terminology we use was suggested by Ralph Kaufmann. A~discussion of
odd modular operads and similar structures can be found e.g.\ in~\cite{markl:odd}.
\end{remark}

\def\calS{{\EuScript S}}
\begin{subequations}
\begin{definition}
\label{b3}
A (classical) {\em Markl operad\/} is a collection
$\calS =\{\calS(n)\}_{n\geq 0}$ of right $\bfk[\Sigma_n]$-modules,
together with $\bfk$-linear maps ($\circ_i$-compositions)
\begin{equation} 
\label{zitra_s_Mikesem_na_Jazz}
\circ_i : \calS(m) \ot \calS(n) \to \calS(m +n -1),
\end{equation}
for $1 \leq i \leq m$ and $n \geq 0$.
These data fulfill the following axioms.
\begin{enumerate}
\item[(i)]
For each $1 \leq j \leq a$, $b,c \geq 0$, $f\in
\calS(a)$, $g \in \calS(b)$ and $h \in \calS(c)$,
\begin{equation}
\label{zitra_s_Mikesem_do_Salmovske}
(f \circ_j g)\circ_i h =
\begin{cases}
(f \circ_i h) \circ_{j+c-1}g& \mbox{for } 1\leq i< j
\\
f \circ_j(g \circ_{i-j+1} h)& \mbox{for }
j\leq i~< b+j
\\
(f \circ_{i-b+1}h) \circ_j g& \mbox{for }
j+b\leq i\leq a+b-1.
\end{cases}
\end{equation}
\item[(ii)]
For each  $1 \leq i \leq m$, $n \geq 0$, $\tau \in \Sigma_m$ and
$\sigma \in \Sigma_n$,  let $\tau \circ_i
\sigma \in \Sigma_{m+n -1}$ be given by inserting the permutation
$\sigma$ at the $i$th place in $\tau$. Let $f \in \calS(m)$ and $g \in
\calS(n)$. Then
\begin{equation}
\label{Krtecek_na_mne_kouka.}
(f\tau)\circ_i(g\sigma) = (f\circ_{\tau(i)} g)(\tau \circ_i
\sigma).
\end{equation}
\end{enumerate}
\end{definition}
\end{subequations}

\begin{theindex}
{
\item Algebra over a Markl operad,
  Definitions~6.7 and~6.9 of~\cite{part1}
\item Canonical grading,~\cite[Section~2]{part1}
\item Cocycle, Example~6.10 of~\cite{part1}
\item Cobar construction, Definition~\ref{Za chvili zavolam Taxovi.}
\item Coboundary, Example~6.10 of~\cite{part1}
\item Constant-free operadic category,
    page~\pageref{Jak bude o vikendu jeste nevim.},  Subsection~\ref{Potkali se u Kolina.}
\item $\Coll(\ttO)$, category of $1$-connected $\ttO$-collections,
  Definition~\ref{Jarka_dnes_u_lekare} 
\item Contraction, Definition~3.3 of~\cite{part1} 
\item Derivation of a Markl operad, Definition~\ref{Uz mam 6,5 hodin
    na Tereji tento rok.} 
\item Discrete operadic fibration, Definition~4.1 of~\cite{part1}
\item Derived sequence, display~\eqref{e1}
\item Dual dg Markl operad, Definition~\ref{Nebrzdi mi kolo.}
\item Elementary morphism, Definition~\ref{plysacci_postacci}
\item Extended units, page~\pageref{Mam zdravotni zpusobilost na dalsi dva
    roky.}, Subsection~\ref{Jsem nachlazen?}
\item Factorizability, \Fac,
      page~\pageref{dnes_prednaska_na_Macquarie},  Subsection~\ref{Potkali se u Kolina.}
\item Free Markl operad, display~\eqref{Konci_Neuron?}
\item $F \Fib T\to t$, map to a local terminal object and its
  fiber, page~\pageref{Jsem treti na ``Potkali se u Kolina.''},  
Section~\ref{letam_205}
\item $F \fib T\to S$, elementary map and its
  fiber, page~\pageref{Jsme treti na ``Potkali se u Kolina.''},  
Subsection~\ref{Jsem nachlazen?}
\item Grading of an operadic category, page~\pageref{Za
  14 dni prvni davka.},  Subsection~\ref{Potkali se u Kolina.}
\item Harmonic pair, Definition~\ref{har}
\item Ideal in a Markl operad, Definition~\ref{Prestane Jarka pit?}
\item Invertibility of \qb{s}, \QBI, page~\pageref{Za chvili na
    kozni.},  Subsection~\ref{Potkali se u Kolina.}
\item Koszul dual, Definition~\ref{vice nez 10 000 pripadu denne}
\item Labeled tower, labeling, Definition~\ref{zitra_vedeni}
\item Local isomorphism, Definition~3.3 of~\cite{part1}
\item Local reordering morphism, Definition~3.3 of~\cite{part1}
\item $\lTw(X)$, groupoid of labeled towers under $X$,
  page~\pageref{vcera_jsem_bezel_36_minut}, Section~\ref{zitra_letim_do_Pragy}
\item Markl operad, Definition~\ref{markl}
\item Markl cooperad, Definition~\ref{Jarca byla na chalupe sama.}
\item Order-preserving morphism, Definition~3.3 of~\cite{part1}
\item Ordered graph, Definition~3.13 of~\cite{part1}
\item $\Iso \subset \ttO$, subcategory of isomorphisms, page~\pageref{Co
  zitra zjisti?}, Subsection~\ref{Jsem nachlazen?}
\item $\LT \subset \ttO$, subcategory of local terminal objects,
  page~\pageref{Jsem znepokojen.}, Subsection~\ref{Jsem nachlazen?}
\item $\DO\subset \ttO$, subcategory of order-preserving morphisms,
  page~\pageref{Zitra v 7 na koznim.}, Subsection~\ref{Potkali se u Kolina.}
\item $\QO\subset \ttO$, subcategory of \qb{s},
  page~\pageref{Internet porad nejde.}, Subsection~\ref{Potkali se u Kolina.}
\item $\VR$, groupoid of virtual isomorphisms,
  page~\pageref{Za_chvili_pojedu_na_schuzi_klubu.}, Section~\ref{letam_205}
\item  $\VR(e)\subset \VR$, subgroupoid of objects of positive grade,
  page~\pageref{Musim to dorvat.}, Section~\ref{letam_205}
\item Pairs with disjoint fibers, Definition~\ref{d3}
\item Preodered graph, Definition~3.1 of~\cite{part1}
\item Pure contraction, Definition~3.3 of~\cite{part1}
\item Quadratic Markl operad, Definition~\ref{zitra_seminar}
\item Quadratic data, Definition~\ref{zitra_seminar}
\item $\QV$, quotient of $\VI$ by virtual isomorphisms,
  page~\pageref{Je streda.}, Section~\ref{letam_205}
\item $\QV(e)$, quotient of $\VIe$ by virtual isomorphisms,
  page~\pageref{Musim to dorvat.}, Section~\ref{letam_205}
\item Rigidity, \Rig, 
      page~\pageref{Koleno se pomalu lepsi.},  Subsection~\ref{Potkali se u Kolina.}
\item Self-dual Markl operad, Definition~\ref{Uz_je_o_te_medaili_zapis.}
\item Strict grading, \SGrad, page~\pageref{Zadne
    viditelne kozni projevy nemam.},  Subsection~\ref{Potkali se u Kolina.}
\item Strictly unital Markl operad, Definition~\ref{svedeni}
\item Strictly counital Markl cooperad, Definition~\ref{Jak bude?}
\item Blow-up axiom, \BU, page~\pageref{bu}, Subsection~\ref{Potkali se u Kolina.}
\item Strongly factorizable operadic category, \SFac, 
      Definition~2.9 of~\cite{part1}
\item ${\tt SUMOp}^\ttV_1(\ttO)$, category of $1$-connected strictly
  unital Markl $\ttO$-operads, page~\pageref{Pomuze_vitamin_C?},
  Section~\ref{zitra_letim_do_Pragy} 
\item Tower, display~\eqref{t1-1}
\item Unital Markl operad, page~\pageref{Habaneros},
  Subsection~\ref{Jsem nachlazen?} 
\item Unique fiber axiom, \UFB, page~\pageref{Kveta_asi_spi.},
  Subsection~\ref{Potkali se u Kolina.}
\item Virtual (iso)morphism, page~\pageref{Za chvili sraz s Denisem.}, 
   Section~\ref{letam_205}
\item Weak blow-up axiom, \WBU, page~\pageref{wbu},
  Subsection~\ref{Potkali se u Kolina.} 
\item $1$-connected Markl operad, Definition~\ref{svedeni}
}
\end{theindex}



\providecommand{\doi}[1]{}
\renewcommand{\doi}[1]{\href{https://doi.org/\detokenize{#1}}{DOI: \detokenize{#1}}}%
\newcommand{\arxiv}[1]{\href{http://arxiv.org/abs/#1}{arXiv:#1}}

\providecommand{\fulldoi}[1]{}
\renewcommand{\fulldoi}[1]{\href{\detokenize{#1}}{\detokenize{#1}}}%
\renewcommand{\fulldoi}[1]{\url{\detokenize{#1}}}%

\label{lastpage}

\end{document}